\documentclass[11pt]{amsart}

\usepackage{epsfig,amsmath,amsfonts,latexsym, mathtools}
\usepackage[abs]{overpic}
\usepackage{color}
 \usepackage{palatino}
 \usepackage{hyperref}
 \usepackage{comment}

\textwidth 6.45in \textheight 8.2in \evensidemargin 0in
\oddsidemargin 0in 
\topmargin .3in

\newtheorem{theorem}{Theorem}
\newtheorem{question}[theorem]{Question}
\newtheorem{proposition}[theorem]{Proposition}
\newtheorem{lemma}[theorem]{Lemma}
\newtheorem{corollary}[theorem]{Corollary}
\newtheorem{conjecture}[theorem]{Conjecture}

\theoremstyle{definition}

\newtheorem{observation}[theorem]{Observation}
\newtheorem{example}[theorem]{Example}
\newtheorem{construction}[theorem]{Construction}

\theoremstyle{remark}
\newtheorem{remark}[theorem]{Remark}
\DeclareMathOperator{\Fill}{Fill}
\DeclareMathOperator{\Cob}{Cob}

\def\dfn#1{{\em #1}}
\def\R{\mathbb{R}}
\def\Z{\mathbb{Z}}

\def\n{\mathbf{n}}
\def\m{\mathbf{m}}
\def\BZ{\mathbf{Z}}

\numberwithin{theorem}{section}
\theoremstyle{plain}

\begin{document}

\title{Symplectic fillings and cobordisms of lens spaces} 

\author{John B. Etnyre}

\author{Agniva Roy}

\address{School of Mathematics \\ Georgia Institute
of Technology \\  Atlanta  \\ Georgia}

\email{etnyre@math.gatech.edu}

\email{aroy86@gatech.edu}


\begin{abstract}
We complete the classification of symplectic fillings of tight contact structures on lens spaces. In particular, we show that any symplectic filling $X$ of a virtually overtwisted contact structure on $L(p,q)$ has another symplectic structure that fills the universally tight contact structure on $L(p,q)$. Moreover, we show that the Stein filling of $L(p,q)$ with maximal second homology is given by the plumbing of disk bundles. We also consider the question of constructing symplectic cobordisms between lens spaces and report some partial results.
\end{abstract}

\maketitle

\section{Introduction}
Studying symplectic fillings of contact manifolds has a long history and is useful in constructing symplectic manifolds. There have only been a few results where one can classify all the symplectic fillings of a given contact manifold. We will restrict our attention to symplectic fillings of lens spaces. The first classification result was for the standard tight contact structure on $S^3$. Eliashberg \cite{Eliashberg90b} showed that any symplectic filling of this manifold was a blowup of the standard symplectic $B^4$. This was extended by McDuff \cite{McDuff90} to the universally tight contact structure $\xi_{ut}$ on the lens space $L(p,1)$. She showed that when $p\not=4$ the only minimal symplectic filling (that is, one that is not a blowup) of $(L(p,1),\xi_{ut})$ is given by a disk bundle $E_p$ over $S^2$ with Euler number $-p$. But when $p=4$ there are two minimal symplectic fillings: $E_4$ and a rational homology ball. This classification is up to diffeomorphism, but was later extended by Hind to a classification up to symplectic deformation \cite{Hind03}. 

The next breakthrough was due to Lisca \cite{Lisca08} who classified the symplectic fillings of the universally tight contact structure on any $L(p,q)$. Here the classification is much more complicated and is recalled in Section~\ref{lclass} below. Recall any lens space has a unique (if the orientation of the contact planes is ignored) universally tight contact structure, while in many cases a lens space can have many virtually overtwisted contact structures. The first result about fillings of such contact structures was due to Plamenevskaya and van Horn-Morris \cite{PlamenevskayaVanHorn-Morris2010}. They showed that any virtually overtwisted contact structure on $L(p,1)$ has a unique minimal symplectic filling. They established their result using a powerful technique of Wendl \cite{Wendl10} that reduces the understanding of Stein fillings of a lens space (or any contact structure supported by a planar open book) to questions about factoring the monodromy of a planar open book supporting the contact structure. These tools were further used by Kaloti \cite{Kaloti13pre} to study other contact structures on lens spaces. Most recently Fossati \cite{Fossati19pre} used these techniques together with a recent result of Menke \cite{menke18pre} to classify all symplectic fillings of lens spaces obtained by (integral) surgery on the Hopf link.  

We note that all the classification results above for $L(p,1)$ and Kaloti's work are classifications of minimal symplectic fillings up to symplectic deformation, while the other classification results are up to diffeomorphism. In fact, in Lisca's work, he constructs symplectic $4$--manifolds and shows the boundaries are diffeomorphic to the appropriate lens space with its universally tight contact structure. So there is no sense in which the diffeomorphisms are the identity on the boundary. 

In \cite{Fossati2019pre2} Fossati gave many restrictions on the topology of symplectic fillings of lens spaces. To state them we recall some notation. Given a continued fraction $[-a_1,\ldots, -a_n]$ for $-p/q<-1$ with $a_i\geq 2$, we know that the lens space $L(p,q)$ is described by surgery on the linear chain $C$ of unknots $U_1,\ldots, U_n$ shown in Figure~\ref{chain}, where the unknot $U_i$ has framing $-a_i$. We call $n$ the length of $p/q$, and denote it $l(p/q)$. We also note that the chain $C$ can be Legendrian realized so that Legendrian surgery yields a contact structure on $L(p,q)$ and all tight contact structures can be so realized. (See Section~\ref{koclassification}.) We denote the Stein filling of $(L(p,q),\xi)$ coming from this surgery description by $X_\xi$. 
\begin{figure}[htb]
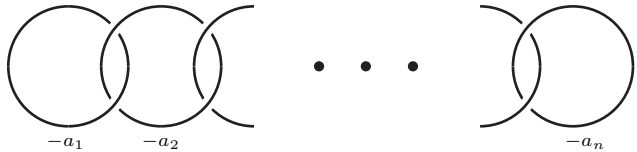
{\tiny
\begin{overpic}
{fig/Chain}
\put(17, 0){$-a_1$}
\put(53, 0){$-a_2$}
\put(213,0){$-a_n$}
\end{overpic}}
\caption{A chain of framed unknots.}
\label{chain}
\end{figure}
Fossati showed that the Euler characteristic of any minimal symplectic filling of any tight contact structure on $L(p,q)$ is less than or equal to $l(p/q)+1$. He also showed that if a filling has $b_2=l(p/q)$ then it has the same intersection form as the filling in Figure~\ref{chain} and also must be simply-connected. Moreover, it was shown that no virtually overtwisted contact structure can be filled by a rational homology ball; subsequent independent proofs of this fact were given in \cite{EtnyreTosun20pre, GollaStarkston19pre}.  Fossati also bounded the Euler characteristic of a filling in terms of the order of its fundamental group. Finally, Fossati also showed that any symplectic filling of a virtually overtwisted structure on $L(p,q)$ is simply-connected for $p$ prime.

In this paper, we will classify all fillings of virtually overtwisted contact structures on lens spaces using only the work of Menke \cite{menke18pre} and Lisca \cite{Lisca08}. This together with Lisca's work, completes the classification of symplectic fillings of lens spaces. Our main result was also obtained by Christian and Li in \cite{ChristianLi20Pre} who, in addition, find many other nice applications of Menke's result, while we explore more fully the ramifications for fillings of lens spaces. 

We also note, while studying implications of the classification result, we will describe in Section~\ref{alt} an alternate construction of the Stein fillings from Lisca's work \cite{Lisca08} that might be of independent interest. 

\subsection{Consequences of the classification}\label{corollaries}
The statement of the classification result is a bit involved, so we begin by describing some corollaries of the main result. 

We denote the set of minimal fillings, up to diffeomorphism,  of a contact manifold $(M,\xi)$ by $\Fill(M,\xi)$. 
\begin{theorem}\label{maxcollection}
If $X$ is any minimal symplectic filling of a virtually overtwisted contact structure $\xi$ on $L(p,q)$ then there is some other symplectic structure on $X$ that fills the universally tight contact structure $\xi_{ut}$ on $L(p,q)$.  That is 
\[
\Fill(L(p,q),\xi)\subseteq \Fill(L(p,q), \xi_{ut}). 
\]
\end{theorem}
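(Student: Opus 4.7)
The plan is to induct on the length $n=l(p/q)$. In the base case $n=1$, Plamenevskaya--van Horn-Morris showed that every virtually overtwisted contact structure on $L(p,1)$ has a unique minimal symplectic filling, namely the disk bundle $E_p$, and by McDuff this same $E_p$ is a minimal filling of $(L(p,1),\xi_{ut})$; the inclusion is therefore immediate.

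For the inductive step, fix a virtually overtwisted contact structure $\xi$ on $L(p,q)$ with $l(p/q)=n\geq 2$. Honda's classification presents $\xi$ by a (non-constant) sign sequence on the chain $U_1,\ldots,U_n$, so there is some position where two consecutive signs differ. At such a sign change I would locate a convex torus $T\subset (L(p,q),\xi)$ whose dividing-slope data and surrounding contact twisting qualify it as a \emph{mixed torus} in the sense of Menke; this is essentially a computation using Honda's classification of tight structures on $T^2\times I$ applied to a thickened neighborhood of the sign-change region.

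Given any minimal symplectic filling $(X,\omega)$ of $(L(p,q),\xi)$, Menke's theorem then yields a minimal symplectic filling $(X',\omega')$ of a cut contact manifold $(Y',\xi')$ together with a symplectic round $1$-handle attachment recovering $(X,\omega)$. Combinatorially, $Y'$ is a disjoint union of lens spaces whose continued-fraction data have strictly shorter total length than $n$, and $\xi'$ either is already universally tight on each summand or has a strictly shorter sign sequence than $\xi$. By the inductive hypothesis (applied componentwise), $X'$ also admits a symplectic structure $\omega''$ filling the universally tight contact structure on $Y'$. Reattaching the round $1$-handle using $\omega''$, but choosing the contact framing at $T$ so that the signs now match across $T$ rather than differing, produces a symplectic structure on the same smooth manifold $X$ filling a tight contact structure on $L(p,q)$ with one fewer sign change. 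Iterating at each remaining sign change eventually yields a symplectic structure on $X$ filling $(L(p,q),\xi_{ut})$.

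The main obstacle lies in the regluing step: one must show that the round $1$-handle can be reattached with a ``matched-sign'' contact framing, and that the cumulative effect of doing so at each sign change is precisely the universally tight contact structure $\xi_{ut}$, rather than some other tight structure on $L(p,q)$. This comes down to a careful bookkeeping argument matching Honda's sign conventions with Menke's explicit symplectic cut-and-paste, together with the observation that the round $1$-handle attachment is smoothly independent of the contact framing, so the underlying 4-manifold $X$ is preserved throughout the iteration.
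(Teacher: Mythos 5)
Your approach is genuinely different in structure from the paper's, although it rests on the same essential tools. The paper first establishes the full classification theorem (Theorem~\ref{list}, via Theorems~\ref{firstglue} and~\ref{mixedglue}, which apply Menke's JSJ theorem repeatedly) and then derives Theorem~\ref{maxcollection} as a quick corollary: because any maximal collection $\mathcal{M}$ cuts the chain into \emph{consistently stabilized} sub-chains, and consistently stabilized sub-chains have the same set of fillings for $\xi$ as for $\xi_{ut}$, one gets $\text{Image}(G_{\mathcal{D}\cup\mathcal{M}}) \subseteq \text{Image}(G_{\mathcal{D}'\cup\mathcal{M}'})\subseteq\Fill(L(p,q),\xi_{ut})$. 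You instead induct directly on $l(p/q)$, do a single Menke cut, invoke the inductive hypothesis, and re-glue with matched signs. This is more direct for the specific inclusion but requires you to do the re-gluing bookkeeping explicitly, whereas the paper absorbs it into the general classification. Both arguments ultimately hinge on the same facts: Menke's theorem, Lemmas~\ref{bucfb}--\ref{bucfb2} identifying the cut pieces, and the weak Legendrian simplicity of rational unknot connected sums (Corollary~\ref{legsimpleratunknots}), which guarantees that the Legendrian attaching knot with prescribed $(tb, rot)$ exists.

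That said, there are a few places your sketch is imprecise or silently relies on facts you should name. First, you assert that Menke's output $(X',\omega')$ can be analyzed ``componentwise,'' but Menke gives a convex-boundary symplectic manifold with $\partial X'$ a disjoint union of two lens spaces; one must rule out $X'$ being connected, which the paper does by citing Etnyre's theorem that a filling of a planar contact manifold has connected boundary. Second, after applying the inductive hypothesis you may get a filling of $\xi_{ut}$ with the ``wrong sign'' on one of the pieces, so to arrange both pieces to be stabilized with the same sign you need the observation that a Stein filling $(X_i',J_i')$ of $\xi_{ut}$ also fills $-\xi_{ut}$ via $(X_i',-J_i')$. Third, your ``iterate at each remaining sign change'' is redundant: the inductive hypothesis already handles any sign changes inside the two cut pieces, so after the single cut and re-glue (with the Legendrian attaching curve stabilized consistently with the pieces) you land directly on $\xi_{ut}$, not merely on a structure with one fewer sign change. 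Finally, the ``round $1$-handle attachment is smoothly independent of the contact framing'' claim needs to be made precise: the $2$-handle framing \emph{is} the contact framing minus one, so what you are actually using is that the Thurston--Bennequin invariant of the new Legendrian attaching knot equals that of the old one (both equal to $-a_k+1$), while only the rotation number is changed; this is exactly what Corollary~\ref{legsimpleratunknots} makes possible. With those points filled in, your argument is sound, and it is a legitimate alternative to the paper's derivation from Theorem~\ref{list}.
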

In fact, the subset of fillings of a virtually overtwisted contact structure can be an arbitrarily small (non-empty) subset. 
\begin{theorem}\label{cor2}
Given any integer $k$ there is a lens space $L(p,q)$ for which $\Fill(L(p,q), \xi_{ut})$ has more than $k$ elements but there is a virtually overtwisted contact structure $\xi$ such that $\Fill(L(p,q), \xi)$ has only one element.
\end{theorem}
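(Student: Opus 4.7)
The plan is, for each integer $k$, to exhibit a single lens space $L(p,q)$ whose universally tight contact structure admits more than $k$ minimal symplectic fillings, while also carrying a virtually overtwisted contact structure with exactly one minimal filling. Given such examples for arbitrary $k$, the theorem follows immediately.

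For the first requirement, I will invoke Lisca's classification \cite{Lisca08}, which identifies $|\Fill(L(p,q),\xi_{ut})|$ with a combinatorial count depending on the continued fraction $[-a_1,\ldots,-a_n]$ of $-p/q$. This count is unbounded over suitable families of lens spaces: for instance, taking $-p/q$ with continued fraction $[-3,-2,\ldots,-2,-3]$ produces lens spaces whose Lisca count grows without bound as the number of interior $-2$'s increases, and which moreover carry virtually overtwisted contact structures (thanks to the entries $a_i\geq 3$ at the ends, which force the Honda--Giroux tight-count to exceed the universally tight contribution). One may therefore choose parameters so that $|\Fill(L(p,q),\xi_{ut})|>k$ while virtually overtwisted structures exist.

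For the second requirement, I will use the main classification theorem of this paper together with Theorem \ref{maxcollection}. Every minimal filling of a virtually overtwisted $\xi$ either is the Stein filling $X_\xi$ itself or arises via a Menke convex-torus splitting of $(L(p,q),\xi)$ \cite{menke18pre}; which such splittings are available is dictated by the Honda--Giroux stabilization data of $\xi$ along the Legendrian chain. By selecting the stabilization signs so that no internal convex torus with the slope required by a Menke decomposition is compatible with $\xi$ (for example, an alternating stabilization pattern on the middle $-2$ entries), I obstruct every non-trivial splitting, so that $\Fill(L(p,q),\xi) = \{X_\xi\}$.

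The main obstacle is the second step: given a chosen stabilization pattern, one must rigorously verify that no convex torus in $(L(p,q),\xi)$ can support a Menke splitting. This reduces to a combinatorial analysis comparing the allowed slopes of convex tori in the Honda--Giroux parametrization against the slopes demanded by the classification theorem. I expect this bookkeeping to be the only delicate part; it requires no techniques beyond those already developed in the paper.
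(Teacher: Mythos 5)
Your high-level strategy matches the paper's: pick a family of lens spaces with unboundedly many fillings of $\xi_{ut}$ by Lisca's classification, and then choose a stabilization pattern so that Theorem~\ref{list} forces a unique filling. But the specific family you chose fails both requirements, so the proof does not go through as written.

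First, the chain $[-3,-2,\ldots,-2,-3]$ (with, say, $n-2$ interior $-2$'s) does \emph{not} give lens spaces with a growing Lisca count. Running the Riemenschneider point diagram on $[3,2,\ldots,2,3]$, one finds $p/(p-q)=[2,n+1,2]$, which has length $3$ independent of $n$. The only length-$3$ null sequences are $(1,2,1)$ and $(2,1,2)$, both admissible, so $\Fill(L(p,q),\xi_{ut})$ has exactly $2$ elements for every $n$. Your parenthetical intuition that entries $\geq 3$ at the ends make the count grow is misleading: what drives the growth in Lisca's Corollary~1.2 is the branching structure in the space of admissible null sequences, not merely the presence of big entries. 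The paper instead uses the chain $[-2,(-3)^n,(-2)^n]$, for which $p/(p-q)=[3^n,n+2]$ and the number of admissible null sequences grows like $n$.

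Second, in your chain the ``middle $-2$ entries'' correspond to unknots with Thurston--Bennequin invariant $-1$, which admit \emph{no} stabilizations at all; there is no alternating stabilization pattern to put on them, and the only stabilizations available are one each on the two $-3$--framed ends. So your mechanism for killing the Menke splittings cannot even be set up. In the paper's example, the $n$ unknots with framing $-3$ have $tb=-2$ and hence one stabilization each; alternating these signs produces overlapping length-$2$ inconsistent sub-chains, and applying Theorem~\ref{list} shows that for any maximal collection $\mathcal{M}$ the resulting sub-chains are consistently stabilized pieces with a unique filling, giving $|\Fill(L(p,q),\xi)|=1$. To repair your argument you would need to switch to a family like the paper's (or another one where the Lisca count genuinely grows and where the chain has enough room to stabilize), and then carry out the combinatorics of Theorem~\ref{list} for that family.
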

On the other hand virtually overtwisted contact structures can have many fillings. 
\begin{theorem}\label{cor3}
Given any integer $k$ there is a lens space $L(p,q)$ and virtually overtwisted contact structure $\xi$ for which $\Fill(L(p,q), \xi)$ has more than $k$ elements.
\end{theorem}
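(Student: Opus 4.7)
The plan is to exhibit, for each integer $k$, a concrete lens space $L(p,q)$ together with a virtually overtwisted contact structure $\xi$ on it whose set of minimal symplectic fillings (up to diffeomorphism) has cardinality greater than $k$. The key input is the classification theorem of this paper (together with Lisca's theorem), which via Theorem~\ref{maxcollection} realizes $\Fill(L(p,q),\xi)$ as a combinatorially specified subset of $\Fill(L(p,q),\xi_{ut})$. Since Lisca's work parametrizes the latter by zero continued fraction expansions of $p/(p-q)$, the number of which can be made arbitrarily large by lengthening the continued fraction of $p/q$, the task reduces to locating virtually overtwisted structures whose selected subset remains large.

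First I would pick a family of lens spaces for which $|\Fill(L(p,q),\xi_{ut})|$ is known to blow up. A convenient choice is a sequence with $p/q=[-a_1,\ldots,-a_n]$ where the $a_i\geq 3$, so that many admissible zero continued fractions of $p/(p-q)$ exist (for instance, chains of $-3$'s produce a count growing at least linearly in $n$, and denser spacings grow faster). This produces, for every $k$, a lens space whose universally tight contact structure has more than $k$ distinct Lisca fillings $W_{\sigma_1},\ldots,W_{\sigma_N}$ with $N>k$.

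Next I would exhibit, on such an $L(p,q)$, a virtually overtwisted contact structure $\xi$ compatible with a growing number of these Lisca fillings. Using Honda's classification, a virtually overtwisted $\xi$ is specified by the same chain $[-a_1,\ldots,-a_n]$ together with a nontrivial assignment of signs (bypasses) on the bridges between the Legendrian components. The paper's main classification theorem translates this decoration into an explicit condition on the zero continued fraction $\sigma$ singling out which $W_\sigma$ fill $\xi$. I would choose the Honda decoration to be as close to the universally tight decoration as possible (e.g.\ flipping only a single sign at one end of the chain, so $\xi$ is genuinely virtually overtwisted), which makes the selection condition extremely mild and should allow all but a bounded number of the $W_\sigma$ to persist as fillings of $\xi$.

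The main obstacle will be the middle step: verifying that the chosen Honda decoration actually admits all (or at least more than $k$) of the $W_\sigma$ as fillings under the combinatorial criterion provided by the main theorem. Concretely this amounts to checking, for each $\sigma$, that the inductive sequence of Menke-type splittings which recovers $W_\sigma$ from the canonical Stein filling $X_\xi$ is compatible with the sign data specifying $\xi$. Once this is done, pairwise non-diffeomorphism of the resulting fillings is inherited from Lisca's classification (the $W_\sigma$ are pairwise non-diffeomorphic), and so $\Fill(L(p,q),\xi)$ has more than $k$ elements, completing the proof.
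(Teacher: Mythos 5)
Your approach is genuinely different from the paper's, and it has a real gap in the middle step that you yourself flag.

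The paper does not stay on the same lens space. It takes a lens space $L(p,q)$ whose universally tight structure has at least $k$ Lisca fillings, then \emph{enlarges} the chain by adding one more Legendrian unknot that has been stabilized both positively and negatively. This produces a virtually overtwisted contact structure $\xi'$ on a \emph{new} lens space $L(p',q')$. Theorem~\ref{firstglue} then says the gluing map $G_{\{L_k\}}$ for that doubly stabilized component is surjective, and its domain is $\Fill(S^3)\times\Fill(L(p,q),\xi_{ut})$, so (using the near-injectivity of $G$ from Theorem~\ref{inject}) $\xi'$ has at least $k$ (or $k/2$) fillings with no further combinatorics to check. In other words, the paper side-steps the entire question of which $W_\sigma$ ``persist'' under a sign flip.

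Your proposal, by contrast, fixes the lens space and flips a single sign in the Honda decoration. The intuition that ``all but a bounded number of the $W_\sigma$ persist'' is not how Theorem~\ref{list} operates: the fillings of the flipped structure are not a subset of the $W_\sigma$ singled out by some mild persistence condition, they are the union of images of the gluing maps $G_{L_i}$ over the components of the inconsistent sub-chain, and each such image is parameterized by Lisca fillings of strictly shorter sub-chains, not by the $W_\sigma$ of the ambient lens space. Your approach can in fact be repaired: for the chain of $-3$'s with the last sign flipped, $G_{\{L_n\}}$ embeds $\Fill\bigl(L(\{L_1,\ldots,L_{n-1}\}),\xi_{ut}\bigr)$ into $\Fill(L(p,q),\xi)$, and the former can be made larger than any $k$ by increasing $n$. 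But this is a different count than the one you describe, requires identifying the right $G_{L_i}$ and verifying the injectivity on its image, and needs the growth estimate for the sub-chain rather than the ambient chain. The paper's construction gets the result with essentially no verification because the doubly stabilized knot lands in $\mathcal{D}$ rather than in an inconsistent sub-chain, where both the surjectivity and the injectivity of the relevant gluing map are already established.
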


We can also characterize the ``maximal" filling of a lens space.
\begin{theorem}\label{cor4}
The second Betti number of any symplectic filling $X$ of $(L(p,q), \xi)$ is less than or equal to the length of $p/q$
\[
b_2(X)\leq l(p/q),
\]
with equality if and only if $X$ is diffeomorphic to $X_\xi$, the filling coming from attaching Stein handles to a Legendrian realization of the chain in Figure~\ref{chain}.
\end{theorem}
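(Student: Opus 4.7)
My plan combines Theorem~\ref{maxcollection} with Fossati's topological constraints on fillings and Lisca's explicit classification of minimal symplectic fillings of the universally tight contact structure on $L(p,q)$. The argument splits naturally into the inequality and the characterization of the equality case. At the outset I would reduce to the case that $X$ is minimal: a blowup strictly increases $b_2$ and is never diffeomorphic to the minimal manifold $X_\xi$, so the only way a non-minimal filling could be relevant to the extremal statement is via the trivial observation that the bound forces minimality once it is established for minimal fillings.

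For the inequality, let $X$ be a minimal symplectic filling of $(L(p,q),\xi)$. Theorem~\ref{maxcollection} endows $X$ with a second symplectic structure filling $(L(p,q),\xi_{ut})$, so $X$ appears on Lisca's classification list of minimal fillings of the universally tight structure. Every filling on Lisca's list is simply-connected, hence $\chi(X)=1+b_2(X)$. Combining this with Fossati's Euler characteristic bound $\chi(X)\leq l(p/q)+1$ gives $b_2(X)\leq l(p/q)$. For the equality case, suppose $b_2(X)=l(p/q)$. Re-invoking Theorem~\ref{maxcollection} and Lisca's list, $X$ is diffeomorphic to one of finitely many explicitly described minimal fillings of $(L(p,q),\xi_{ut})$. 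Each such filling other than the standard linear plumbing of disk bundles of Figure~\ref{chain} is obtained by symplectic rational blowdown operations that strictly decrease $b_2$, so the plumbing is the \emph{unique} filling on Lisca's list that attains $b_2=l(p/q)$. Since the underlying smooth 4--manifold of $X_\xi$ depends only on the continued fraction coefficients $-a_1,\ldots,-a_n$ and not on the choice of tight contact structure $\xi$, we have $X_\xi \cong X_{\xi_{ut}}$ smoothly, so $X\cong X_\xi$. The converse direction is immediate: $X_\xi$ is the Stein filling obtained by the chain of Stein handles in Figure~\ref{chain}, and it has $b_2=l(p/q)$ by construction.

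The main obstacle is upgrading ``same intersection form and simply-connected'' (what Fossati directly supplies in the equality case) to ``diffeomorphic to $X_\xi$''. Because exotic smoothings can realize matching intersection forms in dimension four, the intersection form alone is insufficient. The essential input is therefore the fact that Lisca's explicit parametrization of minimal fillings by zero--continued--fraction expansions of $p/(p-q)$ produces exactly one entry of length $l(p/q)$, namely the trivial expansion giving the standard plumbing; verifying this—and confirming that every other entry on the list corresponds to strictly smaller $b_2$—is the key technical step, after which Theorem~\ref{maxcollection} does all the work of pushing the conclusion from the universally tight case back to an arbitrary virtually overtwisted $\xi$.
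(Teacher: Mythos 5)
Your overall strategy---reduce to the universally tight case via Theorem~\ref{maxcollection} and then work with Lisca's classification---matches the paper's, but your implementation has a small error in the inequality and a genuine gap in the equality case. For the inequality you invoke Fossati's bound $\chi(X)\leq l(p/q)+1$ and then assert that every filling on Lisca's list is simply connected, so that $\chi=1+b_2$. That assertion is false: the $W_\n$ are in general not simply connected (the rational homology ball filling of $L(4,1)$ has $\pi_1\cong\Z/2\Z$, and many other $W_\n$ have nontrivial finite $\pi_1$). What is true, and what the paper uses, is only that $b_1(W_\n)=0$, because the single $1$--handle is rationally cancelled by one of the $2$--handles; this still gives $\chi(W_\n)=1+b_2(W_\n)$, so your route survives with that correction, though it treats Fossati's bound as a black box whereas the paper computes $b_2(W_\n)=\sum_{i}b_i-\sum_i n_i-1$ directly and needs no outside input.

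The real gap is the equality case. You assert that every Lisca filling other than the plumbing ``is obtained by symplectic rational blowdown operations that strictly decrease $b_2$'' and hence cannot attain the maximum. That is not a correct or known description of Lisca's fillings---the various $W_\n$ are not related to one another by rational blowdowns---and you acknowledge yourself that this is ``the key technical step'' which you have not carried out. The paper supplies exactly this step by a short combinatorial argument: since $b_2(W_\n)=\sum b_i-\sum n_i-1$, it suffices to show that among null sequences of fixed length $k$ the quantity $\sum n_i$ is \emph{uniquely} minimized by $(1,2,\ldots,2,1)$, which corresponds to the plumbing. This follows because every such sequence is built from $(1,1)$ by $k-2$ strict blowups, a blowup at the far right adds $2$ to $\sum n_i$ while an interior blowup adds $3$, and only the all-far-right choice realizes the minimum $2k-2$. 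Without this, you stop at ``same intersection form and simply connected,'' which, as you correctly observe, does not force a diffeomorphism in dimension four. A minor further slip: your phrase that Lisca's parametrization ``produces exactly one entry of length $l(p/q)$'' is misstated---all null sequences in $\BZ_{p,q}$ have the same length $l(p/(p-q))$, and it is $\sum n_i$, not the length of the sequence, that controls $b_2$.
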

\noindent
Fossati proved a similar result up to homeomorphism when $p$ was $2, 4,$ a power of an odd prime or $2$ times a power of an odd prime, \cite{Fossati2019pre2}.

In the other direction we note that fillings with the minimal possible second Betti number, that is rational homology balls, were completely determined by Golla and Starkston in \cite{GollaStarkston19pre}, with an independent proof later given by the first author and Tosun \cite{EtnyreTosun20pre}, and a proof with extra hypothesis was given by Fossati in \cite{Fossati2019pre2} (these extra hypothesis were not needed given Theorem~\ref{maxcollection}). 
As pointed out by the referee, these results can be generalize as follows by using our main result below. 
\begin{theorem}\label{chilower}
Let $\xi$ be the contact structure on $L(p,q)$ obtained by surgery on the Legendrian realization $\mathcal{C}=\{L_1,\ldots, L_m\}$ of the chain in Figure~\ref{chain}. Let $k$ be the number of the $L_i$ that have been stabilized both positive and negatively and let $l$ be the number of inconsistent sub-chains of $\mathcal{C}$ one obtains when these doubly stabilized Legendrian knots are removed. Then the Euler characteristic of any filling $X$ of $L(p,q)$ satisfies 
\[
\chi(X)\geq 1+k+ \lceil l/2\rceil.
\]
\end{theorem}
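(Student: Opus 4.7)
The plan is to induct on $k$, the number of doubly stabilized Legendrians in $\mathcal{C}$, cutting the filling at each such knot via Menke's theorem \cite{menke18pre}. Since blowing up only increases $\chi$, it suffices to prove the inequality for minimal symplectic fillings $X$. For the base case $k=0$, the entire $\mathcal{C}$ forms a single sub-chain and $l \in \{0,1\}$. If $l=0$ the chain is consistent, so $\xi = \xi_{ut}$ and $\chi(X)\geq 1$ is automatic. If $l=1$, the structure $\xi$ is virtually overtwisted; Theorem~\ref{maxcollection} combined with the simple-connectivity of fillings of $\xi_{ut}$ coming from Lisca's classification \cite{Lisca08} ensures that a minimal $X$ is simply connected with $b_3(X)=0$, so $\chi(X)=1$ would force $b_2(X)=0$ and $X$ would be a rational homology ball. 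This is forbidden for virtually overtwisted lens space fillings by \cite{EtnyreTosun20pre,GollaStarkston19pre}, giving $\chi(X) \geq 2 = 1 + 0 + \lceil 1/2 \rceil$.

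For the inductive step with $k \geq 1$, pick any doubly stabilized $L_{i_j} \in \mathcal{C}$. The convex torus surrounding $L_{i_j}$ is a mixed torus in the sense of Menke, and \cite{menke18pre} produces a symplectic decomposition $X = X_L \cup_T X_R$ along an interior convex torus $T$, where $X_L$ and $X_R$ are symplectic fillings of the contact lens spaces obtained by Legendrian surgery on the sub-chains $\mathcal{C}_L$ and $\mathcal{C}_R$ that result from deleting $L_{i_j}$ from $\mathcal{C}$ (with all other stabilizations preserved); this is the same input that drives the proof of Theorem~\ref{maxcollection}. Since $\chi(T^2)=0$, a Mayer--Vietoris computation gives $\chi(X) = \chi(X_L) + \chi(X_R)$. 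If $(k_L, l_L)$ and $(k_R, l_R)$ denote the analogous counts for $\mathcal{C}_L$ and $\mathcal{C}_R$, then $k_L + k_R = k-1$, and because removing a single knot does not alter the $\pm$-stabilization pattern of any other knot we have $l_L + l_R = l$. Combining the inductive hypothesis with the elementary inequality $\lceil a/2 \rceil + \lceil b/2 \rceil \geq \lceil (a+b)/2 \rceil$ yields
\[
\chi(X) \geq \bigl(1 + k_L + \lceil l_L/2 \rceil\bigr) + \bigl(1 + k_R + \lceil l_R/2 \rceil\bigr) \geq 1 + k + \lceil l/2 \rceil,
\]
completing the induction. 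Note that any $(-1)$-sphere in a piece $X_L$ or $X_R$ would live inside the minimal $X$, so minimality is inherited and the hypothesis of Menke's theorem continues to apply.

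The main obstacle is justifying the Menke-style decomposition at each doubly stabilized knot in precisely the form claimed. One must verify that a doubly stabilized Legendrian in the Honda chain produces a convex torus satisfying Menke's mixed-torus criterion, and that the resulting pieces are genuine symplectic fillings of chain-presented sub-lens-spaces whose contact structures are given by the inherited sub-chains with inherited stabilizations, so that the bookkeeping on $(k_L, l_L)$ and $(k_R, l_R)$ is well-defined. This is essentially the same technical input that powers Theorem~\ref{maxcollection} and the parallel classification of Christian-Li \cite{ChristianLi20Pre}, so only minor adaptation is needed here.
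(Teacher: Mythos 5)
Your inductive step on $k$ is sound: if $L_{i_j}$ is doubly stabilized, Menke's theorem does apply at the mixed torus inside the corresponding continued-fraction block (this is exactly Theorem~\ref{firstglue}), the round $1$--handle attachment preserves Euler characteristic, $k_L+k_R=k-1$, and $l_L+l_R=l$ because deleting a knot already in $\mathcal{D}$ does not disturb the partition of $\mathcal{C}-\mathcal{D}$ into sub-chains.

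The gap is in the base case. You assert that for $k=0$ ``the entire $\mathcal{C}$ forms a single sub-chain and $l\in\{0,1\}$,'' but this is false: a nicely stabilized chain with no doubly stabilized components can contain arbitrarily many inconsistent sub-chains (Figure~\ref{nicechains} shows one with $l=3$). Your ``not a rational homology ball'' argument gives only $\chi(X)\geq 2$, so it fails to produce the required bound $\chi(X)\geq 1+\lceil l/2\rceil$ as soon as $l\geq 3$. Repairing this by continuing the Menke decomposition along inconsistent sub-chains (Theorem~\ref{mixedglue}) is possible, but the bookkeeping becomes delicate: the dividing slope produced by Menke's theorem need not be the meridian of an end-point of the sub-chain, and inconsistent sub-chains can overlap at their endpoints, so removing one unknot can destroy two sub-chains at once. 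This is precisely why the definition of a maximal collection $\mathcal{M}$ and the estimate $|\mathcal{M}|\geq\lceil l/2\rceil$ appear in the paper. The paper sidesteps your induction entirely by invoking Theorem~\ref{list} in one shot: every filling lies in the image of some $G_{\mathcal{D}\cup\mathcal{M}}$, the sub-chain fillings each contribute $\chi\geq 1$ and the boundary connected sum collapses these to a total of $1$, and then $|\mathcal{D}\cup\mathcal{M}|=k+|\mathcal{M}|\geq k+\lceil l/2\rceil$ additional $2$--handles are attached, giving the bound directly without a recursion whose base case you would still need to control.
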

See the discussion just before Example~\ref{maximal} for the definition of an inconsistent sub-chain, but a key fact is that for an overtwisted contact structure we must have that $k+\lceil l/2\rceil$ is at least $1$. Thus we see that such a contact structure cannot bound a rational homology ball. 

The explicit rational homology ball fillings discussed above were not given in the papers mentioned above, so we note what they are here, and also show that a lens space can have at most one rational homology ball filling (this uniqueness was also proven in \cite{GollaStarkston19pre}). After releasing the first version of this paper, Marco Golla pointed out that the Stein diagrams for the rational homology ball were obtained earlier by Lekili and Maydanskiy in \cite{LekiliMaydanskiy14}. 
\begin{lemma}\label{ratballlemma}
A contact structure on a lens space has a rational homology ball symplectic filling if and only if the contact structure is universally tight and the lens space is diffeomorphic to $L(m^2, mh-1)$, with $h$ relatively prime to $m$ (and which can be taken to be between $1$ and $m-1$). Moreover the filling is unique and shown in Figure~\ref{ratballs}. 
\begin{figure}[htb]
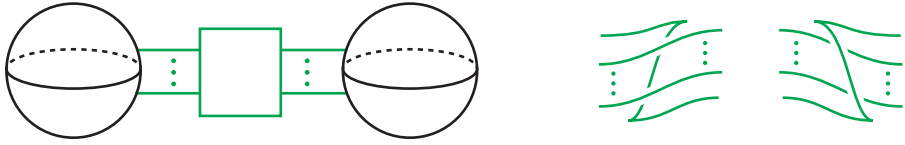
{\tiny
\begin{overpic}
{fig/ratballs}
\end{overpic}}
\caption{On the left are the rational homology balls that symplectically fill lens spaces. There are $m$ strands and the box contains $h$ copies of one of the Legendrian braids on the right. (You cannot use both types of braids.)}
\label{ratballs}
\end{figure}
\end{lemma}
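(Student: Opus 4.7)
The plan is to combine three ingredients: Lisca's smooth classification of which lens spaces bound rational homology balls, the cited theorem (Golla--Starkston, Etnyre--Tosun) that no virtually overtwisted contact structure on a lens space admits a rational homology ball filling, and Lisca's classification of symplectic fillings of $(L(p,q),\xi_{ut})$.

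First I would dispatch the ``only if'' direction. If $X$ is a rational homology ball symplectic filling of $(L(p,q),\xi)$, then $L(p,q)$ bounds a smooth rational homology ball, and Lisca's theorem settling the Casson--Harer conjecture forces $(p,q)=(m^2,mh-1)$ with $\gcd(m,h)=1$; normalizing $h$ modulo $m$ puts it in $\{1,\ldots,m-1\}$. By the obstruction recalled just before the lemma, $\xi$ cannot be virtually overtwisted, so $\xi$ must be (the unique) universally tight contact structure $\xi_{ut}$.

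For existence I would verify directly that the Legendrian diagram in Figure~\ref{ratballs} has the three required properties. First, each component is drawn as a Legendrian knot whose Thurston--Bennequin number equals the surgery coefficient plus one, so the handle attachments produce a Stein domain $X$. Second, a Kirby calculus computation (converting the braided Legendrian picture into standard surgery notation via Legendrian handle slides and slam-dunks) produces a continued fraction chain presenting $L(m^2,mh-1)$; checking that all zig-zags along a single strand have the same sign confirms that the induced boundary contact structure is universally tight rather than virtually overtwisted. Third, a direct homology computation from the handle decomposition shows $H_2(X;\Z)=0$ and $H_1(X;\Z)$ is finite (of order $m$), so $X$ is a rational homology ball.

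For uniqueness I would appeal to Lisca's classification of fillings of $(L(p,q),\xi_{ut})$ recalled in Section~\ref{lclass}. The elements of Lisca's list are plumbings whose second Betti number equals the length of an associated admissible sequence, so a filling is a rational homology ball precisely when this length is zero. An arithmetic check of the admissibility conditions shows this vanishing occurs for exactly one entry in the list for $L(m^2,mh-1)$, and since Lisca distinguishes diffeomorphism types, this yields the stated uniqueness. The main obstacle in the whole argument is the bookkeeping in the contact-geometric identification of the boundary of Figure~\ref{ratballs}: one must be careful that the stabilizations realizing the Legendrian braid do not accidentally produce an overtwisted or virtually overtwisted boundary, and this is the reason for the parenthetical warning in the figure caption that one cannot mix the two types of braids.
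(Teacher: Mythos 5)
Your overall strategy (Lisca's classification plus the virtually-overtwisted obstruction for the ``only if'' direction, an explicit verification for existence, Lisca's classification again for uniqueness) is the same skeleton as the paper's proof, but two steps have genuine problems and one is a legitimate alternative route.

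For the ``only if'' direction, appealing to Lisca's resolution of the Casson--Harer conjecture is not quite the right citation. That result classifies lens spaces bounding \emph{smooth} rational homology balls, and that set is strictly larger (up to orientation-preserving diffeomorphism) than $\{L(m^2,mh-1)\}$: for example, $-L(m^2,mh-1)=L(m^2,m(m-h)+1)$ bounds a smooth rational ball but is not of the form $L(m^2,mh'-1)$ for $m\geq 3$. What is actually needed is the statement from Lisca's symplectic-filling paper (Theorem~\ref{lisca} in the present paper): that the \emph{universally tight} contact structure on $L(p,q)$ admits a rational homology ball Stein filling if and only if $(p,q)=(m^2,mh-1)$. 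Combined with the virtually-overtwisted obstruction, that is exactly the ``only if'' direction, with no detour through the smooth result.

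For existence, your Kirby-calculus plan is a legitimate alternative, but the paper takes a cleaner route: it observes that the Legendrian knot in Figure~\ref{ratballs} lies on a Heegaard torus of $S^1\times S^2$ as an $(n,-m)$-curve with contact framing equal to the torus framing, so Legendrian surgery is the same as re-gluing along the Heegaard torus by a Dehn twist, which immediately identifies the boundary as $L(m^2,mn-1)$. Your worry about checking zig-zag signs to establish universal tightness is unnecessary: once the smooth boundary is identified, the ``only if'' direction already forces the filled contact structure to be universally tight, since no other contact structure on a lens space can bound a rational homology ball.

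The uniqueness step contains the real gap. Your characterization ``the elements of Lisca's list are plumbings whose second Betti number equals the length of an associated admissible sequence'' is false on both counts. Lisca's fillings $W_\n$ are generally not plumbings (only $\n=(1,2,\ldots,2,1)$ gives the plumbing), and the length of the null sequence $\n=(n_1,\ldots,n_k)$ is always $k$, the length of the continued fraction of $p/(p-q)$ -- it does not vary with $\n$ and certainly does not equal $b_2(W_\n)$. The correct relation is $b_2(W_\n)=\sum_i(b_i-n_i)-1$, so a rational homology ball corresponds to exactly one index $i$ with $b_i-n_i=1$ and $b_j=n_j$ for all $j\neq i$; equivalently, $n_i=1$ with no other entry equal to $1$ and $b_i=2$. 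The uniqueness of such a null sequence $\n$ is a nontrivial combinatorial fact -- the content of Lemma~\ref{uniqueratball} in the paper, proved by a careful analysis of left and right blowups starting from $(2,1,2)$. This is the crux of the argument, and ``an arithmetic check of the admissibility conditions'' does not capture it; as stated, your plan could not be executed.
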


We would like to understand which lens space can be obtained by surgery on a Legendrian knot in $(S^3,\xi_{std})$. To this end we first observe the structure of lens spaces with $b_2=1$ fillings.
\begin{lemma}\label{b21}
If $X$ fills a lens space and has second Betti number $1$ then it is obtained by attaching two $2$--handles to $S^1\times D^3$ along two torus knots in $S^1\times S^2$. If $r/s$ and $a/b$ are the slopes of the torus knots then $X$ is simply connected if and only if $s$ and $b$ are relatively prime. The slope convention is that an $h/m$ torus knot is order $m$ in the first homology of $S^1\times S^2$ and is shown in Figure~\ref{ratballs}. 
\end{lemma}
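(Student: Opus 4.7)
The plan is to combine the classification of minimal symplectic fillings of lens spaces (namely Theorem~\ref{maxcollection} together with Lisca's classification \cite{Lisca08}) with the alternate construction of Lisca's fillings developed in Section~\ref{alt}. That construction presents each filling as the union of some copies of $S^1\times D^3$ with a collection of $2$-handles attached along torus knots in the boundary, so it immediately suggests the form of the handle decomposition claimed in the lemma.

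First I would use the classification to identify $X$ up to diffeomorphism with one of Lisca's fillings, and then switch to the handle decomposition from Section~\ref{alt}. I would then read off $b_2$ from the combinatorial data of that construction: each $2$-handle either contributes a new class to $H_2$ or kills a generator of $\pi_1$ coming from a $1$-handle, so $b_2(X)$ equals the difference between the number of $2$-handles and the number of $1$-handles (equivalently, the number of $S^1\times D^3$ summands). The case $b_2(X)=1$ therefore forces exactly one $S^1\times D^3$ summand together with exactly two $2$-handles, whose attaching circles are torus knots on $\partial(S^1\times D^3)=S^1\times S^2$ of some slopes $r/s$ and $a/b$.

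With the handle decomposition in hand, the fundamental group statement is a direct application of van Kampen's theorem. Under the slope convention in the statement, the two attaching circles represent $s$ and $b$ times the generator of $\pi_1(S^1\times D^3)=\Z$, so
\[
\pi_1(X) \;=\; \bigl\langle t \,\bigm|\, t^s=1,\; t^b=1 \bigr\rangle \;=\; \Z/\gcd(s,b)\Z,
\]
which is trivial exactly when $\gcd(s,b)=1$. Abelianizing also recovers $H_1(X)=\Z/\gcd(s,b)\Z$ and $H_2(X)=\Z$, confirming $b_2=1$ as a consistency check.

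The main technical obstacle is to match the handle picture coming out of Section~\ref{alt} with the form claimed in the lemma: one has to rule out the possibility that a $b_2=1$ filling from Lisca's list might a priori appear with several $1$-handles whose cancellations against extra $2$-handles accidentally leave $b_2=1$, and one must verify that the two remaining $2$-handles are honestly torus knots on $S^1\times S^2$ rather than more complicated curves. The explicit nature of the construction in Section~\ref{alt} should make this bookkeeping straightforward, and once it is completed the van Kampen computation above finishes the proof.
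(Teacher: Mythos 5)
Your proposal is correct and follows essentially the same route as the paper: the paper's proof cites Corollary~\ref{reinterpret} (which encapsulates the Section~\ref{alt} description of Lisca's fillings as a single $S^1\times D^3$ with $2$-handles attached along torus knots in $S^1\times S^2$) to get the handle decomposition, and then computes $\pi_1(X)=\langle x\mid x^s, x^b\rangle\cong \Z/\gcd(s,b)\Z$ exactly as you do. The ``technical obstacle'' you raise is not actually present, since Lisca's construction by definition uses exactly one $1$-handle, so there is never a possibility of several $S^1\times D^3$ summands or extra $1$-handle cancellations to rule out.
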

This result was also obtained by Simone in \cite[Theorem~1.4]{Simone16pre}, though the statement is given in different terms. 
\begin{remark}
It is interesting to note that while there is a unique filling of a lens space by a rational homology ball, if it exists, there can be more than one filling of a lens space with second Betti number $1$. For example, the universally tight contact structure on the lens space $L(36,13)$ can be obtained by surgery on a Legendrian realization of the torus knot $T_{-7,5}$ and also has a filling by a manifold with fundamental group $\Z/2\Z$ and second Betti number $1$. These fillings are obtained by applying Lisca's algorithm for constructing fillings given in Section~\ref{lclass} to the null sequences $(2,3,1,2,3)$ and $(2,2,2,1,4)$. (We note the continued fraction of $36/(36-13)$ is  $[2,3,2,2,4]$.)
\end{remark}

Using this we can begin to understand which lens space can be obtained by Legendrian surgery on a knot in the standard contact $S^3$.
\begin{theorem}\label{berge}
One can obtain a contact structure on the lens space $L(nm+1,m^2)$ from Legendrian surgery on a Legendrian realization of the $(n,-m)$--torus knot with Thurston-Bennequin invariant $-nm$; here $n$ and $m$ are relatively prime positive integers. One may also obtain a contact structure on the lens space $L(3n^2+3n+1, 3n+1)$ from Legendrian surgery on a Legendrian realization of the knot shown in Figure~\ref{bknot} with Thurston-Bennequin invariant $-3n^2-3n$.
\begin{figure}[htb]
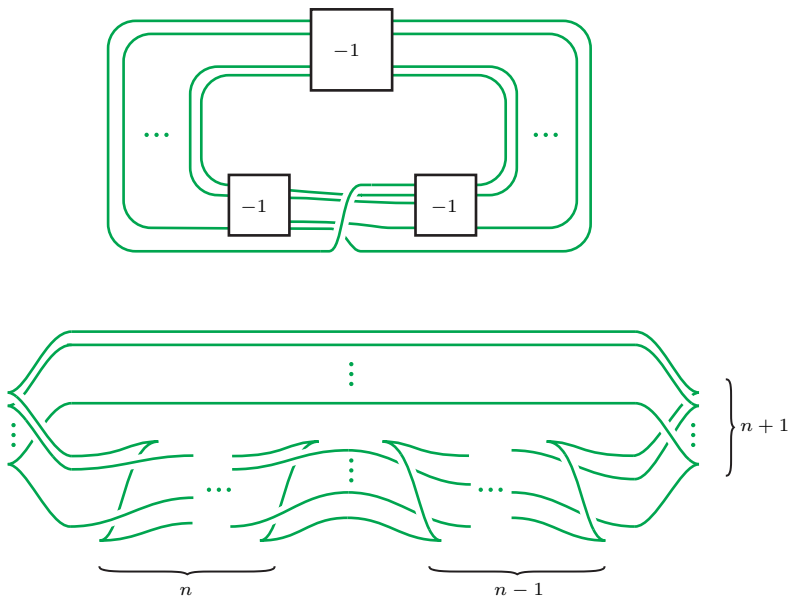
{\tiny
\begin{overpic}
{fig/bknot}
\put(123, 202){$-1$}
\put(88, 143){$-1$}
\put(160, 143){$-1$}
\put(277, 60){$n+1$}
\put(65, -2){$n$}
\put(184, -2){$n-1$}
\end{overpic}}
\caption{The top diagram is a knot on which $-3n^2-3n-1$ surgery yields a lens space. There are $n+1$ strands going into the top box and the boxes indicate one full left handed twist. The bottom diagram is a Legendrian realization of this knot (the reflection about a veritical line gives another Legendrian realization).}
\label{bknot}
\end{figure}
\end{theorem}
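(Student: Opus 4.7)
The proof of both claims follows a common strategy: exhibit an explicit Legendrian representative with the desired Thurston--Bennequin invariant, and then use Kirby calculus to identify the $3$-manifold arising from smooth $(\tb - 1)$-surgery with the claimed lens space.

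For the first part, I would invoke the standard Legendrian realization of the $(n,-m)$-torus knot that attains the maximal Thurston--Bennequin invariant $\tb = -nm$ (the known maximum for negative torus knots). Legendrian surgery on this representative corresponds to smooth $(-nm-1)$-surgery on $T(n,-m)$. Moser's classification of surgeries on torus knots then identifies the result, as an oriented $3$-manifold, with the lens space $L(nm+1, m^2)$ in the paper's orientation convention, proving the first assertion.

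For the second part, one first checks that the Legendrian front in the bottom of Figure~\ref{bknot} genuinely represents the smooth knot depicted at the top: each left-handed twist box becomes a Legendrian twist box in the front, and the three local $-1$-framed unknots in the top picture correspond to the three $-1$-framed unknots linking the Legendrian pattern in the bottom. A direct count of writhe and cusps on the front then gives $\tb = -3n^2 - 3n$, so Legendrian surgery on this knot corresponds to smooth $(-3n^2 - 3n - 1)$-surgery on the top knot. To identify the surgered manifold, the plan is to blow down the three $-1$-framed unknots in turn, using the standard rule that blowing down a $(-1)$-framed unknot encircling $k$ strands introduces one full negative twist on those strands and adjusts nearby framings by linking-number squared. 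After these blow-downs the main component has a new integer framing, and a further sequence of handle slides and slam-dunks converts the diagram to a linear chain of framed unknots whose continued fraction equals $(3n^2 + 3n + 1)/(3n + 1)$, identifying the boundary with $L(3n^2+3n+1,\, 3n+1)$.

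The main obstacle is the Kirby-calculus bookkeeping in the second part: tracking how the framing on the main component evolves under the three $(-1)$-blowdowns, and correctly identifying the resulting knot together with its new framing, requires care, as does the orientation of the twist boxes (so that the $-1$-unknots truly implement the intended left-handed twists). Once these ingredients are in hand, matching the continued-fraction expansion of $(3n^2+3n+1)/(3n+1)$ to the output of the Kirby moves is a finite routine computation, and the $\tb$ count is a straightforward front-projection calculation.
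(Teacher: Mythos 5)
Your proof takes a genuinely different route from the paper's. The paper \emph{derives} the knots in Theorem~\ref{berge} from Lisca's classification rather than verifying them. Specifically, the paper starts from the observation that any lens space obtained by Legendrian surgery on a knot in $(S^3,\xi_{std})$ has a filling with $b_2=1$, hence by Theorem~\ref{lisca} and Corollary~\ref{reinterpret} is built from $S^1\times D^3$ with two $2$--handles attached along torus knots in $S^1\times S^2$. To recover a knot in $S^3$, one of these $2$--handles must cancel the $1$--handle; the paper enumerates the simplest ways this can happen (either one of the torus knots has slope $n$, giving the negative torus knots directly, or after a single handle slide one of them becomes slope $n$, which after normalizing via Remark~\ref{fixslope} to slopes $\pm 1/n$ and $\pm 1/(n\pm 1)$ produces the Berge-type knot of Figure~\ref{bknot} via the Kirby moves in Figure~\ref{makeb}). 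The lens space is then read off from Lisca's algorithm applied to the null sequence $(2,\ldots,2,1,n+1)$ for $(3n^2+3n+1)/3n^2 = [2,\ldots,2,3,2,n+1]$.

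Your proposal is a direct verification: for the torus knots you invoke the Etnyre--Honda maximal $\tb$ together with Moser's classification of lens space surgeries on torus knots, and for the Berge knot you would compute $\tb$ from the front and then push the surgery diagram through Kirby calculus until a linear chain emerges matching $(3n^2+3n+1)/(3n+1)$. This works for the theorem as literally stated, and Moser's theorem is a cleaner way to handle Part~1. However, your approach has two drawbacks relative to the paper's. First, it does not explain where the knot in Figure~\ref{bknot} comes from --- you would simply be handed the picture and asked to check it --- whereas the paper's argument produces it from the classification and is therefore the source of Conjecture~\ref{bergec} and Remark~\ref{onlyex}. Second, the paper's proof also establishes (in its last paragraph) that only the \emph{universally tight} contact structures on $L(3n^2+3n+1,3n+1)$ arise from such a surgery, using Theorem~\ref{list} to show every filling of a virtually overtwisted structure has $b_2\geq 2$; your outline does not engage with this at all, and that claim is relied upon in the remark following the theorem, so a complete substitute proof would still need some version of that argument.

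One small caution if you were to carry out the verification: the Kirby bookkeeping in Part~2 is not just ``blow down the three $-1$-unknots.'' The paper's route (Figure~\ref{makeb}) goes through a handle slide and a $+1$ blow-up before the cancellation with the $1$--handle, and reversing this cleanly from your starting picture requires knowing what target you are aiming for. It is much easier to compute in the paper's direction (from the Lisca surgery presentation toward the knot diagram) than to go blind from the knot diagram toward a linear chain, so even for the verification step you would likely want Lisca's side of the computation available.
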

\begin{conjecture}\label{bergec}
A contact structure on a lens space is obtained from Legendrian surgery on a knot in the tight contact structure on $S^3$ if and only if it is one of the ones listed in Theorem~\ref{berge} or any tight contact structure on $L(p,1)$ (these come from Legendrian surgery on Legendrian unknots).
\end{conjecture}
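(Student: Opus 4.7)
The plan is to prove the conjecture by combining Lemma~\ref{b21}, which classifies $b_2=1$ fillings of lens spaces, with explicit handle calculus to identify the resulting knots in $S^3$. The forward direction of the ``if and only if'' is already established: Theorem~\ref{berge} exhibits the required Legendrian representatives for the two exceptional families, and every tight contact structure on $L(p,1)$ is known to arise from Legendrian surgery on a suitable Legendrian unknot. So the content of the conjecture is the converse.

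Suppose then that $\xi$ on $L(p,q)$ arises from Legendrian surgery on a Legendrian knot $L\subset (S^3,\xi_{std})$, and let $X$ be the resulting Stein filling. Then $X$ admits a handle decomposition with a single $0$--handle and a single $2$--handle, whence $b_2(X)=1$ and $\pi_1(X)=1$. By Lemma~\ref{b21}, the same $X$ also admits a handle decomposition consisting of a $0$--handle, a $1$--handle, and two $2$--handles attached along torus knots of slopes $r/s$ and $a/b$ in $S^1\times S^2$, with $\gcd(s,b)=1$.

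The first step is to note that any two handle decompositions of a smooth $4$--manifold are related by a sequence of handle moves; since the first decomposition has no $1$--handle and the second has one, the $1$--handle must cancel against one of the two $2$--handles. Such a cancellation requires the attaching circle to meet the belt sphere of the $1$--handle transversely in a single point, i.e., to generate $H_1(S^1\times S^2)\cong\Z$. Under the slope convention of Lemma~\ref{b21}, this forces, after relabeling, $b=1$. The second step is to perform this cancellation explicitly in the Kirby diagram and identify the knot $K\subset S^3$ along which the surviving $2$--handle is attached. Combining the constraint that the resulting surgery yields $L(p,q)$ with the specific form of the torus knot of slope $r/s$ and the twisting introduced by the slope $a/1$ torus knot, one hopes to restrict $K$ to either a torus knot $T_{n,-m}$ or the knot pictured in Figure~\ref{bknot}, with an unknotted $K$ recovering the $L(p,1)$ case.

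The main obstacle is this last step: one must enumerate all pairs $(r/s,a/1)$ whose cancellation yields a lens space surgery on $S^3$, and verify that $K$ falls into exactly the listed families. This is essentially the Berge conjecture specialized to the setting provided by Lemma~\ref{b21}, and ruling out further exotic knot types is unlikely to be possible from handle calculus alone. A complete proof will probably require pairing the above handle-theoretic reduction with Floer-theoretic obstructions, such as the Alexander polynomial constraints of Kronheimer--Mrowka--Ozsv\'ath--Szab\'o and the Heegaard Floer changemaker methods used in lens space surgery classification, to eliminate configurations of $(r/s,a/1)$ that survive the Kirby-theoretic analysis but produce knots outside the conjectured list.
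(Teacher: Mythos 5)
This statement is a \emph{conjecture} in the paper, not a theorem, so there is no proof in the paper to compare against. The only supporting discussion the authors give is Remark~\ref{onlyex}, which is explicitly heuristic: it says that experimenting with the possibilities strongly suggests no other configurations work, but that a proof is not known. Your proposal, to its credit, ends by acknowledging the same unresolved core; but the setup en route contains a mis-step worth flagging.

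You reduce to Lemma~\ref{b21} and argue that since the first handle decomposition of $X$ has no $1$--handle, the $1$--handle in the second decomposition ``must cancel against one of the two $2$--handles,'' and that this ``forces, after relabeling, $b=1$.'' That inference is not valid. Two handle decompositions of $X$ are related by handle moves (creations, cancellations, slides), so there is no reason the $1$--handle must cancel against one of the \emph{given} $2$--handles in the form they are presented. The condition $\gcd(s,b)=1$ from Lemma~\ref{b21} guarantees simple connectivity, equivalently that one can slide so that a $2$--handle has \emph{algebraic} linking number one with the $1$--handle; but cancellation requires \emph{geometric} linking number one. This is exactly the distinction Remark~\ref{onlyex} draws: one can always arrange algebraic linking one by handle slides, but ``it does not appear that one can arrange the geometric linking to be one'' except in the cases already enumerated in Theorem~\ref{berge}. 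Thus your $b=1$ step is a restatement of the conjecture, not a deduction, and the reduction to torus knots in $S^3$ does not go through without it. The honest assessment is that your proposal correctly identifies where the difficulty lives — the same place the paper locates it — but offers no new leverage on it; the invocation of Kronheimer--Mrowka--Ozsv\'ath--Szab\'o or changemaker methods at the end is a plausible direction but is not developed, and in fact those tools constrain lens-space surgeries on knots in $S^3$ rather than the intermediate $S^1\times S^2$ handle pictures that arise here, so it is not clear they apply directly.
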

See Remark~\ref{onlyex} after the proof of Theorem~\ref{berge} for evidence for this conjecture. 
\begin{remark}
Since the maximal Thurston-Bennequin invariant representatives of the $(n,-m)$-torus knot are classified \cite{EtnyreHonda01b}, we know all the contact structures on $L(nm+1,m^2)$ that come from surgery on such knots. There is no such classification of the knots in Figure~\ref{bknot}, however we show in the proof of Theorem~\ref{berge} that only the universally tight contact structures on $L(3n^2+3n+1, 3n+1)$ come from such surgeries. 
\end{remark}
\begin{remark}
In the proof of Theorem~\ref{berge} it is suggested that the only fillings of tight contact structures coming from surgery on a Legendrian knot in $(S^3,\xi_{std})$ are the ones in the theorem above. If this were true then the following conjecture would be true.
\end{remark}
\begin{conjecture}
 If $L$ is a Legendrian knot in $(S^3,\xi_{std})$ on which Legendrian surgery yields a lens space, then the $4$--manifold obtained from $B^4$ by attaching a $2$--handle along $L$ with framing one less than the Thurston-Bennequin invariant of $L$ is diffeomorphic to the one obtained by attaching a $2$--handle to one of the knots listed in Theorem~\ref{berge}, or to an unknot with framing less than $-1$.
\end{conjecture}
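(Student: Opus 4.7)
The plan is to combine the classification of knots in $S^3$ admitting integer lens-space surgeries (the Berge--Greene program) with Lemma~\ref{b21} and the Etnyre--Honda classification of Legendrian torus knots. First observe that attaching a single $2$--handle to $B^4$ along a Legendrian knot $L$ with framing $\mathrm{tb}(L)-1$ yields a $4$--manifold $X$ with $b_2(X)=1$ and $H_1(X)=0$. By hypothesis $\partial X\cong L(p,q)$, so the smooth knot type $L\subset S^3$ admits an integer lens-space surgery. By Greene's resolution of the realization problem, $L$ lies in one of Berge's finitely many families of doubly-primitive knots.

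I would then perform a case analysis across the Berge families. For each family one identifies the surgery slope $n$ that produces the lens space, computes the maximum Thurston--Bennequin invariant $\overline{\mathrm{tb}}$ of knots in the family, and determines whether a Legendrian representative can achieve $\mathrm{tb}(L)-1=n$. The families that pass this filter should be exactly the negative torus knots $T_{n,-m}$ (where $\overline{\mathrm{tb}}=-nm$ matches the surgery slope $-nm-1$), the family in Figure~\ref{bknot} (where a direct Legendrian calculation gives $\overline{\mathrm{tb}}=-3n^2-3n$), and the Legendrian unknots (producing $L(p,1)$ with $p>1$ by attaching a $2$--handle with framing less than $-1$).

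The next step is to verify that, within the surviving families, the resulting $4$--manifold trace is diffeomorphic to the handle attachment described in Theorem~\ref{berge} or to an unknot trace. Here Lemma~\ref{b21} provides a concrete model: $X$ is obtained by attaching two $2$--handles to $S^1\times D^3$ along torus knots of slopes $r/s$ and $a/b$ in $S^1\times S^2$, and the unordered pair of slopes can be read off from the null sequence associated to $p/q$ via Lisca's algorithm recalled in Section~\ref{lclass}. Matching these slopes against those computed directly from the explicit Legendrian diagrams would then establish the claimed diffeomorphism of traces.

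The main obstacle is excluding the remaining Berge families in the case analysis. Several Berge knots are described by surgery inside solid tori and their maximum Thurston--Bennequin invariant is not easily controlled; ruling them out rigorously would probably require either sharp $\overline{\mathrm{tb}}$ bounds for each family (in the spirit of Etnyre--Honda but substantially harder) or a direct topological obstruction coming from the $b_2=1$ filling structure imposed by Lemma~\ref{b21}, perhaps combined with the lower bound on $\chi$ in Theorem~\ref{chilower}. This is precisely why the statement is phrased as a conjecture rather than a theorem, and in my view any proof will hinge on developing refined Legendrian invariants for the non-torus-knot Berge families.
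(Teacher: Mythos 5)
This statement is explicitly a \emph{conjecture} in the paper, not a theorem; the paper offers no proof, only the heuristic evidence in Remark~\ref{onlyex}. You correctly recognize this and frame your submission as a strategy rather than a complete argument, which is the right call. However, there are two issues worth flagging.

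First, the route you outline is genuinely different from the paper's. The authors' evidence stays entirely inside the framework of Lisca's classification: any $b_2=1$ Stein filling of a lens space is diffeomorphic to some $W_\n$, which is built from $S^1\times D^3$ by attaching two $2$--handles along torus knots in $S^1\times S^2$. For the filling to become $B^4$ plus one $2$--handle, one of those $2$--handles must geometrically cancel the $1$--handle, and the proof of Theorem~\ref{berge} shows this can be done in two ways (directly, or after a single handle slide); Remark~\ref{onlyex} says the remaining obstruction is to show that in the other $W_\n$'s no sequence of slides makes the geometric linking number one. The authors never invoke Berge or Greene. Your approach instead goes through the classification of knots in $S^3$ admitting lens space surgeries and a case analysis of $\overline{\mathrm{tb}}$ across Berge families. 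Each route has a different open gap, and it is useful to be explicit that they are not the same.

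Second, and more seriously, the statement ``by Greene's resolution of the realization problem, $L$ lies in one of Berge's finitely many families of doubly-primitive knots'' is incorrect. Greene's theorem solves the lens space \emph{realization} problem: if integral surgery on $K\subset S^3$ produces a lens space, then that lens space is one of those obtained from a Berge knot, and $K$ has the same knot Floer homology as some Berge knot with that surgery. It does \emph{not} conclude that $K$ itself is a Berge knot; that is the Berge conjecture, which remains open. Your case analysis over Berge families therefore does not cover all possible $L$, unless you first restrict attention to the trace rather than the knot --- which is what the conjecture is really about, and which is exactly why the paper's approach via Lisca's trace-level classification sidesteps Berge entirely. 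If you want to salvage your route, you should argue at the level of the $4$--manifold $X$ and its identification with a $W_\n$, rather than at the level of the smooth knot type of $L$.
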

\begin{remark}
If this were true then this is a step towards proving the ``contact Berge conjecture". The contact analog of the Berge conjecture is that Legendrian surgery on a knot in the standard tight contact structure on $S^3$ would yield a lens space if and only if it were a Legendrian realization of a Berge knot with appropriate Thurston-Bennequin invariant. From the above conjecture, the only Berge knots which have Legendrian realizations on which Legendrian surgery yields a lens space are the negative torus knots and the knots in Figure~\ref{bknot} (and the unknot). The above conjecture says that any Legendrian knot on which Legendrian surgery yields a lens space must have the same $4$-dimensional trace {{(up to orientation preserving diffeomorphism)}} as the surgeries on the knots in Theorem~\ref{berge} (including the unknot, which can be thought of as a negative torus knot).  
\end{remark}
\begin{remark}
It is interesting to note that Geiges and Onaran \cite{GeigesOnaran18} showed that any tight contact structure on $L(ns^2-s+1, s^2)$ with $n \geq 2, s \geq 1$ is obtained by Legendrian surgery on some Legendrian torus knot in some (possibly overtwisted) contact structure on $S^3$. This indicates another interesting version of the contact Berge conjecture: what Legendrian knots in $S^3$ with some (possibly overtwisted) contact structure give tight contact structures on lens space via Legendrian surgery?
\end{remark}

We are also able to put some restrictions on the fundamental group of the Stein fillings of lens spaces.
\begin{theorem}\label{pi1}
If $X$ is a filling of $L(p,q)$, then $\pi_{1}(X)$ is a proper quotient of $\mathbb{Z}_{p}$. In particular, if $p$ is prime, $X$ must be simply connected.
\end{theorem}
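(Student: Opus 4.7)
The plan is to reduce to a minimal Stein filling of $(L(p,q), \xi_{ut})$ and then derive a contradiction from the universal cover.

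Since blowup and blowdown preserve $\pi_{1}$, we may assume $X$ is minimal. By Theorem~\ref{maxcollection}, the smooth manifold underlying $X$ carries a symplectic structure filling $\xi_{ut}$ even when $\xi$ is virtually overtwisted; as $\pi_{1}$ is a diffeomorphism invariant, we may take $\xi = \xi_{ut}$. Wendl's theorem for planar open books then allows us to assume $X$ is Stein.

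For the quotient claim, a Stein filling has a handle decomposition from $B^{4}$ with only handles of index $\leq 2$. Dualizing presents $X$ as built from a collar of $\partial X$ by handles of index $\geq 2$, which can only kill $\pi_{1}$-generators. Hence the inclusion induces a surjection $\mathbb{Z}_{p} = \pi_{1}(L(p,q)) \twoheadrightarrow \pi_{1}(X)$, exhibiting $\pi_{1}(X)$ as a cyclic quotient of $\mathbb{Z}_{p}$.

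For properness, I would suppose $\pi_{1}(X) = \mathbb{Z}_{p}$ for some $p \geq 2$ and aim for a contradiction. The universal cover $\tilde{X}$ is Stein, with boundary the universal cover of $L(p,q)$, namely $S^{3}$ equipped with the lift of $\xi_{ut}$---necessarily the unique tight contact structure $\xi_{std}$. By Eliashberg's theorem, $\tilde{X} \cong B^{4} \# k\,\overline{\mathbb{CP}^{2}}$ smoothly, for some $k \geq 0$, and multiplicativity of Euler characteristic yields $1 + k = p(1 + b_{2}(X))$, so $k \equiv -1 \pmod{p}$. The deck group $\mathbb{Z}_{p}$ acts freely and symplectically on $\tilde{X}$, permuting the $k$ homology classes of exceptional $(-1)$-spheres. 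After choosing equivariant representatives (using McDuff's uniqueness of symplectic $(-1)$-spheres in each class together with an averaging/equivariant isotopy argument), the class-stabilizer of each $E_{i}$ coincides with its setwise stabilizer. Any nontrivial stabilizer then acts on $E_{i} \cong S^{2}$ as a finite-order area-preserving (hence orientation-preserving) diffeomorphism; by Brouwer such a map has fixed points, contradicting freeness. Therefore every orbit has size $p$, giving $p \mid k$; combined with $k \equiv -1 \pmod{p}$ this forces $p \mid 1$, contradicting $p \geq 2$. The \emph{in particular} clause is immediate, since for $p$ prime the only proper quotient of $\mathbb{Z}_{p}$ is trivial.

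The main obstacle is the equivariant step: we need the symplectic $\mathbb{Z}_{p}$-action on $\tilde{X}$ to genuinely permute a finite collection of exceptional $(-1)$-spheres so that the orbit-counting argument applies. This requires McDuff-type rigidity for symplectic $(-1)$-spheres together with an equivariant representative for each $\mathbb{Z}_{p}$-fixed homology class; the remainder of the argument is then purely arithmetic.
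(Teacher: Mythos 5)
Your overall strategy — reduce to a Stein filling, establish the surjection $\mathbb{Z}_p \twoheadrightarrow \pi_1(X)$ from the handle decomposition, then rule out $\pi_1(X)=\mathbb{Z}_p$ via the universal cover, Eliashberg's classification for $S^3$, and a fixed-point argument — is exactly the paper's. But you inserted a detour that contains the gap you yourself flag as unresolved, and that detour is unnecessary. Having established that $\tilde{X}$ is Stein (by lifting the Stein structure through the cover), you should not allow $\tilde{X}\cong B^4\#k\overline{\mathbb{CP}^2}$ with $k>0$: a Stein domain is an exact symplectic manifold and hence contains no symplectically embedded $(-1)$--spheres (nor any closed symplectic surfaces at all), so a Stein filling of $S^3$ is minimal. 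Eliashberg then gives $\tilde{X}\cong B^4$ outright. The deck group $\mathbb{Z}_p$ would act freely on $B^4$, contradicting Brouwer's fixed point theorem, and you are done with no Euler characteristic computation, no exceptional-sphere orbit counting, and no equivariant-representative step.

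The orbit-counting argument is where the genuine gap lives: you would need to upgrade ``$g_*[E_i]=[E_i]$'' to ``$g(E_i)=E_i$'' for an actual embedded exceptional sphere $E_i$, and neither McDuff's uniqueness (which gives a symplectic isotopy, not equality) nor an averaging trick obviously produces a $g$--invariant representative. Moreover, even granting such a representative, the fixed-point conclusion for a finite-order orientation-preserving diffeomorphism of $S^2$ is a Lefschetz-number statement, not an application of Brouwer. Both of these issues evaporate once you use minimality to conclude $k=0$. As a smaller point, the reduction via Theorem~\ref{maxcollection} is not needed either: any minimal symplectic filling of any tight contact structure on a lens space is deformation equivalent to a Stein domain by Wendl, so you can pass directly to the Stein case after blowing down, regardless of whether $\xi$ is universally tight.
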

\noindent
The second part of this theorem was also proven in \cite{Fossati2019pre2} and after the current paper was released, Aceto, McCoy, and Park \cite{AcetoMcCoyPark2020pre} proved a stronger version of this theorem and related the fundamental group of the filling to its second Betti number. We can also observe the following about fillings of specific lens spaces. 
\begin{theorem}\label{morepi1}
Let $\mathcal{C}$ be a Legendrian realization of the chain in Figure~\ref{chain} determining the lens space $L(p,q)$ and $\xi_{\mathcal{C}}$ be the contact structure determined by Legendrian surgery on $\mathcal{C}$. If the first or last unknot in the chain is stabilized both positively and negatively, then the pull back $\xi_{\mathcal{C}}$ to any cover of $L(p,q)$ is overtwisted and any filling of $(L(p,q), \xi_{\mathcal{C}})$ is simply connected. 
\end{theorem}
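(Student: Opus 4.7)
Plan: The two parts of the theorem are linked via Theorem~\ref{pi1}: once every nontrivial cover of $(L(p,q),\xi_\mathcal{C})$ is known to be overtwisted, the simply-connectedness of fillings follows almost formally. Indeed, if $X$ is a symplectic filling of $(L(p,q),\xi_\mathcal{C})$, Theorem~\ref{pi1} gives $\pi_1(X)\cong \mathbb{Z}/d$ for some $d\mid p$ with $d<p$. If $d>1$, the universal cover $\widetilde X\to X$ is a simply connected symplectic manifold whose contact boundary is the $d$-fold cover of $L(p,q)$ carrying the pullback of $\xi_\mathcal{C}$; the cover statement then says this pullback is overtwisted, which by Gromov--Eliashberg contradicts the existence of $\widetilde X$ as a symplectic filling. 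Hence $d=1$ and $X$ is simply connected.

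The essential content is therefore the overtwistedness in every nontrivial cover. Assume without loss of generality that $L_1$ is doubly stabilized. In Honda's basic-slice picture a standard contact neighborhood of $L_1$ decomposes as a stack of basic slices whose signs match the signs of the stabilizations of $L_1$, so this stack contains at least one slice of each sign. After Legendrian surgery on $\mathcal{C}$ this neighborhood survives as a solid torus $V_1\subset L(p,q)$ with the same mixed-sign decomposition. In any nontrivial cover $Y\to L(p,q)$, a suitable component of the preimage of $V_1$ is a cyclic cover of $V_1$ whose basic-slice stack is a nontrivial concatenation of the original; the mixed signs in this concatenation are incompatible with Honda's classification of tight contact structures on $T^2\times I$, which forces an overtwisted disk inside $Y$.

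The main obstacle is making this last step precise: one must track the slopes and signs of dividing curves under the covering map and verify that the sign conflict near $L_1$ truly produces an overtwisted disk in $Y$, rather than being absorbed by basic slices contributed by neighboring components of $\mathcal{C}$. The hypothesis that $L_1$ is an \emph{endpoint} of the chain (not an interior component with two neighbors) is essential because it localizes the sign conflict to a single solid torus. A convenient way to organize the computation is to first pass to the universal cover $S^3$, where virtual overtwistedness of $\xi_\mathcal{C}$ already provides an overtwisted disk, and then use the endpoint hypothesis to arrange this disk to lie in the preimage of $V_1$, so that it descends to every intermediate cover.
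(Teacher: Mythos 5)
Your argument for the simply-connectedness of fillings from the overtwistedness of covers is essentially the paper's argument (pass to the universal cover of $X$, observe its boundary is a nontrivial cover of $L(p,q)$, contradict Gromov--Eliashberg); the detour through Theorem~\ref{pi1} is harmless but unnecessary. The real content is the overtwistedness statement, and there your proposal has a genuine gap.

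You correctly identify that the endpoint hypothesis makes the meridian $m$ of $L_1$ a core of a Heegaard torus, hence a generator of $\pi_1(L(p,q))$, so $m$ unwraps in every nontrivial cover. But the basic-slice/sign-conflict argument you sketch is not carried out, and you concede as much. More seriously, the fallback you propose --- find an overtwisted disk in the universal cover $S^3$ and ``descend'' it to intermediate covers --- has the covering-space logic inverted. Overtwisted disks lift along covering maps; they do not project. Virtual overtwistedness only guarantees a disk in $S^3$, and that tells you nothing about an intermediate cover $Y$ with $S^3 \to Y \to L(p,q)$. Indeed Remark~\ref{middlestab} exhibits exactly this phenomenon for non-endpoint stabilizations: the universal cover is overtwisted but some intermediate covers have tight pullbacks, so any purported ``descent'' must fail in general.

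The paper closes the gap with a local statement that is much stronger than virtual overtwistedness: by Gompf's Proposition~5.1, a Legendrian knot stabilized both positively and negatively has a Whitehead double of its meridian that bounds an \emph{immersed} overtwisted disk in the surgered manifold itself. This immersed disk sits in a neighborhood of $m$; whenever $m$ unwraps in a cover --- which, by the endpoint hypothesis, is every nontrivial cover --- the immersed disk desingularizes to an embedded overtwisted disk. That localized immersed disk is precisely what your basic-slice sketch would need to produce, and it is the ingredient missing from your proposal.
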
 
See Remark~\ref{middlestab} for further discussions on the hypothesis of this theorem and overtwisted covers.

We now recover Fossati's classification of surgery on the Hopf link \cite{Fossati19pre}, though it is stated in different language and also includes the universally tight case. 
\begin{theorem}\label{cor5}
If $L(p,q)$ is obtained by (integral, negative) surgery on a one or two component chain then all contact structures on $L(p,q)$ bound a Stein structure on the plumbing defined by the surgery. This is the only Stein filling of a contact structure $\xi$ except for the following cases:
\begin{enumerate}
\item $\xi=\xi_{ut}$ on $L(4,1)$, which is surgery on an unknot with framing $-4$, also bounds the rational homology ball shown in Figure~\ref{excpetionalfills}, that has fundamental group $\Z_2$.
\item $\xi=\xi_{ut}$ on $L(8,3)$, which is surgery on a Hopf link with framings $-3$ and $-3$, also bounds a manifold with $b_2=1$ and fundamental group $\Z_2$ shown in Figure~\ref{excpetionalfills}.
\item $\xi=\xi_{ut}$ on $L(9,2)\equiv L(9,5)$, which is surgery on a Hopf link with framings $-5$ and $-2$, also bounds the rational homology ball shown in Figure~\ref{excpetionalfills}, that has fundamental group $\Z_3$.
\item $L(4n-1, 4)\equiv L(4n-1, n)$, which is surgery on a Hopf link with framings $-4$ and $-n$, also bounds Legendrian surgery on a maximal Thurston-Bennequin invariant Legendrian realization of the $(-2n+1,2)$-torus knot shown if Figure~\ref{excpetionalfills} if the Euler class of $\xi$ is $2n-3, 2n-5, \ldots, -2n+3$. The ones with Euler class $\pm(2n-3)$ are universally tight and the others are virtually overtwisted. (When $n>2$, there are other virtually overtwisted structures that only have the plumbing filling.) In particular, this shows that if Legendrian surgery on a knot $K$ yields $L(4n-1,4)$, the trace of the surgery must agree with the trace of surgery on a Legendrian realisation of the $(-2n+1,2)$-torus knot.
\end{enumerate}
\end{theorem}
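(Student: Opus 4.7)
The plan is to reduce the classification to Lisca's description of fillings of the universally tight contact structure, and then identify which virtually overtwisted structures each non-plumbing filling can fill. By Theorem~\ref{maxcollection} every filling of any tight contact structure on $L(p,q)$ is diffeomorphic to a filling of $(L(p,q),\xi_{ut})$, and by Theorem~\ref{cor4} the plumbing $X_{\mathcal{C}}$ is the unique Lisca filling of maximal second Betti number. The problem thus reduces to (i) enumerating Lisca's non-plumbing fillings when the chain has length one or two, and (ii) determining, for each such filling, which tight contact structures on $L(p,q)$ it actually fills.

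For a one-component chain, $L(p,q)=L(p,1)$ and Lisca's classification recovers McDuff's: the only fillings are $E_p$, together with a rational homology ball when $p=4$. Lemma~\ref{ratballlemma} shows the latter fills only the universally tight structure, giving case (1). For a two-component chain, $p=a_1a_2-1$ and $q=a_2$, and I would run Lisca's algorithm on the continued fraction of $p/(p-q)$ and enumerate admissible zero continued fractions. The outcome is that non-plumbing fillings arise in exactly three families: $(a_1,a_2)=(3,3)$ giving $L(8,3)$ as in (2); $(a_1,a_2)=(5,2)$ (equivalently $(2,5)$) giving $L(9,2)=L(9,5)$ as in (3); and $(a_1,a_2)=(4,n)$ with $n\geq 2$ giving $L(4n-1,n)=L(4n-1,4)$ as in (4). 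The rational ball in (3) matches Lemma~\ref{ratballlemma} with $m=3$, $h=2$. In (2) and (4) the non-plumbing filling has $b_2=1$, so by Lemma~\ref{b21} is obtained by attaching two $2$-handles to $S^1\times D^3$ along torus knots in $S^1\times S^2$; a direct slope computation matches (4) with the Stein trace of the maximal Thurston--Bennequin Legendrian $(-2n+1,2)$-torus knot, and (2) with the stated manifold of fundamental group $\Z_2$.

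For (ii), the set of contact structures filled by a given manifold is constrained by matching spin$^c$ structures on the boundary. In cases (1)--(3) the non-plumbing filling has nontrivial first homology, and comparison with Honda's classification of tight contact structures shows that only the universally tight structure is compatible. In case (4), the classification of Legendrian realizations of the $(-2n+1,2)$-torus knot from \cite{EtnyreHonda01b} together with a rotation-number computation identifies the Euler classes of the induced contact structures on $L(4n-1,n)$ as exactly $\pm(2n-3),\pm(2n-5),\ldots$; the extremes correspond to the universally tight structures and the interior values to virtually overtwisted ones. The remaining virtually overtwisted structures (present for $n>2$) are not filled by the torus-knot trace, so they have only the plumbing filling. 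The final assertion of (4) follows from Lemma~\ref{b21}: any $b_2=1$ Stein trace of a Legendrian knot in $(S^3,\xi_{std})$ producing $L(4n-1,4)$ must, by the slope identification, agree with the trace of the $(-2n+1,2)$-torus knot.

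The main obstacle is the bookkeeping in step (ii) for the infinite family in case (4); once one carefully tracks the rotation numbers on all stabilizations of the maximal-$tb$ Legendrian $(-2n+1,2)$-torus knot and the corresponding contact structures on the boundary, the rest of the argument is a short enumeration.
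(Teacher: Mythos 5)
Your overall strategy---reduce to Lisca's classification of fillings of $\xi_{ut}$ via Theorem~\ref{maxcollection}, enumerate the non-plumbing fillings, and then determine which contact structures each one fills---is reasonable in outline and handles step (i) essentially the same way the paper does. The paper also identifies the same three families $(3,3)$, $(5,2)$, $(4,n)$ via the $\BZ_{p,q}$ null sequences for $p/(p-q)=[2^{a_1-2},3,2^{a_2-2}]$, and also invokes the Etnyre--Honda classification of Legendrian torus knots to compute Euler classes in case (4).

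The genuine gap is in step (ii). You propose to determine which contact structures each non-plumbing filling carries a Stein structure for by ``matching spin$^c$ structures on the boundary.'' This is a necessary but not a sufficient condition, and it does not actually separate the universally tight from the virtually overtwisted cases here. Concretely, for $L(4,1)$ the rational ball has $H^2\cong\Z_2$, so $c_1$ restricted to the boundary lies in the $2$--torsion of $H^2(L(4,1))\cong\Z_4$, namely $\{0,2\}$; this includes the Euler class $0$ of the virtually overtwisted structure as well as $\pm2$ of $\xi_{ut}$, so the $c_1$ constraint does not rule out the virtually overtwisted structure. (You do correctly cite Lemma~\ref{ratballlemma} separately for the rational ball cases, which is what actually saves (1) and (3), but for (2) and for the excluded virtually overtwisted structures in (4) you give no replacement for the spin$^c$ argument.) You also do not justify why the torus-knot trace carries \emph{only} the Stein structures coming from Legendrian realizations of the $(-2n+1,2)$--torus knot; a priori there could be another Stein structure on the same smooth $4$--manifold inducing a different contact structure on the boundary.

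The paper closes both of these gaps with Theorem~\ref{list} rather than an Euler-class argument: for a two-component chain, every filling of a virtually overtwisted $\xi_{\mathcal C}$ is obtained by removing one of $L_1,L_2$ and attaching a round $1$--handle to a filling of the remaining one-component chain. Since $L(m,1)$ has a unique filling unless $m=4$, one sees immediately that an extra filling can exist only when one framing is $-4$ and that component is not doubly stabilized, after which the only possible extra filling is the trace of surgery on the $(-2n+1,2)$--torus knot, and its Stein structures are exactly those induced by Legendrian realizations. This replaces your spin$^c$ step with a complete and airtight enumeration. If you want to keep your route, you would need to replace ``spin$^c$ matching'' for $L(8,3)$ with something like Theorem~\ref{morepi1} or the double-cover argument, and you would need an independent argument that all Stein structures on the $b_2=1$ filling arise from Legendrian torus knots; Theorem~\ref{list} gives both for free.
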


\begin{figure}[htb]
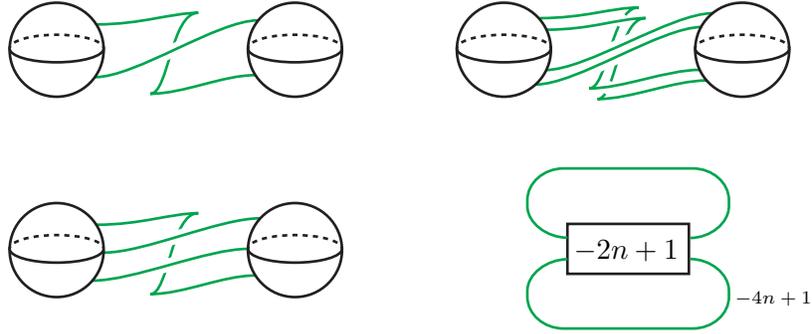
{\tiny
\begin{overpic}
{fig/exceptional2c}
\put(275, 10){$-4n+1$}
\put(214,27){\normalsize$-2n+1$}
\end{overpic}}
\caption{In the upper row, left to right, we have a Stein filling of $L(4,1)$ and $L(8,3)$. In the bottom row a Stein filling of $L(9,2)\equiv L(9,5)$ and a surgery diagram for the fillings of $L(4n-1,4)\equiv L(4n-1, n)$ (the number in the box represents the number of half twists). The Stein handlebody diagrams fill one of the universally tight contact structures on the stated lens space the other is filled by the diagram obtained by reflection about a vertical line.}
\label{excpetionalfills}
\end{figure}
\noindent

Using our main theorem, Theorem~\ref{list}, one may easily check that an explicit enumeration of the fillings of universally tight contact structures on lens spaces obtained by surgery on a chain of unknots with $n$ or fewer components, easily leads to an enumeration of fillings of virtually overtwisted contact structures on lens spaces which are obtained by surgery on a chain of unknots with $n+1$ or fewer components. We illustrate an example of this strategy by classifying the Stein fillings of contact structures on lens spaces obtained from a three component link.

\begin{theorem}\label{3component}
If $L(p,q)$ is obtained by (integral, negative) surgery on a three component chain of unknots then all contact structures on $L(p,q)$ bound a Stein structure on the plumbing defined by the surgery. On every lens space, there exists at least one contact structure that admits only the plumbing filling. This is the only Stein filling of a contact structure $\xi$ except for the following cases:
\begin{enumerate}
    \item The universally tight structures on 
    \begin{align*}
   L&(10, 7), L(12,7), L(13,5), L(13,9), L(16, 11), L(17,7),\\ 
   & L(21,8), 
   L(22,9), L(30,11), L(33,9),  \text{ and }  L(37,14),
    \end{align*}
all admit 2 fillings. There are virtually overtwisted contact structures on 
\[
 L(17,7), L(22,9), L(30,11), L(33,9),  \text{ and }  L(37,14),
\]
that admit 2 fillings and others that admit just 1.
\item The universally tight structures on 
  \begin{align*}
L(18,&5), L(19,7), L(24,7), L(25,14), L(26,15), L(29,11),  \\ & L(31,9), L(24,19), L(41, 15),\text{ and }   L(72, 19)
\end{align*}
 all admit 3 fillings. There exist virtually overtwisted structures on 
   \begin{align*}
L(17,&7), 
L(18,5), 
L(19,7), 
L(21,8), 
L(22,9), 
L(25, 14), 
L(26, 15), 
L(29,11),\\
&L(30,11),  
L(31,9), 
L(33,19), 
L(34,19), 
L(40,11),  \text{ and }
L(41, 15),
    \end{align*}
    that admit 2 fillings while others admit only 1. There exist virtually overtwisted structures on 
    \[
    L(24, 7), L(29,11), L(31, 9), L(34, 19),  L(40,11), \text{ and } L(72, 19)
    \]
    that admit 3 fillings while others admit 2 and others that admit only 1. 
 \item    
    The universally tight contact structure on 
    \[
    L(56,15)
    \]
    admits 4 fillings. There are virtually overtwisted contact structures on $L(56,15)$ that admit 4 fillings, and others that admit 3 fillings, 2 fillings, and just 1 filing. 
\item The universally tight contact structures on 
   \begin{align*}
L(4(l+3)(m+2)-m-6, (l+3)(m+2)-1)& \text{ with } l\geq 3 \text{ or } l=0, m\geq 3, \\
L(4(k+2)(m+2)-k-m-4, 4(m+2)-1)& \text{ with } m+k\geq 4 \text{ and } k,m\not=2, \\
L(9(m+3)-2, 5(m+2)-1)& \text{ with } m\geq 3, \\
L(19(m+2)-4, 5(m+2)-1)& \text{ with } m\geq 1, \\
L(7l+10, 2l+3) & \text{ with } l\geq 4, \text{ and }\\
L(9l+13, 2l+3), &\text{ with } l\geq 3
    \end{align*}
each admit 2 fillings. There are virtually overtwisted contact structures on these lens space that admit 2 fillings and others that admit only 1.     
\item The universally tight contact structures on the lens spaces 
   \begin{align*}
L(16(l+3)-8, 4(l+3)-1), &\text{ with } l\geq 3, \text{ and }\\
L(15(m+2)-4, 4(m+2)-1), &\text{ with } m\geq 3
    \end{align*}
each admit 3 fillings. There are virtually overtwisted contact structures on these lens spaces that admit 3 fillings and others that admit 2 or just 1 filling. 
\end{enumerate}
\end{theorem}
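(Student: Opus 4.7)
My approach is to combine Lisca's classification of the fillings of $(L(p,q),\xi_{ut})$ with the main theorem of the paper (Theorem~\ref{list}), which describes exactly which of those fillings persist as fillings of any virtually overtwisted contact structure $\xi$. For a three-component chain with framings $-a_1,-a_2,-a_3$ presenting a lens space $L(p,q)$, I would first enumerate all zero-continued fraction sequences in Lisca's sense whose reduction equals the dual continued fraction of $p/(p-q)$. For length three this is a finite combinatorial problem: the permissible reductions correspond to a small number of \emph{patterns} in the dual sequence. Running through triples $(a_1,a_2,a_3)$ with $a_i\ge 2$, I would isolate precisely those lens spaces for which more than one Lisca filling of $\xi_{ut}$ exists. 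This step recovers both the sporadic list in items (1)--(3) and (the $L(56,15)$ entry), and the parametric families in (4)--(5) indexed by $(l,m)$ or $(k,m)$, together with their multiplicities.

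Next I would apply Theorem~\ref{list} to pass from the universally tight case to arbitrary tight contact structures. Every Legendrian realization $\mathcal{C}$ of the chain determines a pattern of positive/negative stabilizations on the three Legendrian unknots; Theorem~\ref{list} identifies a Lisca filling $X$ as a filling of $\xi_{\mathcal{C}}$ precisely when the corresponding zero-continued fraction refinement is compatible with this stabilization data. In particular, for every such lens space there is a virtually overtwisted contact structure realized by a chain in which one of the unknots is doubly stabilized (both positively and negatively); by Theorem~\ref{list} only the plumbing filling survives such a choice, which accounts for the assertion that on every three-component lens space at least one contact structure admits only the plumbing filling. The existence of the Stein filling on the plumbing for every $\xi$ is immediate from the Legendrian realization of $\mathcal{C}$ itself.

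Finally, for each lens space appearing in items (1)--(5) I would run through the remaining virtually overtwisted structures (up to the obvious $\Z/2$ reflection symmetry of the chain) and, applying Theorem~\ref{list} together with Lemma~\ref{ratballlemma} for the rational homology ball cases, record how many Lisca fillings of $\xi_{ut}$ remain fillings of the given $\xi_{\mathcal{C}}$. The main obstacle is not conceptual but is the sheer volume of bookkeeping: one must exhaustively inspect dual continued fractions of lengths $3,4,5,\ldots$ that admit a nontrivial zero-continued fraction reduction back to a three-term sequence, then pick out the parameter ranges so that the sporadic and generic cases are distinguished correctly. This is what forces the exclusions $k,m\ne 2$ in (4), the various lower bounds on $l$ and $m$ in (4)--(5), and the separation of $L(56,15)$ as its own case with four fillings. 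No new tool beyond Theorems~\ref{list} and~\ref{maxcollection} and Lemma~\ref{ratballlemma} is required; the argument is an exhaustive but mechanical enumeration made possible by the main classification.
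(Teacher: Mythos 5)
Your overall strategy matches the paper's proof: compute the dual continued fraction $p/(p-q)$ via Riemenschneider point diagrams, enumerate the null sequences in $\BZ_{p,q}$ to list Lisca's fillings of $\xi_{ut}$, then use Theorem~\ref{list} (and Theorem~\ref{cor5} for the length-two sub-chains) to determine which fillings survive for each rotation-number configuration of the chain. The paper organizes this enumeration by cases on $p/(p-q)\in\{[2^k,3,2^l,3,2^m],\,[2^k,4,2^l]\}$, and this is exactly the ``small number of patterns'' you describe; so the core of your plan is the same as the paper's.

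There is, however, a specific flaw in your structural justification of the sentence ``on every lens space, there exists at least one contact structure that admits only the plumbing filling.'' You assert that for every three-component lens space there is a chain with a doubly stabilized unknot, and that Theorem~\ref{list} then forces a unique filling. Both halves of this are wrong in general. First, a component $L_i$ can be stabilized both positively and negatively only if $a_i\ge 4$; for chains like $[-3,-3,-3]$ (which gives $L(21,8)$) no component can be doubly stabilized, and the virtually overtwisted structures arise only from inconsistent sub-chains, for which Theorem~\ref{list} takes a different form. Second, even when $L_i$ is doubly stabilized, Theorem~\ref{firstglue} only says $G_{L_i}$ is surjective onto $\Fill(\mathcal{C})$; if one of the sub-chains is the universally tight $L(4,1)$ you can still get two fillings in the image. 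The correct verification requires checking both mechanisms of Theorem~\ref{list} (doubly stabilized unknots and inconsistent sub-chains) case by case, which the enumeration you propose would eventually do, but the shortcut argument you give is not valid as stated. Also, the paper's proof does not invoke Lemma~\ref{ratballlemma} at this step; that is harmless, but unnecessary.
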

We discuss further corollaries concerning symplectic cobordisms after discussing the classification of fillings of contact structures on lens spaces. 

\subsection{The classification of symplectic fillings}\label{classsect}
Given a linear chain of framed unknots $C$ as in Figure~\ref{chain}, let $L(C)$ be the lens space obtained from surgery on this chain. Notice that if $C$ is the empty chain, then $L(C)$ is $S^3$. Denote the knots in the chain, from left to right, as $U_{1}, U_{2}, \cdots, U_{n}$.

If we remove $U_k$ from the chain $C$ we will get two other chains $C_s$ and $C_e$ (one might be empty if the removed unknot was on the end of the chain). Notice that in $L(C_s)\#L(C_e)$ the knot $U_k$ is simply the connected sum of cores of Heegaard tori in each of the lens spaces (we call these rational unknots). Moreover, we can recover $L(C)$ from $L(C_s)$ and $L(C_e)$ by taking their connect sum and doing $-a_k$ surgery on $U_k$ (the zero framing on $U_k$ comes from the Seifert disk in $S^3$). In addition, if $X$ and $X'$ are $4$--manifolds with boundary $L(C_s)$ and $L(C_e)$, respectively, then if we attach a $1$--handle to connect $X\sqcup X'$ and a $2$--handle to $U_k$, we get a $4$--manifold bounded by $L(C)$. So we see how to build fillings of $L(C)$ from fillings of $L(C_s)$ and $L(C_e)$. We can also describe this filling of $L(C)$ in terms of a round $1$--handle attachment to $X\sqcup X'$ along rational unknots in $L(C_s)$ and $L(C_e)$, see Section~\ref{menkedecompose} for details. 

\begin{construction}\label{constructionofG}
Now, all the tight contact structures $\xi$ on $L(p,q)$ are obtained by Legendrian surgery on Legendrian realizations $L_i$ of the $U_i$ with Thurston-Bennequin invariant $-a_i+1$. Denote the chain of Legendrian knots by $\mathcal{C}$. We can carry out the discussion above for contact structures on lens spaces. Specifically if we remove $L_k$ from $\mathcal{C}$ then we get two chains $\mathcal{C}_s$ and $\mathcal{C}_e$. If we denote the contact structure on the lens space coming from Legendrian surgery on $\mathcal{C}$ by $\xi_{\mathcal{C}}$ then we see there is a Legendrian knot $L_k$ in $L(\mathcal{C}_s)\#L(\mathcal{C}_e)$ on which Legendrian surgery yields $L(\mathcal{C})$. 
This knot is uniquely determined up to a contactomorphism smoothly isotopic to the identity by Lemma~\ref{weaklysimple} since the knot type is weakly Legendrian simple and its Thurston-Bennequin invariant and rotation number are determined by the fact that Legendrian surgery yields $(L(p,q), \xi_\mathcal{C})$. 

If $(X,\omega)$ is a symplectic filling of $L(\mathcal{C}_s)$ and $(X',\omega')$ is a symplectic filling of $L(\mathcal{C}_e)$ then we can build a symplectic filling of $L(\mathcal{C})$ by attaching a Weinstein $1$--handle to $X\sqcup X'$ and a Weinstein $2$--handle to $L_k$. 

If we denote by $\Fill(L(p,q))$ the set of minimal symplectic fillings of $L(p,q)$ up to diffeomorphism (from Wendl's work \cite{Wendl10} we know that this is also the set of Stein fillings of $L(p,q)$), then the above construction gives a map
\[
G_{\{L_k\}}:\Fill(L(\mathcal{C}_s))\times \Fill(L(\mathcal{C}_e)) \to \Fill(L(\mathcal{C}))
\] 
to the fillings of $\mathcal{C}$.

Similarly if $S=\{L_{i_1}, \ldots, L_{i_k}\}$ is a collection of components of $\mathcal{C}$ then $\mathcal{C}-S$ will be a collection of chains $\mathcal{C}_1,\ldots, \mathcal{C}_{k+1}$ (where some of the $\mathcal{C}_i$ could be empty if some of the $L_{i_j}$ are adjacent, or occur at an end of $\mathcal{C}$). There is clearly a similar map to the one constructed above
\[
G_S:\prod_{i=1}^{k+1} \Fill(\mathcal{C}_i) \to \Fill(\mathcal{C}).
\]
from the minimal symplectic fillings of the sub-chains to the fillings of $L(\mathcal{C})$.
\hfill \qed
\end{construction}

\noindent 
We now need some terminology concerning sub-chains of a Legendrian chain $\mathcal{C}$. We call a chain $\mathcal{C}$ \dfn{consistently stabilized} if all of the knots in $\mathcal{C}$ have only been stabilized positively (or not at all) or have all been stabilized negatively (or not at all). We recall, see Theorem~\ref{classifylpq} and Remark~\ref{classifylpq2} below, that the contact structure $\xi_{\mathcal{C}}$ is universally tight if and only if $\mathcal{C}$ is consistently stabilized. 

We call a chain $\mathcal{C}$ \dfn{nicely stabilized} if each component in the $\mathcal{C}$ has only been stabilized either positively or negatively (or not at all), but different components could have been stabilized differently. See top of Figure~\ref{nicechains}.
\begin{figure}[htb]
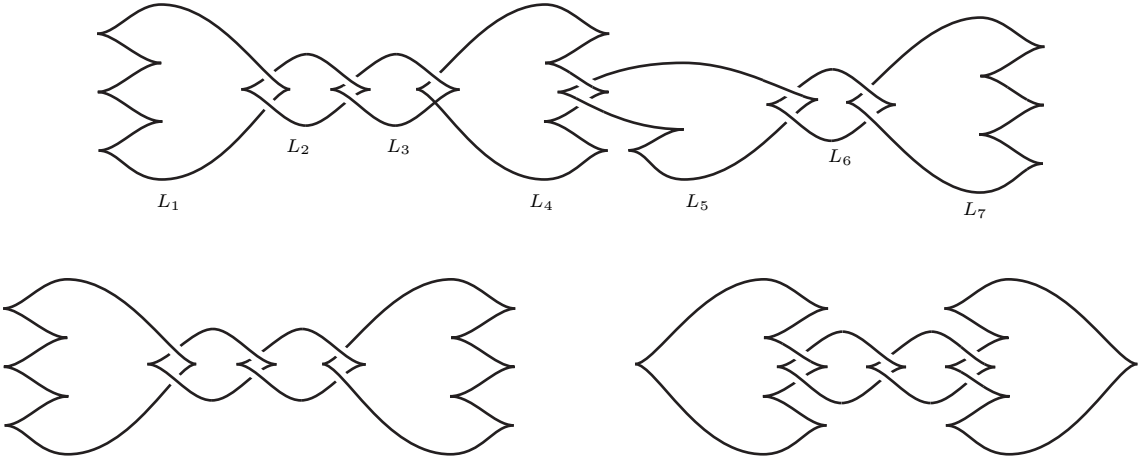
{\tiny
\begin{overpic}
{fig/InconsistentChain}
\put(58, 96){$L_1$}
\put(107, 117){$L_2$}
\put(145, 117){$L_3$}
\put(199, 96){$L_4$}
\put(258, 96){$L_5$}
\put(312, 113){$L_6$}
\put(363, 93){$L_7$}
\end{overpic}}
\caption{A nicely stabilized chain at the top that contains three (overlapping) inconsistent sub-chains. There are two types of inconsistent chains, which are shown at the bottom.} 
\label{nicechains}
\end{figure}

If $\mathcal{C}$ is a nicely stabilized chain then an \dfn{inconsistent sub-chain of $\mathcal{C}$} is a sub-chain $\mathcal{C}_i$ such that the first element and last element in the sub-chain are both stabilized and stabilized in opposite ways, and all the other elements of $\mathcal{C}_i$ are not stabilized at all. See bottom of Figure~\ref{nicechains}. Notice that inconsistent sub-chains can overlap at their end points.

Now given a chain $\mathcal{C}$ of Legendrian unknots, let $\mathcal{D}$ be the collection of knots in $\mathcal{C}$ that have been stabilized both positively and negatively. So each chain in $\mathcal{C}-\mathcal{D}$ is a nicely stabilized chain. Let $\mathcal{S}$ be the set of all unknots in $\mathcal{C}-\mathcal{D}$ that are in inconsistent sub-chains. We call a collection $\mathcal{M}$ of knots in $\mathcal{S}$ \dfn{maximal} if its intersection with each inconsistent sub-chain in $\mathcal{C}-\mathcal{D}$ contains exactly one element.  Notice that if all the inconsistent sub-chains are disjoint, then $\mathcal{M}$ contains as many elements as there are  inconsistent sub-chains, but if the ends of inconsistent sub-chains overlap, as in Figure~\ref{nicechains}, there might be fewer elements in  $\mathcal{M}$. 
\begin{example}\label{maximal}
At the top of Figure~\ref{nicechains} we see a nicely stabilized chain such that $\mathcal{S}$ consists of the chains $\{L_1,L_2,L_3,L_4\}$, $\{L_4, L_5\}$, and $\{L_5, L_6, L_7\}$. Any maximal collection of links in $\mathcal{S}$ must contain $L_4$ or $L_5$ (and notice it cannot contain both) and thus will have two elements while there are three inconsistent sub-chains. 
\end{example}
A key observation is that for any maximal collection $\mathcal{M}$ in $\mathcal{S}$ each chain in $\mathcal{C}-(\mathcal{D}\cup \mathcal{M})$ is a consistently stabilized chain and so the corresponding contact structure is universally tight. 

We can now describe all fillings of contact structures on lens spaces. 
\begin{theorem}\label{list}
Given any chain $\mathcal{C}$ of Legendrian unknots, let $\mathcal{D}$ be the subset of components that have been stabilized both positively and negatively and $\mathcal{S}$ be the set of  unknots in $\mathcal{C}-\mathcal{D}$ that are in inconsistent sub-chains, as discussed above. Then then the fillings of $(L(\mathcal{C}),\xi_{\mathcal{C}})$ are 
\[
\bigcup_{\mathcal{M} \text{ maximal collection in }\mathcal{S}} \text{Image} (G_{\mathcal{D}\cup \mathcal{M}}).
\]
We notice that each chain in $\mathcal{C}-(\mathcal{D}\cup \mathcal{M})$ is consistently stabilized, so corresponds to a universally tight contact structure and hence all of its fillings are determined by Lisca's theorem, Theorem~\ref{lisca}.
\end{theorem}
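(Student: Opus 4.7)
The plan is to prove the two inclusions separately. The easier direction, $\supseteq$, follows directly from Construction~\ref{constructionofG}: for any maximal collection $\mathcal{M}\subset\mathcal{S}$, each chain in $\mathcal{C}-(\mathcal{D}\cup\mathcal{M})$ is consistently stabilized, so by Theorem~\ref{classifylpq} it supports a universally tight contact structure whose fillings are given by Lisca's theorem; iteratively attaching Weinstein $1$-- and $2$--handles, as in the definition of $G_{\mathcal{D}\cup\mathcal{M}}$, produces a Stein filling of $(L(\mathcal{C}),\xi_{\mathcal{C}})$. So every element of $\text{Image}(G_{\mathcal{D}\cup\mathcal{M}})$ lies in $\Fill(L(\mathcal{C}),\xi_\mathcal{C})$.

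For the reverse inclusion $\subseteq$, I would induct on the pair $(|\mathcal{D}|,\#\text{(inconsistent sub-chains)})$, ordered lexicographically. The base case is when $\mathcal{D}=\emptyset$ and there are no inconsistent sub-chains, i.e.\ $\mathcal{C}$ is consistently stabilized. Then $\xi_{\mathcal{C}}=\xi_{ut}$ by Remark~\ref{classifylpq2}, and Lisca's Theorem~\ref{lisca} gives every filling, which is exactly the image of $G_\emptyset$ (trivial $G$, just Lisca's list).

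For the inductive step, let $X$ be an arbitrary minimal symplectic filling of $(L(\mathcal{C}),\xi_{\mathcal{C}})$. The goal is to locate a Legendrian component $L_k$ of $\mathcal{C}$ around which Menke's decomposition theorem produces a round $1$--handle splitting of $X$. If $\mathcal{D}\neq\emptyset$, choose any $L_k\in\mathcal{D}$; the neighborhood of $L_k$ contains a convex torus with both positive and negative bypasses, yielding the mixed torus required by Menke \cite{menke18pre}. If $\mathcal{D}=\emptyset$ but an inconsistent sub-chain exists, then by definition its two endpoints have opposite stabilization signs while all intermediate components are unstabilized; I would show that the boundary of a neighborhood of the interior edge between the last positively-stabilized component and the first negatively-stabilized one (or a component that can play this role, chosen to be $L_k$) is also a mixed torus in the sense of Menke. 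Applying Menke's theorem, $X$ decomposes as a symplectic round $1$--handle attached to a minimal symplectic filling of the contact manifold obtained by cutting $L(\mathcal{C})$ open along a thickened Heegaard torus through $L_k$. By the discussion in Section~\ref{menkedecompose}, this cut manifold is precisely the disjoint union $L(\mathcal{C}_s)\sqcup L(\mathcal{C}_e)$ where $\mathcal{C}-\{L_k\}=\mathcal{C}_s\sqcup\mathcal{C}_e$, with the contact structures $\xi_{\mathcal{C}_s}$ and $\xi_{\mathcal{C}_e}$. The round handle is attached along Legendrian rational unknots which, by Lemma~\ref{weaklysimple}, are determined uniquely by their contact framing data, so this decomposition matches the map $G_{\{L_k\}}$.

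Now I would apply the induction hypothesis to each filling of $L(\mathcal{C}_s)$ and $L(\mathcal{C}_e)$: the complexity strictly decreases because removing $L_k$ either shrinks $\mathcal{D}$ by one element or removes at least one inconsistent sub-chain (when $L_k$ was chosen from an inconsistent sub-chain). Composing the resulting decompositions shows $X\in\text{Image}(G_{\mathcal{D}'\cup\mathcal{M}'})$ for some valid $\mathcal{D}'\cup\mathcal{M}'$ built iteratively; a bookkeeping argument identifies $\mathcal{M}'$ with a genuine maximal collection in $\mathcal{S}$. The main obstacle I expect is the inconsistent sub-chain case: verifying that the relevant convex torus really is a mixed torus to which Menke applies requires a careful bypass analysis along the chain, and one must further check that when two inconsistent sub-chains overlap at a shared endpoint (as in Figure~\ref{nicechains}) the inductive choice of $L_k$ can always be made so that both sub-chains get simultaneously resolved---this is exactly the combinatorial content of the word ``maximal'' in the definition of $\mathcal{M}$, and it is what produces the union over all maximal $\mathcal{M}$ rather than a single one.
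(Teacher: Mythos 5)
Your overall strategy (prove $\supseteq$ via Construction~\ref{constructionofG}, prove $\subseteq$ by applying Menke's theorem inductively) matches the paper's, and you correctly identify the inconsistent sub-chain case as the delicate part. However, you have misdiagnosed where the delicacy lies, and this is a genuine gap in the proposal.

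You frame the problem as one of \emph{choosing} the component $L_k$ wisely, and of verifying that some convex torus ``between the last positively stabilized component and the first negatively stabilized one'' is a mixed torus. But for a nicely stabilized chain with an inconsistent sub-chain $\{L_k,\ldots,L_{k+l}\}$, there is a single mixed torus: it sits at the transition between the continued fraction block of $L_k$ and the continued fraction block of $L_{k+l}$ (the intermediate $L_i$ have $tb=-1$ and do not give rise to continued fraction blocks at all). There is no choice of torus to make. The source of the union over maximal collections $\mathcal{M}$ is something else entirely: Menke's Theorem~\ref{menke} does not output a canonical decomposition. It produces a filling $X'$ of the manifold obtained by cutting along $T$ and gluing in solid tori of meridional slope $e$ for \emph{some} $e$ in the set $E$ of slopes in $(s_1, s_{-1})$ adjacent to $s_0$ in the Farey graph. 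For a mixed torus inside a continued fraction block (i.e., $L_k\in\mathcal{D}$), the set $E$ has exactly one element, which is why Theorem~\ref{firstglue} gives a clean surjection $G_{\{L_k\}}$. But for the mixed torus at an inconsistent sub-chain, $E$ consists of all the ``exceptional vertices'' of Observation~\ref{exceptional}, and Lemma~\ref{bucfb2} shows these correspond precisely to removing $L_k, L_{k+1},\ldots,$ or $L_{k+l}$. Since the slope $e$ Menke's theorem hands you is not under your control, all you can conclude is that $X$ lies in $\bigcup_{i=k}^{k+l}\text{Image}(G_{L_i})$; this is Theorem~\ref{mixedglue}. That forced union over $i$ is what propagates to the union over maximal collections $\mathcal{M}$ in the statement, and it is \emph{not} a bookkeeping issue about overlapping sub-chains or about choosing $L_k$ so that ``both sub-chains get simultaneously resolved.''

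Two smaller points. First, to identify the cut pieces as $(L(\mathcal{C}_s),\xi_{\mathcal{C}_s})$ and $(L(\mathcal{C}_e),\xi_{\mathcal{C}_e})$ you must translate the Farey graph slope data back to continued fractions of sub-chains; this is exactly what Lemmas~\ref{bucfb} and~\ref{bucfb2} do, and they are not automatic. Second, you cite Lemma~\ref{weaklysimple} to recover the round $1$--handle attachment, but what is really needed is Corollary~\ref{legsimpleratunknots} (which does follow from Lemma~\ref{weaklysimple} together with the Baker--Etnyre classification of rational unknots), and one also needs the observation that a filling of a lens space cannot have disconnected boundary (since lens spaces are supported by planar open books), to see that $X'$ is a disjoint union and each piece is minimal.
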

\begin{remark}
This result was also obtained by Christian and Li in \cite{ChristianLi20Pre}. 
\end{remark}

This theorem is proven in Section~\ref{provelist} by systematically applying a theorem of Menke, Theorem~\ref{menke}, to reduce the virtually overtwisted case to the universally tight case that is handled by Lisca's work, Theorem~\ref{lisca}. Two technical parts that may be of independent interest are an understanding of continued fractions, paths in the Farey graph, and breaking up continued fractions discussed in Sections~\ref{breakingup} and~\ref{pathinfg}; and building fillings of a lens space described by surgery on a chain of unknots from fillings of lens spaces obtained by surgery on sub-chains described in Section~\ref{combiningchains}. Along the way we will also need the Legendrian simplicity of certain knots in the connect sum of lens spaces established in Section~\ref{legsimpofratunknots}.

When trying to list the contact structures on lens spaces it is useful to understand the maps $G_S$. 
\begin{theorem}\label{inject}
Give a chain $\mathcal{C}=\{L_1,\ldots, L_n\}$ of Legendrian unknots and some $1\leq k\leq n$, let $\mathcal{C}_s$ and $\mathcal{C}_e$ be the chains in $\mathcal{C}-\{L_k\}$. The map
\[
G_{\{L_k\}}: \Fill(\mathcal{C}_s) \times \Fill(\mathcal{C}_e)\to \Fill(\mathcal{C})
\]
from the minimal symplectic fillings of the sub-chains to the filling of $L(p,q) = L(\mathcal{C})$ is injective unless $q^2\equiv 1 \! \mod p$. When $q^2\equiv 1 \! \mod p$ the map may still be injective. If not there is at most a two-to-one ambiguity described in Proposition~\ref{combinestrings}. 
\end{theorem}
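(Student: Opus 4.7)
The plan is to invert the construction $G_{\{L_k\}}$ using Menke's decomposition theorem (Theorem~\ref{menke}) and analyze the possible ambiguities. First I would observe that $X = G_{\{L_k\}}(X_1, X_2)$ is, by construction, obtained from $X_1 \sqcup X_2$ by attaching a Weinstein $1$--handle and a Weinstein $2$--handle along $L_k$. Equivalently, $X$ contains an embedded convex separating torus $T$ (the boundary of a neighborhood of the cocore of the $2$--handle together with the $1$--handle) whose boundary slope in $L(\mathcal{C}) = \partial X$ equals the slope of the torus boundary of a neighborhood of $L_k$ in $L(\mathcal{C}_s) \# L(\mathcal{C}_e)$. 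Cutting $X$ along $T$ and applying Menke's theorem recovers $(X_1, X_2)$.

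Now suppose $G_{\{L_k\}}(X_1, X_2) = G_{\{L_k\}}(X_1', X_2') = X$. Then $X$ carries two such decomposition tori $T$ and $T'$, both of the fixed slope dictated by $L_k$. By Menke's theorem, each produces a symplectic decomposition, and the identifications of the pieces with Stein fillings of the sub-chains are determined by the Legendrian simplicity of rational unknots (Lemma~\ref{weaklysimple}). The problem thus reduces to classifying, up to isotopy preserving the decomposition, convex tori in $X$ of the prescribed slope.

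The arithmetic dichotomy arises from the classification of orientation-preserving self-diffeomorphisms of $L(p,q)$: since $L(p,q) \cong L(p,q')$ as oriented manifolds if and only if $q' \equiv q^{\pm 1} \bmod p$, the lens space admits a nontrivial self-diffeomorphism swapping the two sides of a genus-one Heegaard splitting precisely when $q \equiv q^{-1} \bmod p$, i.e.\ $q^2 \equiv 1 \bmod p$. In terms of the chain, this corresponds to reversing $\mathcal{C}$ and recovering the same contact lens space, which in turn interchanges the roles of $\mathcal{C}_s$ and $\mathcal{C}_e$. When $q^2 \not\equiv 1 \bmod p$ no such swap exists, so the decomposition torus is unique up to isotopy and $G_{\{L_k\}}$ is injective. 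When $q^2 \equiv 1 \bmod p$, the swap symmetry can map $T$ to a possibly non-isotopic torus $T'$, producing at most a two-to-one ambiguity which I would make explicit in Proposition~\ref{combinestrings}.

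The main obstacle will be proving this torus-uniqueness statement rigorously: showing that, once the slope is fixed and the filling $X$ is given, any two convex tori of that slope giving rise to Menke decompositions of $X$ differ by an ambient isotopy or by the lens space swap. This requires a careful analysis using the classification of tight contact structures on $T^2 \times I$ and the convex surface theory underlying Menke's theorem, to rule out exotic decompositions coming from, say, bypass moves across $T$. The bound of two (rather than a larger ambiguity) follows because the relevant swap self-diffeomorphism of $L(p,q)$ has order $2$.
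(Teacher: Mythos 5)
Your approach is genuinely different from the paper's, and it has a significant gap that the paper's argument sidesteps entirely. The paper does not attempt to prove uniqueness of a Menke-decomposition torus in a given filling $X$. Instead, it reduces the statement to combinatorics via Lisca's classification: Proposition~\ref{altdesc} identifies $G_{\{L_k\}}(W_{\n},W_{\m})$ with $W_{F_{a_k}(\n,\m)}$, where $F_{a_k}$ is an explicit ``fusion'' of null sequences, and then Lisca's Theorem~\ref{lisca}(3) says $W_{\n} \cong W_{\n'}$ iff $\n=\n'$, or $q^2\equiv 1 \bmod p$ and $\n'=\overline{\n}$. So injectivity of $G_{\{L_k\}}$ on fillings of $\xi_{ut}$ is a statement about injectivity of the fusion map $F_{a_k}$ modulo the $\n\mapsto\overline{\n}$ relation, which is elementary; the general case then follows from Theorem~\ref{maxcollection}, since the fillings of any $\xi$ form a subset of those of $\xi_{ut}$ and the restriction of an injective-up-to-$\overline{\phantom{\n}}$ map is still injective-up-to-$\overline{\phantom{\n}}$.

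The gap in your version is exactly the one you flag as ``the main obstacle'': you need to show that, given the filling $X$ alone, a decomposition torus of the prescribed slope is unique up to the expected symmetry. Menke's theorem is an existence result and does not supply this uniqueness; two a priori unrelated constructions $G_{\{L_k\}}(X_1,X_2)=X=G_{\{L_k\}}(X_1',X_2')$ need not differ by an ambient isotopy of $X$ preserving the structure, nor is it clear that a 3-dimensional self-diffeomorphism of $L(p,q)$ swapping Heegaard sides extends over the 4-manifold $X$. In fact the diffeomorphism $W_{\n}\cong W_{\overline{\n}}$ when $q^2\equiv 1\bmod p$ is a nontrivial 4-dimensional fact in Lisca's work, not a formal consequence of lens-space symmetry. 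There is also a scope issue: the theorem is stated for all $k$, including cases where $L_k$ is not stabilized both positively and negatively (indeed $k=1$ or $k=n$), in which case the torus separating the pieces is not mixed and Menke's theorem does not apply, whereas the paper's combinatorial argument via Lisca covers every $k$ uniformly.
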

\begin{remark}
This map is not in general surjective. In fact, in Section~\ref{injectsect} we will see that in Theorem~\ref{list} one does need to consider all the images of $G_{\mathcal{D}\cup \mathcal{M}}$ for all maximal collections in $\mathcal{S}$, as the image for any $\mathcal{M}$ can miss elements in $\Fill(\mathcal{C})$. 
\end{remark}
\begin{remark}
By Theorem~\ref{maxcollection} we see that the minimal symplectic fillings of any contact structure on $L(p,q)$ are a subset of the fillings of the universally tight contact structure. Since Lisca's theorem, Theorem~\ref{lisca}, describes the latter set, Theorem~\ref{list} specifies the former. 
\end{remark}

\subsection{Symplectic cobordisms between lens spaces}\label{cobordisms}
When Stein cobordisms between lens spaces exist, their topology can be controlled using the following theorem. 

\begin{theorem}\label{length}
If there is a Stein cobordism from a tight contact structure $\xi$ on $L(p,q)$ to any contact structure $\xi'$ on $L(p',q')$ then we must have that $l(p'/q')\geq l(p/q)$, where $l(r/s)$ is the length of the continued fractions expansion of $r/s$ as discussed above. Moreover, if there is such a cobordism $X$ with $l(p'/q')= l(p/q)$, then it is an $h$-cobordism, $L(p,q)\cong L(p',q')$, and $\xi$ must be contactomorphic to $\xi'$.
\end{theorem}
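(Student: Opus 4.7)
The plan is to reduce the cobordism question to a filling question by capping off the negative boundary of $X$ with the maximal Stein filling from Theorem~\ref{cor4}. Set $Y := X_{\xi} \cup_{L(p,q)} X$, where $X_{\xi}$ is the chain plumbing Stein filling of $(L(p,q),\xi)$ with $b_2(X_\xi) = l(p/q)$. Since a Stein cobordism glues to a Stein filling of its negative boundary to produce a Stein filling of its positive boundary, $Y$ is a Stein filling of $(L(p',q'),\xi')$. The Mayer--Vietoris sequence for $Y = X_{\xi}\cup X$, combined with $H_2(L(p,q))=0$ and $H_1(L(p,q))=\Z_p$ being torsion, gives the rank identity $b_2(Y) = l(p/q)+b_2(X)$. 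Applying Theorem~\ref{cor4} to the filling $Y$ yields $b_2(Y)\leq l(p'/q')$, and since $b_2(X)\geq 0$ this gives the desired inequality $l(p/q)\leq l(p'/q')$.

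For the equality case $l(p/q)=l(p'/q')$, the above forces $b_2(X)=0$ and $b_2(Y)=l(p'/q')$, so the equality clause of Theorem~\ref{cor4} identifies $Y$ diffeomorphically with the chain plumbing $X_{\xi'}$, which in particular is simply connected. A Van Kampen argument for $Y=X_{\xi}\cup X$ then controls $\pi_1(X)$ (it must be normally generated by the image of $\pi_1(L(p,q))=\Z_p$), while the long exact sequence of the pair $(X,L(p,q))$---using that the Weinstein handle structure of $X$ relative to $L(p,q)\times I$ has only $1$- and $2$-handles, that $b_2(X)=0$, and that $H_2(X)$ is free for Stein domains---forces $H_*(X,L(p,q);\Z)=0$. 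Poincar\'e--Lefschetz duality then gives $H_*(X,L(p',q');\Z)=0$ and, combined with the $\pi_1$ data, upgrades $X$ to an $h$-cobordism.

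From the $h$-cobordism one concludes $L(p,q)\cong L(p',q')$. For the contact equivalence $\xi\cong\xi'$, the induced isometric embedding $H_2(X_\xi)\hookrightarrow H_2(X_{\xi'})=H_2(Y)$ between two negative-definite tridiagonal chain intersection forms of equal rank and determinant should force an isomorphism of plumbing forms; this pins down the Legendrian stabilization data for $\xi$ and $\xi'$ to match (possibly after chain reversal) and yields a contactomorphism $\xi\cong\xi'$. The main obstacles will be the $\pi_1$-upgrade from homological triviality to a genuine $h$-cobordism in dimension four, and the combinatorial analysis forcing the chain lattice embedding to be an isomorphism under the plumbing constraints.
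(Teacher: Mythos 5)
Your argument for the inequality $l(p'/q')\geq l(p/q)$ and the passage to a homology cobordism is essentially the paper's: cap off $X$ with the maximal Stein plumbing filling, apply Theorem~\ref{cor4} to the resulting filling, and then use vanishing of relative homology plus duality. Two points deserve mention there: first, to conclude $L(p,q)\cong L(p',q')$ you cannot just invoke ``$h$-cobordism $\Rightarrow$ diffeomorphic'' (there is no $4$-dimensional $h$-cobordism theorem, and homotopy-equivalent lens spaces need not be diffeomorphic); the paper instead cites the fact that homology cobordant lens spaces are diffeomorphic. Second, your $\pi_1$ discussion is only sketched; the paper makes it precise by lifting the inclusion $L(p,q)\hookrightarrow X$ to the universal covers, computing the homology of the cover from the handle structure, and using Whitehead's theorem to get the homotopy equivalence.

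The serious gap is in the final step. You propose to extract $\xi\cong\xi'$ from the induced isometric embedding $H_2(X_\xi)\hookrightarrow H_2(Y)$ of intersection forms, arguing that the lattice isomorphism ``pins down the Legendrian stabilization data.'' This cannot work: the smooth $4$-manifold $X_\xi$ and its intersection form depend only on the underlying framed chain, i.e.\ only on the continued fraction of $-p/q$, not on the choice of stabilizations (Legendrian vs.\ smooth surgery give the same manifold). Every tight contact structure on $L(p,q)$ has the \emph{same} plumbing filling with the \emph{same} tridiagonal form, so there is no stabilization data encoded in $H_2(X_\xi)$ for the embedding to pin down. The paper instead uses Gompf's invariants of homotopy classes of plane fields: since $X$ is a $\Z$-homology cobordism with trivial second cohomology, these invariants are unchanged across $X$, and the classification of tight contact structures on lens spaces shows two tight structures with equal invariants are contactomorphic. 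You would need some contact-geometric input of this kind; the intersection form alone sees nothing.
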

\begin{remark}
We expect the cobordism when $l(p'/q')= l(p/q)$ will actually be an $s$-cobordism, but for that one would need to see that the Whitehead torsion of $X$ is trivial. This is automatic in some cases, such at when $p=2, 3, 4$ and $6$, but not in general. Less certain, but it is still reasonable to conjecture is that the cobordism is actually a product. 
\end{remark}
\begin{remark}
Notice that there is no restriction on symplectic cobordisms between tight contact structures on lens spaces, so there is a significant difference between them and Stein cobordisms. To see this recall that any tight contact structure is supported by a planar open book \cite{Schoenenberger05} and hence there is a symplectic cobordism from it to the tight contact structure on $S^3$ obtained by capping off all but one boundary component of the open book \cite{Eliashberg04,Gay02a}. We can now glue this cobordism to any filling of a lens space, minus a Darboux ball, to get a cobordisms from any tight contact structure on a lens space to any other tight contact structure on any other lens space. We thank the referee for pointing out this construction. 

We also note that if one considers overtwisted contact structures, then there are plenty of Stein cobordisms between them that do not satisfy the constraints in the above theorem, \cite{EtnyreHonda02a}. 
\end{remark}

In \cite{Plamenevskaya12}, Plamenevskaya showed that there is no tight contact structure on a lens space that can be obtained from itself by Legendrian surgery on a link.  The above theorem strengthens this result.

We now turn to constructing cobordisms. We will give four constructions of cobordisms, two fairly straightforward and two that are less obvious. 

\noindent{\bf Construction 1: Subchains.} 
Notice that if $\mathcal{C}'$ is a sub-chain of a chain of Legendrian unknots $\mathcal{C}$ then we can construct minimal symplectic cobordisms as follows. Let $\mathcal{C}_s$ and $\mathcal{C}_e$ be the two components of $\mathcal{C}-\mathcal{C}'$ where the first is the left most part of the chain (and one or both of these chains can be empty). Now let $L_s$ be the right most unknot in $\mathcal{C}_s$ and $L_e$ the left most unknot in $\mathcal{C}_e$. Finally let $\mathcal{C}_s'=\mathcal{C}_s-\{L_s\}$ and similarly for $\mathcal{C}_e'$. Then by taking a portion of the symplectization of the contact manifold $L(\mathcal{C}')$ and a filling $X'_s$ and $X'_e$ for $\mathcal{C}_s'$ and $\mathcal{C}_e'$ we can attach two Stein $1$--handles to get a connected manifold and then attach two $2$--handles as in the previous section to get a cobordism from $L(\mathcal{C}')$ to $L(\mathcal{C})$. That is, we have a gluing map, analogous to the one above
\[
G_{L_s,L_e}: \Fill(\mathcal{C}_s')\times \Fill(\mathcal{C}_e')\to \Cob(L(\mathcal{C}'), L(\mathcal{C}))
\]
where $\Cob(L(\mathcal{C}'), L(\mathcal{C}))$ is the set of Stein cobordisms, up to diffeomorphism, from the lens space $L(\mathcal{C}')$ to $L(\mathcal{C})$. 

One can use the above construction to create many non-trivial symplectic cobordisms. For example, one can construct cobordisms with arbitrarily large homology or with non-trivial fundamental group. 

\noindent {\bf Construction 2: Rolled up-diagrams.} We begin with an example. 
In Figure~\ref{nonstandard} we see a cobordism from $L(6,1)$ to $L(14,3)$, since the continued fraction of $-14/3=[-5,-3]$ and for $-6=[-6]$ we see that you can have cobordisms without one chain being a sub-chain of the other. To see the left hand diagram gives surgery on the 2-chain with framings $-5$ and $-3$, just slide the $-6$--framed handle over the $-5$--framed handle. 
\begin{figure}[htb]{\tiny
\begin{overpic}
{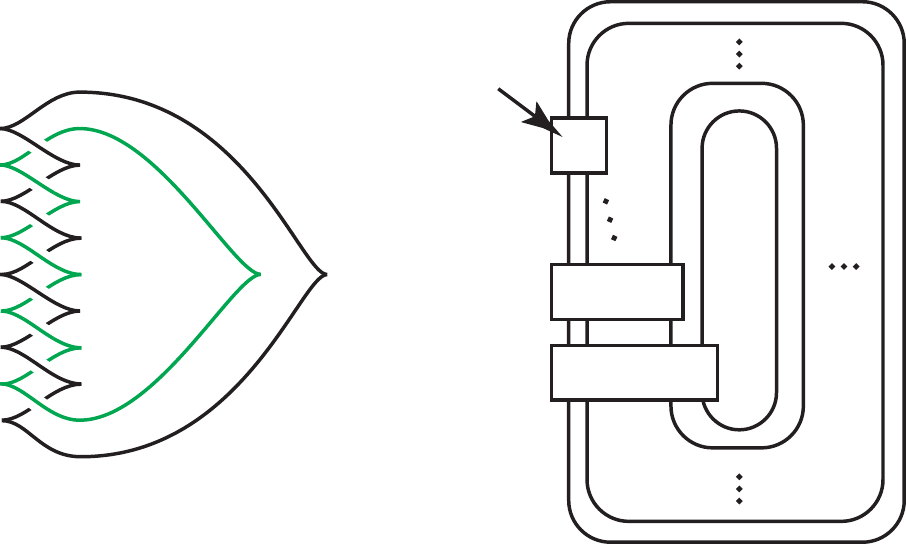}
\put(215, 100){$b_1$}
\put(235, 100){$b_2$}
\put(237, 120){$b_{n-1}$}
\put(265, 120){$b_n$}
\put(168, 48){$-a_1+1$}
\put(163, 71){$-a_2+2$}
\put(118, 134){$-a_{n-1}+2$}
\end{overpic}}
\caption{On the left is a cobordism of lens spaces. Legendrian surgery on the larger curve gives the lens space $L(6,1)$, then a Stein $2$--handle is attached to the other curve to give a cobordism to another lens space. On the right is a ``rolled-up diagram" for the chain $[-a_1,\ldots, -a_n]$. 
} 
\label{nonstandard}
\end{figure}
We can understand these cobordism in terms of \dfn{rolled-up diagrams} introduced by Sch\"onenberger in \cite{Schoenenberger05}. One starts with the chain in Figure~\ref{chain}. Now slide the $-a_2$--framed unknot over the $-a_1$--framed unknot, then the $-a_3$ framed unknot over what became of the $-a_2$--framed unknot, and continue until the $-a_n$--framed unknot is slid. This will give the right hand picture in Figure~\ref{nonstandard}, where the framings are
\[
b_i=2(i-1)+ \sum_{j=1}^i -a_j.
\]
You can see the left hand picture in the figure is a rolled up diagram for the $[-5,-3]$ chain. 

One may readily check that if one deletes some combination of the first unknot, the last unknot, or unknots with framing the same as their predecessor, from the rolled up diagram, then one gets another rolled up diagram for some lens space $L(p',q')$  and we can attach $2$--handles to the removed unknots to get a cobordism from $L(p',q')$  to the original one. (Notice that if you delete only the last unknot then this cobordism is the same cobordism you would get from the gluing map above.)

It is easy to figure out the possibilities for the  lens spaces $L(p', q')$. The continued fraction for $p'/q'$ is obtained by doing any combination of the following to $[-a_1,\ldots, -a_n]$:
\begin{enumerate}
\item Remove $-a_1$ and change $-a_2$ to $-a_1-a_2+2$,
\item Remove $-a_n$,
\item Remove a $-a_i$ that is equal to $-2$. 
\end{enumerate}
Also, by noting that one may roll-up the diagram starting with $-a_n$ instead of $-a_1$, then we can also reverse the roles of $-a_1$ and $-a_n$ in the above. 

\noindent{\bf Construction 3: Torus knots.} In \cite{ChakrabortyEtnyreMin20pre} Chakraborty, Min, and the first author gave a precise criterion for when a knot type that sits on a torus has a Legendrian representative for which the contact framing is larger than the torus framing. These are called ``Legendrian large torus knots". We will discuss this criterion in detail in Section~\ref{setion:cobordism} but here we note the consequences and draw surgery diagrams for these knots. These diagrams come from the upcoming paper \cite{BakerEtnyreMinOnaran21pre} by Baker, Min, Onaran, and the first author, where a classification of torus knots in lens space is given. 

\noindent
{\bf Torus framed surgeries.} It is easy to see that if the knot $K$ sits on a separating torus $T$ in $M$, then the result of doing surgery on $K$ with framing coming from $T$ is a reducible manifold $M_1\# M_2$ where the $M_i$ are the result of Dehn filling the components of $M\setminus T$ with slope $K$. If $\xi$ is a contact structure on a lens space $L(p,q)$ that is obtained by Legendrian surgery on a chain as in Figure~\ref{chain} where the $i^{th}$ component has been stabilized both positively and negatively, then we may draw a surgery diagram for $\xi$ as shown in Figure~\ref{reduciblesurgery}. 
\begin{figure}[htb]{\tiny
\begin{overpic}
{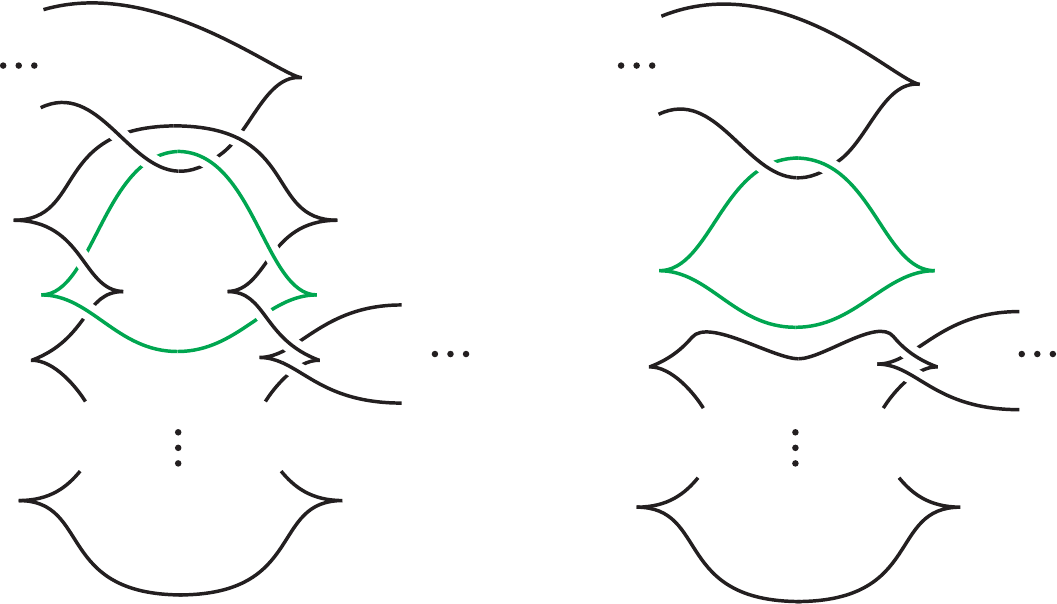}
\put(90, 15){$L_i$}
\put(110, 47){$L_{i+1}$}
\put(85, 158){$L_{i-1}$}
\put(48, 78){$L$}
\end{overpic}}
\caption{On the left is a portion of the chain of Legendrian unknots giving the contact structure $\xi$ together with the Legendrian knot $L$. On the right is the result of Legendrian surgery on $L$ together with a handle slide of $L_i$ over $L$. 
} 
\label{reduciblesurgery}
\end{figure}
The curve $L$ in the diagram can be seen to be a torus knot and the contact framing on $L$ is one larger than the torus framing. So Legendrian surgery on $L$ will result in a the reducible manifold shown at the right hand side of Figure~\ref{reduciblesurgery}. This may be checked by Legendrian handle slides, see \cite{DingGeiges09} and in particular \cite[Section~6.2]{Avdek13}. Just slide $L_i$ in the figure over $L$. Thus attaching a Stein handle to $L$ will give a cobordism from $(L(p,q), \xi)$ to a contact structure on the connected sum of two lens spaces. Attaching another $2$--handle will result in a cobordism from $(L(p,q),\xi)$ to a contact structure on another lens space. 
\begin{example}
Figure~\ref{redex} 
\begin{figure}[htb]{\tiny
\begin{overpic}
{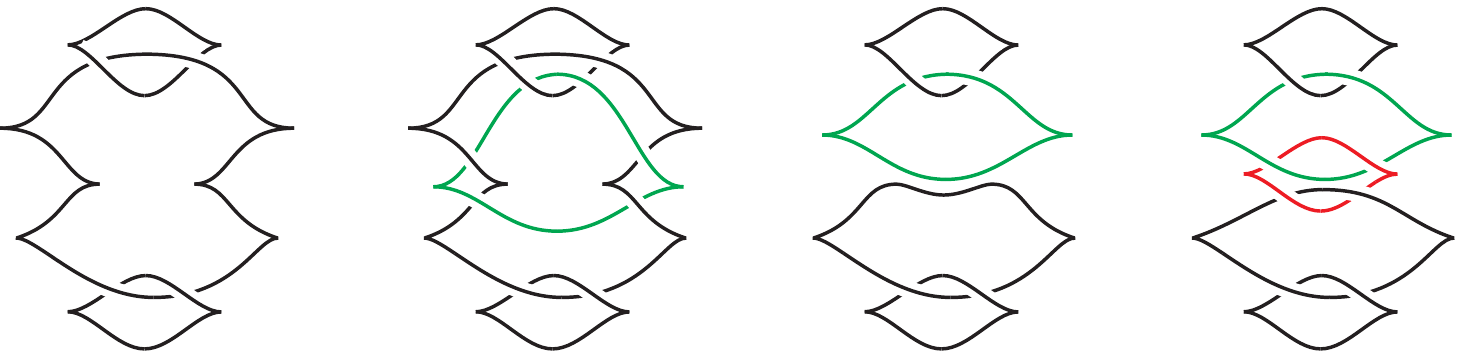}
\end{overpic}}
\caption{On the left is the virtually overtwisted contact structure on $L(12,7)$. In the next figure a Stein $2$--handle is attached to the next unknot. The next figure shows an equivalent surgery diagram. The last figure is the result of another $2$--handle attachment to complete the cobordism to $L(6,5)$.} 
\label{redex}
\end{figure}
gives a Stein cobordism from the virtually overtwisted contact structure on $L(12,7)$ (notice there is only one) to a universally tight contact structure on $L(6,5)$.
\end{example}

\noindent
{\bf Torus plus one framed surgeries.}
There is another type of unexpected cobordism coming from Legendrian large torus knots. Recall, see for example \cite{Rolfsen76}, that if $K$ sits on a torus $T$ in $M$, then the result of doing surgery on $K$ with framing one larger than the framing coming from $T$ is the same as cutting $M$ along $T$ and re-glueing the tori with a negative Dehn twist about $K$. If $\xi$ is a contact structure 
on a lens space $L(p,q)$ that is obtained by Legendrian surgery on a chain as in Figure~\ref{chain} where the $i^{th}$ component has been stabilized at least two times positively and two times negatively, then we may draw a surgery diagram for $\xi$ as shown in Figure~\ref{DTsurgery}. 
\begin{figure}[htb]{\tiny
\begin{overpic}
{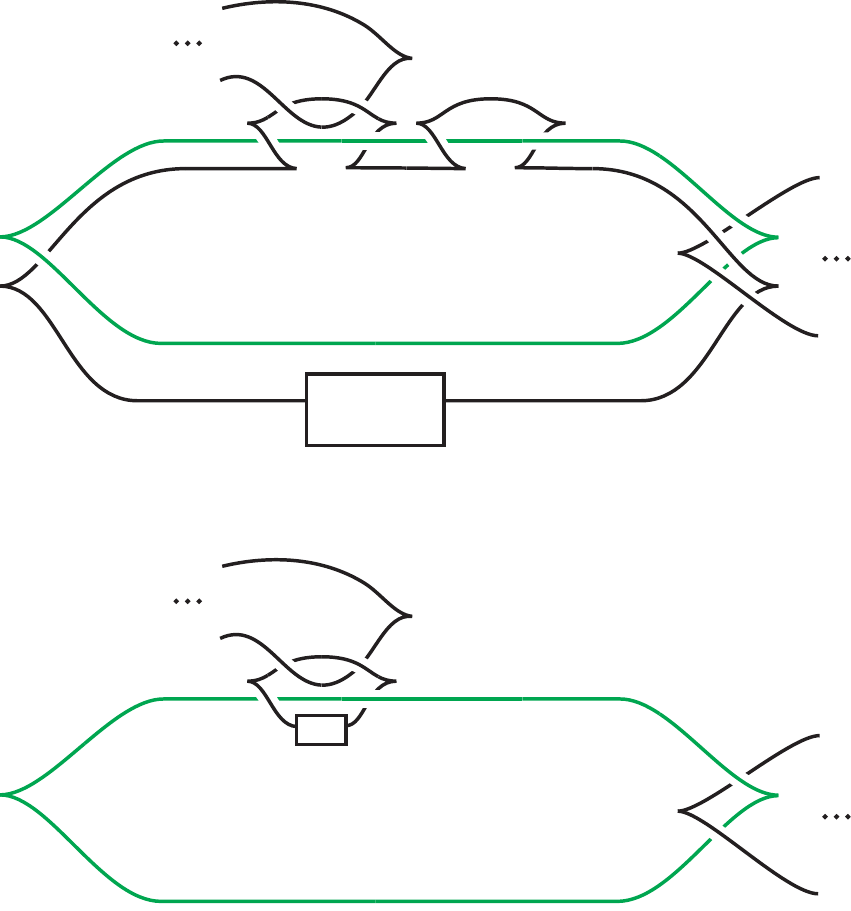}
\put(155, 139){$L_i$}
\put(225, 160){$L_{i+1}$}
\put(115, 255){$L_{i-1}$}
\put(195, 220){$L$}
\end{overpic}}
\caption{The upper diagram shows a portion of the chain of Legendrian unknots giving the contact structure $\xi$ together with the Legendrian knot $L$. The bottom diagram is the result of Legendrian surgery on $L$ together with a handle slide of $L_i$ over $L$. The box indicates the possibility for other stabilizations.
} 
\label{DTsurgery}
\end{figure}
Legendrian surgery produces a new lens space as shown in the bottom of Figure~\ref{DTsurgery}. So attaching a Stein handle to $L$ will give a Stein cobordism from $(L(p,q), \xi)$ to another lens space. We discuss which lens space in Section~\ref{setion:cobordism}, but give an example here.

\begin{example}
There is a symplectic cobordism from the virtually overtwisted contact structure on $L(6,1)$ with Euler class $0$ to the universally tight contact structure on $L(3,2)$. This is easy to see as the contact structure on $L(6,1)$ is given by Legendrian surgery on a Legendrian unknot that is obtained from the maximal Thurston-Bennequin unknot by stabilizing twice positive and twice negatively. Adding a Stein $2$--handle as indicated in Figure~\ref{DTsurgery} will result in a chain of two maximal Thurston-Bennequin unknots, which describes $L(3,2)$. 
\end{example}

Using these cobordisms we can prove the following result.
\begin{theorem}\label{makenice}
There is a Stein cobordism from any tight contact structure on a lens space to a contact structure on a lens space obtained from surgery on a nicely stabilized chain (recall, this means that each unknot in the chain only has positive stabilizations or only negative stabilizations). 
\end{theorem}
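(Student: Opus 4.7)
I will prove this by induction on the quantity $N(\mathcal{C})$, defined as the number of components of the Legendrian chain $\mathcal{C}$ that have been stabilized both positively and negatively. When $N(\mathcal{C})=0$, the chain is nicely stabilized by definition, so the symplectization cylinder $[0,1]\times L(\mathcal{C})$ already supplies the required (trivial) Stein cobordism. For the inductive step it therefore suffices to construct, from any $\mathcal{C}$ with $N(\mathcal{C})\geq 1$, a single Stein cobordism from $(L(\mathcal{C}),\xi_\mathcal{C})$ to a lens space $(L(\mathcal{C}'),\xi_{\mathcal{C}'})$ with $N(\mathcal{C}')<N(\mathcal{C})$, and then compose with the cobordism provided by the inductive hypothesis.

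Choose a component $L_i$ in $\mathcal{C}$ with $s_+\geq 1$ positive and $s_-\geq 1$ negative stabilizations, and split into two cases. When $s_+\geq 2$ and $s_-\geq 2$ I apply the torus-plus-one construction of Section~\ref{cobordisms}: attaching a single Stein $2$-handle along the Legendrian torus knot $L$ of Figure~\ref{DTsurgery} and performing the Legendrian handle slide of $L_i$ over $L$ produces a Stein cobordism to a new lens space whose chain $\mathcal{C}'$ is obtained from $\mathcal{C}$ by replacing $L_i$ with a (possibly longer) piece whose positive and negative stabilization counts are each strictly smaller. When $\min(s_+,s_-)=1$ I instead invoke the torus-framed construction (Figure~\ref{reduciblesurgery}) to reach a connected sum $L(\mathcal{C}_s)\#L(\mathcal{C}_e)$, and then attach one further Stein $2$-handle killing the reducing sphere to land back in a single lens space, as in the worked example $L(12,7)\to L(6,5)$ from Section~\ref{cobordisms}. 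In both cases the components of $\mathcal{C}$ other than $L_i$ are carried along untouched, so the doubly-stabilized count drops by at least one at $L_i$ and is unchanged elsewhere, giving $N(\mathcal{C}')<N(\mathcal{C})$.

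The main obstacle will be the combinatorial bookkeeping that identifies the new chain after the handle slide. One must verify that, after the slide of $L_i$ over the handle attached to $L$, the resulting surgery diagram can be isotoped into a chain of Legendrian unknots (rather than some more general Legendrian link in a lens space), and that the stabilizations one reads off from this chain introduce no new doubly-stabilized components. Both points reduce to an explicit continued-fraction and front-diagram computation on a single component $L_i$, modeled on the two worked examples of Section~\ref{cobordisms}; Lemma~\ref{weaklysimple} guarantees that the Legendrian type of the slid-over $L_i$ is pinned down by its Thurston-Bennequin invariant and rotation number, so the stabilization counts on the pieces of $\mathcal{C}'$ are determined by a purely arithmetic computation from $a_i$, $s_+$, and $s_-$.
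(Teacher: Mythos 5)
There is a genuine gap in your inductive step, and it sits precisely at the point where you deviate from what the paper does.

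Your chosen induction measure is $N(\mathcal{C})$, the number of components stabilized both positively and negatively, and you assert that a single application of either case strictly decreases $N$. But in your Case~1, when $s_+\geq 2$ and $s_-\geq 2$, the torus-plus-one cobordism removes exactly a balanced length-$4$ continued fraction block, which reduces the stabilization counts at $L_i$ from $(s_+, s_-)$ to $(s_+-2, s_--2)$. If $\min(s_+,s_-)\geq 3$, the new component still carries both signs, so it is still in the doubly-stabilized set and $N(\mathcal{C}')=N(\mathcal{C})$. The sentence ``the doubly-stabilized count drops by at least one at $L_i$'' does not follow from ``the positive and negative stabilization counts are each strictly smaller,'' and the induction as stated does not close. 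The repair is easy --- induct instead on $\sum_i \min\bigl(s_+^{(i)}, s_-^{(i)}\bigr)$, which drops by $2$ in your Case~1 and by $1$ in your Case~2, and whose vanishing characterizes nicely stabilized chains --- but as written the argument is incorrect.

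Beyond the gap, your case split is more machinery than is needed. The paper's proof uses only the torus-framed construction of Figure~\ref{reduciblesurgery} (your Case~2), and it applies it to \emph{any} doubly-stabilized $L_i$, not only when $\min(s_+,s_-)=1$: one $2$-handle along $L$ yields a cobordism to $L(\mathcal{C}')\# L(\mathcal{C}'')$ with $\mathcal{C}'=\{L_1,\ldots,L_{i-1},U\}$ and $\mathcal{C}''=\{L_i',L_{i+1},\ldots,L_n\}$, where $U$ has maximal Thurston--Bennequin invariant and $L_i'$ is $L_i$ destabilized once positively and once negatively; a second $2$-handle re-fuses the summands into a single lens space with chain $\{L_1,\ldots,L_{i-1},U,U,L_i',L_{i+1},\ldots,L_n\}$. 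This grows the chain but lowers $\min(s_+,s_-)$ at $L_i$ by one, and iterating (the termination measure is again $\sum_i \min$) reaches a nicely stabilized chain without ever invoking the torus-plus-one construction. Using only this one move also sidesteps the combinatorial bookkeeping you flag as ``the main obstacle'' in Case~1 --- identifying the new chain after the torus-plus-one move really does require a nontrivial continued-fraction analysis to confirm that the shortened block stays a single block and that no new inconsistencies appear elsewhere, whereas the torus-framed move has a clean, explicit effect on the chain. I would recommend dropping Case~1 and using the torus-framed construction uniformly, which is both simpler and matches the paper's argument.
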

Notice that the theorem can give cobordisms from virtually overtwisted contact structures to universally tight ones. The first two constructions cannot do this. Such a cobordism was first pointed out to the authors by Marc Kegel. 

\noindent
{\bf Torus minus one framed surgeries.} We also know, see for example \cite{Rolfsen76}, that if $K$ sits on a torus $T$ in $M$, then the result of doing surgery on $K$ with framing one less than the framing coming from $T$ is the same as cutting $M$ along $T$ and re-glueing the tori with a positive Dehn twist about $K$. We will see in Section~\ref{koclassification} that in any tight contact structure on $L(p,q)$ one may Legendrian realize an $(a,b)$-torus knot with framing agreeing with the torus framing if $-p/q<b/a<0$. Adding a Stein handle to this knot will give a Stein cobordism from $L(p,q)$ to $L(pb^2+p(1-ab), q(1+ab)-pa^2)$. We discuss this further and how to determine the contact structure on the upper boundary in Section~\ref{setion:cobordism}; but we note here, that unlike the other torus knots surgeries, these surgeries will always take virtually overtwisted contact structures to virtually overtwisted ones.

\noindent{\bf Construction 4: Sporadic surgery diagrams.} Marc Kegel showed us the two cobordisms in Figures~\ref{R1} and~\ref{R2}. \begin{figure}[htb]{\tiny
\begin{overpic}
{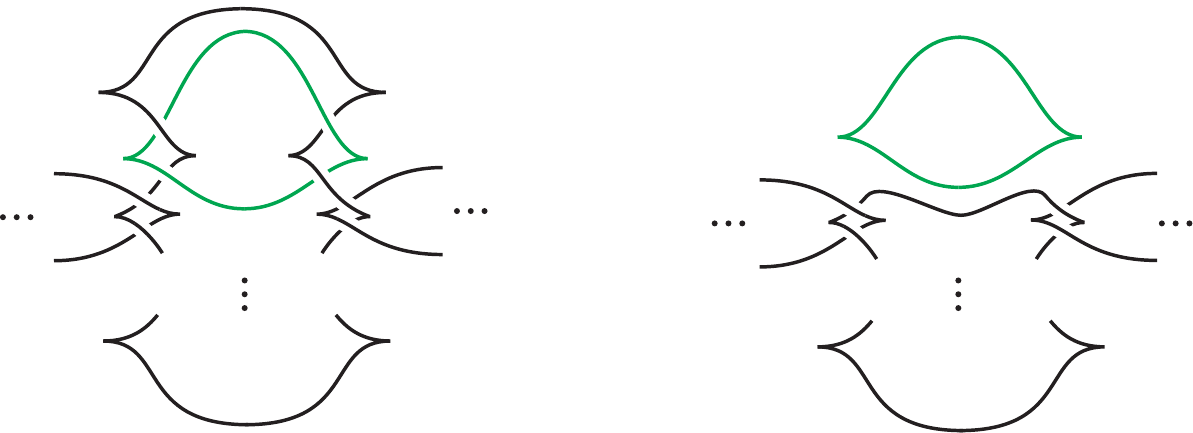}
\put(97, 114){$L_i$}
\put(119, 43){$L_{i+1}$}
\put(15, 42){$L_{i-1}$}
\put(67, 70){$L$}
\end{overpic}}
\caption{The left diagram shows a portion of the chain of Legendrian unknots giving a contact structure on a lens space together with a Legendrian knot $L$. The right diagram is the result of Legendrian surgery on $L$ together with a handle slide of $L_i$ over $L$. 
} 
\label{R1}
\end{figure}\begin{figure}[htb]{\tiny
\begin{overpic}
{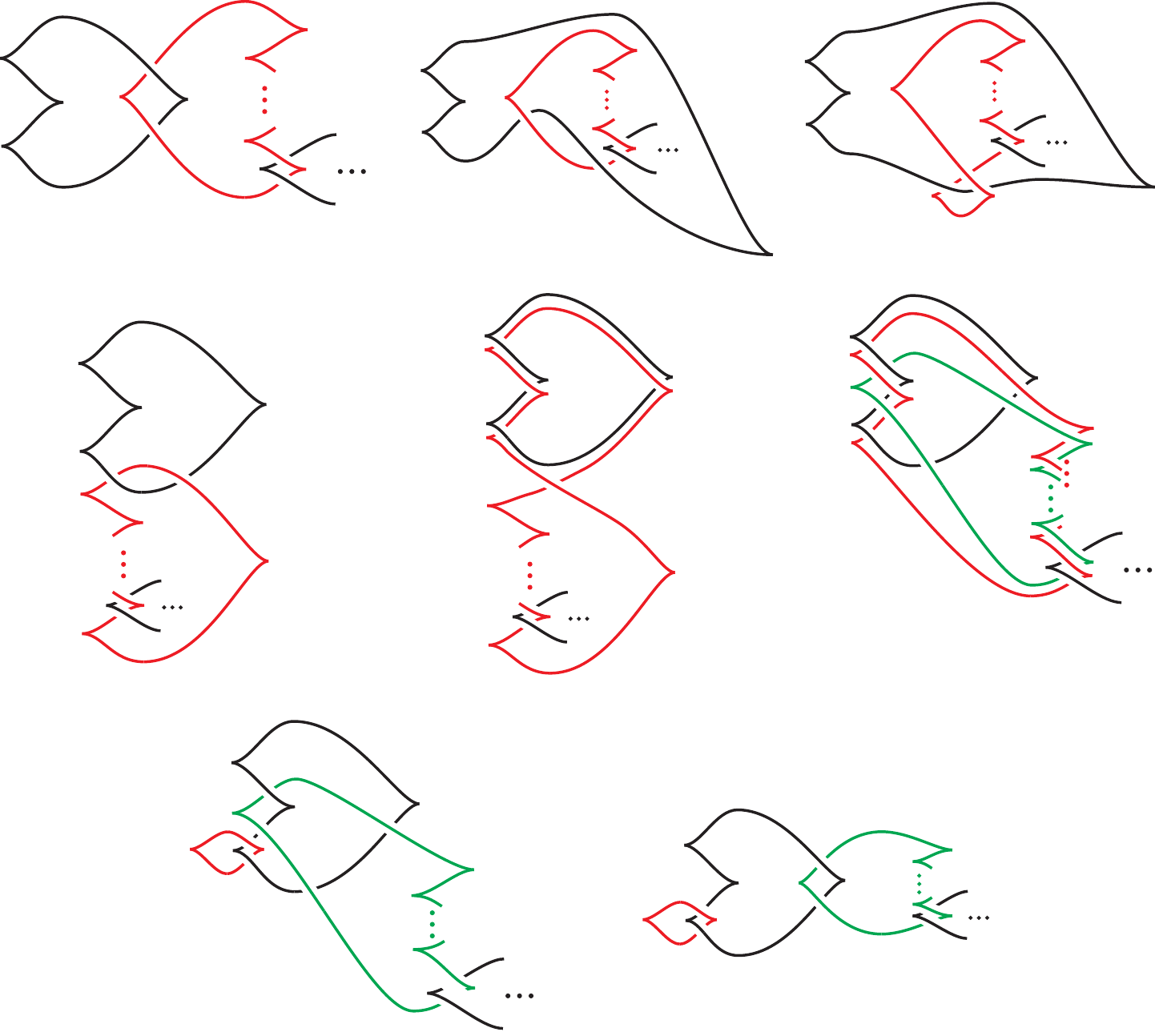}
\put(38, 366){$L_1$}
\put(119, 330){$L_{3}$}
\put(106, 370){$L_{2}$}
\put(353, 190){$L$}
\end{overpic}}
\caption{A Stein cobordism from the lens space described by the chain of Legendrian unknots in the upper left diagram to the one in the lower right digram that is given by attaching a Stein $2$--handle to $L$. Notice that $L$ has one less zig-zag that $L_2$ had in the upper left diagram.} 
\label{R2}
\end{figure}The first always gives a cobordism to a connected sum of lens spaces (to which one can add another $2$--handle to obtain a cobordisms to another lens space). The second cobordism can remove some inconsistent sub-chains from a surgery diagram. We more fully describe the handle attachment in Figure~\ref{R2}: the upper left diagram shows a portion of a chain of Legendrian unknots giving a contact structure on a lens space. In the next diagram a Type~II Legendrian Reidemeister move is performed by pushing the right most cusp of $L_1$ past the lower strand of $L_2$ followed by several Type~II and~III moves to push the upper strand of $L_2$ past the rest of the chain. In the next diagram, we perform a Type~I move to $L_2$ and Type~III to $L_1$ and $L_2$. In the first diagram on Row~2, we use Type~II and~III moves to push the zig-zags of $L_2$ and the rest of the chain outside of $L_1$ and then do a Type~I move to $L_2$. The next diagram is a handle slide of $L_2$ over $L_1$. In the last diagram of Row~2 we see that the zig-zags of $L_2$ and the rest of the chain are pushed to the upper right of $L_2$ and then a Type~I move is performed on $L_2$. We also see the knot $L$ in the lens space. Adding a Stein $1$--handle to $L$ and then sliding $L_2$ over $L$ yields the first figure on Row~3 with is clearly isotopic to the last figure on the row.

At this time we do not have a good geometric understanding of why these cobordisms exist as we do for Construction~3. Though one can see that the Legendrian knot $L$ in Figure~\ref{R1} is a cable of a torus knot, but we do not geometrically see why it has Thurston-Bennequin invariant allowing for a reducible surgery.  We end this discussion by asking if the second cobordism can be generalized. If so the answer to the following question could be YES.
\begin{question}
Given any virtually overtwisted contact structure on a lens space is there a Stein cobordism to a universally tight one?
\end{question}

We have given four ways to construct cobordisms and ask the following. 
\begin{question}
Do all Stein cobordisms between tight contact structures on lens spaces come from these four constructions?
\end{question}
We notice that a positive answer to this question would allow us to strengthen Theorem~\ref{length} to say that any cobordism between a tight contact structure on $L(p,q)$ and another such structure on the same lens space must be a piece of the symplectization of the contact structure.

{\bf Acknowledgements:} We are grateful to  Paolo Aceto, Paolo Lisca, Duncan McCoy, Hyunki Min, Anubhav Mukherjee, and JungHwan Park for useful conversations about the work in this paper. We also thank Marco Golla for giving a lot of valuable feedback on the first draft of the paper, including important references to the literature and a simplification of the proof of Theorem~\ref{length}. In addition, we are very grateful to Ken Baker who was instrumental in our understand of the knots in $S^3$ giving surgeries to lens spaces, Lenny Ng who was instrumental in helping us understand their Legendrian realizations, and Marc Kegel who showed us the surgery diagrams in Construction~4 above (and a surgery diagram that lead to our understanding in Construction~3).  We also thank the referee for many valuable comments and insights on the first drafts of the paper. Both of the authors were partially supported by NSF grant DMS-1906414.

\section{Continued fractions and contact structure on lens spaces}
In this section we will recall background results and establish some preliminary results. Specifically, in Sections~\ref{cf}, \ref{fgraph}, and~\ref{pathinfg} we recall continued fractions, the Farey graph, and paths in the Farey graph, respectively. This is all used in Section~\ref{koclassification} to recall the classification of tight contact structures on lens spaces and solid tori. A technical result about breaking up continued fractions will be established in Section~\ref{breakingup}.

In Section~\ref{legsimpofratunknots} we prove the Legendrian simplicity of rational unknots in lens spaces and establish the same result for connected sums of such knots. Section~\ref{lclass} recalls Lisca's classification of minimal symplectic fillings of universally tight contact structures on lens spaces, Section~\ref{alt} gives an alternate description of these fillings, and Section~\ref{combiningchains} shows how to build fillings of a lens space described by surgery on a chain from fillings of lens spaces described by sub-chains. Finally, in Section~\ref{menkedecompose} we recall Menke's theorem that tells us how to decompose symplectic fillings of virtually overtwisted contact structures on a lens space. 

\subsection{Continued fractions}\label{cf}
Given a collection of integers $a_1,\ldots, a_n$ we denote by $[a_1,\ldots, a_n]$ the continued fraction
\[
a_1-\cfrac{1}{a_2-\cfrac{1}{\cdots -\cfrac{1}{a_n}}}.
\]
The lens space $L(p,q)$ is by definition the result of $(-p/q)$--surgery on the unknot and one may always assume that $-p/q<-1$. There are uniquely determined integers $a_1,\ldots, a_n$ larger than $1$ such that $-p/q = [-a_1, \ldots, -a_n]$. Thus $p/q=[a_1, \ldots, a_n]$. In Lisca's classification of symplectic fillings of universally tight contact structure on lens spaces we will need to consider the continued fractions expansion of ${p}/{(p-q)}$. If we write $p/(p-q)$ in the continue fraction $[b_1,\ldots, b_m]$ with the $b_i>1$ then the $b_i$ and $a_j$ are related by Riemenschneider point diagrams \cite{Riemenschneider74}. Given the expansion of $[a_1,\ldots, a_n]$ we make a diagram consisting of dots. The first row has $a_1-1$ dots, then the $j^{th}$ row has $a_j-1$ dots with the first dot appearing right under the last dot of the previous row.  We now recover the $b_i$ by counting the dots in a column. Specifically $b_i-1$ is the number of dot in the $i^{th}$ column. In Figure~\ref{change} we see the Riemenschneider diagram for $84/19=[5,2,4,3]$ and easily compute $84/65=[2,2,2,4,2,3,2]$.

\begin{figure}[htb]
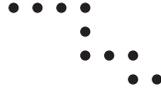
{\tiny
\begin{overpic}
{fig/changecf}
\end{overpic}}
\caption{The Riemenschneider point diagram for $84/19$.}
\label{change}
\end{figure}

\subsection{The Farey graph}\label{fgraph}
The Farey graph is a useful tool for keeping track of embedded essential curves on the torus and is used in the statement of the classification of tight contact structures on the thickened torus. 
\begin{figure}[htb]
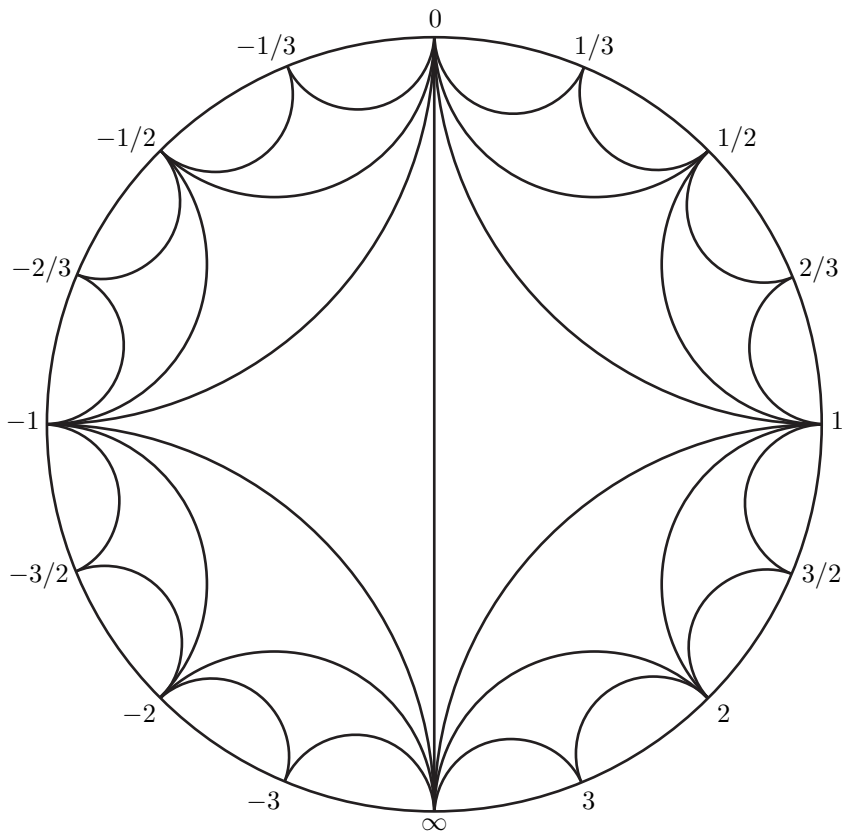
{\small
\begin{overpic}
{fig/farey}
\put(142, 2){$\infty$}
\put(145, 306){$0$}
\put(-15, 154){$-1$}
\put(297, 154){$1$}
\put(29, 43){$-2$}
\put(254, 43){$2$}
\put(19, 261){$-1/2$}
\put(254, 261){$1/2$}
\put(72, 296){$-1/3$}
\put(200, 296){$1/3$}
\put(-13, 212){$-2/3$}
\put(285, 212){$2/3$}
\put(-14, 96){$-3/2$}
\put(286, 96){$3/2$}
\put(76, 10){$-3$}
\put(203, 10){$3$}
\end{overpic}}
\caption{The Farey graph.}
\label{fareygraph}
\end{figure}
Recall that embedded essential curves on the torus are in one-to-one correspondence with the rational numbers (in lowest terms) union infinity. 

Given two rational numbers $a/b$ and $p/q$ in lowest terms we define their ``Farey sum" to be $a/b\oplus p/q= (a+p)/(b+q)$ we will also use $a/b\oplus n(p/q)$ to mean ``Farey summing" $p/q$ to $a/b$, $n$ times. 
Consider the unit disk in $\R^2$. Label the point $(0,1)$ by $0=0/1$ and $(0,-1)$ by $\infty=1/0$.  Now for a point on the unit circle with non-negative $x$-coordinate, if it is half-way between two labeled points, label it with the "Farey sum" of those two points and connect it to both of those by a hyperbolic geodesic (we can consider a hyperbolic metric on the unit disk). Iterate this process until all the positive rational numbers are a label on some point on the unit disk. Now do the same for the points with non-positive $x$-coordinate (except for $\infty$ use the fraction $-1/0$). See Figure~\ref{fareygraph}.

Notice that if we take two curves on $T^2$ of slopes $r$ and $s$, then they form a basis for $H_1(T^2)$ if and only if there is an edge in the Farey graph between them. 

{\em We note that the Farey graph can be constructed in many ways and not all papers use the same convention.}

We will use the notation $[a,b]$ to denote the region on the unit circle that is counterclockwise of $a$ and clockwise of $b$ with similar meanings for $(a,b]$, $[a,b)$, and $(a,b)$. 

We recall a well known lemma about continued fractions and the Farey graph. 
\begin{lemma}\label{rightandleft}
Given a continued fraction $x=[-a_1, \ldots, -a_n]$ with all $a_i\geq 2$, then 
$y=[-a_1,\ldots, -a_n+1]$ and $z=[-a_1,\ldots, -a_{n-1}]$ both have an edge in the Farey graph to $x$ and the former is clockwise of $x$ while the latter is counterclockwise of it. Moreover, there is an edge between the $y$ and $z$.
\end{lemma}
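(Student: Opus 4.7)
The plan is to realize the three fractions $x$, $y$, and $z$ as the images of $-a_n$, $-a_n+1$, and $\infty$ under a single element of $SL(2,\Z)$, and then invoke the fact that such elements preserve both the Farey edge relation and the cyclic order on the Farey circle.

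Concretely, I would let $M(a) = \begin{pmatrix}-a & -1\\ 1 & 0\end{pmatrix} \in SL(2,\Z)$, whose associated M\"obius transformation $r\mapsto -a-1/r$ is precisely the operation used to prepend one term to a continued fraction. A straightforward induction on $n$ shows that $[-a_1,\ldots,-a_n]$ equals the image of $-a_n$ under the product $M := M(a_1)M(a_2)\cdots M(a_{n-1})$. The same argument, starting from $-a_n+1$, shows that $y$ is the image of $-a_n+1$ under $M$. For $z$, one observes that $M(a_{n-1})$ sends $\infty$ to $-a_{n-1}$, so $z = [-a_1,\ldots,-a_{n-1}]$ is equally the image of $\infty$ under $M$ (with the convention that when $n=1$ the empty product $M$ is the identity and $z = \infty$).

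In the ``pre-$M$'' picture, the three fractions $1/0$, $-a_n/1$, and $(-a_n+1)/1$ are pairwise Farey-adjacent, as the three relevant $|ps-qr|$ determinants all equal $1$, and since $a_n\geq 2$ they appear in the clockwise cyclic order $(\infty, -a_n, -a_n+1)$ in Figure~\ref{fareygraph}. Now any $M\in SL(2,\Z)$ preserves the Farey edge relation (since $|ps-qr|$ is invariant under unimodular change of coordinates) and preserves the cyclic orientation on $\R\cup\{\infty\}$ (since the M\"obius derivative $(cr+d)^{-2}$ is positive on $\R$). Applying $M$ therefore carries the clockwise triangle of Farey neighbors $(\infty, -a_n, -a_n+1)$ to the clockwise triangle of Farey neighbors $(z, x, y)$, which is exactly the statement of the lemma.

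The only minor obstacle, more bookkeeping than substance, is reconciling the two orientation conventions: one must confirm that the ``increasing along $\R$'' orientation preserved by $SL(2,\Z)$ corresponds to ``clockwise'' in Figure~\ref{fareygraph}. This is immediate from inspection of the figure, where the negative-real arc is labeled $\infty, -3, -2, -3/2, -1, \ldots, 0$ in clockwise order.
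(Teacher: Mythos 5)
Your proof is correct. The paper itself states this lemma without proof, introducing it as ``a well known lemma,'' so there is no in-paper argument to compare against. Your strategy --- realizing $x$, $y$, $z$ as the images of $-a_n$, $-a_n+1$, $\infty$ under the single matrix $M = M(a_1)\cdots M(a_{n-1}) \in SL(2,\Z)$, verifying that the preimages $(\infty, -a_n, -a_n+1)$ form a clockwise-ordered Farey triangle (the three $2\times 2$ determinants are $\pm 1$, and $a_n \geq 2$ places them in the stated cyclic order), and then invoking the $SL(2,\Z)$-invariance of both the edge relation $|ps-qr|=1$ and the cyclic order on $\widehat{\R}$ --- is a clean and complete argument. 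It also transparently handles the $n=1$ edge case (empty product, $z=\infty$), and your final remark reconciling ``increasing along $\R$'' with ``clockwise'' in the paper's Figure~\ref{fareygraph} is the right convention check to make explicit given the paper's own caveat that Farey graph conventions vary.
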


\subsection{Paths in the Farey graph}\label{pathinfg}
When considering a minimal path in the Farey graph we will be interested in continued fraction blocks. 
Any minimal path in the Farey graph will be a sequence of continued fraction blocks. So below we will define a continued fraction block and how to transition from one to another as one traverses a path. 

A \dfn{continued fraction block of length $k$} is a sequence of $k$ ``half-maximal jumps" in the Farey graph. See Figure~\ref{CFB}. 
\begin{figure}[htb]
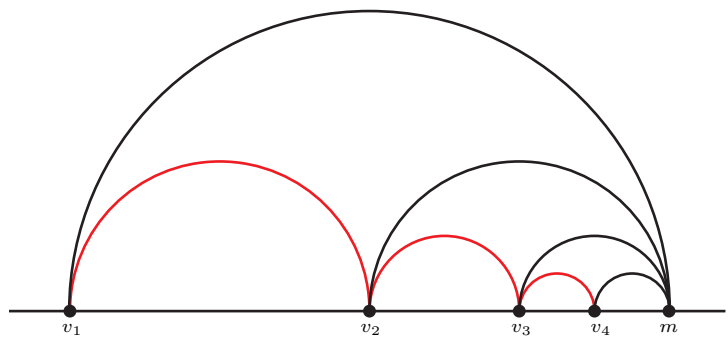
{\tiny
\begin{overpic}
{fig/CFBlock}
\put(20, -2){$v_1$}
\put(133, -2){$v_2$}
\put(190,-2){$v_3$}
\put(220, -2){$v_4$}
\put(246, -2){$m$}
\end{overpic}}
\caption{A continued fraction block of length $3$.}
\label{CFB}
\end{figure}
More precisely, we will consider paths going counterclockwise (in the figure this is represented as going left to right). If we have an edge from $v_1$ to $v_2$ then there is a unique vertex $m$ in the Farey graph that is outside of the region $[v_1,v_2]$ and has edges to both $v_1$ and $v_2$. Now inductively define $v_i$ to be the vertex in $[v_{i-1}, m)$ closest to $m$ with an edge to $v_{i-1}$, for $i\leq k+1$. The union of the $k$ edges between these vertices is the continued fraction block of length $k$. If $m<v_1<0$, then the $v_i$ are simply $v_1\oplus (i-1)m$.
Notice that each jump in the continued fraction block is a half-maximal jump in the sense that the only other jump that is larger than the one made (and does not return to $v_1$) would be to $m$. So the jump from any $v_l$ to $m$ would be ``maximal", while the jump to $v_{l+1}$ is ``half-maximal".  One could define a continued fraction block going clockwise as well. Notice that the notions are symmetric, that is if $v_1,\ldots, v_k$ are the vertices in a continued fraction block moving counterclockwise, then $v_k,\ldots, v_1$ is a continued fraction block moving clockwise. 

Now if one is traversing a path and at vertex $v_k$ in the path, the next jump is not part of the previous continued fraction block, then it will be the beginning of a new continued fraction block. If $v'$ would have been the next jump in the continued fraction block of which $v_k$ is a part, then the possible jumps from $v_k$ are $w^i_1=v_k\oplus iv'$. If the next jump is to $w^i_1$ then we say we started the continued fraction block that is $i$ down from the previous one. See Figure~\ref{NewCFB}.
\begin{figure}[htb]
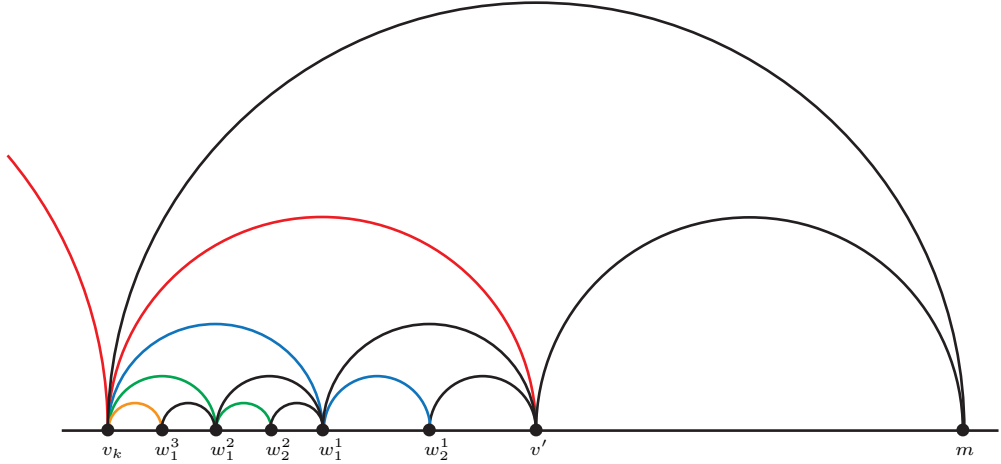
{\tiny
\begin{overpic}
{fig/NewCFBlock}
\put(36, -2){$v_k$}
\put(198, -2){$v'$}
\put(118,-2){$w^1_1$}
\put(158, -2){$w^1_2$}
\put(77,-2){$w^2_1$}
\put(98, -2){$w^2_2$}
\put(56,-2){$w^3_1$}
\put(359, -2){$m$}
\end{overpic}}
\caption{Transitioning between continued fraction blocks. The blue path is the continued fraction block $1$ down from the red. The green path is $2$ down from the red, and the orange is $3$ down from the red.}
\label{NewCFB}
\end{figure}
We make a simple observation.
\begin{observation}\label{exceptional}
If $[v_1,\ldots, v_k]$ is a continued fraction block that is followed by the continued fraction block $[w_1^i,\ldots, w^i_l]$ that is $i$ down from it, then the vertices with an edge to $v_k$ that are in $[w_1^i,v_{k-1}]$ are precisely $w^{i-1}_1, \ldots, w^1_1, w_1^0$, and $m$, where $w_1^0=v'$. We call these the \dfn{exceptional vertices} of the transition between continued fraction blocks. 
\end{observation}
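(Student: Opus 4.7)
The plan is to exploit the planarity of the Farey tessellation: distinct edges of the Farey graph are realized as non-crossing hyperbolic geodesics in the unit disk, so no two of them can meet transversely in the interior.

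First I would verify that each vertex in the claimed list is a neighbor of $v_k$. Writing $v_k = a/b$ and $v' = c/d$ (so $|ad-bc| = 1$), the recursive description of the new blocks (the pivot of the ``$j$-down'' block is $w_1^{j-1}$) gives $w_1^j = v_k \oplus w_1^{j-1}$, and an easy induction starting from $w_1^0 = v'$ yields $w_1^j = (ja+c)/(jb+d)$ for $j \geq 0$. The determinant check $|a(jb+d) - b(ja+c)| = |ad-bc| = 1$ confirms that each $w_1^j$ is adjacent to $v_k$, and the analogous determinant shows each consecutive pair $w_1^{j+1}, w_1^j$ is adjacent. The vertex $m$ is adjacent to $v_k$ by the definition of the original continued fraction block, and $v' = w_1^0$ is adjacent to $m$ since $\{v_k, v', m\}$ is a Farey triangle.

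Next I would rule out any other neighbor of $v_k$ in $[w_1^i, v_{k-1}]$ using planarity. The triangle $\{v_{k-1}, v_k, m\}$ is a face of the Farey tessellation, so the chord $v_{k-1}\text{--}m$ is a Farey edge; any neighbor of $v_k$ on the arc from $m$ counterclockwise around to $v_{k-1}$ (the arc not containing $v_k$) would give an edge from $v_k$ crossing this chord, which is impossible. Applying the same planarity argument with the chords $w_1^i\text{--}w_1^{i-1}$, $w_1^{i-1}\text{--}w_1^{i-2}$, $\ldots$, $w_1^1\text{--}w_1^0$, $v'\text{--}m$ (all Farey edges by the adjacencies verified above) rules out any extra neighbor of $v_k$ strictly between two consecutive vertices of the list on the remaining arc from $w_1^i$ counterclockwise to $m$.

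Combining these two steps, the neighbors of $v_k$ inside $[w_1^i, v_{k-1}]$ are exactly the claimed vertices $w_1^{i-1}, \ldots, w_1^1, w_1^0, m$. The main obstacle is only book-keeping rather than any deep difficulty: for each potential interloping neighbor of $v_k$ one must identify the correct Farey chord from this list that separates it from $v_k$, and verify adjacency of each consecutive pair in the list so that those chords are in fact Farey edges.
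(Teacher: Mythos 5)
The paper states this as an ``Observation'' with no proof at all, so there is no argument to compare against; your proof fills a genuine gap, and it does so by the natural and essentially inevitable route, namely planarity of the Farey tessellation (non-crossing of geodesics). Your argument is correct: verifying adjacency of each consecutive pair in the list $w_1^{i-1},\ldots,w_1^0,m,v_{k-1}$ by the mediant/determinant computation, and then using the chord $v_{k-1}$--$m$ together with the chords $w_1^{j}$--$w_1^{j-1}$ and $v'$--$m$ to fence off the arc $[w_1^i,v_{k-1}]$ into pieces that no further edge from $v_k$ can reach, is exactly how one would justify the claim. One small point you should make explicit (you flag it as ``book-keeping''): you need to know that $w_1^i, w_1^{i-1},\ldots,w_1^0=v',m,v_{k-1}$ appear on the circle in that cyclic order, so that those chords actually partition the arc; this follows from the same mediant computations, since each $w_1^{j}$ is the mediant of $v_k$ and $w_1^{j-1}$ and hence lies strictly between them. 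You should also note that the paper's displayed formula $w_1^i=v_k\oplus i v'$ is, under the paper's own convention for $\oplus n(\cdot)$, inconsistent with what the observation requires; your recursion $w_1^j=v_k\oplus w_1^{j-1}$ with $w_1^0=v'$, giving $w_1^j=(ja+c)/(jb+d)$, is the correct one (one can check on $[-3,-2,-3]=-12/5$ that the transition vertex must be $-7/3$, not $-8/3$), so in effect your proof also corrects a typo in the paper's setup.
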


It is clear that any minimal path in the Farey graph is a continued fraction block, followed by one that is $i$ down from it, and so on. 

We now explain the reason for the terminology ``continued fraction block". Suppose $-p/q<-1$ has the continued fraction decomposition $[-a_1,\ldots, -a_n]$ where each $a_i\geq 2$. By Lemma~\ref{rightandleft} we see that $[-a_1,\ldots, -a_n+1]$ is clockwise of $[-a_1,\ldots, -a_n]$ and has an edge to it. It is a maximal jump towards $0$ that has an edge back to $[-a_1,\ldots, -a_n]$. If we continue to add one to the last element on the continued fraction, subject to the relation that $[-a_1,\ldots, -a_k, -1]=[-a_1,\ldots, -a_k+1]$, we will get a sequence of vertices realizing a minimal path from $-p/q$, clockwise, to $0$. In particular, notice that there are $|a_n-1||a_1-1|\prod_{i=2}^{n-1} |a_i-2|$ jumps in this path, if $n>1$ and $a_1$ jumps if $n=1$. This should be clear since the jumps associated to $[-a_1,\ldots, -a_k]$ go from $[-a_1,\ldots, -a_k+1]$ to $[-a_1,\ldots, -1]$ if $k \not= 1$ or $n$. For $k=1$ or $n$ we have a similar computation. 

\begin{construction}[Building paths from continued fractions]\label{cftopaths}
Turning this around we can get from $0$ to $-p/q$ by moving counterclockwise using the following algorithm: First assume that none of the $a_i$ are $2$ and $n>1$. Starting at $0$ make a jump to $-1$ and continue in a continued fraction block of length $a_0-1$, then move to the continued fractions block one down from this one and make $a_2-2$ jumps, continue in this manner until the last continued fraction block which will be of length $a_n-1$. Now suppose there are some $2$'s among the $a_i$. In particular suppose between $a_k$ and $a_{k+l}$ all the $a_i$ are $2$'s but $a_k$ and $a_{k+1}$ are not $2$. Then when transitioning from the continued fraction block associate to $a_k$ we will start a continued fraction block $l$ down from it corresponding to $a_{k+l}$. 

So we see from the above construction that a continued fraction block corresponds to the vertices $[-a_1,\ldots, -a_k], [-a_1,\ldots, -a_k+1], \ldots, [-a_1,\ldots, -1]$, indicating the reason for the name. 
\end{construction}

We also notice how a continued fraction $[-a_1,\ldots, -a_n]$ can describe a path in the Farey graph that does not start at $0$. Suppose we choose a seed vertex $v_s$ and target vertex $v_t$ with an edge to $v_s$. For convenience, we assume that the rational numbers corresponding to $v_s$ and $v_t$ are negative. We will now describe a minimal path in $[v_s,v_t]$ based on $[-a_1,\ldots, -a_n]$. Assuming none of the $a_i$ are $2$, we start the path with an edge from $v_s$ to the unique vertex in $(v_s,v_t]$ with an edge to both endpoints and then continue with a continued fraction block of length $a_1-1$ then move to the continued fractions block one down from this one and make $a_2-2$ jumps, continue in this manner until the last continued fraction block which will be of length $a_n-1$. If there are $2$'s then we simply proceed as described above (notice that the description above involving the $2$'s did not involve anything about the path starting at $0$). Denote this path by $P^{v_s}_{v_t}[-a_1,\ldots, -a_n]$. Notice that $P^0_\infty[-a_1,\ldots, -a_n]$ is just the shortest path from $0$ to $[-a_1,\ldots, a_n]$. 
\begin{observation}
There is a change of basis for $\Z^2$ taking $P^{v_s}_{v_t}[-a_1,\ldots, -a_n]$ to the minimal clockwise path from $0$ to $-p/q$.
\end{observation}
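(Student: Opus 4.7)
The plan is to exploit the standard action of $GL(2,\Z)$ on the Farey graph: identifying a reduced fraction $p/q$ (and $\infty=1/0$) with the primitive integer vector $\pm (p,q)^{T}$, a matrix $M\in GL(2,\Z)$ acts by $M\cdot (p,q)^{T}$, and two vertices span an edge exactly when their primitive vectors form a $\Z$-basis of $\Z^{2}$. Thus this action preserves adjacency in the Farey graph. Since $v_s$ and $v_t$ are joined by an edge, the primitive vectors representing them form a basis of $\Z^{2}$, so there exists $M\in GL(2,\Z)$ sending $v_s\mapsto 0$ and $v_t\mapsto \infty$. By composing with a reflection in $GL(2,\Z)$ if necessary, we may further arrange that $M$ carries the region $[v_s,v_t]$ (the arc used to define $P^{v_s}_{v_t}$) onto the clockwise side of the edge from $0$ to $\infty$ where the minimal clockwise path from $0$ to $-p/q$ is built in Construction~\ref{cftopaths}.

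With such $M$ in hand, the plan is to verify by induction on the length of the path that $M$ sends $P^{v_s}_{v_t}[-a_1,\ldots,-a_n]$ to the minimal clockwise path from $0$ to $-p/q$. The key observation is that both constructions are defined using only data intrinsic to the combinatorial Farey graph: which pairs of vertices are adjacent, which side of an oriented edge a given vertex lies on, and the ``half-maximal jump'' condition determined from these. All of this data is preserved by the $GL(2,\Z)$ action, and the side convention is handled by the choice of $M$. Consequently the vertex selected at each stage of the path is determined by local combinatorial data that commutes with $M$.

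The main thing to check carefully is the base case: that $M$ sends the initial vertex of $P^{v_s}_{v_t}$ (the unique vertex in $(v_s,v_t]$ adjacent to both $v_s$ and $v_t$) to the first vertex of the standard path (the one adjacent to both $0$ and $\infty$ on the correct side). This reduces to the orientation condition on $M$, which was arranged in its construction. After the base case the inductive step is automatic, since each continued fraction block is built by choosing, at each stage, the vertex adjacent to the previous one that is closest to a fixed pivot $m$, and the transition to a new block is similarly governed by adjacency and relative position within the graph---properties manifestly preserved by any combinatorial automorphism of the Farey graph. This verifies that $M$ realizes the claimed change of basis.
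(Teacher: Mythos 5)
Your proposal is correct and is essentially the same argument as the paper's, which observes that the change of basis sending $v_s\mapsto 0$, $v_t\mapsto\infty$, and $v_s\oplus v_t\mapsto -1$ matches the two paths because both are built from the same sequence of combinatorial jumps in the Farey graph, which the $GL(2,\Z)$ action preserves. Your extra care in pinning down the orientation (composing with a reflection so that $[v_s,v_t]$ lands on the clockwise side) is the explicit version of the paper's requirement that $v_s\oplus v_t$ go to $-1$, and the inductive check of half-maximal jumps is simply spelling out the paper's phrase ``same number and types of jumps.''
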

This should be clear as there is a change of basis taking $v_s$ to $0$, $v_t$ to $\infty$, and $v_s\oplus v_t$ to $-1$ and then both paths are described by the same number and types of jumps (which is of course preserved by the change of basis). 

\subsection{Breaking up a continued fraction}\label{breakingup}
Given a continued fraction $[-a_1,\ldots, -a_n]$ for $-p/q<-1$ with $a_i\geq 2$, we see that the lens space $L(p,q)$ is described by surgery on the linear chain $C$ of unknots $U_1,\ldots, U_n$ shown in Figure~\ref{chain}, where the unknot $U_i$ has framing $-a_i$. If we remove $U_k$ from the chain $C$ we will get two other chains $C_s$ and $C_e$ (one might be empty if the removed unknot was on the end of the chain). In this section we would like to understand the lens spaces corresponding to these chains in terms of the continued fraction expansion of $-p/q$. 

To this end we establish some notation. Given $T^2$ with some coordinates chosen on $H_1(T^2)$, we can consider $T^2\times [0,1]$ and collapse the curves of slope $a/b$ on $T^2\times \{0\}$ and $r/s$ on $T^2\times \{1\}$. This will give a lens space and we call $a/b$ the lower meridian of the lens space and $r/s$ the upper meridian. In particular, the lens space with upper meridian $0$ and lower meridian $-p/q$ is $L(p,q)$. 

\begin{lemma}\label{bucfb}
With the notation above, suppose that $a_k$ is not $2$ and $k\not=1$ or $n$. Let $v_1,\ldots, v_{k-1}$ be the vertices associated to the $a_k$ continued fraction block and $m$ is the maximal jump from these, see Figure~\ref{CFB}. 

The lens space with upper meridian $0$ and lower meridian $m$ is $L(p',q')$ where $-p'/q'$ is given by $[-a_1,\ldots, -a_{k-1}]$ and the lens space with upper meridian $m$ and lower meridian $-p/q$ is $L(p'',q'')$ where $-p''/q''= [-a_{k+1}, \ldots, -a_n]$. That is, the two lens spaces we get are the ones described by the two chains one gets by removing $U_k$ from $C$. 

If $k=n$ then the first lens space has the same description and the second one is $S^3$ and similarly for $k=1$. 
\end{lemma}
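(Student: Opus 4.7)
The proof is an application of the correspondence between continued fractions and paths in the Farey graph, combined with the change of basis observation at the end of Section~\ref{pathinfg}. Recall that the lens space determined by an upper meridian $u$ and a lower meridian $l$ is identified with $L(p,q)$ by any $SL_2(\mathbb{Z})$ basis change sending $u\mapsto 0$ and $l\mapsto -p/q$. The plan is to identify $m$ explicitly in continued fraction terms and then exhibit an appropriate basis change for each of the two claims.

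The key structural observation is that the $k$-th continued fraction block appearing in the canonical path $P^0_\infty[-a_1,\ldots,-a_n]$ consists of the vertices $v_j=[-a_1,\ldots,-a_{k-1},-(j+1)]$ for $j=1,\ldots,a_k-1$, each of which is Farey-adjacent to $\widehat m:=[-a_1,\ldots,-a_{k-1}]$. Indeed, appending one further entry to a continued fraction always produces a Farey neighbor (successive convergents of a continued fraction are Farey-adjacent), so $\widehat m$ and each $v_j$ share an edge. Since the other common neighbor of $v_1$ and $v_{a_k-1}$ is their Farey sum, which lies inside the short arc they span, $\widehat m$ must be the unique exterior common neighbor, namely the apex $m$. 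Hence $m=[-a_1,\ldots,-a_{k-1}]$.

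With $m$ identified, the first claim is immediate: by definition the lens space with upper meridian $0$ and lower meridian $m=[-a_1,\ldots,-a_{k-1}]$ is $L(p',q')$ with $-p'/q'=[-a_1,\ldots,-a_{k-1}]$. For the second claim, set $v_t:=v_1=[-a_1,\ldots,-a_{k-1},-2]$ and observe that $m\oplus v_t=[-a_1,\ldots,-a_k]$, so the edge $m\to[-a_1,\ldots,-a_k]$ is precisely the initial jump of $P^m_{v_t}[-a_{k+1},\ldots,-a_n]$. Concatenating this initial edge with the tail of the canonical path $P^0_\infty[-a_1,\ldots,-a_n]$ beyond $[-a_1,\ldots,-a_k]$ produces the full path $P^m_{v_t}[-a_{k+1},\ldots,-a_n]$ from $m$ to $-p/q$, since both are driven by the same sequence of continued fraction blocks for $a_{k+1},\ldots,a_n$. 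By the change of basis observation there exists $\phi\in SL_2(\mathbb{Z})$ with $\phi(m)=0$ and $\phi(v_t)=\infty$ carrying this path to the standard path $P^0_\infty[-a_{k+1},\ldots,-a_n]$, whose endpoint is $[-a_{k+1},\ldots,-a_n]$. Therefore $\phi(-p/q)=-p''/q''$ with $-p''/q''=[-a_{k+1},\ldots,-a_n]$, so the lens space with upper meridian $m$ and lower meridian $-p/q$ is $L(p'',q'')$.

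The boundary cases $k=1$ and $k=n$ reduce to the same argument, with the convention that one of the sub-chains is empty and yields $S^3$ (corresponding to a single Farey edge joining the upper and lower meridians). The main technical hurdle is the identification $m=[-a_1,\ldots,-a_{k-1}]$, which requires a careful reading of the block structure of the canonical path; once this is in hand, each claim follows by a direct application of the change of basis observation.
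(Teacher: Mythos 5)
The overall strategy you use — first pin down $m$ via Farey-adjacency, then invoke the change-of-basis observation at the end of Section~\ref{pathinfg} — is the same strategy as the paper's proof, which cites Lemma~\ref{rightandleft} to get $m=[-a_1,\ldots,-a_{k-1}]$ and then changes basis so that $m\mapsto 0$ and $m\oplus v_{k-1}\mapsto -\infty$ (where $v_{k-1}$ is the vertex of the block nearest $m$). So the blueprint is right, but a key Farey-sum identity you rely on is false, and this breaks the computation of $-p''/q''$.

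You set $v_t:=v_1=[-a_1,\ldots,-a_{k-1},-2]$ and claim $m\oplus v_t=[-a_1,\ldots,-a_k]$. This is incorrect unless $a_k=3$. Indeed, by Lemma~\ref{rightandleft} the Farey sum of $[-a_1,\ldots,-a_{k-1}]$ with $[-a_1,\ldots,-a_{k-1},-2]$ is $[-a_1,\ldots,-a_{k-1},-3]$: Farey-summing with $m$ increments the last entry by one, not from $-2$ all the way to $-a_k$. The vertex $w$ of the block satisfying $m\oplus w=[-a_1,\ldots,-a_k]$ is the \emph{other} end of the block, $w=[-a_1,\ldots,-a_k+1]$, since $[-a_1,\ldots,-a_k]$ is the Farey sum of its two "parents" $[-a_1,\ldots,-a_{k-1}]$ and $[-a_1,\ldots,-a_k+1]$. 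In your indexing $v_j=[-a_1,\ldots,-a_{k-1},-(j+1)]$ this is $v_{a_k-2}$, not $v_1$, so the two coincide only when $a_k=3$. Consequently the basis change sending $m\mapsto 0$ and your $v_1\mapsto\infty$ does not send $-p/q$ to $[-a_{k+1},\ldots,-a_n]$. For a concrete failure take $[-3,-4,-3]$, $k=2$: then $m=-3$, your $v_1=[-3,-2]=-5/2$, and the element of $SL_2(\mathbb{Z})$ with $-3\mapsto 0$, $-5/2\mapsto\infty$ (e.g.\ $\left(\begin{smallmatrix}1&3\\-2&-5\end{smallmatrix}\right)$) sends $-p/q=-30/11$ to $3/5$, whereas the correct answer is $[-a_3]=-3$. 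The paper's basis change $-3\mapsto 0$, $[-3,-4]=-11/4\mapsto\infty$ (e.g.\ $\left(\begin{smallmatrix}1&3\\-4&-11\end{smallmatrix}\right)$) does send $-30/11\mapsto -3$. So the fix is to send $m\mapsto 0$ and $[-a_1,\ldots,-a_k]\mapsto\infty$ (equivalently $m\oplus v_{\text{last}}\mapsto\infty$ with $v_{\text{last}}=[-a_1,\ldots,-a_k+1]$); with that correction the rest of your argument goes through and agrees with the paper's.
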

\begin{proof}
By Lemma~\ref{rightandleft} we see that $m=[-a_1,\ldots, -a_{k-1}]$ and so the claim about $L(p'.q')$ is clear. 

The change of basis sending $m$ to $0$ and $m\oplus v_{k-1}$ to $-\infty$ will send $v_{k-1}$ to $-1$ and thus we see that $P^m_{m\oplus v_{k-1}}[-a_{k+1}, \ldots, -a_n]$ is sent to a path from $0$ to $[-a_{k+1}, \ldots, -a_n]$ and so the second lens space is as claimed. 

When $k=n$ then notice that $[-a_1,\ldots, -a_n]$ has an edge to $m$ and so the second lens space is $S^3$. We can similarly argue the case $k=1$. 
\end{proof}

\begin{lemma}\label{bucfb2}
With the notation above, suppose that between $a_k$ and $a_{k+i}$ all the $a_j$ are $2$'s but $a_k$ and $a_{k+i}$ are not $2$. That is the continued fraction blocks corresponding to $a_{k+i}$ is $i$ down from the one corresponding to $a_k$. We consider the exceptional vertices $w^{i-1}_1, \ldots, w_1^0$ from Observation~\ref{exceptional} (notice we alraedy understand the exceptional vertex $m$ by Lemma~\ref{bucfb}). 

The lens space with upper meridian $0$ and lower meridian $w_1^j$ is $L(p',q')$ where $-p'/q'$ is given by $[-a_1,\ldots, -a_{k+j}]$ and the lens space with upper meridian $w_1^j$ and lower meridian $[-a_1,\ldots, -a_n]$  is $L(p'',q'')$, where$-p''/q''= [-a_{k+j+2}, \ldots, -a_n]$. That is, the two lens spaces we get are the ones described by the two chains one gets by removing $U_{k+j+1}$ from $C$. 

As in Lemma~\ref{bucfb} if $k=1$ or $k+i=n$ then one of the lens spaces is $S^3$ and the other is as described above. 
\end{lemma}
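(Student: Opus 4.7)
The plan is to extend the change-of-basis argument from Lemma~\ref{bucfb} to the exceptional vertices produced by the skipped $-2$ entries. I would proceed in three steps: identify each $w_1^j$ as a continued fraction, read off the first lens space, and handle the second lens space by a change of basis.

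First I would show $w_1^j = [-a_1, \ldots, -a_{k+j}]$ for $0 \le j \le i-1$ by induction on $j$. The base case $j = 0$ is $v' = w_1^0 = [-a_1, \ldots, -a_k]$, which follows from the explicit listing of the $a_k$ continued fraction block at the end of Construction~\ref{cftopaths}. For the inductive step, $w_1^{j+1}$ is the Farey mediant of $w_1^j$ and $v_k = [-a_1, \ldots, -a_k+1]$. By Lemma~\ref{rightandleft}, the vertex $[-a_1, \ldots, -a_{k+j}, -2]$ is Farey-adjacent both to $[-a_1, \ldots, -a_{k+j}] = w_1^j$ and to $[-a_1, \ldots, -a_{k+j}+1]$; under the hypothesis $a_{k+1} = \cdots = a_{k+j} = 2$ the latter collapses via iterated application of $[\ldots, -2, -1] = [\ldots, -1] = [\ldots + 1]$ down to $[-a_1, \ldots, -a_k+1] = v_k$. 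So $w_1^j \oplus v_k = [-a_1, \ldots, -a_{k+j+1}]$, completing the induction. The lens space with upper meridian $0$ and lower meridian $w_1^j$ is then $L(p', q')$ with $-p'/q' = [-a_1, \ldots, -a_{k+j}]$ by definition.

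For the second lens space I would imitate the change-of-basis step in Lemma~\ref{bucfb}. Let $\phi \in SL_2(\Z)$ be the change of basis with $\phi(w_1^j) = 0$ and $\phi(w_1^{j+1}) = \infty$; this exists and is essentially unique because $w_1^j$ and $w_1^{j+1}$ are Farey neighbors. Tracking the Farey triangle $(w_1^j, w_1^{j+1}, v_k)$ gives $\phi(v_k) = -1$. I would then verify that $\phi$ sends the Farey path $w_1^j \to v_k \to w_1^i \to w_2^i \to \cdots \to -p/q$ in the original graph (which exists because $v_k$ is Farey-adjacent to both $w_1^j$ and $w_1^i$, and the remainder traces the continued fraction blocks at positions $k+i, \ldots, n$) to the standard path from $0$ to $[-a_{k+j+2}, \ldots, -a_n]$ described by Construction~\ref{cftopaths}. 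In particular $\phi(-p/q) = [-a_{k+j+2}, \ldots, -a_n]$, giving the second lens space as claimed. The edge cases $k = 1$ and $k+i = n$, where one of the sub-chains is empty and the corresponding lens space is $S^3$, are handled as in Lemma~\ref{bucfb}.

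The main obstacle will be the bookkeeping in the final step: tracking how the continued fraction block at position $k+i$ (whose maximal jump is $w_1^{i-1}$) and the subsequent blocks at positions $k+i+1, \ldots, n$ behave under $\phi$, and verifying that the image of the entire path matches the standard construction for $[-a_{k+j+2}, \ldots, -a_n]$ -- which for $j < i-1$ begins with $i-1-j$ copies of $-2$ mirroring the skipped positions. A useful consistency check is that $j = i-1$ reduces to a direct application of Lemma~\ref{bucfb} at position $k+i$, which is legitimate because $a_{k+i} \ne 2$.
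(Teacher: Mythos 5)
Your proof is correct and follows the paper's own change-of-basis strategy essentially unchanged: identify each $w_1^j$ as a truncated continued fraction, then apply the change of basis sending $(w_1^j,w_1^{j+1})$ to $(0,\infty)$ to read off the second lens space. You supply two details the paper leaves implicit: the induction establishing $w_1^j=[-a_1,\ldots,-a_{k+j}]$ (the paper only says this is ``clear from the construction''), and the identification of the third triangle vertex. On the latter point you are in fact more careful than the paper's text: the vertex that must be tracked to $-1$ is the one adjacent to both $w_1^j$ and $w_1^{j+1}$ for every $j$, and since $w_1^{j+1}=w_1^j\oplus v_k$, that vertex is $v_k=[-a_1,\ldots,-a_k+1]$, as you track; the vertex $[-a_1,\ldots,-a_k]$ named in the paper's proof equals $v'=w_1^0$, which is adjacent to $w_1^1$ but not to $w_1^j$ for $j\ge 2$.
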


\begin{proof}
The identification of $-p'/q'$ is clear from the construction of a path in the Farey graph starting at $0$ from a continued fraction.

To identify $-p''/q''$ consider the change of basis sending $w^j_1$ to $0$ and $w^{j+1}_1$ to $-\infty$. This change of basis will clearly send $[-a_1, \ldots, -a_k]$ to $-1$ (since this point has an edge to both $w^j_1$ and  $w^{j+1}_1$ and in contained in $[w^j_1, w^{j+1}_1]$). We know $-p/q$ is described by $P^{w^j_1}_{w^{j+1}_1} [-a_{k+j+2}, \ldots, -a_n]$ and thus under the above coordinate change is sent to $[-a_{k+j+2}, \ldots, -a_n]$, as claimed. 
\end{proof}

\subsection{Contact structures on lens spaces}\label{koclassification}
Here we recall the classification of tight contact structures on lens spaces given by Giroux \cite{Giroux00} and Honda \cite{Honda00a}. We will use Honda's notation and also recall classification on $T^2\times [0,1]$ and $S^1\times D^2$. 

Given a minimal path in the Farey graph, we call it a decorated path if each edge has been given a $+$ or $-$ sign. Two decorations on a path are called equivalent if they each give the same number of $+$ signs to each continued fraction block. 
\begin{theorem}
Let $\Gamma_i$ be a pair of parallel homologically essential curves on $T^2\times\{i\}$  of slope $r_i$, $ i=0,1$. Let $P$ be a minimal path in the Farey graph starting at $r_0$ moving clockwise and ending at $r_1$. The minimally twisting tight contact structures on $T^2\times [0,1]$ with dividing curves $\Gamma_0\cup \Gamma_1$ are in one-to-one correspondence with equivalence classes of decorations on $P$. 
\end{theorem}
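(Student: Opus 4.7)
The plan is to use convex surface theory and Honda's theory of basic slices. After putting the boundary tori into convex form so that each $\Gamma_i$ consists of two parallel curves of slope $r_i$, the first step is to \emph{discretize} the interior: produce a sequence of pairwise disjoint convex tori $T^2 \times \{0 = t_0 < t_1 < \cdots < t_k = 1\}$ whose dividing slopes realize the vertices of the chosen minimal clockwise path $P$. Such a discretization exists because, by the Legendrian Realization Principle together with repeated bypass extraction, one can always replace a convex torus by a nearby convex torus whose dividing slope jumps along any prescribed edge of the Farey graph emanating from the current slope. Minimal twisting ensures that one never needs to leave the arc $[r_0,r_1]$ of the Farey graph in this process.

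For existence, each layer $T^2 \times [t_i, t_{i+1}]$ is a \emph{basic slice} (its boundary slopes form an edge of the Farey graph), and basic slices are classified: each edge supports exactly two tight, minimally twisting contact structures, distinguished by a sign $\pm$ computed via the relative half-Euler class. Given $P$ and any decoration, stacking the corresponding basic slices produces a candidate contact structure realizing the decoration; tightness follows from Honda's gluing theorem, which guarantees that stacking basic slices whose slopes traverse a minimal arc of the Farey graph never produces an overtwisted disk.

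For uniqueness, the heart of the argument, and the main obstacle, is the \emph{shuffling lemma}: within a single continued fraction block two adjacent basic slices of opposite signs are interchangeable, so only the total number of $+$ signs in each block is an invariant; across a continued fraction block boundary, such an exchange is impossible. To prove the shuffleability, apply an $SL(2,\Z)$ change of basis so the slopes in the block become $\{0,-1,-2,\ldots\}$; then a $\pm$ swap reduces to an explicit cut-and-paste isotopy of convex tori in $T^2\times[0,1]$ with integer boundary slopes, using that the intermediate convex torus of slope $m$ (the maximal jump) can be inserted and removed freely inside such a block. Rigidity across a block boundary follows from the fact that attempting the analogous shuffle would force a convex torus with dividing slope outside the monotone sub-arc of the Farey graph between the two endpoints, violating minimal twisting or tightness.

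Finally, to confirm that the signed count in each continued fraction block is a genuine contact invariant (and not an artefact of the chosen discretization), I would evaluate the relative half-Euler class of $\xi$ against the convex annuli obtained by cutting suitable meridional disks in the model, and observe that the pairing depends on the decoration exactly through the equivalence relation. The independence from the choice of minimal path $P$ between $r_0$ and $r_1$ then follows because any two minimal paths determine discretizations whose associated decorations match block-by-block under the natural identification of continued fraction blocks, completing the bijection.
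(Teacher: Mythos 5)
The paper does not prove this statement; it is Theorem 2.1 of Honda's \emph{On the classification of tight contact structures.\ I} (also due to Giroux), quoted in Section~2.4 with the citations [Giroux00], [Honda00a]. Your proposal is a reasonable reconstruction of Honda's original argument — basic slice factorization along a minimal path, the shuffling lemma within a continued fraction block, and the relative half-Euler class as the invariant separating inequivalent decorations — so the overall architecture is sound.

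Two points merit a caution, though neither is fatal. First, you cannot literally ``replace a convex torus by a nearby convex torus whose dividing slope jumps along any prescribed edge.'' Bypass attachment does not let you choose the destination vertex; it is the Imbalance Principle that produces \emph{a} bypass, and one then uses minimal twisting together with the Farey-graph constraints on how dividing slopes change under bypass attachment to conclude that the resulting intermediate torus in fact lies on the minimal clockwise arc from $r_0$ to $r_1$. The correct statement is Honda's factoring lemma, which is an existence-of-factorization result rather than a freedom-to-prescribe-edges result; your phrasing overstates what bypass extraction gives you. Second, Honda does not obtain tightness of the stacked model by appeal to a gluing theorem. In [Honda00a] the candidate contact structures are shown to be tight by exhibiting embeddings into known tight models (e.g.\ as $I$-invariant neighborhoods of pre-Lagrangian tori, or as layers in the standard tight $S^3$ or in lens spaces), and it is also verified, via the Poincar\'e dual of the relative Euler class evaluated on a convex vertical annulus, that inequivalent decorations really do produce distinct contact structures — which is the actual mechanism behind ``rigidity across a block boundary,'' rather than an obstruction to performing a shuffle move. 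You do invoke the Euler class, but as an afterthought; in Honda's proof it carries the full weight of distinguishing the structures. Cleaning up these two points would make your sketch match the literature's argument.
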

In particular, we notice that if $r_0$ and $r_1$ share an edge in the Farey graph, then there are exactly two minimally twisting contact structures on $T^2\times [0,1]$ with these boundary conditions. These are called \dfn{basic slices} and the correspondence in the theorem is given by stacking basic slices according to the path.

We can create a solid torus from $T^2\times [0,1]$ by collapsing curves on $T^2\times \{0\}$ of slope $r$. We can similarly construct one by collapsing curves of slope $r$ on $T^2\times \{1\}$. Say the first torus has a \dfn{lower meridian} of slope $r$, and the second torus has an \dfn{upper meridian} of slope $r$. 
\begin{theorem}
Let $\Gamma$ be a pair of homologically essential curves on $\partial S^1\times D^2$ with a lower meridian of slope $m$. Denote the slope of $\Gamma$ by $r$. Let $P$ be a minimal path in the Farey graph that starts at $m$ and goes clockwise to $r$. Then the tight contact structures on $S^1\times D^2$ with dividing curves $\Gamma$ are in one-to-one correspondence with equivalence classes of decorations on $P$, except the first edge (the one starting at $m$) is left blank. 

The same holds if $S^1\times D^2$ has an upper meridian except the path will go counterclockwise from $m$ to $r$. 
\end{theorem}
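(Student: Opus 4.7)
The plan is to decompose a tight solid torus into a standard neighborhood of its core together with a thickened-torus collar, and then invoke the $T^2\times[0,1]$ classification just stated. I focus on the lower-meridian case; the upper-meridian case is identical after reversing the $[0,1]$-parameter.

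First, given a tight $\xi$ on $V=S^1\times D^2$ with convex boundary of slope $r$ and meridional slope $m$, I would produce a convex torus $T'\subset V$ parallel to $\partial V$ whose dividing slope is $v_1$, the first vertex of $P$ after $m$. Such a $T'$ exists by standard convex surface theory: Honda's Imbalance Principle together with the existence of intermediate convex tori of any prescribed slope in the appropriate Farey arc inside a minimally twisting collar of $\partial V$. This splits $V$ as $V'\cup N$, where $V'$ is a solid torus containing the core with boundary slope $v_1$, and $N\cong T^2\times[0,1]$ has boundary slopes $v_1$ (inner) and $r$ (outer).

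Next, I would show that $V'$ carries a unique tight contact structure up to isotopy rel $\partial V'$. Since $v_1$ is adjacent to $m$ in the Farey graph, a convex meridional disk of $V'$ with Legendrian boundary in standard position has a dividing set consisting of a single boundary-parallel arc; cutting $V'$ along this disk yields a tight $3$-ball, which is unique by Eliashberg's theorem. Hence $V'$ is the standard neighborhood of a Legendrian core and carries no modular information. I would then apply the preceding $T^2\times[0,1]$ theorem to $N$: tight structures on $N$ with the prescribed boundary slopes correspond to equivalence classes of decorations on the minimal clockwise path from $v_1$ to $r$, which is exactly the sub-path $P'$ of $P$ obtained by removing the initial edge $m\to v_1$. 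Gluing $N$ to the unique $V'$ therefore produces a tight contact structure on $V$ with the prescribed boundary data, and every tight $\xi$ on $V$ arises this way by the decomposition above; rewriting decorations on $P'$ as decorations on $P$ whose first edge is blank yields the bijection in the statement.

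The main obstacle is injectivity of the gluing map: two inequivalent decorations on $P'$ must yield non-isotopic tight structures on $V$ once glued to $V'$. For this I would show that any contactomorphism $(V,\xi_1)\to(V,\xi_2)$ supported away from $\partial V$ can be isotoped to carry $T'$ to $T'$, using uniqueness of convex tori of a given slope inside a tight thickened torus together with rigidity of the standard neighborhood $V'$. The restriction to $N$ is then a boundary-preserving contactomorphism and forces equivalence of the two $P'$-decorations. This same uniqueness of $V'$ is what justifies leaving the first edge of $P$ unsigned: any would-be sign attached to $m\to v_1$ is swallowed by the collapse to the standard neighborhood.
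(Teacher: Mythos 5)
The paper does not prove this theorem at all; it is stated as recalled background and attributed to Giroux \cite{Giroux00} and Honda \cite{Honda00a}, with no \texttt{proof} environment following it. So there is no in-paper argument to compare against. That said, your sketch follows the same route as Honda's original proof of this result: peel off a convex torus $T'$ of slope $v_1$ adjacent to $m$, observe that the inner solid torus $V'$ is a standard neighborhood of a Legendrian core and hence carries a unique tight structure (because the meridional disk has a connected dividing set and cutting along it gives a tight $B^3$, which is unique by Eliashberg), and reduce the rest to the $T^2\times[0,1]$ classification on the outer collar $N$. This is indeed why the first edge of $P$ is left blank.

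The one place your write-up is thinner than it should be is injectivity. You appeal to ``uniqueness of convex tori of a given slope inside a tight thickened torus together with rigidity of the standard neighborhood $V'$,'' but that uniqueness statement is itself part of what Honda proves, and if one is not careful the argument becomes circular (deducing the solid-torus classification from a uniqueness-of-convex-tori statement that is established as a corollary of the classification). In the actual proof this is handled by first distinguishing the candidate structures via a relative Euler class (or, equivalently, a Poincar\'e-dual count) computed directly on the layered decomposition of $V$; isotoping $T'$ into a fixed position is a convenience in organizing the argument, not a substitute for the invariant that separates the decorations. So your overall strategy is the right one, but you should either explicitly cite the uniqueness-of-convex-tori lemma from Honda as an independently established input, or replace that step with the relative Euler class computation to avoid any appearance of circularity.
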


It is useful to know, see \cite{Honda00a},  that given a contact structure on a solid torus as in the theorem, one can always realize a $(a,b)$-curve on a torus parallel to the boundary as a Legendrian divide if $b/a\in [r,m)$ in the first case and $b/a\in (m,r]$ in the second case. 

We now state the classification of tight contact structures on lens spaces. The terminology used here was introduced in the previous section. 
\begin{theorem}\label{classifylpq}
Consider a lens space $L$ with upper meridian $r$ and lower meridian $s$. Let $P$ be a minimal path in the Farey graph from $s$ going clockwise to $r$. The tight contact structures on $L$ are in one-to-one correspondence with equivalence classes of decorations on $P$ with the first and last edge left blank. 

A contact structure on $L$ is universally tight if and only if the decoration only uses one sign.  
\end{theorem}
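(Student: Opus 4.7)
The plan is to realize $L$ as a union of two solid tori along a convex torus and reduce to the classifications already stated for $S^1 \times D^2$. Specifically, let $T \subset L$ be the Heegaard torus separating the two solid tori $V_0$ (with lower meridian $s$) and $V_1$ (with upper meridian $r$). Given any tight contact structure $\xi$ on $L$, isotope $T$ to be convex. Its dividing set consists of parallel curves of some slope $t$ on the Farey graph. Using the Imbalance Principle and the classification of tight contact structures on solid tori, $t$ may be taken to lie in the clockwise arc $[s, r]$; in fact, by making $T$ convex with a specified boundary slope and then factoring off basic slices, one can insist that $t$ lie on the chosen minimal path $P$. The contact structure on $V_0$ is then classified by an equivalence class of decorations on the clockwise sub-path of $P$ from $s$ to $t$, with the first edge (emanating from the lower meridian $s$) left blank; and the contact structure on $V_1$ is classified by an equivalence class of decorations on the sub-path from $t$ to $r$, with the last edge (ending at the upper meridian $r$) left blank. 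Concatenating yields a decoration on all of $P$ with first and last edges blank.

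Next I would show this map (from tight contact structures on $L$ to equivalence classes of decorations on $P$) is well-defined and bijective. Well-definedness requires independence of the choice of convex torus $T$: if $T'$ is another convex torus parallel to $T$ with dividing slope $t' \neq t$, then the region between $T$ and $T'$ is a minimally twisting $T^2 \times [0,1]$, so by the $T^2 \times I$ classification the corresponding decorations on the two presentations of $P$ differ only by shuffling basic slices inside a continued fraction block, which is precisely the defined equivalence. For surjectivity, given a decoration, build $V_0$ and $V_1$ using the solid torus classification and glue them; the result is tight because each piece is tight and the shuffling argument lets one arrange that no bypass attachment changes the dividing slope across $T$ (so no overtwisted disk can arise). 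Injectivity follows from the same shuffling argument, since any contactomorphism can be made to carry $T$ to a convex torus with prescribed dividing slope and then the decompositions agree.

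The main obstacle is the surjectivity/tightness step: one must check that the contact structure assembled from two tight solid tori and a choice of gluing is actually tight. The standard way to do this is to exhibit explicit Stein handlebodies realizing each decoration (Legendrian realizations of the chain from Figure~\ref{chain} with stabilizations dictated by the signs in the decoration), which gives Stein fillability and hence tightness.

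Finally, for the universally tight characterization, the lift of a basic slice to any cover is again a basic slice of the same sign, whereas two adjacent basic slices of opposite sign produce a virtually overtwisted structure (an overtwisted disk appears in an appropriate cover, as shown by Honda). Since signs can only be swapped within a continued fraction block (which does not change whether the decoration uses one sign or both), having a decoration that uses a single sign is a well-defined invariant of $\xi$. Thus $\xi$ is universally tight iff its decoration can be represented using only one sign, completing the theorem.
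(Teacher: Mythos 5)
The paper does not prove Theorem~\ref{classifylpq}; it is stated explicitly as a recall of the Giroux--Honda classification (cited as \cite{Giroux00} and \cite{Honda00a}). Your sketch is essentially a recapitulation of Honda's convex surface theory argument, so in that sense you are following the same route that the cited literature does. The structure you lay out---decomposing $L$ along a convex Heegaard torus, normalizing the dividing slope onto the minimal path $P$, invoking the solid torus classification on each side, establishing well-definedness by thickening/factoring basic slices and invoking the shuffling equivalence, and obtaining tightness via Stein fillability of Legendrian realizations of the chain---is correct in outline and matches Honda's proof.

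Two points deserve more care. First, your ``injectivity follows from the same shuffling argument'' treats two separate issues as one. The shuffling argument shows the assignment from isotopy classes of $\xi$ to equivalence classes of decorations is well-defined. To show that inequivalent decorations yield non-isotopic contact structures one needs a separate distinguishing invariant: in Honda's proof this is a relative Euler class computation (equivalently, the $\mathrm{spin}^c$ structures of the Stein fillings one writes down realize distinct first Chern classes). Combined with a count---there are at most $\prod|a_i-1|$ equivalence classes, and the explicit Legendrian chains realize exactly that many mutually non-isotopic structures---one gets the bijection. Second, in the universally tight direction your claim that ``two adjacent basic slices of opposite sign produce an overtwisted disk in a cover'' is too strong as stated: adjacent basic slices of opposite sign lying within a single continued fraction block can be shuffled to a one-sign decoration and the result is universally tight. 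The relevant statement is that if the equivalence class of the decoration genuinely requires both signs (i.e., there is a sign change that cannot be shuffled away, necessarily occurring between distinct continued fraction blocks), then a lift to a suitable finite cover is overtwisted. Alternatively, one can count: up to coorientation, $L(p,q)$ admits exactly one universally tight structure (Giroux's $S^1$-invariant structure), and it corresponds to the single-sign decoration; by elimination all others are virtually overtwisted.
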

The lens space $L(p,q)$ is $(-{p}/{q})$--surgery on the unknot in $S^3$, where $-{p}/{q} < -1$. That is it has upper meridian $0$ and lower meridian $-\frac{p}{q}$. So the tight contact structures on $L(p,q)$ are in one-to-one correspondence with equivalence classes of decorations on a minimal path from $0$ counter-clockwise to $-p/q$ (with the first and last edge left blank). If $-p/q=[-a_1,\ldots, -a_n]$, then the count of jumps in the path from $0$ to $-p/q$ given in Section~\ref{pathinfg} shows there are $\prod_{i=1}^{n} |a_i-2|$ edges that need to be decorated (recall the first and last edge are not decorated) and each continued fraction block can be decorated in $a_i-1$ different (equivalence classes of) ways. Thus there are $\prod_{i=1}^{n} |a_i-1|$ tight contact structures on $L(p,q)$. 

\begin{remark}\label{pathsandknots}\label{classifylpq2}
All of the contact structures on $L(p,q)$ are realized by Legendrian surgery on a Legendrian realization of the chain of unknots $\{U_1,\ldots, U_n\}$ in Figure~\ref{chain} with the $i^{th}$ unknot having Thurston-Bennequin invariant $-a_i +1$. Notice that there are $a_i-1$ ways to stabilize $U_i$. Thus we see the stabilizations of $U_i$ correspond precisely to the signs in the decoration of the continued fraction block associated to $a_i$. (Actually there are choices one needs to make when assigning a $\pm$ to a basic slice: so we can make that choice so that the sign corresponds to the sign of the stabilization.)
\end{remark}

\subsection{Rational unknots in lens spaces}\label{legsimpofratunknots}
Given a lens space $L(p,q)$ we call a knot $K$ a \dfn{rational unknot} if it is the core of a Heegaard torus for $L(p,q)$. In \cite{BakerEtnyre12}, Baker and the first author showed there is either $1$, $2$, or $4$ such knots  when considered as oriented knots. In Figure~\ref{ratunknots} we see one or two such (unoriented) knots. 
\begin{figure}[htb]
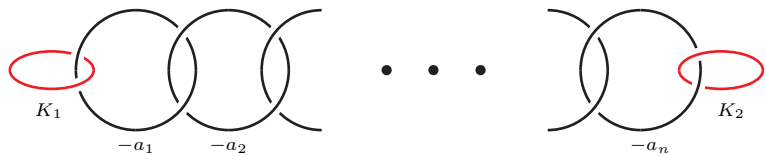
{\tiny
\begin{overpic}
{fig/RatUnknots}
\put(12, 13){$K_1$}
\put(43, 0){$-a_1$}
\put(78, 0){$-a_2$}
\put(237, 0){$-a_n$}
\put(270, 13){$K_2$}
\end{overpic}}
\caption{Rational unknots $K_1$ and $K_2$ in $L(p,q)$.}
\label{ratunknots}
\end{figure}
Recall that a knot type $K$ in a contact manifold $(M,\xi)$ is called weakly Legendrian simple, if Legendrian representatives of them are determined up to a contactomorphism, smoothly isotopic to the identity, by their Thurston-Bennequin invariant and rotation number (if the knots is only rationally null-homologous, then there are rational notions of these invariants \cite{BakerEtnyre12}). 
\begin{theorem}[Baker and Etnyre, 2012 \cite{BakerEtnyre12}]
Rational unknots in tight contact structures on lens spaces are weakly Legendrian simple and there is a unique such knot with maximal Thurston-Bennequin invariant. 
\end{theorem}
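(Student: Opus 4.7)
The plan is to exploit the fact that the complement of a standard neighborhood of a Legendrian rational unknot in $L(p,q)$ is a solid torus, and then to apply Honda's classification of tight contact structures on solid tori (recalled in Section~\ref{koclassification}).

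First I would fix a Legendrian representative $L$ of a rational unknot $K$ in $(L(p,q),\xi)$ and take a standard convex neighborhood $N(L)$. The complement $V = L(p,q) \setminus N(L)$ is diffeomorphic to a solid torus whose meridional slope, together with the meridian of $N(L)$, recovers the genus-one Heegaard decomposition of $L(p,q)$ having $K$ as a core. The dividing set on $\partial V$ has slope determined by the rational Thurston--Bennequin invariant of $L$ (relative to the Heegaard framing), so once the Thurston--Bennequin invariant is fixed, the boundary slope on the complementary solid torus is fixed as well.

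Next I would apply Honda's classification: the contact structure inherited by $V$ is classified, up to isotopy fixing the characteristic foliation on the boundary, by an equivalence class of sign decorations on the minimal clockwise path in the Farey graph from the dividing slope on $\partial V$ to the meridian of $V$. Conversely, given two Legendrian representatives of $K$ with the same classical invariants whose complements carry the same decorated path, a contactomorphism between the complements extends to an ambient contactomorphism of $L(p,q)$; because both representatives are cores of isotopic Heegaard tori, this ambient contactomorphism can be taken to be smoothly isotopic to the identity. The essential step is to show that the rotation number $\overline{\text{rot}}(L)$ determines, and is determined by, the sign decoration. I would do this by decomposing $V$ into basic slices as in Section~\ref{koclassification}, computing $\overline{\text{rot}}$ as a signed sum of contributions of these basic slices via the Legendrian divide/ruling picture on convex tori, and checking that distinct equivalence classes of decorations produce distinct rotation numbers. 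Weak Legendrian simplicity then follows from the uniqueness of the complementary contact structure.

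For the uniqueness of the maximum Thurston--Bennequin representative, I would observe that increasing the Thurston--Bennequin invariant moves the dividing slope on $\partial V$ counterclockwise along the Farey graph toward the meridian of $V$. The maximum is achieved precisely when the dividing slope shares an edge in the Farey graph with the meridian, in which case the minimal path has length one and carries a unique (empty) decoration by Honda's theorem. Since there is only one tight contact structure on $V$ in that extremal configuration, and only one of the (at most four) oriented rational unknot types can appear as its core in the prescribed ambient contact structure, the maximum Thurston--Bennequin representative is unique.

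The main obstacle I anticipate is bookkeeping rather than anything conceptual: carefully matching the rotation number, which is rational since $K$ is only rationally null-homologous and must be computed from a rational Seifert class, to the signed basic-slice data in $V$, and reconciling the sign conventions so that the map from decorations to rotation numbers is genuinely injective on equivalence classes. Once that correspondence is set up cleanly using the Farey graph framework of Sections~\ref{fgraph}--\ref{koclassification}, both the classification and the uniqueness of the maximal Thurston--Bennequin invariant representative should follow without further subtleties.
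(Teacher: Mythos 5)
This theorem is quoted from \cite{BakerEtnyre12} and the paper does not reprove it, so there is no in-text proof to compare against; but your sketch does track the argument in the cited source. Baker and Etnyre classify Legendrian rational unknots exactly by passing to the complementary solid torus, invoking Honda's classification of tight contact structures on solid tori via decorated minimal Farey paths, identifying the boundary dividing slope with the (rational) Thurston--Bennequin invariant and the sign decoration with the (rational) rotation number, and observing that the maximal-tb configuration forces the path to have length one, hence a unique tight structure. So your proposal is correct and is essentially the same approach as the original reference.
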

It is easy to see that Legendrian realizing $K_1$ or $K_2$ in Figure~\ref{ratunknots} by a Thurston-Bennequin invariant $-1$ unknot gives a maximal Thurston-Bennequin invariant rational unknot in $L(p,q)$ with any tight contact structure. 

In \cite{EtnyreHonda03} the first author and Honda showed how to relate the classification of Legendrian knots in the smooth knot type $K_1$ in $(M_1,\xi_1)$ and $K_2$ in $(M_2, \xi_2)$ to the classification of Legendrian knots in the smooth knot type $K_1\# K_2$ in $(M_1\# M_2, \xi_1\# \xi_2)$. We do not need the full force of the theorem, but an immediate corollary of that work is the following. 
\begin{lemma}\label{weaklysimple}
Suppose $K_i$ is a (weakly) Legendrian simple knot type in $(M_i,\xi_i)$ and there is a unique maximal Thurston-Bennequin invariant representative $L_i$ of $K_i$, for $i=1,2$. Then $K_1\# K_2$ is (weakly) Legendrian simple in $(M_1\#M_2, \xi_1\# \xi_2)$ and has a unique maximal Thurston-Bennequin invariant representative $L_1\# L_2$. 
\end{lemma}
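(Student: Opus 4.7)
The plan is to invoke the Etnyre--Honda machinery for Legendrian connected sums \cite{EtnyreHonda03} and then do a short bookkeeping argument in terms of stabilizations. The three inputs I will use are: (i) the Thurston--Bennequin invariant and rotation number are additive under Legendrian connected sum, with $\text{tb}(L_1\#L_2)=\text{tb}(L_1)+\text{tb}(L_2)+1$ and $\text{rot}(L_1\#L_2)=\text{rot}(L_1)+\text{rot}(L_2)$; (ii) every Legendrian representative of $K_1\#K_2$ in $(M_1\#M_2,\xi_1\#\xi_2)$ is contact isotopic to a Legendrian connected sum $L_1'\#L_2'$ of representatives of $K_1$ and $K_2$; and (iii) the ``exchange principle'' $S_\pm(L_1')\#L_2'\simeq L_1'\#S_\pm(L_2')$, which allows one to slide a stabilization across the connected sum point.

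First I would establish the unique maximal representative. By (i), $L_1\#L_2$ realizes $\overline{\text{tb}}(K_1)+\overline{\text{tb}}(K_2)+1$, which is the maximal possible value of $\text{tb}$ on $K_1\#K_2$ (again by (i), since $\text{tb}$ of any factor cannot exceed its own maximum). Conversely, if $L$ is a maximal $\text{tb}$ representative, write $L\simeq L_1'\#L_2'$ using (ii); additivity forces $\text{tb}(L_i')=\overline{\text{tb}}(K_i)$, and uniqueness of the maximal representative on each side gives $L_i'$ contactomorphic to $L_i$ via a contactomorphism smoothly isotopic to the identity. Performing the connect sum with these identifications yields $L\simeq L_1\#L_2$.

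Next I would prove weak Legendrian simplicity. By the hypothesis (weak Legendrian simplicity of $K_i$ together with uniqueness of the maximum $\text{tb}$ representative), every Legendrian representative of $K_i$ is contact isotopic to some $S_+^{a_+}S_-^{a_-}(L_i)$, since stabilizations of $L_i$ already realize every possible $(\text{tb},\text{rot})$ pair below the maximum and tb/rot classify. Given any representative $L$ of $K_1\#K_2$, write $L\simeq L_1'\#L_2'$ via (ii) and then $L_i'\simeq S_+^{a_i^+}S_-^{a_i^-}(L_i)$. Using the exchange principle (iii) to slide all stabilizations onto one side of the connect sum gives
\[
L\simeq S_+^{a_1^++a_2^+}S_-^{a_1^-+a_2^-}(L_1\#L_2).
\]
The total numbers $a_1^\pm+a_2^\pm$ of positive and negative stabilizations are determined by $(\text{tb}(L),\text{rot}(L))$, so any two representatives with the same classical invariants are contact isotopic. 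This establishes weak Legendrian simplicity of $K_1\#K_2$ with $L_1\#L_2$ as the unique maximal $\text{tb}$ representative.

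The only real content is verifying that the Etnyre--Honda decomposition and exchange results apply in our setting (where the knots may only be rationally null-homologous and the ambient manifolds are lens spaces rather than $S^3$); this is precisely the level of generality treated in \cite{EtnyreHonda03}, and the remainder of the argument is elementary stabilization bookkeeping.
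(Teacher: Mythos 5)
The paper itself does not spell out a proof of this lemma; it simply declares it ``an immediate corollary'' of the Etnyre--Honda connected-sum paper \cite{EtnyreHonda03}. Your proposal is the natural unpacking of that citation, using exactly the three ingredients one needs (additivity of $\mathrm{tb}$ and $\mathrm{rot}$, the decomposition of Legendrian representatives of $K_1\#K_2$ into Legendrian connected summands, and the stabilization-exchange relation), so in spirit and substance you are on the same track as the authors.

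There is, however, one step you should be more careful about: the assertion that ``every Legendrian representative of $K_i$ is contact isotopic to some $S_+^{a_+}S_-^{a_-}(L_i)$, since stabilizations of $L_i$ already realize every possible $(\mathrm{tb},\mathrm{rot})$ pair below the maximum.'' Weak Legendrian simplicity plus the existence of a unique maximal-$\mathrm{tb}$ representative tell you that stabilizations of $L_i$ fill out the triangle of invariant values below the apex, and that $(\mathrm{tb},\mathrm{rot})$ classify; they do not by themselves rule out representatives whose $(\mathrm{tb},\mathrm{rot})$ lies \emph{outside} that triangle, i.e.\ representatives that do not destabilize to $L_i$. Without that extra input the exchange-relation bookkeeping can fail to connect two Legendrian connected-sum decompositions with matching total invariants. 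What is really needed (and what makes the corollary go through, both here and in the paper's application to rational unknots via Corollary~\ref{legsimpleratunknots}) is the slightly stronger statement, proved by Baker and the first author in \cite{BakerEtnyre12} for rational unknots, that every Legendrian representative of $K_i$ is a stabilization of $L_i$, so that the Bennequin mountain range is a single triangle. Either build that into the hypotheses of the lemma, or justify it in the cases of interest, and the rest of your bookkeeping argument is correct and yields the conclusion, including the uniqueness of the maximal-$\mathrm{tb}$ representative $L_1\#L_2$.
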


Given a continued fraction $[-a_1,\ldots, -a_n]$ for $-p/q<-1$ with $a_i\geq 2$, we know that the lens space $L(p,q)$ is described by surgery on the linear chain $C$ of unknots $U_1,\ldots, U_n$ the unknot $U_i$ has framing $a_i$. If we remove $U_k$ from the chain $C$ we will get two other chains $C_s$ and $C_e$. Notice that in $L(C_s)\#L(C_e)$ the knot $U_k$ is simply the connected sum of rational unknots in the lens spaces. From the above discussion we have the following understanding of Legendrian realizations of $U_k$. 
\begin{corollary}\label{legsimpleratunknots}
Given any tight contact structure on $L(C_s)$ and $L(C_e)$ the knot $U_k$ is Legendrian simple in $L(C_s)\#L(C_e)$ and realizing $U_k$ as the unknot with Thurston-Bennequin invariant $-1$ in the obvious surgery diagram for $L(C_s)\#L(C_e)$ gives the maximal Thurston-Bennequin invariant representative of $U_k$. All other Legendrian representatives of $U_k$ are stabilizations of this one. 
\end{corollary}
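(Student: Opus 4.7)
The plan is to piece together the Baker–Etnyre Legendrian simplicity result for rational unknots with the connect–sum Lemma~\ref{weaklysimple}, so almost no new work is required.

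First I would observe that in the handlebody picture for $L(C_s)\#L(C_e)$ obtained by removing $U_k$ from the chain $C$, the knot $U_k$ is smoothly isotopic to the connect sum $K^s\# K^e$, where $K^s$ (respectively $K^e$) is a rational unknot in $L(C_s)$ (respectively $L(C_e)$): this is exactly the assertion made in the paragraph introducing the corollary, since removing a single unknot from $C$ and interpreting $U_k$ in the resulting chain exhibits it as a connected sum of cores of Heegaard tori. In the connected sum diagram, the obvious Legendrian unknot with $\tb=-1$ that encircles a single strand of each sub-chain is manifestly the Legendrian connect sum $L^s\# L^e$ of the unique $\tb=-1$ (i.e.\ maximal $\tb$) Legendrian realizations of $K^s$ and $K^e$ provided by the Baker–Etnyre theorem.

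Next I would invoke the Baker–Etnyre theorem for each factor: for any tight contact structure $\xi^s$ on $L(C_s)$ and $\xi^e$ on $L(C_e)$, the rational unknots $K^s$ and $K^e$ are weakly Legendrian simple, and each has a unique maximal $\tb$ representative (namely $L^s$ and $L^e$). Now Lemma~\ref{weaklysimple} applies verbatim with $(M_1,\xi_1)=(L(C_s),\xi^s)$, $(M_2,\xi_2)=(L(C_e),\xi^e)$, $K_1=K^s$, $K_2=K^e$. Its conclusion is that $K^s\# K^e$ is weakly Legendrian simple in $(L(C_s)\#L(C_e),\xi^s\#\xi^e)$ and that $L^s\# L^e$ is its unique maximal $\tb$ representative. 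Translating back, this says precisely that $U_k$ is (weakly) Legendrian simple in the connected sum and that the $\tb=-1$ unknot in the obvious diagram is its unique maximal $\tb$ representative.

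Finally, Legendrian simplicity gives the last sentence of the corollary for free: any Legendrian representative of $U_k$ with $\tb$ less than that of $L^s\# L^e$ is determined up to contactomorphism smoothly isotopic to the identity by its classical invariants, and those invariants are realized by iterated stabilizations of the maximal representative, so every other Legendrian realization of $U_k$ is such a stabilization. There is no real obstacle in this proof; the only thing that deserves a sentence of justification is the identification of $U_k$ with $K^s\# K^e$ and of the $\tb=-1$ diagrammatic unknot with $L^s\# L^e$, both of which are routine once one has drawn the Kirby picture for $L(C_s)\#L(C_e)$ coming from deleting $U_k$ from $C$.
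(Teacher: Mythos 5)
Your proof is correct and follows exactly the route the paper intends: the corollary is stated immediately after the discussion identifying $U_k$ as a connect sum of rational unknots and after Lemma~\ref{weaklysimple}, with no further argument given, so your assembly of Baker--Etnyre plus Lemma~\ref{weaklysimple} is precisely what is being invoked. (Note that both your argument and the paper's remark following the corollary only yield \emph{weak} Legendrian simplicity; the word ``Legendrian simple'' in the corollary's statement should be read accordingly, as the paper explicitly acknowledges when it observes that upgrading to full Legendrian simplicity would strengthen the conclusions of Theorem~\ref{list}.)
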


We note that this corollary is not strictly necessary for our main result on fillings of lens spaces, as the classification there is only up to diffeomorphism. However, we believe this result is of independent interest and note that if one can upgrade it to to say that the $U_k$ are Legendrian simple, and not just weakly Legendrian simple, then the proof of Theorem~\ref{list} would also classify minimal symplectic fillings up to symplectomorphism and symplectic deformation equivalence in some cases. 
In particular, if the fillings  of the lens spaces corresponding to the $\mathcal{C}-(\mathcal{D}\cup\mathcal{M})$ in the theorem are known up to symplectomorphism and symplectic deformation equivalence, then one could get a classification of the lens space of $\mathcal{C}$ up the same equivalence. For example, all the symplectic fillings of virtually overtwisted contact structures in Theorem~\ref{cor5} would be known up to symplectomorphism and symplectic deformation equivalence by results of McDuff and Hind \cite{Hind03,McDuff90} and the proof of Theorem~\ref{list} if the $U_k$ could be classified up to Legendrian isotopy. Similarly, many of the fillings in Theorem~\ref{3component} would be known up to symplectomorphism and symplectic deformation equivalence. We strongly believe that Corollary~\ref{legsimpleratunknots} can be upgraded to a classification up to Legendrian isotopy, but leave it as a conjecture for now.

\subsection{Lisca's classification of symplectic fillings of universally tight contact structures on lens spaces}\label{lclass}
Recall each lens space $L(p,q)$ has either two universally tight contact structures if $q\not= p-1$ or one if $q=p-1$. If $p/q=[a_1,\ldots, a_n]$, then a universally tight contact structure is realized by Legendrian surgery on a Legendrian realization of the chain of unknots in Figure~\ref{chain} with the $i^{th}$ unknot having Thurston-Bennequin invariant $-a_i +1$ and rotation number $a_i-2$. We denote this contact structure by $\xi_{ut}$. The other universally tight structure is $-\xi_{ut}$ and has a similar picture but the rotation numbers are $-a_i+2$ (when $q=p-1$ this will be isotopic to $\xi_{ut}$ but in the other cases it will not be isotopic; it will however be contactomorphic).

{\bf Null sequences:} We say the sequence of integers $(n_1,\ldots, n_s+1, 1, n_{s+1}+1,\ldots, n_k)$ is a \dfn{blowup} of the sequence $(n_1,\ldots, n_k)$ (with the obvious modification for $s=0$ or $k$), and the reverse operation is a \dfn{blowdown}. A blowup is \dfn{strict} if $s>0$. We say a sequence of integers is \dfn{null} if it is obtained by a sequence of strict blowups of $(0)$. For example, the only length two null sequence is $(1, 1)$ and the length three null sequences are $(1, 2, 1)$ and $(2, 1, 2)$. 

If $(n_1, \ldots, n_k)$ is a null sequence of integers, then the chain of framed unknots shown in Figure~\ref{Wn} has boundary $S^1\times S^2$. Later we will need the fact that the chain can be blown down so that it is the first unknot in the chain that is left. This is because all our blowups were strict. But if $\n=(n_1,\ldots, n_k)$ is a null sequence, then so is $\overline \n=(n_k,\ldots, n_1)$, see Section~2 of \cite{Lisca08}, so you can also blowdown the chain so that the right most unknot is the one that remains.
\begin{figure}[htb]
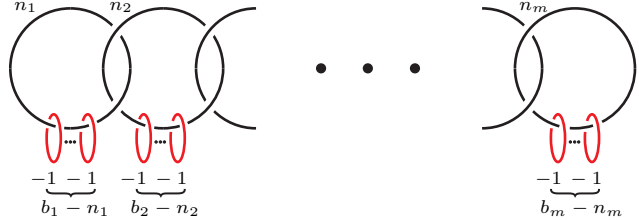
{\tiny
\begin{overpic}
{fig/Fillings}
\put(14, -2){$b_1-n_1$}
\put(48, -2){$b_2-n_2$}
\put(203,-2){$b_m-n_m$}
\put(10, 9){$-1 \,-1$}
\put(44, 9){$-1 \,-1$}
\put(201, 9){$-1 \,-1$}
\put(4, 75){$n_1$}
\put(40, 75){$n_2$}
\put(195, 75){$n_m$}
\end{overpic}}
\caption{Surgery on the black chain of unknots is gives $S^1\times S^2$. The red unknots form a link $L_{\n}$ in $S^1\times S^2$. }
\label{Wn}
\end{figure}

Let $[b_1,\ldots, b_m]$ be the continued fractions expansion of $p/(p-q)$.  Let $\BZ_{p,q}$ be the set of null sequences $(n_1,\ldots, n_m)$ such that $n_i\leq b_i$ for all $i=1, \ldots, m$. Now for any $\n\in \BZ_{p,q}$ we have the link $L_{\n}$ in $S^1\times S^2$ that is the image of the red curves in Figure~\ref{Wn} once the chain of black unknots has been blowndown to give $S^1\times S^2$. Finally, let $W_\n$ be the result of attaching $2$--handles to $S^{1} \times D^{3}$ along $L_\n$ in $\partial (S^1\times D^3)$. 

We can now state Lisca's classification of fillings of universally tight contact structures on lens spaces. 
\begin{theorem}[Lisca, 2008 \cite{Lisca08}]\label{lisca}
Given $-p/q<-1$.
\begin{enumerate}
\item For all $\n\in \BZ_{p,q}, W_\n$ admit Stein structures filling $(L(p,q),\xi_{ut})$. 
\item Any Stein filling of $(L(p,q), \xi_{ut})$ is diffeomorphic to $W_\n$ for some $\n\in \BZ_{p,q}$.  
\item For $\n$ and $\n'$ in $\BZ_{p,q}$,  $W_\n$ and $W_{\n'}$ are diffeomorphic if and only if either $\n'=\n$ or  $q^2\equiv 1 \!\! \mod p$ and $\n'=\n$ or $\n'=\overline\n$. 
\end{enumerate}
\end{theorem}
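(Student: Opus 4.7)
The plan is to prove the three parts of the theorem more or less independently, with part~(2), the classification, being by far the deepest.

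For part~(1), I would check directly that $W_\n$ admits a Stein filling of $(L(p,q),\xi_{ut})$. Starting from the Legendrian realization of the chain in Figure~\ref{chain} giving $\xi_{ut}$ (with each $U_i$ maximally stabilized consistently, so that $\mathrm{tb}=-a_i+1$ and $\mathrm{rot}=\pm(a_i-2)$), one blows down the $(-1)$-framed red unknots of Figure~\ref{Wn} in Kirby calculus to see that $W_\n$ indeed has boundary $L(p,q)$. At the contact/Stein level, these smooth blowdowns correspond to handle slides of Legendrian knots across one another (with matching signs of stabilization so the $\mathrm{tb}$ drops correctly); the null-sequence condition on $\n$ and the bound $n_i\leq b_i$ are exactly what is needed for such Legendrian slides to be performed and to arrive at a Legendrian surgery diagram along $L_{\n}\subset S^1\times S^2$ whose Stein handles fill $\xi_{ut}$.

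Part~(2) is the main obstacle and uses a cap-and-classify argument of the Gromov--McDuff type. Given any minimal symplectic filling $X$ of $(L(p,q),\xi_{ut})$, one first constructs a concave symplectic cap $P$ for $L(p,q)$ as a symplectic plumbing of a linear chain of $2$-spheres $C_1,\ldots,C_m$ with self-intersections $b_1,\ldots,b_m$, where $[b_1,\ldots,b_m]$ is the continued fraction of $p/(p-q)$; such a cap exists by the convex/concave symplectic gluing results of Gay and Etnyre--Honda because $\xi_{ut}$ matches the contact structure on $\partial P$ induced by the plumbing. Glue to obtain a closed symplectic $4$-manifold $Z=X\cup_{L(p,q)} P$ containing the chain $C_1\cup\cdots\cup C_m$ of symplectic spheres. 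Since at least one $b_i\geq 0$, there is a symplectic sphere of non-negative self-intersection in $Z$, so by McDuff's theorem $Z$ is rational; in fact it must be a (blow-up of a) rational ruled surface.

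The delicate step is then to show that the only way such a chain can sit symplectically inside a blowup of $S^2\times S^2$ or $\mathbb{CP}^2$ is one governed by a null sequence $\n\in \BZ_{p,q}$. Writing $[C_i]=a F + b S + \sum c_{i,j} E_j$ in the basis of fiber, section, and exceptional classes, the intersection and adjunction relations along the chain, together with positivity of intersections of $J$-holomorphic curves, force the classes to fit a combinatorial pattern that can be read off as a sequence of blowups starting from $(0)$; the numbers $n_i$ count the exceptional divisors absorbed into $C_i$, giving $n_i\leq b_i$ from the positivity of area. The complement $Z\setminus\mathrm{nbhd}(C_1\cup\cdots\cup C_m)$ is then identified by a Kirby-calculus computation with $W_\n$. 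This is essentially Lisca's original argument and what I would reproduce.

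Finally, for part~(3), once (1) and (2) are established the diffeomorphism classification is a combinatorial matter. I would show that the intersection form of $W_\n$, together with its boundary identification with $L(p,q)$, determines $\n$ uniquely; so the only way to have $W_\n\cong W_{\n'}$ is via a self-diffeomorphism of $L(p,q)$ exchanging the two orientations of the Heegaard splitting. By Bonahon's classification this self-diffeomorphism exists (and is orientation preserving on the bounding $4$-manifold side) precisely when $q^2\equiv 1\pmod p$, and under it the chain of surgery unknots is reversed, which at the level of null sequences is exactly the operation $\n\mapsto \overline{\n}$. Putting these together gives the stated equivalence relation on $\BZ_{p,q}$.
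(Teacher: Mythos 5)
This theorem is not proved in the paper at all: it is stated with attribution to Lisca and cited directly from \cite{Lisca08}. The only original proof content the paper offers in connection with the statement is the remark immediately following, which explains why the present formulation (in terms of Stein fillings) is equivalent to Lisca's original formulation (in terms of minimal weak symplectic fillings): every tight contact structure on a lens space is supported by a planar open book by \cite{Schoenenberger05}, and by Wendl's theorem \cite{Wendl10} any minimal symplectic filling of such a contact structure is deformation equivalent to a Stein domain. So the paper's ``proof'' of the statement as written consists of that two-sentence translation plus the citation.

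Your proposal, by contrast, attempts to reconstruct Lisca's original argument from scratch, which is a genuinely different task. As a sketch of Lisca's proof it is broadly faithful: part~(1) does come down to exhibiting explicit Stein handle decompositions of $W_\n$ filling $\xi_{ut}$; part~(2) really is the Gromov--McDuff cap-and-classify argument (glue on a concave cap built from the chain with self-intersections given by the continued fraction of $p/(p-q)$, invoke McDuff's rationality theorem because the cap contains a symplectic sphere of non-negative square, and then analyze the possible homology classes of the chain in a blown-up rational surface, which is where the null sequences arise); and part~(3) is a combinatorial/rigidity argument whose symmetry is governed by the reversal $\n\mapsto\overline\n$ and the condition $q^2\equiv 1\pmod p$. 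That said, you gloss over the two most delicate points: the construction and verification of the concave symplectic cap is not an immediate application of convex/concave gluing (Lisca devotes real work to it), and the reduction of the intersection pattern of the sphere chain to a null sequence requires a nontrivial positivity-of-intersections and adjunction analysis that your one paragraph only gestures at. Also worth noting: you don't mention the Wendl/Schoenenberger bridge between ``minimal symplectic filling'' and ``Stein filling,'' which is precisely the one piece the present paper supplies, and which is needed for the statement as written here.

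So: as a review of what the paper does, your proposal is off target (the paper cites, it does not prove); as a sketch of Lisca's proof, it captures the right strategy but underestimates the technical weight of the cap construction and the homological classification.
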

Notice that if $(X,J)$ is a Stein filling of $(M,\xi)$ then $(X,-J)$ is a Stein filling of $(M,-\xi)$ so the theorem above deals with all fillings of all universally tight contact structures on $L(p,q)$. 
\begin{remark}
The result in \cite{Lisca08} was stated for minimal symplectic fillings but as all lens spaces are supported by planar open books \cite{Schoenenberger05} any minimal symplectic fillings of a lens space is deformation equivalent to a Stein domain \cite{Wendl10}. So the statement above is equivalent to the one given in \cite{Lisca08}.
\end{remark}

\subsection{Alternative descriptions of Lisca's fillings of lens spaces}\label{alt}
Here we will describe a way to build Stein fillings of lens spaces and see that by Lisca's results in the previous section, this construction will give all fillings of universally tight contact structures on lens spaces. 

We can think of $S^1\times S^2$ as $T^2\times [0,1]$ where we have collapsed the $\infty$--sloped curves on $T^2\times \{0\}$ and $T^2\times\{1\}$. That is, in the terminology established in Section~\ref{breakingup}, both the upper and lower meridians are $\infty$.  Now the contact structure $\ker(\sin(\pi t)\, d\theta+\cos(\pi t)\, d\phi)$ on $T^2\times [0,1]$ (with coordinate $t$ on $[0,1]$ and cyclic coordinates $(\theta, \phi)$ on $T^2$, where $\theta$ corresponds to the $\infty$--slope) descends to $S^1\times S^2$. 

We can think of $T^2\times \{t\}, t\in (0,1),$ as the boundary of a neighborhood of $S^1\times \{N\}$, where $N$ is the north pole of $S^2$, and use the coordinates above to refer to curves on $T^2\times \{t\}$ by their slopes.

We have the following observation.
\begin{lemma}\label{newfills}
Each component of the link $L_\n$ in $S^1\times S^2$ that arises in Lisca's classification of fillings of lens spaces from Section~\ref{lclass} sits on a Heegaard torus for $S^1\times S^2$. That is, in the notation above, there are $t_1,\ldots, t_k\in [0,1]$ such that the components of $L_\n$ sit on $T^2\times \{t_1,\ldots, t_k\}$ (there can be more than one component on a given Heegaard torus, but we can also arrange that they are all on distinct tori). 

The framings on the components of $L_\n$ are one less than the framings induced by the tori on which they sit 
and the slopes of the components of $L_\n$ on the $T^2\times \{t_i\}$ are monotonic (that is, either decreasing or increasing, depending on the direction the chain is traversed). 
\end{lemma}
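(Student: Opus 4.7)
The plan is to interpret Figure~\ref{Wn} geometrically, identifying each component of $L_{\mathbf{n}}$ as a torus knot on a Heegaard torus of $S^{1}\times S^{2}$ after the blowdown. Before the blowdown, the complement of the black chain in $S^{3}$ is a solid torus whose core becomes $S^{1}\times\{N\}$ after the surgery; the boundaries of regular neighborhoods of arc-segments of that core become, once the chain is blown down, the Heegaard tori $T^{2}\times\{t\}$ described in the paragraph preceding the lemma.

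First I would verify that each red unknot in Figure~\ref{Wn} can be isotoped onto one such meridional torus. This is visible from the diagram: each red component only clasps a local portion of the black chain, so pushing it onto the boundary of a thin tubular neighborhood of that portion is a direct isotopy. Different red components can be placed on distinct nested tori, which after the blowdown become a family of parallel Heegaard tori ordered by a parameter $t\in (0,1)$; this produces the claimed $t_{1},\ldots,t_{k}$.

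Second, I would compute the slope and framing of each component after blowdown, by induction on the length $m$ of the null sequence, using the fact that null sequences are built from $(0)$ by strict blowups. A strict blowup corresponds in the diagram to inserting a $-1$-framed black unknot and a new red component; blowing this new unknot down alters slopes on the surrounding torus by a Farey sum and alters each affected framing by $-\ell^{2}$, where $\ell$ is the relevant linking number. Carefully tracking these changes through the induction shows that the resulting slopes form a monotonic sequence in the Farey graph, which matches the geometry of nested Heegaard tori, and that the final framing of each red component is exactly one less than the induced torus framing. Monotonicity of the $t_{i}$ (in either direction, depending on how the chain is traversed) follows because the sequence of Farey sums produced by the blowdowns traces out a monotone path along the Farey graph, and, in the standard contact structure displayed in the excerpt, the convex tori parallel to the Heegaard torus have slopes that vary monotonically in $t$.

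The main obstacle will be the slope-and-framing bookkeeping in the inductive step, since blowing down one $-1$-framed black unknot can change both the slope and the framing of several red components simultaneously. A useful sanity check for the framing claim is that Lisca's fillings are Stein, so each $L_{\mathbf{n}}$ component admits a Legendrian representative with Thurston--Bennequin invariant one greater than its attaching framing; and any Legendrian ruling curve on a convex torus in the ambient contact structure on $S^{1}\times S^{2}$ has Thurston--Bennequin invariant equal to the torus framing. This forces attaching framing $=$ torus framing $-1$, in agreement with the lemma.
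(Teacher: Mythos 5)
Your geometric starting point is the same as the paper's: each red component of $L_\n$ is a meridian to a black unknot, hence sits on the boundary of a tubular neighborhood of that unknot, which becomes a torus in $S^1\times S^2$ after the black chain is surgered away. But your argument has two gaps, and the computational route you propose differs from (and is more delicate than) the one the paper actually uses.

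First, it is not automatic that the torus you produce is a \emph{Heegaard} torus; you need to show that both complementary pieces in $S^1\times S^2$ are solid tori and that the torus is the standard one $T^2\times\{t\}$. The paper handles this by slam-dunking the black chain on either side of the chosen component down to a two-component chain with rational coefficients, so that the separating torus visibly bounds a solid torus on both sides, and then invoking Waldhausen's uniqueness of genus-one Heegaard splittings of $S^1\times S^2$ to identify it with $T^2\times\{t\}$. Your phrase ``after the blowdown become a family of parallel Heegaard tori'' assumes exactly what needs to be shown; without some version of these two steps the identification with the $T^2\times\{t\}$ foliation is unjustified.

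Second, your inductive slope/framing bookkeeping is sketched rather than executed, and the sketch contains a sign confusion: a strict blowup of a null sequence inserts a $+1$-framed black unknot (the new entry in the sequence is $1$, and the surgery framings are the $n_i$), not a $-1$-framed one. Tracking Farey sums and $\ell^2$-corrections through a full blowdown history is possible, but it is considerably more involved than the paper's approach, which instead slam-dunks once and reads the slope directly from the resulting gluing matrix $\begin{bmatrix} b_i^{*} & b_i \\ c_i^{*} & c_i\end{bmatrix}$ with $c_i/b_i=[n_1,\ldots,n_i]$, giving the slope $[n_1,\ldots,n_{i-1}]$ on the nose; monotonicity then falls out of the Farey-graph discussion in Section~2.3 rather than needing its own induction. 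Finally, the contact-geometric ``sanity check'' at the end is appealing but essentially circular in this context: the fact that the $-1$ framing equals the Stein/contact framing is precisely what the torus-framing computation is meant to establish, so it cannot serve as an independent verification of that framing. In short, the idea is right but the proof as written leaves the Heegaard-torus identification unproved, misstates the blowup framing, and only gestures at the inductive slope computation, whereas the paper's slam-dunk-plus-matrix argument settles all of this directly.
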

The knots that appear in the lemma are called torus knots in $S^1\times S^2$. 
\begin{proof}
Consider Figure~\ref{Wn}. Only consider the black curves that describe $S^1\times S^2$. Let $K$ be a meridian to the $k^\text{th}$ component in the chain. Slam dunk the $m^\text{th}$ unknot in the chain, and then the next component. Continue until the $(k+2)^\text{th}$ component has been slam dunked. Now similarly slam dunk the first unknot in the chain and continue until the $(k-1)^\text{th}$ unknot is slam dunked. We now have a chain with two components, each having rational surgery coefficient. See Figure~\ref{tknot}. 
\begin{figure}[htb]
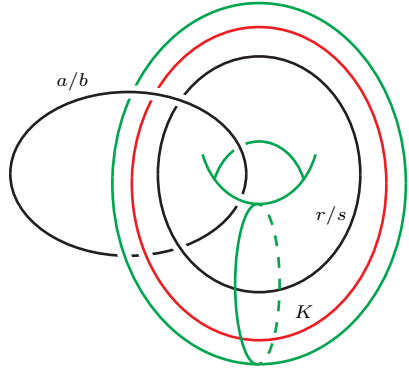
{\tiny
\begin{overpic}
{fig/torusknots}
\put(110, 20){$K$}
\put(118, 57){$r/s$}
\put(20,108){$a/b$}
\end{overpic}}
\caption{Surgery on the Hopf link that gives $S^1\times S^2$ and the knot $K$ and $K'$ sitting on the Heegaard torus.}
\label{tknot}
\end{figure}

There is a torus $T$ separating the two components of the chain. We know the chain describes $S^1\times S^2$ and $T$ clearly separates it into two solid tori. That is $T$ is a Heegaard torus for $S^1\times S^2$. Notice that $K$ sits on this Heegaard torus and the $-1$ framing on $K$ is one less than the framing coming from $T$. It is well-known that there is a unique genus one Heegaard splitting of $S^1\times S^2$ \cite{Waldhausen68}, so we see that any meridian to any component of the chain in Figure~\ref{Wn} is a torus knot. 

Let $s_i$ be the slope of the $i^{th}$ meridian on $T^2\times \{t_i\}$. It is clear that $s_1=-\infty$ and we claim that the slope of $s_i$ is $[n_1,\ldots, n_{i-1}]$. Given this, from our discussion in Section~\ref{pathinfg}, it should be clear that the slopes are monotonically increasing. To see the claim notice that the $i^{th}$ meridian when pushed inside the $i^{th}$ surgery torus will be a longitude, and when all the unknots are slam dunked to leave just the first unknot then it is easy to see that the boundary of the $i^{th}$ surgery torus is mapped into $S^3$ by the map $\begin{bmatrix} b_i^* &b_i\\ c_i^* & c_i\end{bmatrix}$ where $c_i/b_i= [n_1, \ldots, n_i]$. From Lemma~\ref{rightandleft} it is easy to see that $c^*_i/b^*_i=[n_1, \ldots, n_{i-1}]$ and this is the slope to which the longitude in the $i^{th}$ torus is mapped.
\end{proof}
From our description of $S^1\times S^2$ and the contact structure $\xi_{std}$ above, notice that each $T^2\times\{t\}$ is linearly foliated by Legendrian curves and the slope of these curve range from $-\infty$ up to $\infty$ as $t$ increases. 
\begin{corollary}\label{reinterpret}
Let $L$ be a collection of linear Legendrian curves on the Heegaard tori of $(S^1\times S^2, \xi_{std})$ as discussed in the previous sentence. Then Legendrian surgery on $L$ produces a Stein filling of a lens space. Moreover, all of Lisca's fillings are of this type. 
\end{corollary}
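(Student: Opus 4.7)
The first claim is that Legendrian surgery on such an $L$ yields a Stein filling of a lens space. Since each component of $L$ is a leaf of the characteristic foliation on the convex Heegaard torus $T^2\times\{t_i\}$, it is a Legendrian divide, and its contact framing agrees with the framing induced by the torus. Hence Legendrian surgery attaches a Weinstein $2$-handle with framing one less than the torus framing. Because $(S^1\times S^2,\xi_{std})$ is Stein filled by $S^1\times D^3$, the result is a Stein domain whose boundary is the 3-manifold obtained by this smooth surgery. To see this boundary is a lens space, I would reverse the slam-dunks in the proof of Lemma~\ref{newfills}: a surgery on a torus knot of slope $s_i$ in $S^1\times S^2$ with framing one less than the torus framing can be unwound by introducing an auxiliary meridional unknot together with an appropriate integer surgery. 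Monotonicity of the slopes $s_i$ ensures these unwindings stitch together into a single linear chain of integer-framed unknots in $S^3$, so the boundary is a lens space.

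For the second assertion I would combine Lemma~\ref{newfills} with the explicit description of $\xi_{std}$. Given $\n\in\BZ_{p,q}$, the link $L_\n\subset S^1\times S^2$ consists of components lying on distinct Heegaard tori $T^2\times\{t_i\}$ with monotonic slopes $s_i$, and with framings one less than the torus framings. The contact structure $\xi_{std}=\ker(\sin(\pi t)\,d\theta+\cos(\pi t)\,d\phi)$ has each $T^2\times\{t\}$ linearly foliated by Legendrian curves whose slope is a continuous monotone function of $t$ ranging over all of $\R\cup\{\infty\}$; after reparameterizing $t$, one may arrange that the foliation on $T^2\times\{t_i\}$ has slope exactly $s_i$. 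Since the $s_i$ are monotonic, the required tori are pairwise disjoint and the components of $L_\n$ can each be Legendrian realized as a leaf of the characteristic foliation on the appropriate torus. Legendrian surgery on this realization attaches $2$-handles with the same framings as in Lisca's construction, so the resulting Stein $4$-manifold is diffeomorphic to $W_\n$ and fills a tight contact structure on $L(p,q)$. Together with Theorem~\ref{lisca}, this shows every Lisca filling of $(L(p,q),\xi_{ut})$ is of this form.

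The main obstacle is purely bookkeeping: one must track slope conventions carefully enough to confirm that the slopes produced by Lemma~\ref{newfills} agree, in the coordinates used for $\xi_{std}$, with the slopes along which $\xi_{std}$ has its linear Legendrian foliation, and that the contact framing of a Legendrian divide on a convex torus is indeed the torus framing (so that Legendrian surgery gives framing one less, matching Lisca). Once both identifications are in place, the diffeomorphism with $W_\n$ is immediate from the framing data, and the Stein property follows from Weinstein handle attachment.
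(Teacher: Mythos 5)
Your proof is correct in substance and broadly follows the same structure, but for one step you use a genuinely different (and slightly bumpier) route than the paper. For the first assertion, to conclude that the result of surgery is a lens space, the paper uses the clean geometric fact that a surgery along a torus knot with framing one less than the torus framing is the same as cutting $S^1\times S^2$ along the Heegaard torus and regluing by a right-handed Dehn twist; since performing several such surgeries on disjoint parallel tori just composes Dehn twists, the result immediately has a genus-one Heegaard splitting, hence is a lens space. You instead propose to reverse the slam-dunks of Lemma~\ref{newfills} to reconstruct an integer-framed linear chain of unknots, citing monotonicity of slopes. That should work, but as sketched it leaves a nontrivial gap: one must actually verify that the unwinding process yields integer (not rational) framings and that the pieces assemble into one linear chain, which is essentially the full content of the translation between null sequences and chains. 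The Dehn twist argument sidesteps this entirely, which is why the paper chose it. Your treatment of the second assertion (matching slopes and framings to realize Lisca's links Legendrianly, then invoking Lemma~\ref{newfills} and Theorem~\ref{lisca}) coincides with the paper's.
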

\begin{remark}
We notice that there are fillings you can construct from this corollary that do not obviously come form Lisca's construction, but by his classification theorem they will be diffeomorphic to one constructed by his algorithm. 
\end{remark}
\begin{proof}
Since the Heegaard tori are foliated by Legendrian curves, the contact framing and the framing induced by the torus are the same. So Legendrian surgery on one of these Legendrian curves corresponds to a $-1$--surgery on the knot with respect to the torus framing. It is well-known that this is equivalent to cutting the manifold along the torus and regluing by a right handed Dehn twist along the curve. Thus we obtain a lens space, even after surgery on several such curves. 

As $S^1\times S^2$ is Stein filled by $S^1\times D^3$ and Legendrian surgery corresponds to attaching a Stein $2$--handle along the Legendrian knot, it is clear that our lens spaces are Stein fillable. 

The above discussion shows that the link on which one surgers to obtain Lisca's fillings of lens spaces is a link as described in the corollary by Lemma~\ref{newfills}.
\end{proof}
We would now like to observe some symmetries in the Legendrian torus knots in $S^1\times S^2$. To this end we first notice that we have an obvious symmetry of $S^1\times D^3=(S^1\times [-1,1])\times D^2$ with the symplectic structure $d\theta\wedge dt + dx\wedge dy,$ where $\theta$ is the angular coordinate on $S^1$, $t$ the coordinate on $[-1,1]$, and $(x,y)$ are Euclidean coordinates on $D^2$. This is clearly an exact filling of $S^1\times S^2$ as exhibited by the Liouville vector field $t\frac{\partial}{\partial t} +\frac 12(x\frac{\partial}{\partial x} + y\frac{\partial}{\partial y})$. Now consider the symplectomorphism $F:S^1\times D^3\to S^1\times D^3$ given by $F(\theta, t, x,y)= (\theta, t, r_\theta(x,y)),$ where $r_\theta:D^2\to D^2$ is rotation by $\theta$. This also induces a contactomorphism of $S^1\times S^2$ that fixes $S^1\times \{N,S\}$, where $N$ and $S$ are the north and south poles of $S^2$, respectively. 

\begin{remark}\label{fixslope}
We notice that given a Legendrian torus knot $K$ of slope $r/s$ then $F(K)$ is a Legendrian torus knot of slope $(r+s)/s$. Moreover, attaching a Stein $2$--handle to $S^1\times D^3$ along $K$ gives a symplectic manifold that is symplectomorphic to the one obtained by attaching the handle to $F(K)$. Applying $F$ or $F^{-1}$ repeatedly we can assume that the knot $K$ has slope in $(-1,0]$. 

Notice that this explains why in Lisca's construction all the $2$--handles are attached along negative torus knots. {\em A priori} one could attach the handles along positive torus knots too but after applying $F^{-1}$ enough times, one can arrange that all the torus knots in the link used to construct the Stein filling have negative slope. 
\end{remark}

\subsection{Combining symplectic fillings of universally tight contact structures on lens spaces}\label{combiningchains}

We would now like to understand how fillings of lens spaces associated to sub-chains of $[a_1,\ldots, a_n]$ relate to fillings of the lens space associated to the Legendrian realization $\mathcal{C} = \{L_1, \ldots, L_n\}$ of the chain in Figure 1 corresponding the the continued fraction $[a_1,\ldots, a_n]$. For the rest of this section we will only consider fillings of the universally tight contact structure on $L(p,q)$, hence we drop the contact structure from the notation $\Fill(L(p,q),\xi)$. Recall from the introduction that there is a map 
\[
G_{L_s}: \Fill([a_1,\ldots, a_{s-1}])\times \Fill([a_{s+1}, \ldots, a_n])\to \Fill([a_1,\ldots, a_n]).
\]
(Here we use integers $[a_1,\ldots, a_n]$ to indicate the corresponding chain of unknots describing the lens spaces. In addition, for all the fillings to be of universally tight contact structures all the stabilizations of the Legendrian knots must be of the same type.) We will show this map is injective if we do not have $q^2\equiv 1\! \mod p$ and identify the lack of injectivity in this case. For this we need to understand $G_{L_s}$ better. We begin by observing an immediate consequence of Riemenschneider point diagrams discussed in Section~\ref{cf}.
\begin{lemma}
Given a continued fraction $p/q =[a_1,\ldots, a_n]$ with $a_i\geq 2$ and some $1< s< n$, let $p_1/q_1=[a_1,\ldots, a_{s-1}]$ and $p_2/q_2= [a_{s+1}, \ldots, a_n]$.
Let $[b_1,\ldots, b_k]$, respectively $[c_1,\ldots, c_l]$, be the continued fraction expansion of $p_1/(p_1-q_1)$, respectively $p_2/(p_2-q_2)$. Then the continued fraction expansion of $p/(p-q)$ is 
\[
[b_1,\ldots, b_{k-1}, b_k+c_1, c_2,\ldots, c_l]
\]
if $a_s=2$ and 
\[
[b_1,\ldots, b_{k-1}, b_k+1, 2, \ldots, 2, c_1+1, c_2,\ldots, c_l]
\]
otherwise, where there are $(a_s-3)$ $2$'s between $b_k+1$ and $c_1+1$. 

If $p'/q'=[a_2,\ldots, a_n]$ and $[b_1,\ldots, b_l]$ is the continued fractions expansion of $p'/(p'-q')$, then the expansion of $p/(p-q)$ is
\[
[2,\ldots, 2, b_1+1, b_2, \ldots, b_n]
\]
where there are $(b_1-2)$ $2$'s at the beginning. There is a similar formula if $a_n$ is left out. 
\hfill \qed
\end{lemma}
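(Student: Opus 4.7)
The proof will be a direct computation via the Riemenschneider point diagram described in Section~\ref{cf}. The key idea is that the diagram for $p/q = [a_1,\ldots,a_n]$ decomposes into the diagrams for the two sub-chains $[a_1,\ldots,a_{s-1}]$ and $[a_{s+1},\ldots,a_n]$ together with the dots contributed by row $s$, and reading columns in the combined diagram recovers the continued fraction of $p/(p-q)$ directly.

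The plan is as follows. First I would set up notation: let $D_1$ be the point diagram for $p_1/q_1$ (with $k$ columns giving $[b_1,\ldots,b_k]$), $D_2$ the diagram for $p_2/q_2$ (with $l$ columns giving $[c_1,\ldots,c_l]$), and $D$ the diagram for $p/q$. In $D$, the last dot of row $s-1$ lies in some column $C$; the first dot of row $s$ lies directly below it, still in column $C$, and row $s$ extends to the right for a total of $a_s-1$ dots, ending in column $C+a_s-2$. The first dot of row $s+1$ lies directly below this last dot of row $s$, in column $C+a_s-2$. Thus rows $1,\ldots,s-1$ of $D$ form exactly $D_1$ placed in columns $1,\ldots,k$ (so $C=k$), while rows $s+1,\ldots,n$ form exactly $D_2$ placed in columns $k+a_s-2,\ldots,k+a_s-3+l$.

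Now I would analyze the two cases by counting dots in each column of $D$:

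If $a_s = 2$, then row $s$ contributes a single dot in column $k = k+a_s-2$, so columns $1,\ldots,k-1$ are unchanged from $D_1$ (giving $b_1,\ldots,b_{k-1}$), column $k$ now has $(b_k-1)+1+(c_1-1) = b_k+c_1-1$ dots (giving $b_k+c_1$), and columns $k+1,\ldots,k+l-1$ come from $D_2$ (giving $c_2,\ldots,c_l$). This yields the first formula.

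If $a_s \geq 3$, then row $s$ contributes one dot to column $k$ (making it $b_k$ dots, i.e.\ entry $b_k+1$), one dot each to the intermediate columns $k+1,\ldots,k+a_s-3$ (giving $a_s-3$ entries equal to $2$), and one dot to column $k+a_s-2$ which also receives the first column of $D_2$ (giving $c_1$ dots, i.e.\ entry $c_1+1$), followed by the remaining columns of $D_2$ giving $c_2,\ldots,c_l$. This yields the second formula.

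For the last statement (removing $a_1$), I would apply the same column-counting argument: adding a new top row with $a_1-1$ dots introduces $a_1-2$ entirely new columns each containing a single dot (entries equal to $2$), and then its last dot lands on top of the first column of the old diagram, incrementing the first old entry by $1$. (I would note that the paper's statement appears to contain a typo in the count of $2$'s; the correct count from the diagram is $a_1-2$.) The remaining case where $a_n$ is dropped is symmetric under reversing the order of entries, since the Riemenschneider diagram of the reversed sequence is the transpose of the original.

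There is no serious obstacle here; the only care needed is bookkeeping the indices when the columns of $D_1$ and $D_2$ overlap at column $k$, versus the generic case $a_s\geq 3$ where row $s$ creates intermediate $2$'s between the two sub-diagrams. The entire argument reduces to reading off the column counts in a single diagram.
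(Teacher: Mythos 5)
Your proof is correct and follows exactly the route the paper intends (the paper offers no written proof, merely declaring the lemma ``an immediate consequence of Riemenschneider point diagrams''); your column-counting via the overlapping of $D_1$, row $s$, and $D_2$ is the canonical way to see it. You are also right that the count of $2$'s in the last formula should be $a_1-2$ rather than $b_1-2$: for example, with $[a_1,a_2]=[3,2]$ one has $p/(p-q)=5/3=[2,3]$, while $p'/(p'-q')=2=[2]$ so $b_1-2=0$ would incorrectly give $[3]$, whereas $a_1-2=1$ gives the correct $[2,3]$; the symbol $b_n$ at the end of that displayed formula should likewise be $b_l$.
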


We now observe a fact about combining null sequences. 
\begin{lemma}\label{combnullchains}
If $(n_1,\ldots, n_k)$ and $(m_1,\ldots, m_l)$ are two null sequences then so are 
\begin{equation}\label{adding}
(n_1,\ldots, n_{k-1}, n_k+m_1, m_2, \ldots, m_l),
\end{equation}
and
\begin{equation}\label{adding2}
(n_1,\ldots, n_{k-1}, n_k+1, 2, \ldots, 2, m_1+1, m_2, \ldots, m_l).
\end{equation}
\end{lemma}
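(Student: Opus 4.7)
The plan is to prove both identities by induction on $l$, mimicking the sequence of strict blowups that builds $(m_{1},\ldots,m_{l})$ from $(0)$ but carried out on the concatenated sequence with position indices shifted by $k-1$.

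For the first identity I induct on $l$. The base case $l=1$ is immediate: the unique length-one null sequence is $(0)$, so the concatenated sequence equals $(n_{1},\ldots,n_{k})$. For the inductive step I write $(m_{1},\ldots,m_{l+1})$ as a strict blowup at some position $s\in\{1,\ldots,l\}$ of a shorter null sequence $(m'_{1},\ldots,m'_{l})$, invoke the inductive hypothesis on $(n_{1},\ldots,n_{k-1},n_{k}+m'_{1},m'_{2},\ldots,m'_{l})$, and then perform the analogous strict blowup at position $k+s-1$ of this concatenated sequence (which is strict since $k+s-1\geq 1$). The point to verify is that this shifted blowup yields the desired sequence: for $s>1$ the increment falls on $m'_{s}$ directly, and for $s=1$ it lands on the merged entry $n_{k}+m'_{1}$, producing $n_{k}+(m'_{1}+1)=n_{k}+m_{1}$, which is precisely the right value.

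The second identity then follows from the first. Starting from $(n_{1},\ldots,n_{k})$, a strict blowup at position $k$ followed by $j$ further strict blowups at the right end produces the null sequence
\[
(n_{1},\ldots,n_{k-1},n_{k}+1,\underbrace{2,\ldots,2}_{j},1).
\]
Feeding this together with $(m_{1},\ldots,m_{l})$ into the first identity merges the trailing $1$ with $m_{1}$, giving
\[
(n_{1},\ldots,n_{k-1},n_{k}+1,\underbrace{2,\ldots,2}_{j},m_{1}+1,m_{2},\ldots,m_{l}),
\]
which is null for every $j\geq 0$.

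The only real obstacle is the bookkeeping at $s=1$ in the inductive step of the first identity, where the blowup of the $m$-sequence modifies its first entry and the corresponding increment on the concatenated side must be absorbed into the merged entry $n_{k}+m'_{1}$ rather than a pristine copy of $m'_{1}$. Once this index-translation is handled, both identities reduce to a routine translation of strict blowups between the two sequences.
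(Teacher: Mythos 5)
Your proof is correct and follows essentially the same strategy as the paper's: the first identity is established by transporting, position-shifted by $k-1$, the sequence of strict blowups that builds $(m_1,\ldots,m_l)$ onto the concatenated sequence, and the second identity is reduced to the first by first building the auxiliary null sequence $(n_1,\ldots,n_{k-1},n_k+1,2,\ldots,2,1)$. The paper phrases the first part in terms of starting from $(1,1)$ and continuing the blowup sequence, and obtains the second part by applying equation~\eqref{adding} twice via the null sequence $(1,2,\ldots,2,1)$, but these are only cosmetic differences from your explicit induction and your direct construction of the intermediate sequence; your version is a bit more careful with the $s=1$ bookkeeping, which the paper leaves implicit.
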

\begin{proof}
For the first claim notice that for $(m_1,\ldots, m_l)$ there is a sequence of strict blowups from $(1, 1)$ to $(m_1,\ldots, m_l)$. Now starting from $(n_1,\ldots, n_k)$ we can blowup at the last point to get $(n_1,\ldots, n_k+1, 1)$ which is the result of combining $(n_1,\ldots, n_k)$ and $(1,1)$ as in Equation~\ref{adding}. Now continue the sequence of blow up to go from $(1,1)$ to $(m_1,\ldots, m_l)$ on $(n_1,\ldots, n_k+1, 1)$ to get $(n_1,\ldots, n_{k-1}, n_k+m_1, m_2, \ldots, m_l)$. The last equation is obtained from applying the first equation to $(n_1,\ldots, n_k)$, $(1,2, \ldots, 2, 1)$, and $(m_1,\ldots, m_l)$, and since these are all null sequences so is the result. 
\end{proof}

Given $\n\in \BZ_{p_1,q_1}$ and $\m\in \BZ_{p_2,q_2}$ we call the combination in \eqref{adding} the \dfn{$2$--fusion} of $\n$ and $\m$ and denote it $\n*_{2} \m$. Similarly the combination in \eqref{adding2} is the \dfn{$a_s$--fusion} of $\n$ and $\m$ when there are $(|a_s| - 3)$ $2$'s, and denote it $\n*_{a_s} \m$. Clearly this gives a map
\[
F_{a_s}: \BZ_{p_1,q_1} \times \BZ_{p_2,q_2} \to \BZ_{p,q}.
\]
\begin{proposition}\label{altdesc}
Given fillings $W_\n\in \Fill(L(p_1,q_1))$ and $W_\m\in \Fill(L(p_2,q_2))$ then 
\[
G_{L_s}(W_\n, W_\m) = W_{F_{a_s}(\n, \m)}.
\]
\end{proposition}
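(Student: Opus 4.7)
The plan is to prove the equality by exhibiting both sides as the same Kirby diagram, up to handle slides and $1$--$2$ handle cancellations.

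First I would lay out explicit Kirby diagrams for each side. For the right-hand side $W_{F_{a_s}(\n,\m)}$, I would use Lisca's construction of Section~\ref{lclass}: draw the black chain from Figure~\ref{Wn} for the null sequence $\n *_{a_s} \m$ (with the prescription given by Lemma~\ref{combnullchains}) together with the red $2$--handles of appropriate framings. Blowing down the black chain yields the single $1$--handle $S^1\times D^3$, and the $2$--handles are attached along the link $L_{\n *_{a_s} \m}\subset S^1\times S^2$. For the left-hand side $G_{L_s}(W_\n,W_\m)$, I would take the disjoint union of the Lisca diagrams for $\n$ and $\m$, connect them by a Weinstein $1$--handle to form the boundary connected sum, and add a $2$--handle along $L_s$. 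By Corollary~\ref{legsimpleratunknots}, the knot $L_s$ is (up to contactomorphism smoothly isotopic to the identity) the standard $\mathrm{tb}=-1$ Legendrian realization of the rational unknot which is the connected sum of rational unknots in $L(\mathcal{C}_s)$ and $L(\mathcal{C}_e)$, so it appears as an unknot passing once through the connecting $1$--handle with framing $-a_s$.

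The key step is then to match the two diagrams. I would do this in two cases according to the definition of $F_{a_s}$. When $a_s=2$, the $2$--handle attached along $L_s$ has framing $-2$; after sliding the last red curve of the $\n$--diagram and the first red curve of the $\m$--diagram over $L_s$ (and sliding $L_s$ off the connecting $1$--handle), a single $1$--$2$ cancellation removes the connecting $1$--handle and fuses the two black chains at the point where $n_k$ and $m_1$ meet. This produces precisely the chain whose $k$--th node has coefficient $n_k+m_1$, matching the $2$--fusion formula~\eqref{adding}. When $a_s>2$, the $2$--handle along $L_s$ has framing $-a_s$ and requires $a_s-3$ additional stabilizations, which in the diagrammatic picture manifests as $a_s-3$ extra $(-2)$--framed unknots inserted between the two chains; this is exactly the insertion described in the $a_s$--fusion formula~\eqref{adding2}. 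The needed framing bookkeeping on the red $2$--handles follows from the Riemenschneider point diagram description recalled in Section~\ref{cf} together with the lemma just before the statement of the proposition (which computes the continued fraction of $p/(p-q)$ from those of $p_1/(p_1-q_1)$ and $p_2/(p_2-q_2)$ in precisely the form compatible with fusion).

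The main obstacle is the careful framing accounting through the handle slides: I need to check that after the slides, the framing $b_k-n_k$ of the last red $2$--handle in the $\n$--block and $c_1-m_1$ of the first in the $\m$--block get replaced by (respectively) $b_k+c_1-(n_k+m_1)$ in the $2$--fusion case and by the appropriate framings coming from the inserted $2$'s in the $a_s$--fusion case, which is exactly what Lisca's recipe dictates for the fused null sequence via the computation in the previous lemma.

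As a sanity check I would also verify the statement via the alternate construction of Section~\ref{alt}: by Lemma~\ref{newfills} each $W_\n$ arises from Legendrian surgery on a collection of torus knots sitting on Heegaard tori of $(S^1\times S^2,\xi_{std})$, and the effect of $G_{L_s}$ is to glue two such collections along a new Heegaard torus whose slope is dictated by the fusion type. The resulting linear sequence of Legendrian torus knots, read out via the slope computation at the end of the proof of Lemma~\ref{newfills}, agrees with the sequence of slopes associated to $F_{a_s}(\n,\m)$, confirming the identification of fillings.
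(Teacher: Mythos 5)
Your overall strategy is the one the paper uses: present both sides as Kirby diagrams, identify $L_s$ via Corollary~\ref{legsimpleratunknots}, split into the cases $a_s=2$ and $a_s>2$, and match the left side to the Lisca diagram for the fused null sequence by handle slides and $1$--$2$ cancellation.

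However, there is a genuine missing step in the framing bookkeeping. You attach the $2$--handle to $L_s$ with framing $-a_s$, where this framing is measured in the surgery diagram for $L(\mathcal{C}_s)\#L(\mathcal{C}_e)$ coming from the chain in Figure~\ref{chain}. But to fuse this handle into the Lisca diagram for $W_\n\natural W_\m$ you need the framing of the same knot measured as $K_2''\#K_1''$ in the diagram of Figure~\ref{Wn}, i.e.\ as a meridian of the $1$--handles. These two framing conventions do \emph{not} agree; the paper records the discrepancy in Lemma~\ref{findrat}, which shows that $K_i$ and $K_i''$ are isotopic but their natural framings differ by one, so the $-a_s$ framing on $K_1\#K_2$ becomes a $0$--framing on $K_2''\#K_1''$ when $a_s=2$ (and similarly in the general case, via the modified diagram in Figure~\ref{fuselarger}). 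Without this translation the $1$--$2$ cancellation you describe does not go through as stated. You gesture at ``framing bookkeeping'' via the Riemenschneider lemma and the lemma combining continued fractions, but those handle a different piece of accounting (the $b_i,c_i$'s of the fused lens space), not this shift between the chain picture and the Lisca picture. Two smaller slips point the same way: $L_s$ has $\mathrm{tb}=-1$ only when $a_s=2$ and is stabilized $a_s-2$ (not $a_s-3$) times in general, and the $a_s-3$ auxiliary unknots inserted in the $a_s$--fusion picture carry framing $+2$ (they are part of the black $S^1\times S^2$--defining chain), not $-2$. Your appeal to the torus--knot reinterpretation of Section~\ref{alt} is a nice independent consistency check, but it does not substitute for pinning down the framing conversion.
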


For this proposition we need a preliminary result that identifies the rational unknots in $\partial W_\n$. 

\begin{lemma}\label{findrat}
Let $p/q=[a_1,\ldots, a_n]$ and $p/(p-q)= [b_1,\ldots, b_m]$. Denote the chain of unknots with framings $-a_i$, as in Figure~\ref{chain}, by $\mathcal{C}$, and the corresponding chain for the $b_i$ by $\mathcal{C}'$. The lens spaces $L(\mathcal{C})$ and $L(\mathcal{C}')$ are orientation-reversing diffeomorphic and the rational unknots $K_1$ and $K_2$ in $L(\mathcal{C})$ shown in Figure~\ref{ratunknots} and the corresponding ones $K_1'$ and $K_2'$ in $L(\mathcal{C}')$ are isotopic. Moreover, if $K''_1$ and $K''_2$ are the meridians to the $n_1$-framed and $n_m$-framed unknots in Figure~\ref{Wn}, then for any $\n\in \BZ_{p,q}$, $K_i''$ is isotopic to $K_i$ but the framing on $K_i$ is one less that the framing on $K_i''$. (Here all framings on the knots are with respect to the Seifert framing as knots in $S^3$.)
\end{lemma}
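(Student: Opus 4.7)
My plan is as follows. The first assertion follows from Riemenschneider's point diagram (Section~\ref{cf}) together with the classical orientation-reversing diffeomorphism $L(p,q) \cong -L(p, p-q)$: the duality expressed by the point diagram translates, at the level of Kirby diagrams, into a sequence of Rolfsen twists and slam-dunks converting $\mathcal{C}$ to (an orientation reversal of) $\mathcal{C}'$.

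For the identification $K_i \leftrightarrow K'_i$, I would observe that the complement of a tubular neighborhood of $U_1 \subset S^3$ is itself a solid torus whose core is precisely $K_1$. All surgeries of $\mathcal{C}$ are performed inside a regular neighborhood of $U_1 \cup \cdots \cup U_n$, so this complementary solid torus persists as a Heegaard solid torus in $L(\mathcal{C})$, and $K_1$ is its core; in particular $K_1$ is a rational unknot. The orientation-reversing diffeomorphism of the first paragraph swaps the two Heegaard solid tori, and the same construction applied to $\mathcal{C}'$ at $U'_1$ produces $K'_1$. Hence $K_1$ and $K'_1$ are identified in the common underlying lens space, and the symmetric argument at the opposite end identifies $K_2$ with $K'_2$.

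For the meridians $K''_i$ appearing in the $W_\n$ diagram, I would exploit the fact that Figure~\ref{Wn} arises from $\mathcal{C}'$, together with the red $n_i$-framed curves, by a sequence of $(-1)$-blowups encoding the null sequence $\n$ (this is essentially the content of the null-sequence setup in Section~\ref{lclass}, since the $n_i$ are obtained from $(0)$ by a sequence of strict blowups). Running these blowups in reverse collapses the auxiliary $(-1)$-framed unknots and recovers $\mathcal{C}'$; simultaneously, it carries $K''_1$ (the meridian of the $n_1$-framed red curve) onto the meridian of the first component of $\mathcal{C}'$, namely $K'_1$. By the previous paragraph $K'_1 = K_1$, so $K''_1$ is isotopic to $K_1$ in the lens space; the argument for $K''_2$ is symmetric.

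The framing statement is the only nontrivial piece. Recall that a $(-1)$-blowdown along an unknot $U$ alters the framing of any disjoint curve $J$ by $+\,\mathrm{lk}(J, U)^2$. I would trace $K''_1$ through the entire blowdown reduction from Figure~\ref{Wn} to $\mathcal{C}'$ and check that exactly one blowdown involves an unknot linked once with $K''_1$, while all other blowdowns are unlinked from $K''_1$; this produces a net framing shift of $+1$ relative to the Seifert framing of the resulting meridian $K'_1 = K_1$. The same computation then handles $K''_2$. The main obstacle is organizing this linking-number bookkeeping cleanly through the canonical blowdown reduction of the null sequence $\n$; with that in hand, the rest of the argument is standard Kirby calculus combined with the uniqueness of the genus-one Heegaard splitting of a lens space.
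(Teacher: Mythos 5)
Your proposal has the right skeleton---reduce to an explicit Kirby-calculus comparison between the $W_\n$ picture, the $\mathcal{C}'$ chain, and the $\mathcal{C}$ chain---but several of the intermediate steps are either incorrect as stated or skip exactly the content the paper needs.

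First, the claim that the complement of $\nu(U_1)$ in $S^3$ ``persists as a Heegaard solid torus in $L(\mathcal{C})$'' is not right as stated: the remaining components $U_2,\ldots,U_n$ lie inside that complementary solid torus, so surgering on them changes it. To make $K_1$ literally the core of a Heegaard solid torus you would first slam-dunk the chain down to a single rational surgery on $U_1$; that is a fixable gap, but the sentence as written is false.

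More seriously, the assertion that the orientation-reversing diffeomorphism of the first paragraph ``swaps the two Heegaard solid tori,'' and that this therefore identifies $K_1$ with $K_1'$, is precisely what needs to be proved. A lens space may have up to four (or two unoriented) isotopy classes of rational unknots, and without an explicit identification there is no reason \emph{a priori} that the diffeomorphism sends $K_1$ to $K_1'$ rather than to $K_2'$ (or to an orientation-reversed copy). The paper proves this matching by carrying out the Kirby moves that turn the $b_i$-chain into the $-a_j$-chain while tracking the meridian; your argument assumes the conclusion of that computation. Note also that the Riemenschneider correspondence between $(a_1,\ldots,a_n)$ and $(b_1,\ldots,b_m)$ is not a simple reversal, so there is genuine content in ``leftmost goes to leftmost.''

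Finally, the framing computation is misplaced. The passage from Figure~\ref{Wn} to $\mathcal{C}'$ blows down only the red $(-1)$-framed meridians, and each of these is a small meridian of the same black unknot that $K''_1$ links; they have linking number $0$ with $K''_1$. None of those blowdowns changes the framing of $K''_1$, so one gets $K''_1 \cong K'_1$ with \emph{no} shift, not a shift of $+1$. The framing shift of one unit in the lemma arises in the second stage, the Kirby moves converting the $\mathcal{C}'$-diagram to the $\mathcal{C}$-diagram (sliding $K''_1$ over a $(-1)$-framed component and then cancelling the $0$-framed unknot against it, as in Figures~\ref{ratsame} and~\ref{ratfinalfig}). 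Since your proposal routes the identification $K'_1 = K_1$ through an abstract Heegaard-torus argument that keeps no track of framings, the $+1$ never appears anywhere in your argument; attributing it to the $W_\n\to\mathcal{C}'$ blowdowns is where the proof breaks.
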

\begin{proof}
We first notice that given any $\n\in \BZ_{p,q}$ in Figure~\ref{Wn} we can blowdown all the $-1$-framed unknots to get $\mathcal{C}'$. Thus it is clear that the $K''_i$ are isotopic to the $K'_i$. So we are left to relate the $K_i''$ and the $K_i$. To this end, consider $\n=(1,2,\ldots, 2, 1)\in \BZ_{p,q}$. From above $L(\mathcal{C}')$ is clearly diffeomorphic to $\partial W_\n$ so that the rational unknots are preserved. We will now convert $\partial W_\n$ into $L(\mathcal{C})$ and see that the $K_i''$ go to the $K_i$. We start with the $-1$--framed meridians to the $n_m$-framed unknots in Figure~\ref{Wn}. Slide one $-1$--framed meridian over a second one and then that one over a third and so on until one sees the diagram at the top of Figure~\ref{ratsame}. Now blow down the $n_m$-framed unknot (recall $n_m=1$) to get the next diagram in Figure~\ref{ratsame}. Slide the $-2$--framed unknot over one of the $-1$--framed meridian to the $(n_{m-1}-1)$--framed unknot if there are any such unknots or do nothing if not. Then continue as before to slide the $-1$--framed meridians over the other such knots until one arrives at the bottom diagram in Figure~\ref{ratsame}. Continue this process until one arrives at Figure~\ref{ratfinalfig}. 
\begin{figure}[htb]
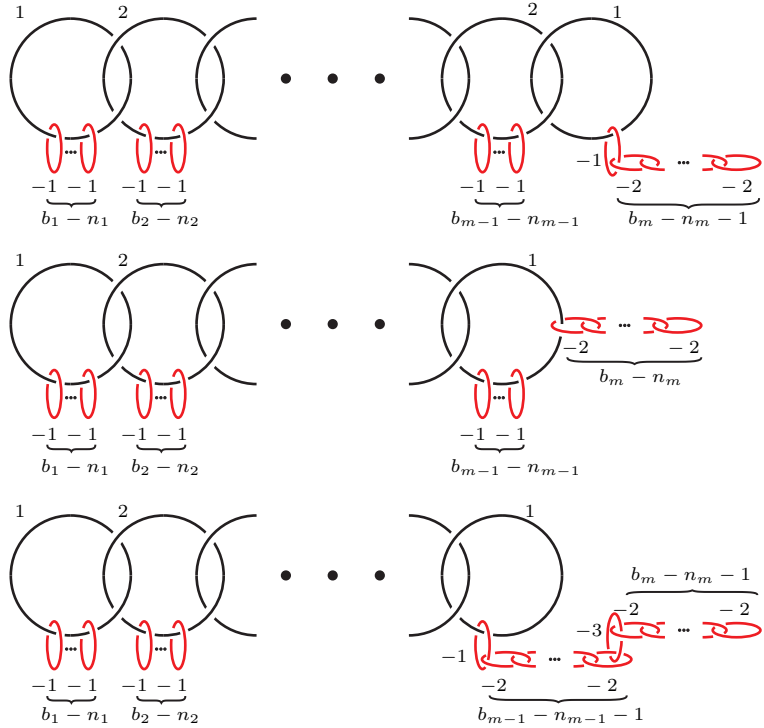
{\tiny
\begin{overpic}
{fig/SeeRatUnknot}
\put(14, -2){$b_1-n_1$}
\put(48, -2){$b_2-n_2$}
\put(179,-2){$b_{m-1}-n_{m-1}-1$}
\put(10, 9){$-1 \,-1$}
\put(44, 9){$-1 \,-1$}
\put(165, 20){$-1$}
\put(180, 9){$-2 \quad \quad \quad \quad -2$}
\put(230, 38){$-2 \quad \quad \quad \quad-2$}
\put(237, 51){$b_m-n_m-1$}
\put(216, 30){$-3$}
\put(4, 75){$1$}
\put(43, 75){$2$}
\put(197, 75){$1$}
\put(4, 170){$1$}
\put(43, 170){$2$}
\put(198, 170){$1$}
\put(14, 92){$b_1-n_1$}
\put(48, 92){$b_2-n_2$}
\put(169,92){$b_{m-1}-n_{m-1}$}
\put(10, 104){$-1 \,-1$}
\put(44, 104){$-1 \,-1$}
\put(172, 104){$-1 \,-1$}
\put(211, 137){$-2 \quad \quad \quad \quad-2$}
\put(225, 126){$b_m-n_m$}
\put(4, 264){$1$}
\put(43, 264){$2$}
\put(198, 265){$2$}
\put(230, 264){$1$}
\put(14, 186){$b_1-n_1$}
\put(48, 186){$b_2-n_2$}
\put(169, 186){$b_{m-1}-n_{m-1}$}
\put(236, 186){$b_{m}-n_{m}-1$}
\put(10, 198){$-1 \,-1$}
\put(44, 198){$-1 \,-1$}
\put(172, 198){$-1 \,-1$}
\put(231, 198){$-2 \quad \quad \quad \quad-2$}
\put(216, 208){$-1$}
\end{overpic}}
\caption{Converting $\mathcal{C}'$ to $\mathcal{C}$.}
\label{ratsame}
\end{figure}
To that you get the framings claimed in Figure~\ref{ratfinalfig} notice that if the chain $(b_1,\ldots, b_m)$ is of the form $(2^{d_1}, c_1+3, 2^{d_2}, c_2+3, \ldots, 2^{d_k}, c_k+3, 2^{d_{k+1}})$ for $c_i, d_i\geq 0$, where $2^l$ means $2$ repeated $l$ times, then the above process gives a chain with framings $(-d_1-2, (-2)^{c_1}, -d_2-3, \ldots, -d_k-3, (-2)^{c_k}, -d_{k+1}-2)$ which is precisely $(-a_1,\ldots, -a_n)$, where $(a_1,\ldots,a_n)$ is obtained from $(b_1,\ldots, b_m)$ according to the Riemenschneider point diagram formula from Section~\ref{cf}.
\begin{figure}[htb]
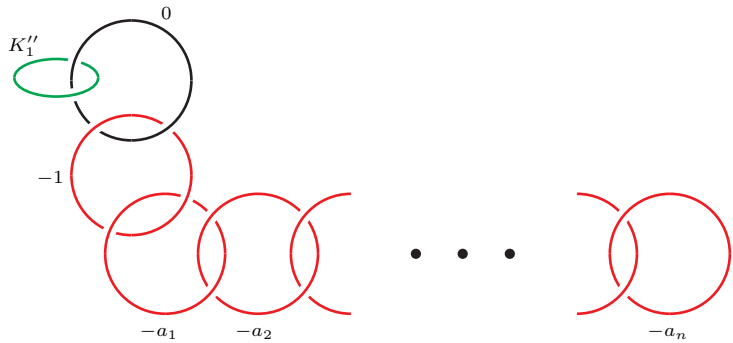
{\tiny
\begin{overpic}
{fig/ratfinal}
\put(0, 109){$K_1''$}
\put(58, 121){$0$}
\put(11, 59){$-1$}
\put(50, 1){$-a_1$}
\put(86, 1){$-a_2$}
\put(242, 1){$-a_n$}
\end{overpic}}
\caption{Final step converting $\mathcal{C}'$ to $\mathcal{C}$.}
\label{ratfinalfig}
\end{figure}
The curve $K_1''$ in Figure~\ref{ratfinalfig} can be slid over the  $-1$-framed unknots and then thinking of the $0$-framed unknot as a $1$--handle, it may be cancelled from the picture using the $-1$-framed unknot. The resulting picture is precisely the chain in Figure~\ref{ratunknots} with $K_1''$ becoming $K_1$ with framing one less. 

To see the claim for $K_2''$ one does the above argument but starting from the left most $-1$-framed unknot if Figure~\ref{Wn} and working to the right. 
\end{proof}

\begin{proof}[Proof of Proposition~\ref{altdesc}]
Let $p_1/q_1=[a_1,\ldots, a_{s-1}]$ and $p_2/q_2= [a_{s+1}, \ldots, a_n]$. Suppose $W_\n$ is a filling of $L(p_1,q_1)$ and $W_{\m}$ is a filling of $L(p_2,q_2)$. Recall that $W_\n$ is constructed by attaching a $1$--handle to the $4$--ball and then $2$--handles to a framed link $L_\n$ in $\partial (S^{1} \times D^{3})$. This is indicated on the upper left of Figure~\ref{fuse}. We obtain $L_\n$ by taking the red link in Figure~\ref{Wn} and blowing down the $n_i$--framed curves until only the rightmost one is left and it has framing $0$. Now replace this with $0$--framed unknot with a $1$--handle. We can similarly form $W_\m$ but this time we blowdown the $m_i$--framed knots leaving the leftmost one.  From this construction we see that the knots $K_2''$ in $\partial W_\n$ and $K_1''$ in $\partial W_\m$ are simply meridional circles to the $1$--handles. 

To construct $G_{L_s}(W_\n, W_\m)$ we attach a $1$--handle to $W_\n\cup W_\m$, obtaining the boundary connected sum, and then a $2$--handle to the connected sum of the rational unknots $K_2$ in $L(p_1,q_1) = \partial W_\n$ and $K_1$ in $L(p_2,q_2)=\partial W_\m$. Recall from Lemma~\ref{findrat} the $K_i$ are exactly the same knots as the $K_i''$ but the framing on the former is one less than that on the latter. So the resulting filling of $L(p,q)$ is shown at the top of Figure~\ref{fuse}; the surgery coefficient on $K_2''\# K_1''$ will be determined later.
\begin{figure}[htb]
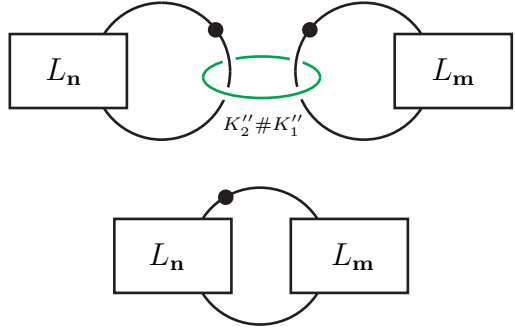
{\tiny
\begin{overpic}
{fig/fusion2}
\put(14, 93){\large$L_{\n}$}
\put(160, 93){\large$L_{\m}$}
\put(81, 74){$K''_2\#K''_1$}
\put(53, 23){\large$L_{\n}$}
\put(122, 23){\large$L_{\m}$}
\end{overpic}}
\caption{The results of attaching a round $1$--handle to $K''_2$ in $\partial W_\n$ and $K''_1$ in $\partial W_\m$.}
\label{fuse}
\end{figure}

Suppose $a_s=2$. Then $G(W_\n,W_\m)$ attaches a round $1$--handle to a Legendrian realization of $K_2''$ in $\partial W_\n$ and a realization of $K''_1$ in $\partial W_\m$. To get a filling of $L(p,q)$ we need to attach the $2$--handle portion of the round $1$--handle to $K_1\#K_2$ with framing $-2$. According to Lemma~\ref{findrat} this would be the same as attaching the $2$--handle to $K_2''\#K_1''$ with framing $0$. Now use the $0$--framed unknot to slide $L_\n$ off of the leftmost $1$--handle in Figure~\ref{fuse} and onto the rightmost handle. Then cancel the leftmost $1$--handle with the $0$--framed $2$--handle. This is shown on the bottom of Figure~\ref{fuse}.

Notice that this manifold is precisely the the manifold $W_{F_{2}(\n,\m)}$ associated to the $2$--fusion of $\n$ and $\m$ if one blows down the the curves in Figure~\ref{Wn} associated to $\n*_2\m$ leaving the one labeled $n_k+m_1$ as the $0$--framed unknot. Thus we have established the proposition when $a_2=2$. 
\begin{figure}[htb]
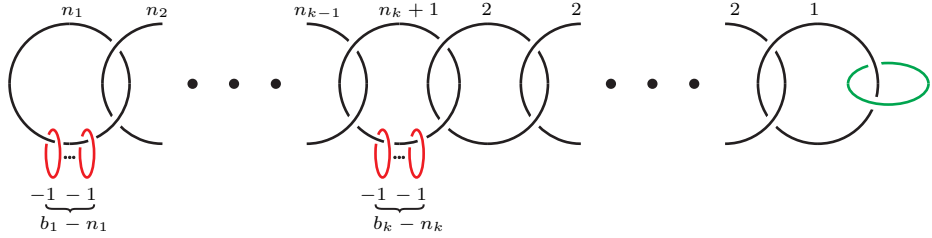
{\tiny
\begin{overpic}
{fig/fusion}
\put(14, -2){$b_1-n_1$}
\put(140,-2){$b_k-n_k$}
\put(10, 9){$-1 \,-1$}
\put(135, 9){$-1 \,-1$}
\put(22, 79){$n_1$}
\put(54, 79){$n_2$}
\put(110, 79){$n_{k-1}$}
\put(142, 79){$n_k+1$}
\put(181, 79){$2$}
\put(215, 79){$2$}
\put(275, 79){$2$}
\put(305, 79){$1$}
\end{overpic}}
\caption{Another picture of $W_\n$. There are $a_s-3$ unknots with framing $2$ on the right.}
\label{fuselarger}
\end{figure}

Now we consider the case for $a_s>2$. For this notice that Figure~\ref{fuselarger} is a diagram that gives $W_\n$. One simply blows down the $1$--framed unknot on the right and continues to blow down the chain the unknots on the right until one is left with the original chain defining $W_\n$, we then blowdown $n_i$--framed unknots until we are left with the unknot that was originally framed $n_k+1$. It will now have a $0$--framing and can be replaced with a $1$--handle to get $W_\n$. Now notice the green curve is the rational unknot $K_2$, but the framing on it in $\partial W_\n$ is $a_s-1$ less than the framing on $K_2$. Now arguing as above we clearly see that $G_{L_s}(W_\n,W_\m)$ is the same as the manifold $W_{F_{a_s}(\n,\m)}$ obtained from the $a_s$--fusion of $\n$ and $\m$. 
\end{proof}

\begin{proposition}\label{combinestrings}
Given a continued fractions $p/q =[a_1,\ldots, a_n]$ with $a_i\geq 2$ and some $1< s< n$, let $p_1/q_1=[a_1,\ldots, a_{s-1}]$ and $p_2/q_2= [a_{s+1}, \ldots, a_n]$. The map 
\[
G_{L_s}: \Fill(L(p_1,q_1),  \xi_{ut})\times \Fill(L(p_2,q_2), \xi_{ut})\to \Fill(L(p,q), \xi_{ut})
\]
is injective unless $q^2\equiv 1 \!\mod p$. When $q^2\equiv 1\! \mod p$, the map will identify $(W_\n,W_\m)$ and $(W_{\n'}, W_{\m'})$ if and only if $\n'*_{a_s} \m' = \overline{\n*_{a_s}\m}$. (Note that we must choose the universally tight contact structures on $L(p_1,q_1)$ and $L(p_2,q_2)$ so that the image of the map is a universally tight contact structure.) 
\end{proposition}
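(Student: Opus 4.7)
The plan is to reduce everything to Lisca's diffeomorphism classification of the $W_\n$ via Proposition~\ref{altdesc}. By that proposition, $G_{L_s}(W_\n,W_\m)=W_{\n *_{a_s}\m}$ and $G_{L_s}(W_{\n'},W_{\m'})=W_{\n'*_{a_s}\m'}$. Hence $G_{L_s}(W_\n,W_\m)$ and $G_{L_s}(W_{\n'},W_{\m'})$ are diffeomorphic if and only if $W_{\n*_{a_s}\m}$ and $W_{\n'*_{a_s}\m'}$ are diffeomorphic, and by Theorem~\ref{lisca}(3) this happens exactly when either
\[
\n*_{a_s}\m=\n'*_{a_s}\m' \qquad \text{or} \qquad q^2\equiv 1 \!\!\mod p \ \text{ and }\ \n'*_{a_s}\m' = \overline{\n*_{a_s}\m}.
\]
The parenthetical about universal tightness is handled by Remark~\ref{pathsandknots}: since $\xi_{ut}$ on $L(p,q)$ corresponds to a consistently stabilized chain, any sub-chain inherits a consistent stabilization, so each sub-factor automatically fills a $\xi_{ut}$ on its piece.

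The first step is to show that the fusion operation is injective on $\BZ_{p_1,q_1}\times\BZ_{p_2,q_2}$, so that the first of the two cases above reduces to $\n=\n'$ and $\m=\m'$. The key observation is that all elements of $\BZ_{p_1,q_1}$ have the same length $k$, namely the length of the continued fraction of $p_1/(p_1-q_1)$, and likewise all elements of $\BZ_{p_2,q_2}$ have a common length $l$. Looking at the explicit formulas
\[
\n*_{2}\m = (n_1,\ldots,n_{k-1},\,n_k+m_1,\,m_2,\ldots,m_l), \qquad a_s=2,
\]
\[
\n*_{a_s}\m = (n_1,\ldots,n_{k-1},\,n_k+1,\,2,\ldots,2,\,m_1+1,\,m_2,\ldots,m_l), \qquad a_s>2
\]
(with $a_s-3$ twos in the middle), once $k$, $l$, and $a_s$ are fixed the split position in the fused sequence is determined, so one can read off both $\n$ and $\m$ from $\n*_{a_s}\m$ by an obvious inversion. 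This establishes injectivity when $q^2\not\equiv 1\!\!\mod p$.

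For the second case ($q^2\equiv 1\!\!\mod p$), the plan is to analyze when the additional equation $\n'*_{a_s}\m' = \overline{\n*_{a_s}\m}$ has a solution in $\BZ_{p_1,q_1}\times\BZ_{p_2,q_2}$. A direct computation from the formulas above shows $\overline{\n*_{a_s}\m} = \overline{\m}*_{a_s}\overline{\n}$, so by the uniqueness of the fusion decomposition established in the previous paragraph, any solution must satisfy $\n'=\overline{\m}$ and $\m'=\overline{\n}$. This solution only exists when $\overline{\m}\in\BZ_{p_1,q_1}$ and $\overline{\n}\in\BZ_{p_2,q_2}$ (a condition which is not automatic from $q^2\equiv 1\!\!\mod p$, explaining why the map may still be injective in that case). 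When such a solution does exist, the ambiguity is at most two-to-one, exactly as described by the equation $\n'*_{a_s}\m' = \overline{\n*_{a_s}\m}$.

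The main obstacle here is really just the bookkeeping in establishing uniqueness of the fusion decomposition and the verification that $\overline{\n*_{a_s}\m} = \overline{\m}*_{a_s}\overline{\n}$; both are routine but need to be stated carefully because the length $l(p/(p-q))$ is not directly $l(p_1/(p_1-q_1))+l(p_2/(p_2-q_2))$ but picks up an extra $(a_s-3)$ when $a_s>2$ and loses one when $a_s=2$. Once these are in place the proposition follows immediately from Proposition~\ref{altdesc} and Theorem~\ref{lisca}(3).
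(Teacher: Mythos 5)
Your proof follows the same route as the paper: translate $G_{L_s}$ to the fusion map $F_{a_s}$ via Proposition~\ref{altdesc}, then invoke Lisca's diffeomorphism classification, Theorem~\ref{lisca}(3). The paper's own proof is very terse --- it simply states the two-line translation and says ``the proposition follows from Proposition~\ref{altdesc}'' --- whereas you explicitly supply the implicit ingredient: that $F_{a_s}$ is injective, because the split position in $\n *_{a_s}\m$ is determined once $k$, $l$, and $a_s$ are fixed. That is a worthwhile gap to close; the paper silently relies on it to deduce injectivity of $G_{L_s}$ from injectivity of $\n\mapsto W_\n$. One small caution on your extra analysis of the $q^2\equiv 1\!\!\mod p$ case: the identity $\overline{\n*_{a_s}\m}=\overline{\m}*_{a_s}\overline{\n}$ is a formal identity on tuples, but your conclusion ``any solution must satisfy $\n'=\overline{\m}$, $\m'=\overline{\n}$'' tacitly assumes the lengths $k=l(p_1/(p_1-q_1))$ and $l=l(p_2/(p_2-q_2))$ coincide --- this is not forced by palindromicity of $p/q$ (for instance $[2,3,2,3,2]$ with $s=2$ gives $k=1$, $l=2$). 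When $k\neq l$ the standard split of $\overline{\n*_{a_s}\m}$ occurs at position $k$, not $l$, so the solution, if any, is not literally $(\overline{\m},\overline{\n})$; your criterion still gives the right answer (no solution, since $\overline{\m}\notin\BZ_{p_1,q_1}$ on length grounds), but the stated justification is off. Since this extra description goes beyond what the proposition asserts, it does not affect the validity of your proof, but it is worth flagging if you intend to keep it.
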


\begin{proof}
Let $p_1/q_1=[a_1,\ldots, a_{s-1}]$ and $p_2/q_2= [a_{s+1}, \ldots, a_n]$. Suppose $W_\n$ and $W_{\n'}$ are fillings of $L(p_1,q_1)$ and $W_{\m}$ and $W_{\m'}$ are fillings of $L(p_2,q_2)$. From Lisca's classification, Theorem~\ref{lisca}, we know $W_{\n*_{a_s}\m}$ is the same as $W_{\n'*_{a_s}\m'}$ if and only if $\n*_{a_s}\m=\n'*_{a_s}\m'$ or $q^2\equiv 1\! \mod p$ and $\n'*_{a_s}\m'=\n*_{a_s}\m$ or $\overline{\n*_{a_s}\m}$. Thus the proposition follows from Proposition~\ref{altdesc}.
\end{proof}

\begin{remark}
We note that the map $G_{L_s}$ can also be surjective, but in general does not have to be. For example, notice that any filling in the image of the map must have Euler characteristic at least $2$ (since the boundary connected sum of  $W_\n$ and $W_{\n'}$ has Euler characteristic at least $1$ and we then add a $2$--handle). If we consider $-16/3=[-6,-2,-2]$, then $\Fill(L(16,3),\xi_{ut})$ contains two elements, the plumbing of three disk bundles over spheres and a rational homology ball. From our above observation we see that the rational homology ball filling will not be in the image of any $G_{L_s}$. 
\end{remark}

\subsection{Decomposing fillings along tori}\label{menkedecompose}
Let $X$ be a $4$--manifold with boundary. If $K_0$ and $K_1$ are two disjoint oriented knots in $\partial X$ then a round $1$--handle is attached to $X$ along $K_0\cup K_1$ by gluing $S^1\times ([0,1]\times D^2)$ to $X$ by identifying $S^1\times \{i\}\times D^2$ with a neighborhood of $K_i$, for $i=0,1$. Such a gluing is determined by a framing on $K_0$ and $K_1$. Denote the resulting manifold by $X'$. We notice that $\partial X'$ is obtained from $\partial X$ by removing neighborhoods of the $K_i$ and gluing the resulting boundary components together (so that the meridian on one of the tori goes to the meridian on the other, and the longitude determined by the framing on one of the tori goes to the longitude on the other). Conversely notice that there is a natural torus $T$ in $\partial X'$ so that $\partial X$ can be recovered from $\partial X'$ by removing a neighborhood of $T$ and gluing in two solid tori. (Not every gluing of solid tori will yield $\partial X$, but there will be one that does.) 

Notice that attaching a round $1$--handle can also be described by attaching a $1$--handle with attaching sphere a point on $K_0$ and a point on $K_1$ followed by attaching a $2$--handle to $K_0\# K_1$ (notice that after attaching the $1$--handle the parts of $K_i$ outside the attaching region of the $1$--handle can be joined to form the connected sum of the knots). 

If $X$ is a symplectic manifold with convex boundary and the $K_i$ are Legendrian knots in $\partial X$ then the $1$--handle and $2$--handle mentioned above can be attached as Weinstein handles and thus the resulting manifold $X'$ is also a symplectic manifold with convex boundary. 

Suppose $(M,\xi)$ is a contact manifold.  An embedded convex torus $T$ in $M$ is called a \dfn{mixed torus} if $T$ has a neighborhood $N=[-1,1]\times T^2$ with convex boundary such that $\{0\}\times T^2$ is $T$ and $[-1,0]\times T^2$ and $[0,1]\times T^2$ are basic slices of opposite sign. Let the slope of the dividing curves on $\{t\}\times T^2$ be denoted by $s_t$, for $t=-1,0, 1$. There will be an edge in the Farey graph from $s_{-1}$ to $s_0$ and one from $s_0$ to $s_{1}$. Let $E$ be the set of slopes in the interval $(s_1, s_{-1})$ with an edge to $s_0$. 
\begin{theorem}[Menke, 2018 \cite{menke18pre}]\label{menke}
If $(X,\omega)$ is a symplectic manifold with convex boundary and $T$ is a mixed torus in $\partial X$ then there is a symplectic manifold $(X',\omega')$ and Legendrian knots $K_0$ and $K_1$ in $\partial X'$ such that $(X,\omega)$ is recovered from $(X',\omega')$ by attaching a round $1$--handle along $K_0\cup K_1$. 

Moreover, $\partial X'$ is obtained from $\partial X$ by cutting along $T$ and gluing in two solid tori both with meridional slope $e$ for some $e\in E$ and extending the contact structure over these tori by the unique tight structure on them. These solid tori are neighborhoods of the Legendrian knots $K_i$. 
\end{theorem}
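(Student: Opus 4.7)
The plan is to apply pseudoholomorphic curve techniques in the spirit of Hofer--Wysocki--Zehnder, Eliashberg, McDuff, and Wendl to produce a holomorphic foliation that identifies a round $1$--handle inside $X$ to be removed. The opposite-sign condition on the two basic slices flanking $T$ is what makes the local model amenable to being filled by holomorphic curves.

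\emph{Step 1: set up the Reeb picture.}  In the neighborhood $N=[-1,1]\times T^{2}$ of $T$, perturb the contact form so that each convex torus $\{t\}\times T^{2}$ carries a Morse--Bott family of closed Reeb orbits. For a chosen $e\in E$, locate a pre-Lagrangian torus $T_{e}\subset N$ of slope $e$ whose Reeb foliation is an $S^{1}$--family of closed orbits. The hypothesis that the two basic slices on either side of $T$ have opposite signs is essential: it implies that a neighborhood of $T_{e}$ splits as two solid-torus neighborhoods of meridional slope $e$ glued along $T_{e}$, i.e.\ the boundary of a standard round $1$--handle region, and simultaneously that the Reeb dynamics near $T_e$ forces a Bishop-type family of holomorphic disks.

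\emph{Step 2: find holomorphic cylinders in the filling.}  Extend the picture to a compatible almost complex structure on the symplectic completion of $X$ and consider the moduli space of finite-energy $J$--holomorphic cylinders asymptotic to Reeb orbits on $T_{e}$. The Bishop family gives a local seed, and SFT compactness together with automatic transversality in the Morse--Bott setting should produce a parametrized family of cylinders that sweeps out an embedded neighborhood of two Legendrian core curves of $T_{e}$; call these cores $K_{0}$ and $K_{1}$. One must prove that every cylinder in the family extends (rather than breaking along an undesired orbit) by an intersection-positivity and action argument controlled by the opposite-sign structure.

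\emph{Step 3: extract the handle and conclude.}  The foliating cylinders cut out a symplectically embedded round $1$--handle $S^{1}\times[0,1]\times D^{2}\subset X$ whose attaching region is a pair of tubular neighborhoods of $K_{0}\sqcup K_{1}$. Remove this handle to define $(X',\omega')$. The description of $\partial X'$ as $\partial X$ cut along $T$ and filled by two tight solid tori of meridional slope $e$ is then the standard boundary effect of a round $1$--handle attachment, and the induced contact structure extends uniquely because the tight contact structure on each new solid torus is determined by its dividing slope. The main obstacle is Step~2: one must show that the relevant moduli space of cylinders is non-empty, transversely cut out, and compactifies to a genuine foliation sweeping out a solid torus. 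Ruling out multiple covers, nodal limits, and breaking into overtwisted-like configurations is where the opposite-sign assumption on the two basic slices does the real work, and this is the technical heart of Menke's argument.
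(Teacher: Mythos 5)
This theorem is cited from Menke's paper \cite{menke18pre}; the present paper does not give a proof, so there is no internal argument to compare against. Your sketch is therefore really a sketch of what should be in Menke's paper, and in broad strokes it is on the right track: Menke does set up a Morse--Bott perturbation near the mixed torus, does rely on a family of finite-energy pseudoholomorphic curves in the completion of $X$ asymptotic to the Reeb orbits of slope $e$, and does use intersection positivity and compactness to show these curves sweep out the round $1$--handle. Your recognition that Step~2 (non-emptiness, transversality, and compactness of the moduli space) is the technical heart is accurate.

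There is, however, one concrete error and one significant understatement. First, you write ``for a chosen $e\in E$, locate a pre-Lagrangian torus $T_e$,'' as if the slope can be selected freely. The theorem asserts only that \emph{some} $e\in E$ works, and which one is dictated by the filling $(X,\omega)$, not by the user; for a generic $e\in E$ the relevant moduli space is empty, and identifying the distinguished slope is itself part of the argument, not an input to it. Second, your description of the local model --- ``a neighborhood of $T_e$ splits as two solid-torus neighborhoods \ldots\ i.e.\ the boundary of a standard round $1$--handle region'' --- conflates the three-dimensional picture in $\partial X'$ (two tight solid tori of meridional slope $e$ glued along $T_e$) with the four-dimensional round $1$--handle $S^1\times[0,1]\times D^2$ inside $X$. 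These live in different dimensions and are related by the handle-attaching map, not identified. The opposite-sign condition on the flanking basic slices is not merely a compactness-saving device; it is what guarantees the existence of the exceptional slopes in $E$ in the first place (this is where the Farey combinatorics of the mixed torus enters) and hence the existence of suitable asymptotic Reeb orbits. As an outline your proposal captures the strategy, but as written it under-constrains the choice of $e$ and papers over the part of the argument that actually produces the foliation.
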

A contact structure $\xi$ on a lens space $L(p,q)$ is determined by a minimal path in the Farey graph from $0$ to $-p/q$ with all edges (but the first and last) decorated by a sign. When two adjacent edges have a different sign, the torus along which they come together is a mixed torus. So a mixed torus can occur within a continued fraction block (and the Legendrian knot corresponding to this continued fraction block will be stabilized both positively and negatively) or, if there are no mixed tori within a continued fraction block, then there can be mixed tori as one transitions from a continued fraction block to another along an inconsistent sub-chain.

\section{Decomposing fillings of virtually overtwisted lens spaces}\label{provelist}

We now analyze the result of applying Menke's theorem, Theorem~\ref{menke}, to fillings of lens spaces. We begin with a mixed torus within a continued fraction block. 
\begin{theorem}\label{firstglue}
Let $\xi_{\mathcal{C}}$ be a contact structure on $L(p,q)$ described by Legendrian surgery on the chain $\mathcal{C}=\{L_1,\ldots, L_n\}$ of Legendrian unknots. With the notation from Construction~\ref{constructionofG}, if $L_k$ is a Legendrian knot that has been stabilized both positively and negatively, then the map
\[
G_{L_k}:\Fill(L(\mathcal{C}_s))\times \Fill(L(\mathcal{C}_e)) \to \Fill(L(\mathcal{C})).
\]
is surjective.
\end{theorem}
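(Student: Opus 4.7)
The plan is to apply Menke's theorem (Theorem~\ref{menke}) to any given filling $X$ of $(L(\mathcal{C}), \xi_{\mathcal{C}})$ in order to decompose it along a mixed torus inside the $k$-th continued fraction block of the Farey-graph path for $\xi_{\mathcal{C}}$, producing fillings $X'_s$ of $(L(\mathcal{C}_s), \xi_{\mathcal{C}_s})$ and $X'_e$ of $(L(\mathcal{C}_e), \xi_{\mathcal{C}_e})$ with $G_{L_k}(X'_s, X'_e) = X$. Since $L_k$ has been stabilized both positively and negatively, by Remark~\ref{pathsandknots} the decorations in the $k$-th block contain both signs, so some adjacent pair of edges with common vertex $s_0 = v_i$ (and neighbors $s_{\pm 1} = v_{i\pm 1}$) carry opposite signs, giving a mixed convex torus $T$ in $(L(\mathcal{C}), \xi_{\mathcal{C}})$.

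The critical computation is that the set $E$ in Menke's theorem reduces to $\{m\}$, where $m$ is the corner of the $k$-th block. Inside the block $v_{i-1}, v_i, v_{i+1}$ are all neighbors of $m$, so a change of basis in $\mathrm{GL}_2(\mathbb{Z})$ sends $(s_0, m) \mapsto (0, \infty)$, and because the common neighbors of $0$ and $\infty$ in the Farey graph are exactly $\pm 1$, the pair $\{s_{-1}, s_1\}$ maps to $\{1, -1\}$. In these normalized coordinates the arc from $-1$ to $1$ through $\infty$ contains only one neighbor of $0$, namely $\infty$ itself, because all other neighbors $\pm 1/q$ with $|q| \geq 2$ lie strictly inside $(-1, 1)$. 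Pulling back, $E = \{m\}$.

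Menke's theorem now yields $(X', \omega')$ with Legendrian knots $K_0, K_1 \subset \partial X'$ such that attaching a Weinstein round $1$-handle along $K_0 \cup K_1$ recovers $(X, \omega)$. Since $e = m$ is forced, $\partial X'$ is obtained from $L(\mathcal{C})$ by cutting along $T$ and capping with two solid tori of meridional slope $m$. By Lemma~\ref{bucfb} (where if $k \in \{1, n\}$ one of the sub-lens-spaces degenerates to $S^3$), the two resulting lens spaces are exactly $L(\mathcal{C}_s)$ and $L(\mathcal{C}_e)$, with contact structures obtained by restricting $\xi_{\mathcal{C}}$ to the two halves of $L(\mathcal{C}) - T$ and extending by the unique tight model on each capping solid torus. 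A comparison via Theorem~\ref{classifylpq} shows these restrictions-plus-extensions agree with $\xi_{\mathcal{C}_s}$ and $\xi_{\mathcal{C}_e}$: the partial block-$k$ decorations on each side of the cut collapse into the unique tight description of a solid torus with boundary slope $s_0$ adjacent to meridian $m$, leaving only the decorations from blocks other than $k$, which encode the sub-chains. Since a Stein filling of a disconnected contact manifold is the disjoint union of Stein fillings of its components, $X' = X'_s \sqcup X'_e$.

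By Menke's theorem the $K_i$ are the cores of the capping solid tori, hence Legendrian rational unknots. By Corollary~\ref{legsimpleratunknots} and Lemma~\ref{weaklysimple}, the Legendrian connected sum $K_0 \# K_1$ in $L(\mathcal{C}_s) \# L(\mathcal{C}_e)$ is determined up to ambient contactomorphism smoothly isotopic to the identity by its Thurston--Bennequin and rotation numbers, which are pinned down by the requirement that the round $1$-handle attachment recover $(L(\mathcal{C}), \xi_{\mathcal{C}})$. Hence $K_0 \# K_1$ coincides with the Legendrian knot used in Construction~\ref{constructionofG}, giving $X = G_{L_k}(X'_s, X'_e)$ and establishing surjectivity. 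The main obstacle I anticipate lies in the preceding step: verifying rigorously that the non-minimal path description of each component of $\partial X'$ (with its inherited partial block-$k$ decorations and the undecorated capping edge) determines the same tight contact structure as the minimal-path description of $\xi_{\mathcal{C}_s}$ or $\xi_{\mathcal{C}_e}$ requires a careful application of Honda's classification, exploiting the uniqueness of the tight structure on the capping solid torus to absorb the extra decorations.
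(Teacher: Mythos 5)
Your proof follows essentially the same route as the paper's: locate a mixed torus $T$ inside the continued fraction block for $L_k$, apply Menke's theorem, show that the set $E$ of permitted meridional slopes collapses to the single corner vertex $m$, invoke Lemma~\ref{bucfb} to identify the resulting pieces as $L(\mathcal{C}_s)$ and $L(\mathcal{C}_e)$, and use Corollary~\ref{legsimpleratunknots} to pin down the Legendrian knot to which the $2$--handle of the round $1$--handle is attached. Your explicit normalization argument for $E=\{m\}$ and your care about reconciling the non-minimal Farey path with the minimal one are nice expansions of points the paper states more tersely, and the conclusion that the cut-open path differs from the minimal one for $\xi_{\mathcal{C}_s}$ only in the undecorated final edge to the meridian $m$ (which is exactly the convention in Theorem~\ref{classifylpq}) is the key observation that makes the contact structures match.

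One step is stated too loosely to stand as written. You assert ``a Stein filling of a disconnected contact manifold is the disjoint union of Stein fillings of its components,'' but that is false in general: a connected $4$--manifold can certainly have disconnected convex boundary (a symplectic cobordism, for instance). What is actually needed is a reason why $X'$ cannot be a \emph{connected} manifold with boundary $L(\mathcal{C}_s)\sqcup L(\mathcal{C}_e)$. The paper supplies this: tight contact structures on lens spaces are supported by planar open books, and by \cite{Etnyre04a} a symplectic filling of a planar contact manifold has connected boundary; so a connected $X'$ would be a filling of (say) $L(\mathcal{C}_s)$ with an extra boundary component, a contradiction. You should also note that minimality of the two pieces follows because a blow-up in either factor would persist as a sphere of self-intersection $-1$ in $X$ after the round handle attachment, contradicting minimality of $X$; this is needed to land in $\Fill(L(\mathcal{C}_s))\times\Fill(L(\mathcal{C}_e))$ rather than just the set of all symplectic fillings.
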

This is essentially Theorem~1.3 in \cite{menke18pre} adapted to the notation used in this paper. 
\begin{proof}
Because $L_k$ has been stabilized both positively and negatively, we know in the continued fraction block associated to $L_k$ there is a mixed torus $T$. 

Given a Stein filling $X$ of $L(\mathcal{C})$. Theorem~\ref{menke} then says there is a filling $X'$ of the result of cutting $L(\mathcal{C})$ along $T$ and gluing in two solid tori. To determine the possible slope for these tori we notice that the basic slices adjacent to $T$ are two jumps in the continued fraction block, say from $v_j$ to $v_{j+1}$ and then to $v_{j+2}$, see Figure~\ref{CFB}. As such the only vertex in $(v_{j+2}, v_j)$ with an edge to $v_{j+1}$ is $m$ in Figure~\ref{CFB}. Thus this is the meridional slope for the two solid tori. That is $X'$ is a filling of the lens spaces with upper meridian $0$ and lower meridian $m=[-a_1, \ldots, -a_{k-1}]$ and the lens space with upper meridian $m$ and lower meridian $[-a_1,\ldots, -a_n]$, see Section~\ref{breakingup} for terminology. Lemma~\ref{bucfb} then says that topologically these lens spaces are $L(\mathcal{C}_s)$ and $L(\mathcal{C}_e)$, respectively. Moreover, the path in the Farey graph describing $L(\mathcal{C}_s)$ is a sub-path of the one describing $L(\mathcal{C})$ with one extra edge added (that is the jump from $[-a_1,\ldots, -a_{k-1}+1]$ to $m=[-a_1, \ldots, -a_{k-1}]$). Thus the contact structure on $L(\mathcal{C}_s)$ is the one given by Legendrian surgery on the sub-chain $\mathcal{C}_s$ and similarly for $L(\mathcal{C}_e)$. (See Remark~\ref{pathsandknots} for the relation between the paths in the Farey graph and Legendrian surgery on the chain.)

Since there cannot be a symplectic filling of a lens space with disconnected boundary (by \cite{Etnyre04a} fillings of contact structures supported by planar open books must have a single boundary component, and tight structures on lens spaces are supported by planar open books \cite{Schoenenberger05}) we see that $X'$ is a disjoint union of two symplectic manifolds $X_{\mathcal{C}_s}$ and $X_{\mathcal{C}_e}$. The filling $X$ is recovered from $X_{\mathcal{C}_s}\cup X_{\mathcal{C}_e}$ by attaching a round $1$--handle. Thus if either $X_{\mathcal{C}_s}$ or $X_{\mathcal{C}_e}$ were not minimal, then $X$ would not be either. But since it is, we must have both $X_{\mathcal{C}_s}$ and $X_{\mathcal{C}_e}$ be minimal symplectic fillings (and by \cite{Wendl10}, Stein fillings). When attaching a round $1$--handle to $X_{\mathcal{C}_s}\cup X_{\mathcal{C}_e}$ the associated $2$--handle is attached to $U_k$ and to get from $L(\mathcal{C}_s)\# L(\mathcal{C}_e)$ to $L(\mathcal{C})$ there is a unique possible Legendrian realization of $U_k$ to which a Stein $2$--handle can be attached by Corollary~\ref{legsimpleratunknots}. That is $X$ is $G_{L_k}(X_{\mathcal{C}_s}, X_{\mathcal{C}_e})$ and we see that $G_{L_k}$ is surjective. 
\end{proof}

We now consider mixed tori that are not contained in a continued fraction block. 
\begin{theorem}\label{mixedglue}
Let $\xi_{\mathcal{C}}$ be a contact structure on $L(p,q)$ described by Legendrian surgery on the chain $\mathcal{C}=\{L_1,\ldots, L_n\}$ of Legendrian unknots. Assume that $\mathcal{C}$ is nicely stabilized and $\mathcal{C}'=\{L_k,\ldots, L_{k+l}\}$ is an inconsistent sub-chain, then 
\[
\Fill(L(\mathcal{C})) = \bigcup_{i=k}^{k+l} Image (G_{L_i}).
\]
\end{theorem}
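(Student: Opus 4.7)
The plan is to mirror the strategy of Theorem~\ref{firstglue}: produce a mixed torus inside the inconsistent sub-chain, apply Menke's theorem to decompose any filling $X$ as a round $1$-handle attached to a symplectic manifold $X'$, and then identify that decomposition with one of the maps $G_{L_i}$ for $i\in\{k,\ldots,k+l\}$. The inclusion $\bigcup_{i=k}^{k+l}\mathrm{Image}(G_{L_i})\subseteq\Fill(L(\mathcal{C}))$ is immediate from Construction~\ref{constructionofG}, so the real content is the reverse inclusion.

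First I would locate the mixed torus $T$. Since $\mathcal{C}'$ is inconsistent, the decoration on the Farey path representing $\xi_{\mathcal{C}}$ must change sign somewhere between the CFB for $L_k$ and the CFB for $L_{k+l}$ (the intermediate components, being unstabilized, correspond to $a_i=2$, so no non-trivial CFB occurs between them). Any representative decoration gives a well-defined torus at this sign-change, and by Remark~\ref{pathsandknots} and the discussion in Section~\ref{menkedecompose} this torus is mixed. Applying Theorem~\ref{menke} to $T$ produces $(X',\omega')$ and Legendrian knots $K_0,K_1\subset\partial X'$ with $X$ recovered from $X'$ by attaching a round $1$-handle along $K_0\cup K_1$, and with $\partial X'$ obtained from $L(\mathcal{C})$ by cutting along $T$ and gluing in two solid tori whose common meridional slope $e$ lies in Menke's set $E$.

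The crux of the argument is to identify $E$ with the indices $\{k,k+1,\ldots,k+l\}$. The mixed torus sits exactly at the transition from the CFB of $L_k$ to the CFB of $L_{k+l}$, which is $l$ steps ``down.'' By Observation~\ref{exceptional}, the set of exceptional vertices at this transition consists of the $l+1$ vertices $m, w_1^0, w_1^1,\ldots,w_1^{l-1}$, so $E$ has exactly $l+1$ elements. Combining Lemma~\ref{bucfb} (for $m$) and Lemma~\ref{bucfb2} (for the $w_1^j$), each such vertex corresponds bijectively to the removal of one of the unknots $U_k,U_{k+1},\ldots,U_{k+l}$ from the surgery chain, with the two resulting lens spaces diffeomorphic to $L(\mathcal{C}_s)$ and $L(\mathcal{C}_e)$ for the corresponding $i\in\{k,\ldots,k+l\}$. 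Moreover, the Farey-graph paths describing these sub-lens spaces are sub-paths of the original path, so the induced contact structures agree with the ones obtained by Legendrian surgery on $\mathcal{C}_s$ and $\mathcal{C}_e$.

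To finish, note that any tight contact structure on a lens space is supported by a planar open book~\cite{Schoenenberger05}, so by~\cite{Etnyre04a,Wendl10} its minimal symplectic fillings are connected. Thus $X'$ splits as $X_s\sqcup X_e$ with $X_s\in\Fill(L(\mathcal{C}_s))$ and $X_e\in\Fill(L(\mathcal{C}_e))$, and minimality of $X$ forces both pieces to be minimal. The $2$-handle portion of the round $1$-handle is attached along the connected sum of two rational unknots with the Thurston--Bennequin framing demanded by the surgery, and Corollary~\ref{legsimpleratunknots} identifies this Legendrian representative uniquely, so the reconstruction of $X$ matches the definition of $G_{L_i}$ and we conclude $X=G_{L_i}(X_s,X_e)$. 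The main technical obstacle is the Farey bookkeeping of Step~3: verifying that the $l+1$ exceptional vertices at the transition really exhaust $E$ and correspond to all of $L_k,\ldots,L_{k+l}$, rather than only a proper subset. This is precisely the content of Lemma~\ref{bucfb2}, which was set up in Section~\ref{breakingup} to handle exactly this multi-$2$ situation.
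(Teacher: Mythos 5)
Your proof is correct and follows essentially the same strategy as the paper's: locate the mixed torus at the transition between the continued fraction blocks of $L_k$ and $L_{k+l}$, apply Menke's theorem, and use Observation~\ref{exceptional} together with Lemmas~\ref{bucfb} and~\ref{bucfb2} to match the $l+1$ exceptional meridional slopes bijectively with the indices $k,\ldots,k+l$. The paper's proof is terser, simply deferring to Theorem~\ref{firstglue} for the connectedness-of-fillings and rational-unknot-uniqueness steps that you spell out, but the underlying argument is the same.
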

See Section~\ref{classsect} for the terminology used in this theorem. 
\begin{proof}
The argument is exactly the same as in the proof of Theorem~\ref{firstglue} except the mixed torus $T$ is at the juncture between the continued fraction block associated to $L_k$ and the one associated to $L_{k+l}$ (recall the other $L_i$ in $\mathcal{C}'$ all have Thurston-Bennequin invariant $-1$ and do not correspond to a continued fraction block but indicate how far down $L_{k+l}$'s continued fraction block is from $L_k$'s, see Section~\ref{pathinfg}). Thus given $X$ a filling of $L(\mathcal{C})$, Menke's theorem  (Theorem~\ref{menke}) gives a symplectic manifold $X'$ with $\partial X'$ obtained from $\partial X$ by cutting along $T$ and gluing in solid tori with meridional slopes identified in Observation~\ref{exceptional}. The lens spaces thus obtained are then identified in Lemma~\ref{bucfb2}.
\end{proof}
\noindent
This completes the proof of Theorem~\ref{list}, as follows. 
\begin{proof}[Proof of Theorem~\ref{list}]
Given a lens space, represent it by a chain of unknots $\mathcal{C}$, and call the lens space $L(\mathcal{C})$. Now consider $\mathcal{D}$, the components that have been stabilised both positively and negatively. By Theorem~\ref{firstglue}, we get a surjective map from the fillings of $\mathcal{C} - \mathcal{D}$ to the fillings of $\mathcal{C}$ (notice that the composition of $G_{\{L\}}$ with $L\in\mathcal{D}$ has the same image as $G_{\mathcal{D}}$). Then, Theorem~\ref{mixedglue} tells us that all the fillings of the sub-chains in $\mathcal{C}-\mathcal{D}$ are given by the gluing maps coming from a maximal collection $\mathcal{M}$ in $\mathcal{S}$ (see the introduction, before Example \ref{maximal}, for notation). 
\end{proof}

\section{Counting fillings of lens spaces}
In this section we will start by studying the gluing maps $G_{\mathcal{S}}$ that appear in our main theorem, Theorem~\ref{list}, and see the extent to which we need all of the ones used in the theorem. Then in Sections~\ref{general} and~\ref{specific} we establish all the corollaries of our main result discussed in the introduction. 

\subsection{Injectivity of the gluing map}\label{injectsect}
We begin by proving Theorem~\ref{inject} concerning the injectivity of the gluing maps in our main theorem. 
\begin{proof}[Proof of Theorem~\ref{inject}]
Proposition~\ref{combinestrings} says that the gluing map when one unknot is removed from a chain is injective on the fillings of the universally tight contact structure on $L(p,q)$ (up to the symmetries described in the proposition if $q^2\equiv 1 \!\!\mod p$). Since Theorem~\ref{maxcollection} (proven below) says that the fillings of any other contact structure are a subset of the fillings of the universally tight contact structure, we see that the gluing map for any contact structure is a restriction of the one for the universally tight contact structures and so will be at least as injective. 
\end{proof}

\begin{remark}
We note Menke \cite{menke18pre} showed that if $(M',\xi')$ is obtained from $(M,\xi)$ by Legendrian surgery on a Legendrian $L$ that has been stabilized positively and negatively, then any filling of $(M',\xi')$ comes from a filling of $(M,\xi)$ by attaching a Stein $2$--handle to $L$. Thus the gluing map corresponding to such an $L$ in a chain $\mathcal{C}$ is also surjective. Though we see that this is not true in general, see Example~\ref{firstex} below.   
\end{remark}

We now observe that in general there is no single gluing map in Theorem~\ref{list} that will produce all of the fillings of a lens space.
\begin{example}\label{firstex}
Consider the lens space in Figure~\ref{needall}. 
\begin{figure}[htb]
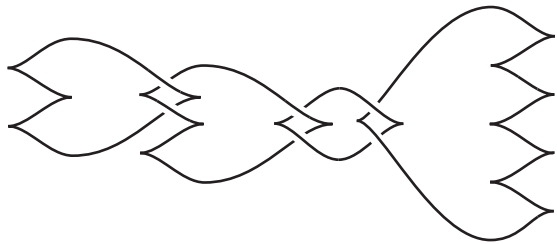
{\tiny
\begin{overpic}
{fig/needalllinks}
\end{overpic}}
\caption{A  contact structure on $L(57,22)$ with three different fillings.}
\label{needall}
\end{figure}
There are no components in the chain that have been stabilized both positively and negatively. So by Theorem~\ref{list} we only need to consider the inconsistent sub-chain $\{L_2,L_3,L_4\}$ (we label the components from left to right). So Theorem~\ref{list} says that the fillings of this contact structure are in the image of the gluing maps $G_{L_2}$, $G_{L_3}$, and $G_{L_4}$. One may easily check that $Image(G_{L_3})=Image(G_{L_4})$ consists of two fillings corresponding to $(1,2,2,2,2,1)$ and $(2,1,3,2,2,1)$ in $\BZ_{57,22}$ while $Image(G_{L_2})$ consists of fillings corresponding to $(1,2,2,2,2,1)$ and $(1,2,4,1,2,2)$.
\end{example}

\begin{example}
In Figure~\ref{needall2} we see a Legendrian chain giving a contact structure on $L(222,61)$ with inconsistent sub-chain $\{L_2, L_3, L_4, L_5, L_6\}$. 
\begin{figure}[htb]
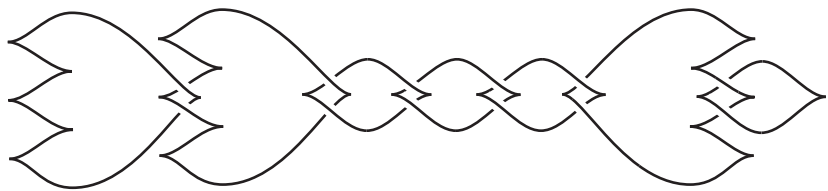
{\tiny
\begin{overpic}
{fig/needalllinks2}
\end{overpic}}
\caption{A  Legendrian chain for the lens space $L(222,61)$.}
\label{needall2}
\end{figure}
Applying Theorem~\ref{list} (together with Lemma~\ref{combnullchains}) we find the fillings of this contact structure as follows. 
The image of $G_{L_2}$ consists of the fillings corresponding to $(1,2,2,2,2,2,1), (1,2,2,2,3,1,2), (2,1,3,2,2,2,1),$ and $(2,1,3,2,3,1,2)$. The images of $G_{L_3}$ and $G_{L_4}$ agree and contain the fillings in the image of $G_{L_2}$ and $(1,2,3,1,3,2,1)$ and $(1,2,3,1,4,1,2)$. The image of $G_{L_5}$ contains the fillings in the image of $G_{L_3}$ and also the ones corresponding to $(2,2,2,1,5,2,1)$ and $(2,2,2,1,6,1,2)$. Finally, the image of $G_{L_6}$ contains the fillings corresponding to  $(1,2,2,2,2,2,1), (2,1,3,2,2,2,1), (1,2,3,1,3,2,1),$ and $(2,2,2,1,5,2,1)$. 
\end{example}

From the first example we see that  in Theorem~\ref{list} there is no single gluing map whose image will contain all of the fillings of a contact structure and that there is no choice for ``best" gluing map to use (that is whether using the end points of an inconsistent sub-chain or an interior part of the chain will give most or all of the fillings). From the second example we see that it is possible to get all of the fillings from a single gluing map but the filling map comes from one of the internal knots in the inconsistent sub-chain. 

\subsection{General results on fillings of lens spaces}\label{general}
We now see that the smooth manifolds that minimally symplectically fill any contact structure on $L(p,q)$ also fill the universally tight contact structure. 
\begin{proof}[Proof of Theorem~\ref{maxcollection}]
Let $\mathcal{C}$ be any chain of Legendrian unknots describing some contact structure $\xi$ on $L(p,q)$. Let $\mathcal{C}'$ be a chain that describes a universally tight contact structure on $L(p,q)$. Notice that $\mathcal{C}$ and $\mathcal{C}'$ are smoothly the same framed link, just some of the components are stabilized in a different way. Now let $\mathcal{D}$ be the components of $\mathcal{C}$ that have been stabilized both positively and negatively and let $\mathcal{S}$ be the unknots in $\mathcal{C}-\mathcal{D}$ that are in inconsistent sub-chains (see Section~\ref{classsect} for notation). Let $\mathcal{D}'$ be the subset of $\mathcal{C}'$ that corresponds to the components of $\mathcal{D}$. Similarly let $\mathcal{S}'$ be the subset of $\mathcal{C}'-\mathcal{D}'$ that corresponds to $\mathcal{S}$. Notice that for any maximal collection $\mathcal{M}$ in $\mathcal{S}$ and the corresponding collection $\mathcal{M}'$ in $\mathcal{S}'$, the chains in $\mathcal{C}-(\mathcal{D}\cup\mathcal{M})$ and $\mathcal{C}'-(\mathcal{D}'\cup\mathcal{M}')$ correspond to the same universally tight contact structures and so have the same elements. Thus $Image(G_{\mathcal{D}\cup\mathcal{M}})$ is a subset of $Image(G_{\mathcal{D}'\cup\mathcal{M}'})$. (Notice that they could be the same, but they do not have to be. For example, if $\mathcal{C}=\{L_1,L_2,L_3\}$ with $L_1$ stabilized positively and negatively while $L_2\cup L_3$ is the Hopf link with $tb(L_2)=tb(L_3)=-2$ and $r(L_2)=-r(L_3)=1$, then $Image(G_{\mathcal{D}'\cup\mathcal{M}'})$ contains two elements while $Image(G_{\mathcal{D}\cup\mathcal{M}})$ contains one.) Moreover the union of $Image(G_{\mathcal{D}'\cup\mathcal{M}'})$ over all maximal collections $\mathcal{M}'$ is contained in $\Fill(\mathcal{C}')$ we see that $\Fill(\mathcal{C})$ is contained in $\Fill(\mathcal{C}')$. (Recall, $\Fill(\mathcal{C})$ is describing the smooth types of the symplectic fillings of $L(\mathcal{C})$ and not the specific symplectic structures on the smooth manifolds.)
\end{proof}

We now prove Theorem~\ref{cor2} that says the difference between the number of fillings of a universally tight contact structure and a virtually overtwisted contact structure can be arbitrarily large. 
\begin{proof}[Proof of Theorem~\ref{cor2}]
From the proof of Corollary~1.2 in \cite{Lisca08} one can easily see that the fillings of the universally tight contact structure on $[-2, (-3)^n, (-2)^n]$ has at least $n-1$ fillings (recall $m^n$ in a continued fraction means repeat $m$, $n$ times). Now if we consider the chain of Legendrian unknots with Thurston-Bennequin invariant $-1$, $-2$ ($n${ times}), $-1$  ($n${ times}) where the $tb=-2$ unknots are alternatively stabilized positively and negatively gives an overtwisted contact structure $\xi$. From our main theorem, Theorem~\ref{list}, we see that it has a unique Stein filling. 
\end{proof}
Virtually overtwisted contact structures on lens space can have arbitrarily many fillings.
\begin{proof}[Proof of Theorem~\ref{cor3}]
Given any integer $k$, Lisca gives lens spaces with at least $k$ Stein fillings \cite{Lisca08}. Doing Legendrian surgery on a rational unknot in this lens space that has been stabilized both positively and negatively yields a virtually overtwisted contact structure that has at least $k$ Stein fillings by Theorem~\ref{list}.
\end{proof}
We now see that the only filling of a lens space with the same second Betti number as the plumbing that fills the lens space is indeed the plumbing.
\begin{proof}[Proof of Theorem~\ref{cor4}]
Suppose $p/(p-q)=[b_1, \ldots, b_k]$ where $b_i\geq 2$. The fillings of the universally tight contact structure are given by $\BZ_{p,q}$. The proof of Lemma~\ref{findrat} shows that the filling corresponding to $(1,2^{k-2}, 1)$ corresponds to the filling given by the plumbing diagram in Figure~\ref{chain}. 

We claim that any other filling, if it exists, must have fewer $2$--handles (and hence smaller second Betti number). Given $\n\in \BZ_{p,q}$ where $\n=(n_1,\ldots, n_k)$  the number of $2$--handles in $W_\n$ is 
\[
\sum_{i=1}^k b_i-n_i=\sum_{i=1}^k b_i - \sum_{i=1}^k n_i.
\]
and so the second Betti number is one less that this (since $W_\n$ is made with one $1$--handle that is (rationally) cancelled by one of the $2$--handles). The only way to get a null collection $(n_1,\ldots, n_k)$ of length $k$ is to (strictly) blowup such a collection of length $k-1$. A blowup that is on the interior of the chain, adds $3$ to $\sum n_i$, but when done at the far right it only adds $2$. The chain $(1, 2^{k-2}, 1)$ is obtained from $(1,1)$ by always blowing up on the far right and so $\sum n_i$ in this case is $2k-2$, but in all other cases is strictly larger than this. Thus all other fillings must have strictly smaller second Betti number than the plumbing. 
\end{proof}

We now prove there is a lower bound on the Euler characteristic of a filling of a contact structure on $L(p,q)$. 
\begin{proof}[Proof of Theorem~\ref{chilower}]
Let $\mathcal{C}$ be a chain defining a contact structure $\xi$ on the lens space $L(p,q)$ and let $\mathcal{D}$ be the unknots in $\mathcal{C}$ that are stabilized both positively and negatively. Let $\mathcal{S}$ be the unknots in $\mathcal{C}-\mathcal{D}$ that are in inconsistent sub-chains, and $\mathcal{M}$ will denote some maximal subset of $\mathcal{S}$ (we are using the notation from the discussion just before Example~\ref{maximal}). Now from Theorem~\ref{list} we know that any filling $X$ of $\xi$ will be in the image of $G_{\mathcal{D}\cup \mathcal{M}}$ for some choice of $\mathcal{M}$. That is, let $\mathcal{C}_1,\ldots,\mathcal{C}_n$ be the sub-chains of $\mathcal{C}$ left after removing $\mathcal{D}\cup \mathcal{M}$ and let $X_i$ be a Stein filling of $L(\mathcal{C}_i)$. Then we construct the filling of $\xi$ by taking the boundary connected sum of the $X_i$ and then attaching a $2$--handle for each knot in $\mathcal{D}\cup \mathcal{M}$. Each of the $X_i$ must have Euler characteristic at least $1$, so the Euler characteristic of the result of the boundary connected sum of the $X_i$ will be at least $1$ as well. We now add $|\mathcal{D}\cup \mathcal{M}|$ $2$--handles. So the Euler characteristic of $X$ is at least $1+|\mathcal{D}\cup \mathcal{M}|$. 

Recall we set $k=|\mathcal{D}|$ and $l$ is the number of inconsistent sub-chains in $\mathcal{C}-\mathcal{D}$. It is easy to see that any maximal $\mathcal{M}$ has at least $\lceil l/2\rceil$ elements. Thus from above we see that any filling of $\xi$ must have Euler characteristic bounded below by $1+k+\lceil l/2\rceil$.
\end{proof}

We now identify the rational homology balls that lens spaces bound. 
\begin{proof}[Proof of Lemma~\ref{ratballlemma}]
In \cite{Lisca08}, Lisca shows that the universally tight contact structure on $L(p,q)$ has a rational homology ball Stein filling if and only if $(p,q)=(m^2, mh-1)$ for some $h$ relatively prime to $m$. In \cite{EtnyreTosun20pre, GollaStarkston19pre} and Theorem~\ref{chilower} it was shown that no virtually overtwisted contact structure on a lens space bounds a rational homology ball, while the same result under extra hypothesis was also given in \cite{Fossati2019pre2}. So we are left to explicitly give a handle presentation of the filling of $L(m^2, mh-1)$. To this end notice that in Figure~\ref{ratballs} the Legendrian knot sits on a Heegaard torus for $S^1\times S^2$ as a $(n,-m)$--curve and the contact framing agrees with the framing induced by the Heegaard torus. So the framing corresponding to Legendrian surgery will be one less that this. In Figure~\ref{ratballs} this will be the $-nm-1$ framing (indeed, drawing Figure~\ref{ratballs} using dotted circle notation for the $1$--handle we see the $2$--handle is attached to the $(n,-m)$--torus knot and it is well-known that the difference between the torus framing and Seifert framing of a $(n,-m)$--torus knot is $-nm$).
Thus smoothly the result of Legendrian surgery is the same as cutting $S^1\times S^2$ along the Heegaard torus and regluing by a Dehn twist on the $(n,-m)$--curve. So we see that the meridian of one of the Heegaard tori is glued to the $(1-mn,m^2)$--curve on the other. That is the same as doing $-m^2/(mn-1)$ surgery on the unknot in $S^3$. Thus we get the lens space $L(m^2,mn-1)$. So the Stein diagrams in Figure~\ref{ratballs} fill the claimed lens spaces. And as such they must come from Lisca's construction. 

We now turn to the uniqueness of rational homology ball fillings of lens spaces. To this end we notice that this follows from the following lemma because Lisca \cite{Lisca08} showed that all Stein fillings are described up to diffeomorphism by compatible null sequences. We note that this was previously observed in \cite{GollaStarkston19pre}.
\end{proof}
\begin{lemma}\label{uniqueratball}
If $L(p,q)$ bounds a Stein rational homology ball, then there exists a unique null sequence $(n_{1}, \ldots , n_{k})$ such that $n_{i} = 1$, no other $n_j$ is $1$, and $p/(p-q) = [n_{1}, \ldots , n_{i-1}, 2 , n_{i+1} , \ldots, n_{k}]$.
\end{lemma}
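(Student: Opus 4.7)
The plan is to establish existence from Lisca's classification and an Euler characteristic calculation, and then to deduce uniqueness from Lisca's theorem combined with the explicit geometric description of the filling given by Lemma~\ref{newfills} and Figure~\ref{ratballs}.

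For existence, observe that by Theorem~\ref{chilower} together with Theorem~\ref{maxcollection}, any rational homology ball filling of a lens space fills the universally tight contact structure. Hence Lisca's Theorem~\ref{lisca} provides $W = W_\mathbf{n}$ for some $\mathbf{n} = (n_1,\ldots,n_k) \in \BZ_{p,q}$, with $p/(p-q) = [b_1,\ldots,b_k]$ and each $b_j \geq 2$. Since $W_\mathbf{n}$ is built from $S^1 \times D^3$ (which has Euler characteristic zero) by attaching $\sum_j (b_j - n_j)$ two-handles, the rational homology ball condition $\chi(W) = 1$ forces $\sum_j (b_j - n_j) = 1$. Hence exactly one index $i$ has $n_i = b_i - 1$ while $n_j = b_j$ for $j \neq i$; combined with $n_i \geq 1$ and $b_i \geq 2$, this forces $b_i = 2$ and $n_i = 1$, and $n_j = b_j \geq 2$ for $j \neq i$ prevents any other entry from equaling $1$.

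For uniqueness, suppose $\mathbf{n}$ and $\mathbf{n}'$ are two null sequences satisfying the lemma's conditions for the same $\mathbf{b}$. Using $M(x) = \begin{pmatrix} x & -1 \\ 1 & 0 \end{pmatrix}$ and the factorization $M(1) = M(2)\begin{pmatrix} 1 & 0 \\ 1 & 1 \end{pmatrix}$, a direct matrix computation shows that the null condition $[\mathbf{n}^{(i)}] = 0$ for $\mathbf{n}^{(i)} := (b_1,\ldots,b_{i-1},1,b_{i+1},\ldots,b_k)$ is equivalent to the numerical identity $P_{i-1} \cdot \tilde P_{k-i} = p$, where $P_j$ and $\tilde P_j$ are the numerators of the partial convergents $[b_1,\ldots,b_j]$ and $[b_{k-j+1},\ldots,b_k]$ in lowest terms. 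The standard identity $p_j q_{j+1} - p_{j+1} q_j = -1$ shows that $\{P_j\}_{j=0}^{k-1}$ is strictly increasing, so each value $P_{i-1}$ is attained at a unique $i$.

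To pin down this unique $i$, combine the above with Lemma~\ref{newfills}: the single $2$--handle in Lisca's diagram for $W_\mathbf{n}$ is attached along a Legendrian curve on a Heegaard torus of $S^1 \times S^2$ of slope $[n_1,\ldots,n_{i-1}] = [b_1,\ldots,b_{i-1}]$. Figure~\ref{ratballs} identifies this curve as the $(h,-m)$--torus knot determined by $(p,q) = (m^2,mh-1)$, so the target value of $[b_1,\ldots,b_{i-1}]$ is a specific rational depending only on $(m,h)$. Since $(m,h)$ is determined by $(p,q)$ and the partial convergents $[b_1,\ldots,b_j]$ are all distinct, only one $i$ realizes the prescribed slope. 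The main obstacle will be making the identification of the partial-convergent slope with the geometric torus-knot slope of Figure~\ref{ratballs} precise; this amounts to a careful unpacking of Lemma~\ref{newfills} in the rational-ball case.
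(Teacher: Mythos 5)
Your argument is genuinely different from the paper's, which proves uniqueness by a purely combinatorial analysis of how a null sequence with a single entry equal to $1$ must arise from $(2,1,2)$ by alternating ``left'' and ``right'' strict blowups, and then reads the blowup parameters off from $\mathbf{b}=(b_1,\ldots,b_k)$. Your proposal instead uses an Euler-characteristic count for existence and a matrix/continued-fraction identity for uniqueness. The existence half is essentially right, but the justification is off: $\sum_j(b_j-n_j)=1$ forces a single exceptional index $i$ with $n_i=b_i-1$, and the step from there to $n_i=1$ and $b_i=2$ does \emph{not} follow from ``$n_i\geq 1$ and $b_i\geq 2$'' (which permit, say, $b_i=5$, $n_i=4$). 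What you actually need, and should state, is the structural fact that every null sequence of length $\geq 2$ contains an entry equal to $1$ (each strict blowup inserts a fresh $1$); since $n_j=b_j\geq 2$ for $j\neq i$, the only available slot is $n_i$, giving $n_i=1$ and hence $b_i=2$.

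The uniqueness half has a real gap. Your derivation of the identity $P_{i-1}\,\tilde P_{k-i}=p$ from the null condition (i.e.\ from the $(1,1)$ entry of $M(b_1)\cdots M(b_{i-1})M(1)M(b_{i+1})\cdots M(b_k)$ vanishing) is correct, but the next sentence --- ``$\{P_j\}$ is strictly increasing, so each value $P_{i-1}$ is attained at a unique $i$'' --- only says that $i\mapsto P_{i-1}$ is injective; it does not control the other factor $\tilde P_{k-i}$, and hence says nothing about whether the \emph{product} equals $p$ at a unique index. For instance, for $L(9,5)$ one has $\mathbf{b}=(3,2,2,2)$ and the products $P_{i-1}\tilde P_{k-i}$ over $i=1,\ldots,4$ are $4,9,10,7$: they happen to be distinct, but nothing in your argument accounts for that. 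To make this route work you would need to additionally pin down the value $P_{i-1}$ --- most naturally by showing $P_{i-1}=m$ when $p=m^2$ --- after which monotonicity of $\{P_j\}$ does give a unique $i$. Your third paragraph gestures at exactly this via Lemma~\ref{newfills} and Figure~\ref{ratballs}, but you acknowledge yourself that the identification is not made precise, and as framed it courts circularity: to know the single $2$--handle is attached along the specific torus knot of Figure~\ref{ratballs} you would already be appealing to the uniqueness of the filling, which is essentially what is being proved. So the matrix approach is plausible but not yet a proof; the paper's combinatorial blowup argument closes these gaps without appealing to geometry or to $p=m^2$, and is the cleaner path.
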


\begin{proof}
A null sequence $(n_{1}, \ldots , n_{k})$ with only one $n_{i} = 1$ must be obtained by strict blowups from the sequence $(2, 1, 2)$. At any of the intermediate stages, one can either do a {\em blowup on the left}, i.e., $(m_{1}, \ldots , m_{i-1}, 1 , m_{i+1} , \ldots, m_{l}) \to (m_{1}, \ldots , m_{i-1}+1, 1 , 2, m_{i+1} , \ldots, m_{l})$, or a {\em blowup on the right}, i.e., $(m_{1}, \ldots , m_{i-1}, 1 , m_{i+1} , \ldots, m_{l}) \to (m_{1}, \ldots , m_{i-1}, 2 , 1, m_{i+1}+1 , \ldots, m_{l}).$ In particular, a null sequence as described in the statement is obtained by a series of blowups alternately on the left and on the right, and this path of blowups obviously determines the null sequence. A general null sequence obtained from $(2, 1, 2)$ by $t_1$ blowups on the left, followed by $t_2$ blowups on the right, followed by $t_3$ blowups on the left, and so on, ending with $t_{2r}$ blowups on the right, will look like $(t_{1}+2,2^{t_{2}-1},t_{3}+2, \dots, 2^{t_{2r-2}-1},t_{2r-1}+2,2^{t_{2r}},1,t_{2r}+2, 2^{t_{2r-1}-1}, \ldots, t_{2}+2,2^{t_{1}})$. 

Now, suppose $p/(p-q) = [b_{1},\ldots,b_{k}] = [n_{1}, \ldots , n_{i-1}, 2 , n_{i+1} , \ldots, n_{k}]$, where $(n_{1}, \ldots , n_{k})$ is a null sequence with only $n_{i} = 1$. By the above discussion, it follows that $(n_{1}, \ldots , n_{k})$ is obtained from $(2,1,2)$ by
$t_{1} = b_{1} - 2$ blowups to the left, followed by,
$t_{2} = b_{k - t_{1}} -2$ blowups to the right, followed by,
$t_{3} = b_{t_{2}+1}-2$ blowups to the left, followed by,
$t_{4} = b_{k - t_{1} - 1 - t_{3} + 1} - 2$ blowups to the right, and so on, until
$b_{i+1} - 2$ blowups to the right. This shows that the null sequence is uniquely determined by the continued fraction expansion of $p/(p-q)$.
\end{proof}

We now turn to fillings of lens spaces with second Betti number equal to $1$. 
\begin{proof}[Proof of Lemma~\ref{b21}]
The first statement is clear from Corollary~\ref{reinterpret}. For the second, notice that the fundamental group can be computed from the handle decomposition as $\langle x| x^s, x^b\rangle$. This is equivalent to $\langle x| x^{\gcd(s,b)}\rangle$. The result follows. 
\end{proof}

We can now prove our result about the lens spaces one obtains from Legendrian surgery on some knot in $(S^3,\xi_{std})$.

\begin{proof}[Proof of Theorem~\ref{berge}]
Since any filling of a lens space is smoothly the same as a filling of a universally tight contact structure on a lens space we consider such fillings. If a filling of a lens space comes from $B^4$ by attaching a $2$--handle to a Legendrian knot then the filling has second Betti number equal to one. Thus from Lisca's classification, in Section~\ref{lclass}, this will correspond to a lens space obtained from attaching two $2$--handles to $S^1\times D^3$ and the attaching curves of the $2$--handles are specified from a null sequence by two $-1$--framed meridians to the null sequence chain.  Notice that to get from $S^1\times D^3$ with two $2$--handles attached, to $B^4$ with one $2$--handle attached, one must have one of the $2$--handles cancelling the $1$--handle in the natural handle structure on $S^1\times D^3$. That is, one of the $2$--handles (possibly after handle slides) must link the $0$--framed unknot geometrically once. 

From Corollary~\ref{reinterpret} we know that the $2$--handles are attached on two torus knots in $S^1\times S^2$. The simplest way to cancel the $1$--handle is for one of the torus knots to be a curve of slope $n$ (that is, the curve intersects $\{p\}\times S^2$ once for each $p\in S^1$). It is clear that when one cancels the $1$--handle with such a curve then the other torus knot in $S^1\times S^2$ becomes a torus knot in $S^3$ and its framing is one less than the framing given by the torus. The torus knot must be a negative torus knot as we know from the Bennequin inequality, the positive torus knots cannot be Legendrian realized with the required contact framing. Since all negative torus knots can be realized with the required framing they must all come from this construction via Lisca's theorem, Theorem~\ref{lisca}.

The next simplest way that the $1$--handle can be cancelled is if after a single handle slide one of the $2$--handles becomes a torus knot of slope $n$. For this to happen both of the $2$--handles must have slopes with an edge in the Farey graph to $n$ and an edge to each other (if there were not an edge between the slopes then the handle slide would not give a torus knot, and if there is an edge the result of the handle slide is a Farey addition or subtraction of the slopes and so there must be edges to $n$). Using Remark~\ref{fixslope} we can assume that the $2$--handles are attached to curves of slope $\pm 1/n$ and $\pm 1/(n\pm1),$ where $n$ is a positive integer. 

We first consider the case when the slopes are $-1/n$ and $-1/(n-1)$. This $4$--manifold is shown on the left of Figure~\ref{makeb}. We then slide the $(-n-2)$--framed handle over the other to get the middle diagram in the figure. 
\begin{figure}[htb]
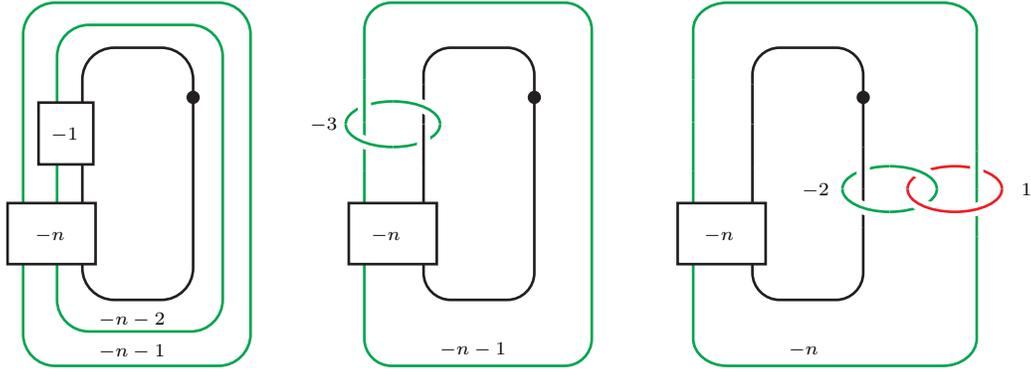
{\tiny
\begin{overpic}
{fig/MakeBerge}
\put(11, 48){$-n$}
\put(17, 86){$-1$}
\put(35, 16){$-n-2$}
\put(35, 4){$-n-1$}
\put(138, 48){$-n$}
\put(115, 90){$-3$}
\put(164, 5){$-n-1$}
\put(264, 48){$-n$}
\put(296, 5){$-n$}
\put(301, 65){$-2$}
\put(384, 65){$1$}
\end{overpic}}
\caption{On the left is the $-1/n$ and $-1/(n-1)$ torus knots in $S^1\times S^2$ with framings one less than that given by the tori on which they sit. Sliding the $(-n-2)$-framed $2$--handle over the other $2$--handle gives the next diagram. The last diagram is obtained by blowing up a $1$--framed unknot to unlink the two $2$--handles.}
\label{makeb}
\end{figure}
Then blowup a $1$--framed unknot to get the final diagram in the figure. Finally, sliding the $-n$--framed handle $n$ times over the $-2$--framed handle, canceling the $1$--handle with the $-2$--framed $2$--handle, and blowing down the $1$--framed handle will give the diagram in Figure~\ref{bknot}. Thus we know there are lens spaces filled by surgery on a Legendrian version of the knot in that figure. To identify these lens spaces we note that the first diagram also comes from applying Lisca's algorithm in Section~\ref{lclass} to the continued fractions $(3n^2+3n+1)/3n^2=[2,\ldots 2, 3,2,n+1]$ (where there are $n-1$, $2$s at the start) with the null sequence $(2,\ldots, 2, 1, n+1)$ (where there are $n$, $2$s at the start). 

Moving to the case of $1/n$ and $1/(n+1)$, one can perform the same computation as above and obtain a negative torus knot. 

Now, the continued fractions expansion of $-\frac{3n^{2}+3n+1}{3n+1}$
is $[-n-1,-4,-2, \ldots, -2]$ (where there are $n-1$, $-2$s at the end). Thus the lens space is represented by a surgery diagram on a link with $n+1$ unknots, call them $U_{1}, U_{2}, \ldots U_{n+1}$. A virtually overtwisted structure is represented by a chain where either $U_{1}$ and/or $U_{2}$ is stabilized on both sides, and/or they are stabilized in opposite directions. The last $n$ components will always be max tb unknots and form a consistent chain. By Theorem~\ref{list}, a filling of a virtually overtwisted structure will come either by:
\begin{itemize}
    \item attaching a 2-handle along the connect sum of rational unknots (corresponding to $U_{2}$) in the boundary of a filling of $L(n+1,1)$ (lens space corresponding to the subchain $U_{1}$) connect summed with a filling of $L(n,n-1)$ (lens space corresponding to the subchain $U_{3},\ldots,U_{n+1})$, or
    \item attaching a 2-handle along the rational unknot (corresponding to $U_{1}$) in the boundary of a filling of the lens space corresponding to the chain of unknots $U_{2},\ldots,U_{n+1}$, i.e., $L(p,q)$ where $-\frac{p}{q} = [-4,-2,\ldots,-2]$.
\end{itemize}
It is clear that in both cases, the filling cannot have $b_{2}=1$, and in particular, the virtually overtwisted contact structure cannot arise from Legendrian surgery on a Legendrian knot in $(S^{3},\xi_{std})$.
\end{proof}
\begin{remark}\label{onlyex}
From the proof above, it seems there should be no other knots in $S^3$ that come from Lisca's construction. If the two torus knots in $S^1\times S^2$ are not as in the proof, but do give a simply connected filling, then one can slide the $2$--handles so that one of them algebraically links the $1$--handle one time. However, it does not appear that one can arrange the geometric linking to be one. We were not able to prove this in any examples, but experimenting with the possibilities strongly indicates this. This is the main evidence for Conjecture~\ref{bergec}. 
\end{remark}

We will now put some restrictions on the fundamental group of a filling of a lens space, improving a result of Fossati \cite{Fossati2019pre2}.

\begin{proof}[Proof of Theorem~\ref{pi1}]
If $X$ is a Stein filling of $L(p,q)$, it is built by adding $0$--, $1$--, and $2$--handles to $B^{4}$. Turning it upside down, it is a smooth manifold built by adding 2, 3, and 4 handles to $L(p,q) \times I$. Thus, $\pi_{1}(X)$ must be a quotient of $\mathbb{Z}_{p}$. If $\pi_{1}(X) = \mathbb{Z}_{p}$, we consider the universal cover $X'$ of $X$. Then $X'$ is a Stein filling of $S^{3}$ by lifting the Stein structure through the covering map. Eliashberg tells us that $X'\cong B^4$ \cite{Eliashberg90b}.  Thus the deck transformations are fixed point free maps from $B^{4}$ to itself. But there are no such maps, by Brouwer's fixed point theorem.
\end{proof}

We now turn to the proof of Theorem~\ref{morepi1} about the fillings and covers of contact structures on lens spaces defined by Legendrian surgery on a chain of Legendrian unknots where the first or last unknot in the chain has been stabilized both positively and negatively. 
\begin{proof}[Proof of Theorem~\ref{morepi1}]
Notice that the meridian to either the first or last unknot in the chain in Figure~\ref{chain} is a core of a Heegaard torus for $L(p,q)$ and as such generates the fundamental group of $L(p,q)$. Thus in any cover of $L(p,q)$ this meridian will have to unwrap; that is, a component of the preimage of one of these curves $m$ under the covering map will have to be a non-trivial connected cover of $m$. 

It is well-known, see \cite[Proposition~5.1]{Gompf98}, that if a Legendrian knot $L$ has been stabilized both positively and negatively, then the Whitehead double of the meridian can be Legendrian realized so that it is the boundary of an immersed overtwisted disk. So if the meridian unwraps in a cover of Legendrian surgery on $L$, then that cover will be overtwisted. Thus we see that $\xi_{\mathcal{C}}$ becomes overtwisted in any cover of $L(p,q)$. 

Now if $X$ is a filling of $(L(p,q), \xi_{\mathcal{C}})$ and $X$ is not simply connected, then let $X'$ be the universal cover of $X$. This will be a symplectic filling of a cover of $(L(p,q), \xi_{\mathcal{C}})$, contradicting the above observation. 
\end{proof}
\begin{remark}\label{middlestab}
Notice that it is important that in the hypothesis of Theorem~\ref{morepi1} it is the first or last unknot in the chain that has been stabilized both positively and negatively. A family of examples showing this was first given by Marco Golla and reported in \cite{Fossati2019pre2}. One of those examples is the lens space $L(56,15)$ which is surgery on the chain $(-4,-4,-4)$, where the first and last unknots in the chain are only stabilized positively, but the middle unknot has been stabilized both positively and negatively. A filling of this contact structure can be made by taking two copies of the rational homology ball filling of surgery on the twice positively stabilized unknot, adding a $1$--handle and then adding a Stein $2$--handle to the positively and negatively stabilized middle unknot. One may readily check that this filling has $\pi_1\cong \Z/2\Z$. 

It is easy to see where the above proof breaks down in this example. The meridian to the middle unknot has order $4$ in the fundamental group of $L(56,16)$ and so in the double cover lifts to two curves and does not unwrap. 

In general, if a middle unknot in a chain is stabilized positively and negatively, then one can determine its order in the fundamental group and determine covers that must become overtwisted. 
\end{remark}

\subsection{Specific results on fillings of lens spaces}\label{specific}
We now classify symplectic fillings of lens spaces obtained by surgery on a one or two component chain. 
\begin{proof}[Proof of Theorem~\ref{cor5}]
The lens spaces obtained by negative surgery on the unknot (one component chain) are $L(p,1)$.
From McDuff \cite{McDuff90} we know that the only minimal symplectic fillings of the universally tight contact structures on $L(p,1)$ are given by the plumbing given in Figure~\ref{chain} except for $L(4,1)$ which is also filled by the manifold shown in the upper left of Figure~\ref{excpetionalfills}. Plamenevskaya and van Horn-Morris \cite{PlamenevskayaVanHorn-Morris2010} showed all the virtually overtwisted contact structures on $L(p,1)$ were only filled by the plumbing.

We now turn to the universally tight contact structures on $L(p,q)$ obtained by negative surgery on a two component chain (and we assume the surgery coefficients are less than $-1$ since otherwise we could just blowdown to get a surgery on a knot). Suppose $p/q=[n,m]$ where $n,m\geq 2$. Using  Riemenschneider point diagrams from Section~\ref{cf} we see that $p/(p-q)=[2^{n-2}, 3, 2^{m-2}]$. The only elements in $\BZ_{p,q}$ are $(1, 2^{m+n-5}, 1)$ and $(0)$ if $m+n-5=-1$, $(1,1)$ if $m+n-5=0$, $(2,1,2)$ if $m+n-4=1$, $(2,1,3,1), (2,3,1,2), (3,1,2,2),$ and $(2,2,1,3)$ if $n+m-5=2$ and $(2,1,3, 2^{n+m-7}, 1)$ and $(1, 2^{n+m-7}, 3, 1, 2)$ if $m+n-4\geq 3$. 

The first case corresponds to $L(3,2)$ and gives the filling by the plumbing. The second case will give fillings $L(p,q)$ when $p/(p-1)$ is $[2,3]$ and $[3,2]$. That is they give fillings of $L(5,2)\cong L(5,3)$ and they are simply the plumbing. For a chain of length three we get the lens spaces $L(7,4)\cong L(7,2)$ and $L(8, 3)$. Applying Lisca's algorithm to the former gives the upper left diagram in Figure~\ref{excpetionalfills} with a $2$--handle attached to a $(-1)$--framed unknot added to a meridian of the $1$--handles. Cancelling the $1$--handle with this $2$--handle yields the $(-3,2)$--torus knot with framing $-7$. The filling of $L(8,3)$ gotten from $(2,1,2)$ is shown in the upper right of Figure~\ref{excpetionalfills}.

For chains of length four we get the lens spaces $L(9,5)\cong L(9,2)$ and $L(11,4)\cong L(11, 3)$. The extra filling of the former is shown on the lower left of Figure~\ref{excpetionalfills} while the latter is surgery on the $(-5,2)$--torus knot with framing $-11$. Finally, for longer chains, we get the lens spaces $L(4k+7, k+2)\cong L(4k+7, 4)$ and get the filling given by the upper left diagram in Figure~\ref{excpetionalfills} with a $2$--handle attached to a $(-n+1)$--framed meridian to the $1$--handle. Cancelling the handles yields the $(-2n+1, 2)$--torus knot with framing $-4n+1$. 

Now turning to the virtually overtwisted contact structures we see that either one of the Legendrian knots in the plumbing diagram has to be stabilized both positively and negatively or one of the unknots is stabilized positively and the other is stabilized negatively. In both cases Theorem~\ref{list} says that any filling of this contact structure comes from a filling of the chain where one of the unknots is removed with a $2$--handle added. So the only way we can get more than one filling is if one of the components is framed $-4$ and has only been stabilized positively or negatively. In this case we are again attaching a $2$--handle to a meridian for the $1$--handle in the upper left of Figures~\ref{excpetionalfills} with framing $-n$ (or $-m$). Once again this yields a $(-2n+1,2)$--torus knot with framing $-4n+1$. The Euler classes of these resulting contact structures can easily be worked out as the rotation numbers of such torus knots, which have been given in \cite{EtnyreHonda01b}.
\end{proof}

\begin{proof}[Proof of Theorem~\ref{3component}]
\noindent
If $L(p,q)$ is obtained from a 3-component chain of unknots, then one may use the Riemenschneider point diagram from Section~\ref{cf} to see that the continued fraction expansion of $p/(p-q)$ is either $[2^{k},3,2^{l},3,2^{m}]$, where $k, l, m \geq 0$ or $[2^{k},4,2^{m}]$, if the 2nd component has framing $-2$. By Lisca's theorem, Theorem~\ref{lisca}, the possible fillings of the universally tight contact structures will 
come from null sequences that are all 1s and 2s, with the exception of either at most two 3s, or a single 4. The possible null sequences of length 1 are $(0)$, of length 2 are $(1, 1)$, of  length 3 are
$(2,1,2), (1,2,1),$
 of length 4 are
\[
 (1,2,2,1), (2,1,3,1), (1,3,1,2), (3,1,2,2), (2,2,1,3),
 \]
of length 5 are
\begin{align*}
 (1,2,2&,2,1), (3,1,2,3,1), (1,3,2,1,3), (2,3,1,2,3), (3,2,1,3,2), (2,2,2,1,4), (4,1,2,2,2),\\
 & (2,2,1,4,1), (1,4,1,2,2), (2,1,4,1,2), (1,3,1,3,1), (2,1,3,2,1), (1,2,3,1,2),
 \end{align*}
of length 6 are
\begin{align*}
 &(1,2,2,2,2,1), (2,2,1,4,2,1), (3,1,2,3,2,1), (1,2,4,1,2,2), (1,2,3,2,1,3), \\
 &(2,1,3,3,1,2), (1,3,1,3,2,1), (1,2,3,1,3,1), (1,2,2,3,1,2), (2,1,3,2,2,1),
  \end{align*}
  and  of length greater than 6 are
\begin{align*}
(1, 2^{k}&, 1), 
(1,2^{k},3,1,3,2^{j},1), 
(2,1,3,2^{k},3,1,2), (2,1,3,2^{k},1),  (1, 2^{k},3,1,2),\\
&  (3,1,2,3,2^{k},1), (1, 2^k, 3, 2, 1, 3), (2,2,1,4,2^{k},1), (1,2^{k},4,1,2,2).
  \end{align*}
Given the above, in the {universally tight case}, it is easy to now look at $L(p,q)$ obtained from a 3 component link, and for the $p/(p-q)$ expansion, find the admissible sequences from the above list, and then write down the corresponding fillings. We shall list the lens spaces by the lengths of the expansions of $p/(p-q)$. We shall use the following convention, $L(p,q) = L([p/(p-q)])$, where $[p/(p-q)]$ refers to its continued fraction expansion. In particular, we list all the lens spaces that have fillings other than the ones coming from the plumbing, i.e., ones coming from null sequences other than $(1,2, \ldots,2,1)$. For the virtually overtwisted case, we will state the results in terms of the rotation number configuration $(r_{1}, r_{2}, r_{3})$ (notice that the Thurston-Bennequin numbers of the unknots in the chain are determined by the continued fraction of $-p/q$). The structures not mentioned will admit one filling. The results for the virtually overtwisted structures follow from Theorem~\ref{list} and Theorem~\ref{cor5}. 

\smallskip

\noindent
{\bf {Size 3 chains:}} The universally tight contact structures on 
\begin{align*}
L(10,7)= L([2,2, 4]) & \cong L(10,3)=L([4, 2,2 ]), \\
L(13,5) = L([2,3,3]) &\cong L(13,8) = L([3,3,2]), \\
L(12,7) = L([3,2,3]) &  \text{ and }  L(12,5) = L([2,4,2]),\\ 
\end{align*}
 all have 2 fillings, corresponding to the null sequences $(1,2,1)$ and $(2,1,2)$. All the virtually overtwisted structures have just one filling.
 
 \noindent
{\bf {Size 4 chains:}} The universally tight contact structures on 
\begin{align*}
L(13,9) = L([4,2,2,2]) &\cong L(13,3) = L([2,2,2,4]), \\ 
L(17,7) = L([2,4,2,2]) & \cong L(17,5) = L([2,2,4,2]), \\
L(21,8)=L([2,3,3,2]),& \text{ and }  L(16,9)=L([3,2,2,3])
\end{align*} 
have one extra filling, for the first two lens spaces  these are coming from $(3,1,2,2)$, and $(1,3,1,2)$, respectively. (Here, and below, we list the null sequences for the first named lens space.) For $L(21,8)$ there are two extra null sequences $(2,1,3,1)$ and $(1,3,1,2)$, but $8^2\equiv 1 \!\!\mod 21$, so by Lisca's theorem, Theorem~\ref{lisca}, we know the fillings corresponding to $\n$ and $\overline\n$ are diffeomorphic. Similarly for $L(16,9)$ and the null sequences $(3,1,2,2)$ and $(2,2,1,3)$. 
The universally tight contact structures on 
\begin{align*}
L(18,5) = L([2,2,3,3]) &\cong L(18,11) = L([3,3,2,2]), \text{ and} \\ 
L(19,7) = L([2,3,2,3]) & \cong L(19,11) = L([3,2,3,2]) \\
\end{align*}
have two extra fillings coming from $(2,1,3,1)$ and $(2,2,1,3)$; and $(1,3,1,2)$ and $(2,2,1,3)$, respectively. 

There are two fillings of the following virtually overtwisted contact structures: 
\begin{align*}
L(17,7) &\text{ with configurations  } (\pm 1, 0, \mp 2), \\
L(18,5) &\text{ with rotation number configurations  } (\pm2, \mp1, 0), \\
L(19,7) &\text{ with configurations } (\pm 1, \mp 2, 0),  \text{ and } \\
L(21,8) &\text{ with configurations  } (\pm1, \pm1,\mp1) \text{ and } (\pm 1,\mp 1, \mp 1).\\
\end{align*}

\noindent
{\bf {Size 5 chains:}}  The universally tight contact structures on 
\begin{align*}
L(16,11) & = L([4,2,2,2,2]) \cong L(16,3) = L([2,2,2,2,4]), \\ 
L(22,9) & = L([2,4,2,2,2])  \cong L(22,5) = L([2,2,2,4,2]), \\
& \text{ and }  L(30,11)=L([2,3,2,3,2])
\end{align*} 
have one extra filling coming from $(4,1,2,2,2)$, $(1,4,1,2,2)$, and  $(1,3,1,3,1)$, respectively. 
The universally tight contact structures on 
\begin{align*}
L(25,14) & = L([3,2,2,3,2]) \cong L(25,9) = L([2,3,2,2,3]), \\ 
L(26,15) & = L([3,2,3,2,2])  \cong L(26,7) = L([2,2,3,2,3]), \\
L(29,11) & = L([2,3,3,2,2])  \cong L(29,8) = L([2,2,3,3,2]), \\
& \text{ and }  L(24,7)=L([2,2,4,2,2])
\end{align*} 
have two extra fillings, the first three coming from $(3,1,2,3,1)$ and $(3,2,1,3,2)$; $(2,1,3,2,1)$ and $(1,2,3,1,2)$; and $(2,1,3,2,1)$ and $(1,2,3,1,2)$. For $L(24,7)$ there are three extra null sequences $(2,1,4,1,2)$, $(2,1,3,2,1)$, and $(1,2,3,1,2)$, but since $7^2\equiv 1 \!\!\mod 24$ the last two give the diffeomorphic fillings. 

There are two fillings of the following virtually overtwisted contact structures: \\ 
\begin{align*}
L(22,9) &\text{ with rotation number configurations } (\pm 1, 0, \mp 3),\\ 
L(24,7) &\text{ with configurations } (0,0,\pm 2), \text{ and }  (\pm 2, 0,0),\\ 
L(25,14) &\text{ with configurations } (0,\pm 3, \mp 1),\\ 
L(26,15) &\text{ with configurations } (0,0,\pm 2) \text{ and }  (0, \pm 2, 0),\\ 
L(29,11) &\text{ with configurations } (\pm 1, \pm 1, 0), (\pm 1, \mp 1, \pm 2), \text{ and } (\pm 1, \mp 1, \mp 2), \text{ and}\\ 
L(30,11) &\text{ with configurations } (\pm 1, \mp 2, \pm 1), (\pm 1, \mp 2, \mp 1), \text{ and } (\mp 1, \mp 2, \pm 1).
\end{align*}

There are three fillings of the following virtually overtwisted contact structures: 
\begin{align*}
L(24,7) &\text{ with the configurations } (\pm 2, 0, \mp 2) \text{ and}\\
L(26,15) &\text{ with configurations } (0,\pm 2, \mp 2),\\ 
L(29,11) &\text{ with the configurations } (\pm 1, \pm 1, \mp 2).
\end{align*}

\noindent
{\bf {Size 6 chains:}} The universally tight contact structures on 
\begin{align*}
L(33,19) = L([3,2,3,2,2,2]) &\cong L(33,7) = L([2,2,2,3,2,3]), \text{ and} \\ 
L(37,14) = L([2,3,3,2,2,2]) & \cong L(37,8) = L([2,2,2,3,3,2]) \\
\end{align*}
have one extra filling both coming from $(2,1,3,2,2,1)$. 
The universally tight contact structures on 
\begin{align*}
L(31,9) & = L([2,2,4,2,2,2]) \cong L(31, 7) = L([2,2,2,4,2,2]), \\ 
L(34,19) & = L([3,2,2,3,2,2])  \cong L(34,9) = L([2,2,3,2,2,3]), \\
L(41,15) & = L([2,3,2,3,2,2])  \cong L(41,11) = L([2,2,3,2,3,2]), \\
& \text{ and }  L(40,11)=L([2,2,3,3,2,2])
\end{align*} 
have two extra fillings. For the first three lens spaces  these are  coming from $(2,1,3,2,2,1)$ and $(1,2,4,1,2,2)$; $(3,1,2,3,2,1)$ and $(1,2,2,3,1,2)$; and $(1,3,1,3,2,1)$ and $(1,2,2,3,1,2)$, respectively. For the lens space $L(40,11)$ there are three extra null sequences $(2,1,3,3,1,2)$, $(2,1,3,2,2,1)$, and $(1,2,2,3,1,2)$, but the latter two give diffeomorphic fillings since $11^2\equiv 1 \!\!\mod 40$.

There are two fillings of the following virtually overtwisted contact structures: \\ 
\begin{align*}
L(31,9) &\text{ with all rotation number configurations except } (0,0,\pm 1)  \text{ and }  (\pm 2, 0, \mp 3),\\ 
L(33,19) &\text{ with all configurations except } (0, 0, \mp 1) \text{ and }  (0,0,\pm 3),\\ 
L(34,19) &\text{ with all configurations except } (0,\pm 1, 0) \text{ and }  (0,\pm 3,\mp 2),\\ 
L(37,14) &\text{ with all configurations except } (\pm 1, \mp 1, \mp 3) \text{ and } (\pm 1, \mp 1, \pm 3),\\ 
L(40,11) &\text{ with configurations } (0,\pm 1, \pm 2), (0,\pm 1, \mp 2), (\pm 2, \pm 1, 0), \text{ and } (\pm 2, \mp 1, 0) \text{ and}\\  
L(41,15) &\text{ with either only one of } r_2 \text{ or } r_3 \text{ not-zero, or both non-zero and of the same sign}.\\ 
\end{align*}
There are three fillings of the following virtually overtwisted contact structures: \\ 
\begin{align*}
L(31,9) &\text{ with rotation number configurations } (\pm 2, 0, \mp 3), \\
L(34,19) &\text{ with configurations } (0, \pm 3, \mp 2), \\  
L(40,11) &\text{ with configurations } (\pm 2, \pm 1, \mp 2), (\pm 2, \mp 1, \pm 2), \text{ and } (\pm 2, \mp 1, \mp 2),  \text{ and}\\  
L(41,15) &\text{ with configurations} (\pm 1, \pm 2, \mp 2) \text{ and } (\pm 1, \mp 2, \pm 2).
\end{align*}

\noindent 
{\bf {Size 7 and above chains:}} 
We will consider two cases depending on whether $p/(q-p)$ is $[2^k, 3, 2^l, 3, 2^m]$ or $[2^k, 4, 2^l]$.

\noindent
{\bf Case 1:} Lens spaces $L(p,q)$ with $p/(q-p)=[2^k, 3, 2^l, 3, 2^m]$. 

If $k=2$ or $m=2$, then there is an extra filling coming from the null sequences $(2,1,3,2^l,1)$ and $(1,2^l, 3, 1, 2)$, respectively, and if both are $2$ we also have $(2,1,3,2^l,3,1,2)$.

If $l\geq 3$ or $l=0$,  then the only possible extra fillings come from those mentioned above.

If $l=2$, then when $k=0$ or $m=0$, then there are extra fillings coming from $(3,1,2,3, 2^m, 1)$ and $(1,2^k,3,2,1,3)$ in addition to the ones mentioned above. 

If $l=1$, then there are extra fillings coming from 
\[
(1,2^k, 3,1,3, 2^m, 1), (2,1,3,2,3,1,2), (2,1,3,2,2,2,2), \text{ and } (2,2,2,2,3,1,2).
\]

Thus we see the universally tight contact structure on the lens spaces 
\[
L(4(l+3)(m+2) -m-6, (l+3)(m+2)-1)=L([2, 2, 3, 2^l, 3, 2^m])
\]
\[
 \cong L(4(l+3)(m+2)-m-6, 4(l+3)-1)=L([2^m, 3, 2^l, 3,2,2])
\] 
each have one extra filling when $l\geq 3$ or $l=0$ and the continued fraction has length at least 7. The virtually overtwisted contact structures on these lens spaces will also have an extra filling if $r_1\not=0$ (as above when giving rotation number configurations for the virtually overtwisted contact structures, we will always use the chain of unknots that describes the first listed lens space). The universally tight contact structure on 
\[
L(16(l+3) -8, 4(l+3) -1)= L([2,2,3, 2^l, 3,2,2])
\]
has two extra fillings (there are three extra null sequences but since $(4(l+3)-1)^2\equiv 1 \!\!\mod (16(l-3)-8)$ two of them give diffeomorphic fillings). The virtually overtwisted contact structure on these lens spaces will have one extra filling if one of $r_1$ or $r_3$ is zero and the other is not and two extra fillings if both $r_1$ and $r_3$ are non-zero. 

The universally tight contact structure on the lens spaces 
\[
L(9(m+2)-2, 5(m+2)-1)=L([3,2,2,3,2^m])\cong L(9(m+2)-2, 9)=L([2^m,3,2,2,3]) \text{ and} 
\]
\[
L(19(m+2)-4,5(m+2)-1)=L([2,2,3,2,2,3,2^m])\cong L(19(m+2)-4,19)=L([2^m,3,2,2,3,2,2]) 
\]
each have one extra filling (we need $m\geq 1$ so the continued fraction has length at least 7). The lens space $L(72,19)$ has two extra fillings since there are three extra null sequences but since $19^2\equiv 1 \!\! \mod 72$ two of them are diffeomorphic. For the first family the virtually overtwisted contact structures with $r_2=\pm 3$ will have two fillings and for the second family the virtually overtwisted contact structures with $r_1\not =0$ will have two fillings. Virtually overtwisted contact structures on $L(72, 19)$ will have three fillings if $|r_1|=|r_3|= 2$, they will have two fillings if one of $r_1$ or $r_3$ is $\pm 2$ and the other is $0$, all others will have just one filling. 

The universally tight contact structure lens spaces 
\[
L(4(k+2)(m+2)-k-m-4, 4(m+2)-1)=L([2^k,3,2,3,2^m])
\]
have one extra filling if $m+k\geq 4$ and $k$ and $m$ are not equal to $2$. The virtually overtwisted contact structure with $r_2\not = 0$ will have two fillings. When one of $k$ or $m$ is $2$, then the universally tight contact structure on the lens spaces
\begin{align*}
L(1&5(m+2)-4, 4(m+2)-1)=L([2,2,3,2,3,2^m]) \\ & \cong L(15(m+2)-4, 15)=L([2^m,3,2,3,2,2])
\end{align*}
have two extra fillings if $m\geq 3$. The virtually overtwisted contact structures with one of $r_1$ or $r_2$ not zero but the other zero will have two filings while the ones with both $r_1$ and $r_2$ non-zero will have three fillings. When $k=m=2$ we get the lens space $L(56,15)$ and the universally tight contact structure has three extra fillings. There are actually four extra null sequences, but two of them give diffeomorphic fillings since $15^2\equiv 1 \!\! \mod 56$. The virtually overtwisted contact structures will have one extra filling if two of the $r_i$ are $0$, two extra fillings if $r_2=0$ but the others are not, and three extra fillings if all the $r_i$ are non-zero. 

\noindent
{\bf Case 2:} Lens spaces $L(p,q)$ with $p/(q-p)=[2^k, 4, 2^l]$. 

If $k=2$ or $m=2$, then there is an extra filling coming from the null sequences $(2,1,3,2^2,1)$ and $(1,2^l, 3, 1, 2)$, respectively. 

If $k=3$ or $m=3$, there there is an extra filling coming from the null sequence $(2,2,1,4,2^l,1)$ and $(1,2^l, 4,1,2,2)$, respectively. 

Thus the universally tight contact structures on the lens spaces 
\[
L(7l+10,2l+3)=L([2,2,4,2^l])\cong L(7l+10, 7)=L([2^l,4,2,2]) \text{ and}
\]
\[
L(9l+13, 2l+3)=L([2,2,2,4,2^l])\cong L(9l+13, 9)=L([2^l,4,2,2,2])
\]
have one extra filling. The virtually overtwisted contact structures on the first family will have two fillings if $r_1\not=0$ and on the second family if $r_1=\pm 3$.
\end{proof}

\section{Stein cobordisms}
We begin this section by constraining Stein cobordisms between lens spaces based on the length of the continued fractions describing the lens spaces. 

\begin{proof}[Proof of Theorem~\ref{length}]
We show that if there is a Stein cobordism from a tight contact structure on $L(p,q)$ to any contact structure on $L(p',q')$ then we must have that $l(p'/q')\geq l(p/q)$, where $l(r/s)$ is the length of the continued fractions expansion of $r/s$ as discussed in the introduction. To this end let $X$ be a Stein cobordism from a contact structure on $L(p,q)$ to any contact structure on $L(p',q')$. If $l(p'/q')< l(p/q)$ then we could use $X$ to build a Stein filling of $L(p',q')$ with second homology larger than is allowed by our main results Theorem~\ref{list}. To see this we simply take the filling $Y$ of $L(p,q)$ that has maximal second Betti number (which is given by the plumbing and has $b_2=l(p/q)$) and glue it to $X$. As $X$ and $Y$ are both Stein, so is the resulting filling of $L(p',q')$; which, moreover, has second Betti number at least that of $Y$.

Now suppose that $l(p/q)=l(p'/q')$ and we have the Stein cobordism above. From above we see that the second Betti number of $X$ is trivial.  Moreover, if we choose $Y$ to be the filling of $L(p,q)$ with largest second Betti number, then $X\cup Y$ will be the analogous filling of $L(p',q')$. Thus by Theorem~\ref{cor4} both fillings are simply connected and have the same second homology. 

We now claim that $H_2(X,L(p,q))=0$. To see this we first notice that  $H_2(X,L(p,q))$ has no torsion because $X$ is built from $[0,1]\times L(p,q)$ by attaching $1$-- and $2$--handles. Now by excision notice that $H_k(X,L(p,q))\cong H_k(X\cup Y, Y)$ and the long exact sequence of the pair $(X\cup Y, Y)$ shows that $H_k(X,L(p,q))$ is trivial except possibly when $k=2$ where it might be torsion. But since we have already seen that there is no torsion, it is trivial there as well. 
By duality, we see that all the homology groups $H_i(X,L(p',q'))$ also vanish.
It is known that two lens spaces are homology cobordant if and only if they are diffeomorphic, see \cite{DoigWehrli15pre} for a modern proof of this fact and a discussion of previous proofs.

Since these fillings are both simply connected we can use the van Kampen theorem to see that $\pi_1(L(p,q))$ must surject onto $\pi_1(X)$, and thus they are both abelian and isomorphic to $H_1(L(p,q))$ and $H_1(X)$, respectively. Now since the relative homologies $H_k(X,L(p,q))$ all vanish, we see that the inclusion map $i:L(p,q)\to X$ induces an isomorphism from $\pi_1(L(p,q))$ to $\pi_1(X)$.

We now claim that $i$ is a homotopy equivalence. To see this notice that $i$ lifts to the universal cover of $L(p,q)$ and $X$ to give a map $j: S^3\to \widetilde X$. Or more precisely, if $p:S^3\to L(p,q)$ is the covering map then $i\circ p$ lifts to $\widetilde X$. Notice that the boundary component $L(p,q)$ in $X$ is covered by an $S^3$ boundary component of $\widetilde X$ because $\pi_1(L(p,q))=\pi_1(X)$. Thus $j$ is an embedding of $S^3$ into a boundary component of $\widetilde X$. Now since $X$ is built from $[0,1]\times L(p,q)$ by attaching the same number of $1$-- and $2$--handles, we also know $\widetilde X$ is built from $[0,1]\times S^3$ by attaching the same number of $1$-- and $2$--handles. Since we know $\widetilde X$ is simply connected, we must also have that its first homology is trivial; and then by using cellular homology we can compute that its second homology is also trivial (the chain groups are generated by the handles, and the boundary map form the $2$--chain group must be onto the $1$--chain group). Of course, since there are no higher index handles, all the higher relative homology groups also vanish. Thus we see that $j$ induces an isomorphism on all the homology groups $H_k(S^3)\to H_k(\widetilde X)$ and Whitehead's theorem implies $j$ is a homotopy equivalence and hence induces an isomorphism on all the homotopy groups. Of course the projections $p$ and $p'$ also induce isomorphisms on all the higher homotopy groups, and hence so does $i$. But we already knew that $i$ induces an isomorphism on $\pi_1$. Thus $i$ is a homotopy equivalence and $X$ is an $h$-cobordism (since we can make the same arguments for the inclusion of $L(p',q')$ into $X$).

To complete the proof suppose we are given a Stein cobordism $X$ from a contact structure $\xi$ on $L(p,q)$ to $\xi'$ on the same lens space, from above we know that $X$ is a homology cobordism and thus there is no second cohomology. In \cite{Gompf98}, Gompf introduced invariants of homotopy classes of oriented plane fields. One of these was a refinement of the Chern class of the plane field. He also gave formulas to see how they would change under a Stein cobordism. Since this is a homology cobordism, the invariants must be the same. But from the classification of contact structures on lens spaces, two tight contact structures are contactomorphic if and only if their invariants are the same. Thus $\xi$ is the same as $\xi'$. 
\end{proof}

\section{Cobordisms between lens spaces}\label{setion:cobordism}
In this section we  more thoroughly discuss Construction~3 of Stein cobordisms between lens spaces from the end of the introduction. All the relevant details about the other constructions were discussed in the introduction. Construction~3 is based on surgeries on torus knots. Much of the discussion below comes from the upcoming paper \cite{BakerEtnyreMinOnaran21pre} where more on Legendrian torus knots in lens spaces can be found. 

Suppose $\xi$ is a contact structure on $T^2\times [0,1]$ determined by a path in the Farey graph that is a continued fraction block of length $2n$. We will call $\xi$ a \dfn{balanced continued fraction block} if it has the same number of positive signs as negative signs. The slope of the $n^{th}$ vertex in the continued fraction block is called the \dfn{central slope}. In \cite{ChakrabortyEtnyreMin20pre} the following result was proven.
\begin{theorem}[Chakraborty, Etnyre, and Min, 2020 \cite{ChakrabortyEtnyreMin20pre}]
Let $\xi$ be a minimally twisting tight contact structure on $T^2\times [0,1]$. There is a Legendrian knot $L$ smoothly isotopic to a $(p,q)$--curve on $T^2\times \{pt\}$ with contact twisting $n$ larger than the framing coming from the torus if and only if the path in the Farey graph describing $\xi$ contains a balanced continued fraction block of length $2n$ with central slope $q/p$. 
\end{theorem}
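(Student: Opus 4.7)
The plan is to establish the biconditional by constructing, in each direction, the object claimed to exist, using Honda's classification of tight contact structures on solid tori together with the bypass content hidden in a balanced continued fraction block.

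For the backward direction, suppose the defining path of $\xi$ contains a balanced continued fraction block $B$ of length $2n$ with central slope $q/p$. Let $V \subset T^2\times[0,1]$ be the sub-thickened torus realizing $B$. Since $q/p$ is a vertex of $B$, there is a convex torus $T_c\subset V$ with dividing slope $q/p$, flanked by $n$ basic slices on each side. The balanced hypothesis would let us arrange (after possibly shuffling, which is legal for like-signed basic slices within a continued fraction block) that the $n$ basic slices on one side of $T_c$ carry the opposite sign from the $n$ on the other. I would then attach these flanking basic slices as bypass layers on either side of $T_c$ to produce a convex torus $T'$ parallel to $T_c$, still of dividing slope $q/p$ but with a larger dividing set. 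The Legendrian realization principle would then yield a closed Legendrian curve $L$ on $T'$ of smooth slope $q/p$, and counting contributions of each flanking basic slice to the contact framing (the balance condition is precisely what ensures that positive and negative contributions accumulate rather than cancel) would show the twisting of $L$ above the torus framing is exactly $n$.

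For the forward direction, let $L$ be a Legendrian $(p,q)$-curve on some $T^2\times\{t_0\}$ with contact twisting $n$ above the torus framing. A standard neighborhood $N(L)$ is a solid torus with convex boundary; by Honda's classification, the tight contact structure on it with core twisting $+n$ is unique up to contactomorphism rel boundary. Since $\partial N(L)$ is an essential torus in $T^2\times[0,1]$, it is isotopic there to a horizontal torus, so its complement splits into two thickened tori $M_\pm$, each minimally twisting and described by a sub-path in the Farey graph. The contribution of $N(L)$ itself comes from noting that $N(L)\setminus L$ is a minimally twisting $T^2 \times (0,1]$ whose path runs from $\partial N(L)$ inward toward $q/p$ in $n$ half-maximal jumps, forming a continued fraction half-block. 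Concatenating the three sub-paths (for $M_-$, for $N(L)$, and for $M_+$), rearranging according to the sign conventions, and appealing to Honda's uniqueness, one obtains a defining path for $\xi$ containing a continued fraction block of length $2n$ centered at $q/p$ whose signs on the two sides of $q/p$ are opposite, hence balanced.

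The main obstacle will be the coordinate translation between intrinsic coordinates on $\partial N(L)$ (where Honda's classification is naturally phrased, with meridian slope $\infty$) and the ambient coordinates on $T^2$ (where the Farey-graph path lives). The change of basis is a matrix in $\mathrm{SL}(2,\Z)$ determined by $(p,q)$; under it, the dividing slope on $\partial N(L)$ corresponding to twisting $+n$ lands at a specific vertex Farey-adjacent to $q/p$, and half-maximal jumps in the solid-torus coordinates correspond to half-maximal jumps adjacent to $q/p$ in the ambient ones. Tracking the signs of basic slices through this basis change, and verifying that the bypass attachments in the backward direction are compatible with the sign conventions on either side of $T_c$, is the most delicate part of the argument and is where Honda's classification must be invoked most carefully.
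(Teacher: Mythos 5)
This theorem is stated in the paper as a quotation from \cite{ChakrabortyEtnyreMin20pre}; no proof is given in the present paper, so there is no in-paper proof to compare against. Judged on its own merits, your proposal has a gap that affects both directions: it tries, in each, to place the Legendrian knot $L$ on a convex torus isotopic to $T^2\times\{pt\}$, but this is impossible when $n>0$. A Legendrian curve lying on a convex torus is either a Legendrian divide (contact framing equal to the torus framing, twisting $0$) or meets the dividing set in $2k>0$ points (twisting $-k<0$); its twisting relative to the torus framing is therefore always $\leq 0$. Part of the content of the theorem is precisely that for $n>0$ the knot $L$ is not realizable on any convex horizontal torus, and a proof must engage with this.

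Concretely, in the backward direction you propose attaching bypasses to $T_c$ to get a convex torus $T'$ with the same slope $q/p$ but more dividing curves, and then realizing $L$ on $T'$. Leaving aside whether that bypass move exists (on a torus with two dividing curves, a nontrivial bypass either changes the slope or is disallowed; it does not increase the number of dividing curves at fixed slope), any Legendrian realization on the convex torus $T'$ would have twisting $\leq 0$, not $n$. In the forward direction the claim that $\partial N(L)$ is an essential torus isotopic to a horizontal one is false: $\partial N(L)$ bounds the solid torus $N(L)$ and is therefore compressible, whereas each $T^2\times\{t\}$ is incompressible, so they are not isotopic; moreover $T^2\times I\setminus \mathrm{int}\,N(L)$ has three boundary tori and is not a disjoint union of two thickened tori. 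The correct decomposition is instead to push two parallel copies $T_\pm$ of $T^2\times\{t_0\}$ to either side of $N(L)$, isolating a middle region diffeomorphic to $T^2\times I$ that contains $N(L)$ and the convex annulus $A=T^2\times\{t_0\}\setminus N(L)$, and to extract the continued fraction block (and its balanced sign structure) from this middle region together with the two outer thickened tori; your proposed decomposition never produces the middle region where the positive twisting actually lives.
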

We will call an $L$ as in the theorem with $m>0$ a \dfn{Legendrian large torus knot}. Notice that according to this theorem, only some virtually overtwisted contact structures on a Lens space can have Legendrian large torus knots; specifically, the ones coming from a path in the Farey graph with a balanced continued fraction block. In particular, if $\xi$ on $L(p,q)$ is determined by surgery on a chain $\mathcal{C}$ of Legendrian unknots, then there will be a Legendrian torus knot with contact framing $1$ larger, respectively $2$ larger, than the torus framing if and only if one of the Legendrian knots in $\mathcal{C}$ has been positively and negatively stabilized at least $1$, respectively $2$, times. 

Recall from Section~\ref{koclassification}, that we can describe a lens space as $T^2\times [0,1]$ with slope $s/r$ curves collapsed on $T^2\times \{1\}$ and slope $p/q$ curves collapsed on $T^2\times \{0\}$ and we called $r/s$ the upper meridian and $p/q$ the lower meridian. The standard way to represent $L(p,q)$ is with an upper meridian $0$ and lower meridian $p/q$. We can use the same slope convention to talk about torus knots in $L(p,q)$. Suppose $a/b$ is slope of a torus knot that has a Legendrian representative $L$ with contact framing $1$ larger than the torus framing. Then as discussed at the end of the introduction we know that Legendrian surgery on $L$ will be the manifold $L(a,b)\#L(c,d)$ where $c/d$ is determined as follows: change the basis of $T^2$ so that $a/b$ becomes zero and $p/q$ becomes number less than or equal to $-1$, this latter number will be $c/d$. 

To pin down the contact structure on the summands, let ${P}$ be the decorated path in the Farey graph corresponding to $\xi$ by Theorem~\ref{classifylpq}. There will be a  length $2$ continued fraction block $P'$ with central slope $a/b$ corresponding to $L$ in $P$. Let $P_1$ be the part of the path from $p/q$ to the start of $P'$ and $P_2$ the part of $P$ from the end of $P'$ to $0$. The contact structure on $L(a,b)$ will correspond to the path $P_2$ extended by one jump to $a/b$. The contact structure on $L(c,d)$ will correspond to the path with one jump from $a/b$ to the end of $P_1$ followed by $P_1$. One can see this since the surgery on $L$ takes place entirely in the $T^2\times I$ corresponding to $P'$ and hence the contact structure on the complementary tori of $T^2\times I$ are unchanged; moreover, the surgery on $L$ inside of $T^2\times I$ yields the connected sum of two tight contact structures on solid tori and considering the dividing slopes on the boundary and meridian slopes, we see the contact structures on the solid tori are unique. 

Now suppose $b/a$ is the slope of a torus knot that has a Legendrian representative $L$ with contact framing $2$ larger than the torus framing. Then as discussed at the end of the introduction Legendrian surgery on $L$ gives the lens space obtained from $L(p,q)$ by cutting along a Heegaard torus and re-gluing by a negative Dehn twist along the $(a,b)$--curve. One may check that the resulting lens space still has upper meridian $0$ but the lower meridian will be $d/c$ where $d/c$ is determined by 
\[
\begin{bmatrix}c\\ d\end{bmatrix}=
\begin{bmatrix}1-ab & a^2\\ -b^2& 1+ab\end{bmatrix}\begin{bmatrix}q\\ -p\end{bmatrix}.
\]
To determine the contact structure on $L(d,c)$ we observe the following lemma.
\begin{lemma}
If $(T^2\times [0,1],\xi)$ is determined by a length $4$, balanced continued fraction block with central slope $r/s$ and $L$ is the Legendrian $(s,r)$--torus knot with contact twisting $2$ larger than the torus framing, then Legendrian surgery on $L$ will result in an $[0,1]$-invariant contact structure on $T^2\times [0,1]$. 
\end{lemma}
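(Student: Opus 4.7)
The plan is to compute the effect of Legendrian surgery explicitly in normalized coordinates and then reduce the resulting decorated path in the Farey graph to the trivial path. After a change of basis for $H_1(T^2)$, I would assume the central slope $r/s = 0$ and the common external vertex of the length-$4$ continued fraction block is $\infty$, so that the block consists of vertices $v_0,\ldots,v_4 = -2,-1,0,1,2$, and $L$ is the Legendrian $(1,0)$-curve on the central convex torus. Since decorations within a single continued fraction block are equivalent up to permutation of signs (by the classification recalled in Section~\ref{koclassification}), the balanced decoration may be taken as $(+,+,-,-)$ on the four consecutive edges.

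Because the contact framing of $L$ exceeds the torus framing by $2$, Legendrian surgery corresponds smoothly to surgery with framing one larger than the torus framing, which (by the ``torus plus one framed surgeries'' discussion earlier in this section) amounts to cutting $T^2 \times [0,1]$ along the slope-$0$ convex torus and regluing via a negative Dehn twist $\phi$ along $L$. A direct matrix computation shows that in the consistent (lower-half) basis, the top boundary slope transforms as $\phi^{-1}(v_4) = -2 = v_0$, so the new manifold is $T^2 \times [0,1]$ with matching dividing slopes $-2$ on both ends. The decorated path for the post-surgery contact structure is
\[
-2 \; \to \; -1 \; \to \; 0 \; \to \; \infty \; \to \; -2
\]
with signs $(+,+,-,-)$, since $\phi^{-1}$ acts by an orientation-preserving diffeomorphism on the basic slices of the upper half and hence preserves their signs.

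To finish, I would show this closed decorated path represents the $[0,1]$-invariant contact structure. By Honda's classification, it suffices to reduce the path to the trivial zero-length path via bypass moves and decoration equivalences. First, I would pivot the sub-path $0 \to \infty \to -2$ with signs $(-,-)$ across the adjacent triangles $\{-1,0,\infty\}$ and $\{-1,\infty,-2\}$ in the Farey graph to obtain the equivalent sub-path $0 \to -1 \to -2$ with signs $(-,-)$. Substituting yields the path $-2 \to -1 \to 0 \to -1 \to -2$ with signs $(+,+,-,-)$, which contains the inverse basic-slice pair on the edge $(-1,0)$ (signs $+$ then $-$); cancelling this pair, then cancelling the analogous inverse pair on the edge $(-2,-1)$, reduces the path to the trivial one at $-2$, confirming the $[0,1]$-invariance.

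The main obstacle is justifying the initial bypass pivot rigorously: Honda's classification fixes a single minimal path, and the equivalence between the two length-$2$ minimal sub-paths from $0$ to $-2$ with matching sign count requires an explicit application of the bypass lemma from \cite{Honda00a} together with careful tracking of signs through the move. The geometric picture—that a balanced decoration is a ``folded'' configuration which unfolds after the central Dehn twist—makes the reduction transparent, but the sign bookkeeping in the bypass pivot is the technical crux.
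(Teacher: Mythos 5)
Your proof takes a genuinely different route from the paper's, but it has a real gap at exactly the point you flag as the ``technical crux.'' The paper's argument is much shorter and avoids the Farey-graph combinatorics entirely: it chooses coordinates so that $s_0 = -4$, $s_1 = 0$, and $L$ is a $(1,-2)$-curve, observes that the original $(T^2\times[0,1],\xi)$ embeds in the tight $S^3$ as (part of) the complement of a standard neighborhood of a Legendrian unknot with $tb=-5$ and $\text{rot}=0$, and notes that Legendrian surgery on $L$ then produces a tight lens space. The image of $T^2\times I$ inside a tight lens space is tight and carries no Giroux torsion, so it is minimally twisting; since both boundary slopes become $0$, Honda's classification forces the $I$-invariant structure. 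No path reduction is needed.

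The gap in your argument is twofold, and both pieces are substantial. First, the ``pivot'' replacing the sub-path $0 \to \infty \to -2$ by $0 \to -1 \to -2$ while keeping the signs $(-,-)$ is not a move covered by the classification: Honda's theorem assigns decorations to a \emph{minimal clockwise} path, not to an arbitrary length-$2$ path, and the wrong-way path $0 \to \infty \to -2$ does not a priori describe a minimally twisting structure at all (the intermediate slope $\infty$ lies in the wrong arc of the Farey boundary, so the layering it describes is potentially non-minimally twisting). Identifying its sign decoration in the correct clockwise path requires an argument; there is no reason the signs ``transfer'' as written. Second, and more seriously, the cancellation of adjacent opposite-sign basic slices on the doubled edge $-1 \leftrightarrow 0$ is false in general: gluing a positive basic slice to a negative basic slice along a common boundary torus need not give the $I$-invariant thickened torus and can in fact produce an overtwisted or torsion structure. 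Showing that the concatenated structure $-2 \to -1 \to 0 \to \infty \to -2$ with signs $(+,+,-,-)$ is tight, minimally twisting, and hence $I$-invariant is precisely the content of the lemma, so the reduction you propose is circular without an independent tightness input. This is what the embedding into tight $S^3$ supplies in the paper's proof, and it is the missing ingredient in yours.
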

\begin{proof}
We can suppose (by choosing an appropriate basis for the first homology of $T^2$) that the dividing slope $s_0$ on $T^2\times \{0\}$ is $-4$ and the slope $s_1$ on $T^2\times \{1\}$ is $0$. Then we will have that $L$ is a $(1,-2)$--torus knot and Legendrian surgery on $L$ will produce a contact structure on $T^2\times [0,1]$ with dividing slopes $s'_0=0$ and $s_1'=s_1=0$. Since the original $\xi$ can be embedded in the tight contact structure on $S^3$ (consider the complement of Legendrian unknot with $tb=-5$ and rotation number $0$) we know that the resulting contact structure must be tight and minimally twisting. Thus it must be $[0,1]$-invariant. 
\end{proof}
Arguing as we did for the case above we see the contact structure on $L(d,c)$ will correspond to the path in the Farey graph obtained by removing the length $4$ continued fraction block corresponding to $L$, keeping the part of the path between $0$ and it the same, and applying the above transform to the vertices of the path between the continued fraction block and $p/q$ (and keeping the signs of the corresponding jumps the same). 

\begin{example}
Consider the contact structure $\xi$ on $L(6,1)$ obtained by Legendrian surgery on the Legendrian unknot with $tb=-5$ and rotation number $0$. The path in the Farey graph corresponding to $\xi$ has vertices $-6,-5,-4,-3,-2, -1, $ and $0$. The first and last edges are undecorated, and there are two $+$ signs and two $-$ signs on the other jumps (since the path is a continued fraction block, the signs can be put on any of the edges). Thus we see there are Legendrian knots $L_1$ and $L_3$ in the knot type of the $(1,-2)$-- and $(1,-4)$--torus knot, respectively, that have contact framing $1$ more than the torus framing, and a Legendrian knot $L_2$ in the knot type of the $(1,-3)$--torus knot with contact framing $2$ larger than the torus framing.  From the discussion above, Legendrian surgery on $L_1$ and $L_3$ will produce $(L(1,2), \xi_1)\# (L(4,1), \xi_2)$ where $\xi_1$ is the unique tight contact structure on $L(2,1)$ and $\xi_2$ is the unique virtually overtwisted contact structure on $L(4,1)$. Legendrian surgery on $L_2$ will result in the unique contact structure on $L(3,2)$. One may easily draw Legendrian surgery diagrams for $L_1$ and $L_2$ from our discussion at the end of the introduction. It is an interesting exercise to find a surgery diagram for $L_3$. We also leave it as an interesting exercise to determine that Legendrian surgery on $L_2$ after a single stabilization will result in $(L(3,1),\xi_3)\#(L(3,1),\xi_4)$ where $\xi_3$ is universally tight and $\xi_4=-\xi_3$. 
\end{example}

We now show that one can always find a Stein cobordism from a contact structure on a lens space to a contact structure on a lens space given by surgery on a nicely stabilized chain. 
\begin{proof}[Proof of Theorm~\ref{makenice}]
Let $\mathcal{C}=\{L_1, \ldots, L_n\}$ be a chain of Legendrian unknots. Suppose that $L_i$ has been stabilized both positively and negatively. Then as in Figure~\ref{reduciblesurgery} we can find a Legendrian knot $L$ in $(L(\mathcal{C}), \xi_\mathcal{C})$ to which one can attach a $2$--handle to get a cobordism to the connected sum of the lens spaces $(L(\mathcal{C}'), \xi_{\mathcal{C}'})$ and $(L(\mathcal{C}''), \xi_{\mathcal{C}''})$, where $\mathcal{C}'=\{L_1,\ldots, L_{i-1},U\}$ where $U$ is a maximal Thurston-Bennequin invariant unknot and $\mathcal{C}''=\{L_i', L_{i+1},\ldots, L_n\}$ where $L'_i$ is obtained from $L_i$ by destabilizing positively and negatively. We can now attach a $2$--handle to another unknot to get a cobordism to $(L(\mathcal{C}'''), \xi_{\mathcal{C}'''})$ where $\mathcal{C}'''=\{L_1,\ldots, L_{i-1},U, U, L'_i, L_{i+1}, \ldots, L_n\}$. We can continue in this way until we have a nicely stabilized chain. 
\end{proof}

We end by discussing surgeries on torus knots with framing one less than the torus framing. Suppose $b/a$ is the slope of a torus knot that has a Legendrian representative $L$ with contact framing equal to the torus framing. Then as discussed at the end of the introduction, Legendrian surgery on $L$ gives the lens space obtained from $L(p,q)$ by cutting along a Heegaard torus and re-gluing by a positive Dehn twist along the $(a,b)$--curve. One may check that the resulting lens space still has upper meridian $0$ but the lower meridian will be $d/c$ where $d/c$ is determined by 
\[
\begin{bmatrix}c\\ d\end{bmatrix}=
\begin{bmatrix}1+ab & -b^2\\ a^2& 1-ab\end{bmatrix}\begin{bmatrix}q\\ -p\end{bmatrix}.
\]
That is, one will obtain the $L(d,c)$ lens space. One can determine the contact structure on $L(d,c)$ as follows. The contact structure on $L(p,q)$ is determined by a path in the Farey graph. The slope $b/a$ might not be an end point of one of the jumps in the path, but one can lengthen the path so that it is (a basic slice can be split into two basic slices of the same sign). Now arguing as above, the path determining the contact structure on $L(d,c)$ agrees with the given path between $0$ and $b/a$ and the above transformation is applied to all the vertices between $b/a$ and $-p/q$ (and the signs of the jumps are kept the same). 

\begin{example}
Consider the universally tight contact structure on $L(3,2)$. One can realize the $(4,-5)$-curve by a Legendrian knot with contact framing agreeing with the torus framing. Legendrian surgery on this knot will produce a universally tight contact structure on $L(13,10)$ (one can arrange to get either universally tight contact structure by choosing the appropriate Legendrian realization of the $(4,-5)$-curve). 
\end{example}

\def\cprime{$'$} \def\cprime{$'$}


\end{document}